\newtheorem{theorem}{Theorem}[section]
\newtheorem{lemma}[theorem]{Lemma}
\newtheorem{cor}[theorem]{Corollary}
\newtheorem{prop}[theorem]{Proposition}
\newtheorem{dfn}[theorem]{{Definition}}
\newtheorem*{rmk}{{Remark}}
\numberwithin{equation}{section}
\newcommand {\N}{\mathbb{N}} %% positive integers
\newcommand {\R}{\mathbb{R}} %% reals
\newcommand {\C}{\mathbb{C}} %% complex
\DeclareMathOperator{\id}{id}
\DeclareMathOperator{\vol}{vol}
\DeclareMathOperator{\supp}{supp}
\DeclareMathOperator{\End}{End}
\DeclareMathOperator{\grad}{grad}
\DeclareMathOperator{\tr}{tr}
\DeclareMathOperator{\Span}{span}
\DeclareMathOperator{\Jac}{Jac}
\begin{document}
%%%%%%%%%
% survey, 15.8.2010
%%%%%
\title[Noncompact harmonic Manifolds]{Noncompact harmonic manifolds}

\author{Gerhard Knieper and Norbert Peyerimhoff }
\date{\today}
\address{Faculty of Mathematics,
Ruhr University Bochum, 44780 Bochum, Germany}
\email{gerhard.knieper@rub.de}
\address{Department of Mathematical Sciences, Durham University, Durham DH1 3LE, UK}
\email{norbert.peyerimhoff@dur.ac.uk}
\subjclass{Primary 37C40, Secondary 53C12, 37C10}
\keywords{harmonic manifolds, geodesic flows, Lichnerowicz conjecture}

%%%%%%%%%%%%%%%%%%%%%%%%%%%%%%%%%%%%%%%%%%%%%%%%%%%%%%%%%%%%%%%%%%%%%%

\begin{abstract}
  The Lichnerowicz conjecture asserts that all harmonic manifolds are
  either flat or locally symmetric spaces of rank~1.  This conjecture
  has been proved by Z.I. ~Szab\'{o} \cite{Sz} for harmonic manifolds
  with compact universal cover.  E.~Damek and F.~Ricci \cite{DR}
  provided examples showing that in the noncompact case the conjecture
  is wrong. However, such manifolds do not admit a compact quotient.
  The classification of all noncompact harmonic spaces is still a very
  difficult open problem. 

  In this paper we provide a survey on recent results on noncompact
  simply connected harmonic manifolds, and we also prove many new
  results, both for general noncompact harmonic manifolds and for
  noncompact harmonic manifolds with purely exponential volume growth.
\end{abstract}

%%%%%%%%%%%%%%%%%%%%%%%%%%%%%%%%%%%%%%%%%%%%%%%%%%%%%%%%%%%%%%%%%%%%%%

\maketitle

\tableofcontents
%%%%%%%%%%%%%%%%%%%%%%%%%%%%%%%%%%%%%%%%%%%%%%%%%%%%%%%%%%%%%%%%%%%%%%

\section{Introduction}
A complete Riemannian manifold $X$ is called harmonic if the harmonic
functions satisfy the mean value property, that is, the average on any
sphere coincides with its value in the center.  Equivalently, for any
$p \in X$ the volume density $\theta_p(q) = \sqrt{\det g_{ij}(q)}$ in
normal coordinates, centered at any point $p \in X$, is a radial
function. In particular, if $c: [0, \infty) \to X$ is a normal
geodesic with $c(0) =p$, the function $f(t) := \theta_p(c(t))$ is
independent of $c$. It is easy to see that all rank~1 symmetric spaces
and Euclidean spaces (model spaces) are harmonic. In 1944,
A.~Lichnerowicz conjectured that conversely every complete harmonic
manifold is a model space. He confirmed the conjecture up to
dimension~4 \cite{Lic}. It was not before the beginning of the 1990's
that general results where obtained. In 1990 Z.I.~Szab\'{o} \cite{Sz}
proved the Lichnerowicz conjecture for compact simply connected
spaces. However, not much later, in 1992, E.~Damek and F.~Ricci
\cite{DR} showed that in the noncompact case the conjecture is
wrong. They provided examples of homogeneous harmonic spaces which are
not symmetric.  Nevertheless, in 1995 G.~Besson, G.~Courtois and
S.~Gallot \cite{BCG} confirmed the conjecture for manifolds of
negative curvature admitting a compact quotient. The proof consisted
in a combination of deep rigidity results from hyperbolic dynamics and
used besides \cite{BCG} the work of Y.~Benoist, P.~Foulon and
F.~Labourie \cite{BFL} and P.~Foulon and F.~Labourie \cite{FL}.

In 2002, A.~Ranjan and H.~Shah showed \cite{RSh2} that noncompact,
simply connected harmonic manifolds of polynomial volume growth are
flat. Using a result by Y.~Nikolayevski \cite{Ni} showing that the
density function $f$ is an exponential polynomial, subexponential
volume growth of noncompact simply connected harmonic manifolds
implies flatness as well.  In 2006, J.~Heber \cite{He} proved that
among the homogeneous harmonic spaces only the model spaces and the
Damek-Ricci spaces occur. Therefore, it remains to study
nonhomogeneous harmonic manifolds of exponential volume growth. In
particular, these are spaces without conjugate points and their
horospheres have constant mean curvature $h >0$.\\

As has been recently observed by the first author \cite{Kn3} the
asymptotic nature of the volume growth has a crucial impact on the
geometry of the harmonic space.  In particular, in \cite{Kn3} it has
been proved that purely exponential volume growth, geometric rank $1$,
Gromov hyperbolicity and the Anosov-property of the geodesic flow
(with respect to the Sasaki-metric) are equivalent for noncompact
harmonic spaces with $h >0$.  Note that the volume growth is called
purely exponential if there exists a constant $c \ge 1$ such that for
the volume density $f$ the estimate
$$
\frac{1}{c} \le \frac{f(t)}{e^{ht}} \le c
$$
holds for all $t \ge 1$. In \cite{Kn3} it is also proved that
nonpositive curvature or more generally no focal points imply the
above conditions.  We also note that all examples of noncompact
harmonic spaces including the Damek-Ricci spaces have nonpositive
curvature. Therefore, the following questions are fundamental in the
study of noncompact harmonic spaces:

\begin{itemize}
\item[(A)] Has every non-flat simply connected noncompact harmonic
  manifold {\em purely} exponential volume growth?
\item[(B)] Has every non-flat simply connected noncompact harmonic
  manifold  nonpositive curvature?
\item[(C)] Are there {\em nonhomogeneous} simply connected harmonic manifolds?
\end{itemize}

In particular, a negative answer to Question (C) implies a positive
answer to Question (A) and Question (B). If Question (B) has a
positive answer, Question (A) has a positive answer as well but not
necessarily vice versa.  If Question (A) has an affirmative answer its
proof could be considered as a first step into the direction of the
classification of all harmonic manifolds. But we like to mention that
even under the additional assumption that a non-flat simply connected
noncompact manifold $(X,g)$ admits a compact quotient, there is at
present no proof that $(X,g)$ has purely exponential volume growth
without a further assumption.  However, in case a compact quotient
exists and the volume growth is purely exponential one can deduce
using the above mentioned rigidity results that $(X,g)$ is a symmetric
space of negative curvature.  In particular, if the answer to question
(A) is yes it would solve the classification of harmonic spaces
admitting a compact quotient (see  \cite{Kn3} for details).\\

We refer the reader to \cite{Sz}, \cite{Krey} and \cite{Be} for well
known classical results on harmonic spaces, mostly related to the case
of simply connected {\em compact} harmonic spaces. For informative surveys
on {\em Damek-Ricci spaces}, we refer the reader to \cite{BTV} and \cite{Rou}.

The article does not cover recent results on asymtotically harmonic
spaces given in \cite{He}, \cite{Zi1,Zi2} and \cite{CaSam}, nor does it
present the integral geometric results for general noncompact spaces
given in \cite{PS}. Moreover, the article is complementary to
\cite{Kn3}. One of the aims of this article is to present many
important other recent results by several authors in a self-contained
way. Another aim is the presentation of a number of new results. An
overview over these results is given at the beginning of each of the
two parts of this article.

\bigskip
\bigskip

\noindent {\bf Acknowledgement:} This research was mainly carried out
during a Research in Pairs Programme (RiP) in August 2012 at the
Mathematisches Forschungsinstitut Oberwolfach. The authors want to
express their gratitude for this opportunity. We also like to thank
U. Dzwigoll for her support in typing this article.

\newpage

\part{General noncompact harmonic manifolds}

In this part, we give mostly self-contained presentations of the
following topics:
\begin{enumerate}
\item Nikolayevsky's result \cite{Ni} that the density function of a noncompact
  harmonic space is a exponential polynomial,
\item Ranjan/Shah's result \cite{RSh2} that harmonic spaces with
  polynomial volume growth are flat, and their integral formula for
  harmonic functions,
\item Zimmer's result \cite{Zi3} that Busemann and Martin boundary
  coincide on nonflat harmonic spaces, and the calculation of the
  Radon-Nykodym derivative of the visibility measures.
\end{enumerate}
The material is presented in our own framework and notation, and is
often very different from the articles mentioned above. We also
present several other new results, amongst them a uniform divergence
result for geodesic rays emanating from the same point (Chapter
\ref{chap:unifdifgeod}), various curvature properties of spheres and
horospheres (Chapter \ref{chap:curvspheres}), and a differential
inequality (see \eqref{eq:box}) for the density function $f$ of every
noncompact harmonic space. Moreover, we give an explicit formula for
the Green's kernel in terms of the density function, our treatment of
visibility measures and their Radon-Nykodym derivative differs
considerably from \cite{Zi3}, and we discuss representations of bounded
harmonic functions in connection with the Martin boundary.

%%%%%%%%%%%%%%%%%%%%%%%%%%%%%%%%%%%%%%%%%%%%%%%%%%%%%%%%%%%%%%%%%%%%%%

\section{The density function of harmonic manifolds}

The main goal of this chapter is Nikolayevsky's result \cite{Ni} that
the density functions of harmonic manifolds are exponential
polynomials.

We start with the more general assumption that $(X,g)$ is a complete,
simply connected manifold without conjugate points. By a theorem of
Cartan-Hadamard, the exponential map $\exp_p: T_pX \to X$ is a
diffeomorphism. 

Let us first briefly recall some basic facts on the calculus of Jacobi
tensors (see e.g. \cite{Es}, \cite{Gre}, \cite{Kn2} and \cite{Kn} for
more details). Let $c: I \to X$ be a unit speed geodesic and let
$N(c)$ denote the normal bundle of $c$ given by a disjoint union
$$
N_t(c) := \{w \in T_{c(t)}X \mid \langle w, \dot{c}(t) \rangle = 0 \}.
$$
A $(1,1)$-tensor along $c$ is a differentiable section
$$
Y: I \to \End{N(c)}= \bigcup_{t \in I}\End( N_t(c)),
$$
i.e., for all orthogonal parallel vector fields $x_t$ along $c$ the
covariant derivative of $t \to Y(t) x_t$ exists. The derivative $Y'(t)
\in \End( N_t(c))$ is defined by
$$
Y'(t)(x_t) = \frac{D}{dt} \left(Y(t)x_t \right).
$$
$Y$ is called parallel if $Y'(t) = 0$ for all $t$.  If $Y$ is parallel
we have $Y(t)x_t = (Y(0)x)_t$ and, therefore, $\langle Y(t)x_t , y_t
\rangle$ is constant for all parallel vector fields $x_t, y_t$ along
$c$ .  In particular, $Y$ is parallel if and only if $Y$ is a constant
matrix with respect to parallel frame field in the normal bundle of
$c$. Therefore, parallel $(1,1)$-tensors are also called constant.

The curvature tensor $R$ induces a symmetric $(1,1)$-tensor along $c$
given by
$$
R(t) w : = R(w, \dot{c}(t)) \dot{c}(t).
$$
A $(1,1)$-tensor $Y$ along $c$ is called a Jacobi tensor if it solves
the Jacobi equation
$$
Y''(t) + R(t) Y(t) = 0.
$$
If $Y, Z$ are two Jacobi tensors along $c$ the derivative of the
Wronskian
$$
W(Y, Z)(t) := Y'^{\ast}(t) Z(t) - Y^{\ast}(t)Z'(t)
$$
is zero and thus, $W(Y, Z)$ defines a parallel $(1,1)$-tensor.  A
Jacobi tensor $Y$ along a geodesic $c: I \to X$ is called Lagrange
tensor if $W(Y, Y) =0$.  The importance of Lagrange tensors comes from
the following proposition.

\begin{prop} (see, e.g, \cite[Prop. 2.1]{Kn3}) \label{jac1} Let $Y: I
  \to \End{N(c)} $ be a Lagrange tensor along a geodesic $c: I \to X$
  which is nonsingular for all $t \in I$. Then for $t_0 \in I$ and any
  other Jacobi tensor $Z$ along $c$, there exist constant tensors
  $C_1$ and $C_2$ such that
  \begin{equation} \label{eq:jactensrep}
  Z(t) = Y(t) \left( \int\limits_{t_0}^{t}( Y^{\ast} Y)^{-1}(s) ds \
    C_1 + C_2 \right)
  \end{equation}
  for all $t \in I$. Conversely, every tensor of the form \eqref{eq:jactensrep}
  with $Y, C_1, C_2$ as above is a Jacobi tensor. 
\end{prop}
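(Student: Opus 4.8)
The plan is to use a reduction-of-order (variation of parameters) argument. Since $Y$ is nonsingular on all of $I$, I would write the arbitrary Jacobi tensor $Z$ in the form $Z = Y B$, where $B(t) := Y(t)^{-1} Z(t)$ is an a priori arbitrary differentiable $(1,1)$-tensor along $c$, and then derive an ordinary differential equation for $B$. The goal is to show that $B$ must have the shape appearing inside the bracket in \eqref{eq:jactensrep}, so that substituting back gives the claimed formula.

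Differentiating twice yields $Z'' = Y'' B + 2 Y' B' + Y B''$. Substituting into the Jacobi equation $Z'' + RZ = 0$ and using $Y'' + RY = 0$ for the Lagrange tensor $Y$, the zeroth-order terms cancel and one is left with the reduced equation $2 Y' B' + Y B'' = 0$. I would then multiply on the left by $Y^\ast$ and recognize the result as an exact derivative: writing out $\frac{d}{dt}(Y^\ast Y B') = (Y^\ast Y)' B' + Y^\ast Y B''$, it remains only to understand $(Y^\ast Y)'$.

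This is exactly where the Lagrange hypothesis enters decisively. The Wronskian condition $W(Y,Y) = Y'^{\ast} Y - Y^\ast Y' = 0$ says precisely that $Y^\ast Y'$ is self-adjoint, whence $(Y^\ast Y)' = Y'^{\ast} Y + Y^\ast Y' = 2 Y^\ast Y'$. Combining this with the reduced equation gives $\frac{d}{dt}(Y^\ast Y B') = 2 Y^\ast Y' B' + Y^\ast Y B'' = Y^\ast(2 Y' B' + Y B'') = 0$, so $Y^\ast Y B'$ is a constant tensor, which I name $C_1$. Since $(Y^\ast Y)^{-1}$ exists everywhere, $B' = (Y^\ast Y)^{-1} C_1$, and integrating from $t_0$ produces $B(t) = \int_{t_0}^t (Y^\ast Y)^{-1}(s)\, ds\, C_1 + C_2$ with $C_2 = B(t_0)$; substituting $Z = Y B$ yields \eqref{eq:jactensrep}. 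For the converse I would run the computation backwards: for $Z$ of the stated form the bracketed factor $B$ satisfies $Y^\ast Y B' = C_1$, hence $Y^\ast(2 Y' B' + Y B'') = 0$, and nonsingularity of $Y^\ast$ forces $2 Y' B' + Y B'' = 0$, so that $Z'' + RZ = (Y'' + RY) B = 0$.

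The only genuine obstacle is the recognition that the Lagrange condition is exactly what turns $Y^\ast Y B'$ into a conserved quantity; everything else is bookkeeping with the product rule. The nonsingularity of $Y$, and hence of $Y^\ast$ and $Y^\ast Y$, is used twice: to ensure that $B = Y^{-1} Z$ is well defined, and to guarantee that the integrand $(Y^\ast Y)^{-1}$ makes sense throughout $I$.
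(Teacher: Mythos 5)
Your proof is correct: the reduction $Z = YB$ with $B = Y^{-1}Z$, together with the observation that the Lagrange condition $W(Y,Y)=0$ turns $Y^{\ast}YB'$ into a parallel (constant) tensor, is exactly the right argument, and both directions check out, including the use of nonsingularity of $Y^{\ast}$ to recover $2Y'B' + YB''=0$ in the converse. Note that the paper states this proposition without proof, deferring to \cite[Prop.~2.1]{Kn3}, and the argument given there is essentially this same variation-of-parameters computation, so your proposal matches the intended proof.
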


Let $SX$ denote the unit tangent bundle of $X$ with fibres $S_pX$, $p
\in X$ and $\pi: SX \to X$ be the canonical footpoint projection. For
every $v \in SX$, let $c_v: \R \to X$ denote the unique geodesic
satisfying $\dot{c}_v(0) = v$.  Define $A_v$ to be the Jacobi tensor
along $c_v$ with $A_v(0) = 0$ and $A_v'(0) = \id$.  Then the volume of
a geodesic sphere $S(p,r)$ of radius $r$ about $p$ is given by
$$
\vol S(p,r) = \int\limits_{S_pX}\det A_v(r) d \theta_p(v),
$$
where $d \theta_p(v)$ is the volume element of $S_pX$ induced by the
Riemannian metric. 

\begin{dfn} Let $(X,g)$ be a complete, simply connected manifold without
  conjugate points. $X$ is a {\em harmonic manifold} if the volume density
  $\det A_v(t)$ does not depend on $v \in SX$. We call the function
  $$ f(t) = \det A_v(t) $$
  the {\em density function} of the harmonic space $X$.
\end{dfn}

\begin{rmk} If $(X,g)$ is a harmonic space, we have
  $$
  \vol S(p,r) = \omega_n f(r),
  $$
  where $\omega_n$ is the volume of the sphere in the Euclidean
  space $\mathbb{R}^n$. Since
  $$
  \frac{f'(r)}{f(r)} = \frac{(\det A_v(r))'}{\det A_v(r)} = \tr
  (A_v'(r) A_v(r)^{-1})
  $$
  is the mean curvature of the geodesic sphere of radius $r >0$ about
  $\pi(v)$ in $c_v(r)$, $X$ is harmonic if and only if the mean
  curvature of all spheres is a function depending only on the radius.
\end{rmk}

\begin{prop}
  Let $(X,g)$ be a simply connected manifold without conjugate points
  and $v \in SX$. Let $B_v$ be the Jacobi tensor along $c_v$ with
  $B_v(0) = \id$ and $B_v'(0) = 0$. For $t \neq 0$, the tensor
  $$Q_v(t) = A_v^{-1} (t) B_v(t)$$
  is well defined, since $(X,g)$ has no conjugate points. The tensor
  $Q_v$ satisfies
  $$
  Q_v(t) - Q_v(s) = - \int\limits_s^t (A_v^* A_v)^{-1} (u) du
  $$
  for $0 < s \le t$.
\end{prop}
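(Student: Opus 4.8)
The plan is to exploit the Wronskian relation between the two distinguished Jacobi tensors $A_v$ and $B_v$. First I would record that $A_v(t)$ is invertible for every $t\neq 0$ precisely because $(X,g)$ has no conjugate points, so that $Q_v(t)=A_v^{-1}(t)B_v(t)$ is well defined on $I\setminus\{0\}$, and in particular on any interval $0<s\le t$. The crucial structural fact is that $A_v$ is a Lagrange tensor: since $W(A_v,A_v)$ is a constant $(1,1)$-tensor, it suffices to evaluate it at $t=0$, where $(A_v')^{\ast}(0)A_v(0)-A_v^{\ast}(0)A_v'(0)=\id\cdot 0-0\cdot\id=0$. Hence $(A_v')^{\ast}(t)A_v(t)=A_v^{\ast}(t)A_v'(t)$ for all $t$.

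Next I would compute the mixed Wronskian $W(A_v,B_v)$. It too is constant, and evaluating at $t=0$ gives $W(A_v,B_v)(0)=(A_v')^{\ast}(0)B_v(0)-A_v^{\ast}(0)B_v'(0)=\id\cdot\id-0\cdot 0=\id$, so that $(A_v')^{\ast}(t)B_v(t)-A_v^{\ast}(t)B_v'(t)=\id$ for every $t$. Substituting $B_v=A_vQ_v$ and $B_v'=A_v'Q_v+A_vQ_v'$ into this identity, and collecting terms, rewrites the left-hand side as $\bigl((A_v')^{\ast}A_v-A_v^{\ast}A_v'\bigr)Q_v-(A_v^{\ast}A_v)Q_v'$. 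The first bracket is exactly $W(A_v,A_v)=0$ by the Lagrange property just established, so the identity collapses to $-(A_v^{\ast}A_v)(t)\,Q_v'(t)=\id$, i.e. $Q_v'(t)=-(A_v^{\ast}A_v)^{-1}(t)$ for all $t\neq 0$.

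Finally I would integrate this first-order identity over $[s,t]$, which lies in $I\setminus\{0\}$ by the hypothesis $0<s\le t$, to obtain $Q_v(t)-Q_v(s)=-\int_s^t (A_v^{\ast}A_v)^{-1}(u)\,du$, as claimed.

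I do not expect a serious obstacle here; the only point requiring a little care is that $A_v$ degenerates at $t=0$, so the expression for $Q_v'$ and the integration are carried out on the conjugate-point-free set $t>0$, never touching the singularity. An alternative route would be to invoke Proposition~\ref{jac1} directly: applying it to the nonsingular Lagrange tensor $Y=A_v$ and the Jacobi tensor $Z=B_v$ yields $Q_v(t)=A_v^{-1}(t)B_v(t)=\int_{t_0}^t (A_v^{\ast}A_v)^{-1}(s)\,ds\,C_1+C_2$, and then the mixed Wronskian computation above is exactly what is needed to pin down the constant as $C_1=-\id$, recovering the same formula.
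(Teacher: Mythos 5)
Your proof is correct, and your primary argument takes a genuinely different (and more self-contained) route than the paper. The paper proves this by invoking Proposition~\ref{jac1} with $Y=A_v$, $Z=B_v$ on $[s,t]$, then pinning down the two constants: evaluation at $t=s$ gives $C_2=Q_v(s)$, while differentiating at $t=s$ and using the symmetry of $A_v'A_v^{-1}$ (justified geometrically as the second fundamental form of geodesic spheres) identifies $C_1=-W(A_v,B_v)(s)$, which equals $-\id$ since the Wronskian is parallel and $W(A_v,B_v)(0)=\id$. You instead bypass the representation formula entirely: from the two constant Wronskians $W(A_v,A_v)\equiv 0$ and $W(A_v,B_v)\equiv\id$ you derive the pointwise differential identity $Q_v'(t)=-\bigl(A_v^{\ast}A_v\bigr)^{-1}(t)$ on $t>0$ and integrate. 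This makes transparent that the proposition is nothing but the integrated form of that ODE, and your algebraic verification of the Lagrange property by evaluating $W(A_v,A_v)$ at $t=0$ is cleaner than the paper's geometric appeal to second fundamental forms (the two facts are of course equivalent where $A_v$ is invertible). What the paper's route buys in exchange is consistency of method: the same application of Proposition~\ref{jac1} is reused immediately afterwards (e.g., in Proposition~\ref{prop:Avsjac}), so the constants-determination scheme is the organizing principle of that chapter. Your closing remark correctly identifies that the alternative route via Proposition~\ref{jac1} with $C_1=-\id$ recovers exactly the paper's argument.
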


\begin{proof}
  Since $A_v$ is a Lagrange tensor along $c_v$ and nonsigular for $t
  \neq 0$ ($(X,g)$ has no conjugate points), Proposition \ref{jac1}
  yields
  \begin{equation} \label{BvAv}
  B_v(t) = A_v(t) (\int\limits_s^t (A_v^* A_v)^{-1} (u) du\, C_1 + C_2).
  \end{equation}
  For $t = s$, we obtain $B_v(s) = A_v(s) C_2$, and therefore $C_2 = Q_v(s)$.
  Differentiation at $t = s$ yields
  \begin{eqnarray*}
    B_v'(s) & = & A_v' (s) C_2 + A_v(s)(A_v^* A_v)^{-1} (s) C_1\\
    & = & A_v' (s) Q_v(s) + (A_v^*)^{-1}(s) C_1\\
    & = & (A_v' (s) A_v^{-1} (s)) B_v(s) + (A_v^*)^{-1} (s) C_1\\
    & = & (A_v^*)^{-1}(s) (A_v')^{*} (s) B_v(s) + (A_v^*)^{-1} (s) C_1.
  \end{eqnarray*}
  Here we used that $A_v' (s) A_v^{-1} (s)$ is symmetric since it is
  the second fundamental form of a geodesic sphere with radius $s$
  around $p$. This implies that
  $$
  A_v^*(s) B_v'(s) = (A_v')^*(s) B_v(s) + C_1,
  $$
  i.e., the constant tensor $C_1$ satisfies 
  $C_1 = A_v^*(s) B_v'(s) - (A_v')^*(s) B_v(s) = -W(A_v,B_v)(s)$. 
  At $s = 0$, we conclude $C_1(0) = - \id \cdot \id = - \id$. Plugging this
  into \eqref{BvAv}, we obtain 
  $$ Q_v(t) = A_v^{-1}(t) B_v(t) = - \int\limits_s^t (A_v^* A_v)^{-1}(u) du + 
  Q_v(s).
  $$
\end{proof}

\begin{prop} \label{prop:Avsjac}
  Let $A_{v,s}$ be Jacobitensor along $c_v$ with $A_{v,s}(s) = 0$ and
  $A_{v,s}'(s) = \id$. Then
  \begin{equation} \label{eq:Avsrep}
  A_{v,s}(t) = A_v(t) \int\limits_s^t (A_v^* A_v)^{-1} (u) du\, A_v^*(s).
  \end{equation}
  Note that $A_v^*(s)$ in the above formula is a constant tensor.
\end{prop}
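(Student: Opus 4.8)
The plan is to apply the representation formula from Proposition~\ref{jac1} directly, taking the Lagrange tensor $Y = A_v$ and the arbitrary Jacobi tensor $Z = A_{v,s}$, and then to pin down the two constant tensors $C_1, C_2$ from the initial data $A_{v,s}(s)=0$ and $A_{v,s}'(s)=\id$. Since $(X,g)$ has no conjugate points, $A_v(t)$ is nonsingular for every $t \neq 0$, so Proposition~\ref{jac1} applies on any interval avoiding $t=0$; I would work on the interval containing both $s$ and $t$ on which $A_v$ is invertible (e.g.\ $(0,\infty)$ when $s,t>0$). With base point $t_0 = s$, Proposition~\ref{jac1} gives
\begin{equation*}
A_{v,s}(t) = A_v(t)\left( \int_s^t (A_v^* A_v)^{-1}(u)\,du\; C_1 + C_2 \right).
\end{equation*}

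First I would evaluate at $t=s$. The integral vanishes there, so $A_{v,s}(s) = A_v(s) C_2$, and since $A_{v,s}(s)=0$ while $A_v(s)$ is invertible, this forces $C_2 = 0$. Next I would differentiate the representation in $t$:
\begin{equation*}
A_{v,s}'(t) = A_v'(t) \int_s^t (A_v^* A_v)^{-1}(u)\,du\; C_1 + A_v(t)(A_v^* A_v)^{-1}(t)\,C_1.
\end{equation*}
Evaluating at $t=s$ again kills the integral term, leaving $A_{v,s}'(s) = A_v(s)(A_v^* A_v)^{-1}(s) C_1 = (A_v^*)^{-1}(s) C_1$, where I used $A_v(s)A_v^{-1}(s) = \id$. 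Imposing $A_{v,s}'(s) = \id$ then yields $C_1 = A_v^*(s)$. Substituting $C_1 = A_v^*(s)$ and $C_2 = 0$ back into the representation reproduces exactly formula~\eqref{eq:Avsrep}.

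I do not expect any serious obstacle here; the argument is essentially the same bookkeeping as in the preceding proposition for $Q_v$, with the roles of the prescribed data shifted to the point $s$ rather than $0$. The only points requiring a little care are the restriction to an interval on which $A_v$ is nonsingular (so that Proposition~\ref{jac1} is legitimately applicable and $C_2=0$ can be concluded), and the simplification $A_v(s)(A_v^* A_v)^{-1}(s) = (A_v^*)^{-1}(s)$ in the derivative computation. Finally, since $s$ is fixed, $A_v^*(s)$ is a constant $(1,1)$-tensor along $c_v$, as the statement already remarks, so the right-hand side is indeed of the admissible form and there is nothing further to check.
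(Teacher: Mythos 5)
Your proposal is correct and is essentially the paper's own argument run in the opposite logical direction: the paper verifies that the right-hand side of \eqref{eq:Avsrep} is a Jacobi tensor (by the converse part of Proposition~\ref{jac1}) with the same initial data $A_{v,s}(s)=0$, $A_{v,s}'(s)=\id$, and concludes equality by uniqueness, whereas you use the representation part of Proposition~\ref{jac1} to write $A_{v,s}$ with unknown constants and solve $C_2=0$, $C_1=A_v^*(s)$ from the same initial data. The computations, including the simplification $A_v(s)(A_v^*A_v)^{-1}(s)=(A_v^*)^{-1}(s)$, are identical, so there is nothing to add.
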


\begin{proof}
  $A_{v,s}$ and the right hand side of \eqref{eq:Avsrep} are both
  Jacobitensors, because of Proposition \ref{jac1}. Since
  $$
  A_{v,s}(s) = 0 = A_v(s) \int\limits_s^s (A_v^* A_v)^{-1} (u) du\, A_v^*(s) = 0
  $$
  and
  $$
  A_{v,s}'(s) = \id = A_v'(s) \cdot 0 + A_v(s) (A_v^* A_v)^{-1}(s) A_v^*(s),
  $$
  both Jacobitensors agree.
\end{proof}

\begin{cor}
  We have
  $$
  Q_v(s) - Q_v(t) = A_v^{-1}(t) A_{v,s}(t) (A_v^*)^{-1}(s)
  $$
\end{cor}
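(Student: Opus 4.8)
The plan is to read off the claimed identity directly from the two preceding results, so that essentially no new computation is required; both sides will be shown to equal the integral $\int_s^t (A_v^* A_v)^{-1}(u)\,du$. First I would rewrite the formula of the proposition immediately above in the equivalent form
$$
Q_v(s) - Q_v(t) = \int\limits_s^t (A_v^* A_v)^{-1}(u)\,du,
$$
obtained simply by negating $Q_v(t) - Q_v(s) = -\int_s^t (A_v^* A_v)^{-1}(u)\,du$. This disposes of the left-hand side of the corollary.

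Next I would take the representation \eqref{eq:Avsrep} of Proposition \ref{prop:Avsjac}, namely $A_{v,s}(t) = A_v(t)\,\bigl(\int_s^t (A_v^* A_v)^{-1}(u)\,du\bigr)\,A_v^*(s)$, and multiply it on the left by $A_v^{-1}(t)$ and on the right by $(A_v^*)^{-1}(s)$; both inverses exist since $(X,g)$ has no conjugate points and $s,t \neq 0$. Using $A_v^{-1}(t) A_v(t) = \id$ and $A_v^*(s)(A_v^*)^{-1}(s) = \id$, the outer factors collapse and I obtain
$$
A_v^{-1}(t)\,A_{v,s}(t)\,(A_v^*)^{-1}(s) = \int\limits_s^t (A_v^* A_v)^{-1}(u)\,du.
$$
Comparing the two displays yields the asserted identity.

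The only point requiring a little care is the ordering and cancellation of the non-commuting tensor factors, together with the observation, already noted after \eqref{eq:Avsrep}, that $A_v^*(s)$ is a constant tensor, so that it cancels cleanly against $(A_v^*)^{-1}(s)$ on the right. There is no genuine obstacle here: the corollary is a purely formal consequence of the integral formula for the difference of the $Q_v$ and the explicit representation of $A_{v,s}$.
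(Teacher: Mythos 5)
Your proof is correct and follows essentially the same route as the paper: both identify the difference $Q_v(s)-Q_v(t)$ with $\int_s^t (A_v^* A_v)^{-1}(u)\,du$ via the preceding proposition, and then recognize this integral as $A_v^{-1}(t)\,A_{v,s}(t)\,(A_v^*)^{-1}(s)$ by rearranging the representation \eqref{eq:Avsrep} of $A_{v,s}$. No gap here; the paper's proof is exactly this two-line comparison.
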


\begin{proof}
  Using Proposition \ref{jac1} and \ref{prop:Avsjac}, we conclude
  $$
  Q_v(s) - Q_v(t) = \int\limits_s^t (A_v^* A_v)^{-1}(u) du = 
  A_v^{-1}(t) A_{v,s}(t) (A_v^*)^{-1}(s).
  $$
\end{proof}

Proposition \ref{prop:Avsjac} leads to the following important result
for manifolds without conjugate points:

\vspace*{.5cm}

\begin{center}
\fbox{\parbox{5cm} {\[\det (Q_v(s) - Q_v(t)) = \frac{\det
      A_{v,s}(t)}{\det A_v(t) \cdot \det A_v(s)} \qquad \text{for}\ 0 < s \leq
    t.\qquad \]}}
\end{center}

\vspace*{.5cm}

We assume now that $(X,g)$ is a harmonic manifold and are about to
present the main result of this chapter. We have $f(t) = \det A_v(t)$
and $\det A_{v,s}(t) = f(t-s)$. Therefore, we obtain
$$
\frac{f(t-s)}{f(t) f(s)} = \det (Q_v(s) - Q_v(t))\quad 
\text{where}\; Q_v(s) = A_v^{-1} (s) B_v(s).
$$

Our main result (Theorem \ref{thm:Ni} below) follows easily from the
following useful fact.

\begin{prop} \label{prop:finexppol} For a function $\varphi \in
  C^\infty({\R})$, we denote its translation to $s \in \R$ by
  $\varphi_s$ and define this function as
  $\varphi_s(t)=\varphi(t-s)$. Assume that the vector space, spanned by
  all translates of $\varphi$, is finite dimensional. Then $\varphi$
  is an exponential polynomial, i.e., we have
  $$
  \varphi(t) = \sum\limits_{i=1}^k (p_i(t) \sin(\beta_i t) + q_i(t)
  \cos (\beta_i t)) e^{\alpha_i t}
  $$
  where $p_i, q_i$ polynomials and $\beta_i, \alpha_i \in {\R}$.
\end{prop}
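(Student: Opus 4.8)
The plan is to exploit the translation operators acting on the finite-dimensional space $V := \Span\{\varphi_s : s \in \R\}$ and to show that $\varphi$ satisfies a linear ordinary differential equation with constant coefficients, whose solutions are precisely exponential polynomials of the asserted shape.

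First I would record the elementary fact that $V$ is invariant under all translations: since $(\varphi_s)_r = \varphi_{s+r}$, the operator $T_r\colon \psi \mapsto \psi_r$ maps the spanning set into $V$ and hence, by linearity, maps $V$ into itself. The crucial and least formal step is to show that $V$ is also invariant under differentiation $D = d/dt$. For this I would choose points $t_1, \dots, t_n$ (with $n = \dim V$) such that the evaluation functionals $\mathrm{ev}_{t_i}\colon \psi \mapsto \psi(t_i)$ form a basis of $V^*$; this is possible because every $\psi \in V$ is continuous and the common kernel of all evaluations is $\{0\}$, so the evaluation functionals span $V^*$. The resulting linear isomorphism $\Phi\colon V \to \R^n$, $\psi \mapsto (\psi(t_1), \dots, \psi(t_n))$, transports the curve $s \mapsto \varphi_s$ into $s \mapsto (\varphi(t_1 - s), \dots, \varphi(t_n - s))$, which is smooth because $\varphi \in C^\infty(\R)$. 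Hence $s \mapsto \varphi_s$ is a smooth curve in the finite-dimensional space $V$, and its derivative $-\varphi_s' = \frac{d}{ds}\varphi_s$ again lies in $V$; evaluating at $s = 0$ gives $\varphi' \in V$, so $D(V) \subseteq V$.

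Once $D$ restricts to a linear endomorphism of the finite-dimensional space $V$, I would apply the Cayley--Hamilton theorem to its characteristic polynomial $P$, obtaining $P(D)\varphi = 0$; that is, $\varphi$ solves a monic constant-coefficient linear ODE. Factoring $P$ over $\C$ as $\prod_j (x - \lambda_j)^{m_j}$ with $\lambda_j = \alpha_j + i\beta_j$, the solution space of $P(D) = 0$ is spanned by the functions $t^k e^{\lambda_j t}$, $0 \le k < m_j$; passing to real and imaginary parts and using that $\varphi$ is real-valued yields exactly a finite sum $\sum_i (p_i(t)\sin(\beta_i t) + q_i(t)\cos(\beta_i t)) e^{\alpha_i t}$ with real polynomials $p_i, q_i$, which is the claimed form.

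The main obstacle is the differentiation-invariance step: one must pass from the pointwise derivative $\varphi'$ to a statement about the abstract vector space $V$, and verify that the difference quotients $(\varphi_s - \varphi)/s$ converge \emph{within} $V$ rather than merely pointwise. The coordinate identification $\Phi$ resolves this, since convergence of the coordinate vectors in $\R^n$ is equivalent to convergence in $V$, and every evaluation functional is continuous on the finite-dimensional space $V$; the remaining steps are standard linear algebra together with the classical solution theory of constant-coefficient linear ODEs.
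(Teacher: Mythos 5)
Your proposal is correct and takes essentially the same route as the paper: both arguments establish that the finite-dimensional, translation-invariant space $V$ spanned by the translates is also invariant under differentiation, and then deduce that $\varphi$ satisfies a linear ODE with constant coefficients, whence it is an exponential polynomial. The only differences are minor: you justify the limit step carefully via the evaluation-functional coordinate isomorphism $\Phi$ (where the paper simply asserts that the finite-dimensional space $V$ is closed, so that the pointwise limit $\varphi' = \lim_{s \to 0} \frac{1}{s}(\varphi - \varphi_s)$ lies in $V$), and you extract the ODE by applying Cayley--Hamilton to $D|_V$ rather than by observing that the $\dim V + 1$ functions $\varphi, \varphi', \dots, \varphi^{(\dim V)} \in V$ must be linearly dependent.
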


\begin{proof}
  Let $V$ be the finite dimensional vector space defined by
  $$
  V : = {\rm span}\; \{\varphi_s \mid\; s \in {\R}\}.
  $$
  
  The derivative of $\varphi$ can be expressed as the following limit
  of functions:
  \begin{eqnarray*}
    \varphi'(t) & = & \lim\limits_{s \to 0} \frac{\varphi(t) - \varphi(t-s)}{s}\\
    & = & \lim\limits_{s \to 0} \frac{\varphi(t) - \varphi_s(t)}{s}\\
    & = & \big(\lim\limits_{s \to 0} \frac{\varphi - \varphi_s}{s} \big) (t).
  \end{eqnarray*}
  We have $\frac{1}{s} (\varphi - \varphi_s) \in V$ for all $s > 0$
  and $\varphi' = \lim\limits_{s \to 0} \frac{1}{s} (\varphi -
  \varphi_s)$. Since $V$ is finite dimensional, it is closed and we
  have $\varphi' \in V$.

  Now, the $(\dim V) + 1$ functions
  $\varphi,\varphi',\dots,\varphi^{(\dim V)} \in V$ must be linear
  dependent over ${\R}$. This shows that $\varphi$ satisfies a linear
  ordinary differential equation with constant coefficients and is,
  therefore, an exponential polynomial.
\end{proof}

We are now ready to prove our main result.

\begin{theorem} (see \cite[Theorem 2]{Ni}) \label{thm:Ni} Let $(X,g)$
  be a harmonic manifold. Then the density function $f(t)$ is an
  exponential polynomial.
\end{theorem}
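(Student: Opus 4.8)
The plan is to deduce this directly from Proposition \ref{prop:finexppol}: it suffices to show that the vector space $V$ spanned by all translates $f_s$, where $f_s(t) = f(t-s)$, of the density function is finite dimensional. For this I would fix an arbitrary $v \in SX$ and use the identity displayed in the box above, which in the harmonic case reads
$$f(t-s) = f(s)\, f(t)\, \det\bigl(Q_v(s) - Q_v(t)\bigr), \qquad 0 < s \le t,$$
since $\det A_v(t) = f(t)$ and $\det A_{v,s}(t) = f(t-s)$. Here $Q_v(t) = A_v^{-1}(t) B_v(t)$ is a smooth matrix-valued function on $(0,\infty)$, and $f(s), f(t) \neq 0$ because $(X,g)$ has no conjugate points.

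The key observation is that the right-hand side separates the variables $s$ and $t$ into a finite sum. Writing $Q_v(t) = (q_{ij}(t))$ and expanding $\det\bigl(Q_v(s) - Q_v(t)\bigr)$ by the Leibniz formula and multilinearity, each of the finitely many resulting terms is a product of some entries $q_{ij}(s)$ and some entries $q_{ij}(t)$. Collecting terms gives a representation
$$\det\bigl(Q_v(s) - Q_v(t)\bigr) = \sum_{k=1}^{K} g_k(s)\, h_k(t),$$
where $g_1,\dots,g_K$ and $h_1,\dots,h_K$ are products of the matrix entries of $Q_v$. Substituting this back yields
$$f_s(t) = f(t-s) = \sum_{k=1}^{K} \bigl(f(s)\, g_k(s)\bigr)\, \bigl(f(t)\, h_k(t)\bigr),$$
so that every translate $f_s$ is a linear combination, with coefficients depending only on $s$, of the fixed functions $f\, h_1,\dots,f\, h_K$. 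Consequently each $f_s$ lies in the finite-dimensional space $W := \Span\{f\, h_1,\dots,f\, h_K\}$, whence $V \subseteq W$ is finite dimensional, and Proposition \ref{prop:finexppol} finishes the argument.

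The one point requiring care --- and what I expect to be the main technical obstacle --- is that the identity above is only valid in the range $0 < s \le t$, whereas the difference-quotient argument underlying Proposition \ref{prop:finexppol} needs the translates as genuine functions near each point. I would resolve this by fixing any $a > 0$ and running the argument with all functions restricted to $(a,\infty)$: for $0 < s \le a < t$ the representation holds, so the restricted translates span a finite-dimensional space there, and since this space is closed it also contains $f = \lim_{s \to 0^+} f_s$. The computation in Proposition \ref{prop:finexppol} then produces a constant-coefficient linear ODE satisfied by $f$ on $(a,\infty)$; as $a > 0$ was arbitrary and $f$ is smooth, this ODE is valid on all of $(0,\infty)$, forcing $f$ to be an exponential polynomial. (Alternatively, one may invoke real-analyticity of $f$ to extend, for each fixed $s$, the separated identity from $t \ge s$ to all $t$ and argue with two-sided translates directly.)
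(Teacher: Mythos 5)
Your proposal is correct and follows essentially the same route as the paper: expand $\det\bigl(Q_v(s)-Q_v(t)\bigr)$ by multilinearity into a finite sum of separated products $\sum_\alpha b_\alpha(s)c_\alpha(t)$, conclude that the translates $f_s$ span a finite-dimensional space, and invoke Proposition \ref{prop:finexppol}. The extra care you take about the identity being established only for $0 < s \le t$ addresses a point the paper passes over silently (it writes $V = \Span\{f_s \mid s \in \R\}$ without comment), and your restriction-to-$(a,\infty)$ fix is sound.
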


\begin{proof}
  As introduced earlier, we define the translations $f_s : {\R} \to
  {\R}$ by $f_s(t) = f(t-s)$, where $f$ is the density function. Since
  $Q_v(s) - Q_v(t)$ is a matrix with entries of the form $q(s)-q(t)$,
  we have
  \begin{eqnarray*}
  f_s(t) & = & \det (Q_v(s) - Q_v(t) f(t) f(s)\\
         & = & \sum\limits_{\alpha = 1}^N b_\alpha(s) c_\alpha(t),
  \end{eqnarray*}
  for some integer $N$ and with suitable smooth functions
  $b_\alpha,c_\alpha$.

  In particular, the vector space
  $$
  V : = {\rm span}\; \{f_s \mid\; s \in {\R}\} \subset {\rm span}\;
  \{c_\alpha \mid\; 1 \leq \alpha \leq N\}
  $$
  has dimension $\le N$. Applying Proposition \ref{prop:finexppol}, we
  see that $f(t)$ is an exponential polynomial.
\end{proof}

\begin{rmk} Let $(X,g)$ be a harmonic manifold. 

  (a) The volume of a geodesic ball $B_t(p)$ of radius $t>0$ in
  $(X,g)$ can be expressed by
  \begin{eqnarray*}
    \vol B_t(p) & = & \int\limits_0^t \vol S_u(p) du\\
    & = & \int\limits_0^t \int\limits_{S_pX} f(u) du = \omega_n 
    \int\limits_0^t f(u) du,
  \end{eqnarray*}
  where $\omega_n$ is the volume of the $(n-1)$-dimensional
  Euclidean unit sphere.

  (b) We have $f(t) = (-1)^{\dim X-1} f(-t)$, since
  $$
  A_v(t) = -A_{-v}(-t) = : C(t)
  $$
  which follows from the fact that both sides are Jacobi-Tensors along
  $c_v$ with $A_v(0) = C(0) = 0$ and $A_v'(0) = C'(0) = \id$. This
  implies
  \begin{eqnarray*}
    f(t) = \det A_v(t) = \det [-A_{-v}(-t)] & = & (-1)^{\dim X-1} \det A_{-v}(-t)\\
    & = & (-1)^{\dim X-1} f(-t).
  \end{eqnarray*}

  (c) The quotient $\frac{f'}{f}(r) \ge 0$ is the mean curvature of
  sphere $S_r(p)$ (with respect to the outward normal vector) in
  $(X,g)$, and $\frac{f'}{f}(r)$ is monotone decreasing to the
  (constant) mean curvature $h \ge 0$ of the horospheres of $(X,g)$.
\end{rmk}

\begin{cor} \label{cor:h0poly} Let $(X,g)$ be a harmonic
  manifold. Then the following properties are equivalent:
  \begin{itemize}
  \item $h=0$,
  \item $X$ has polynomial volume growth,
  \item $X$ has subexponential volume growth.
  \end{itemize}
\end{cor}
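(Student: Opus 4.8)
The plan is to reduce all three conditions to a single quantity, the exponential growth rate of the density function $f$, and to show that this rate equals the horosphere mean curvature $h$. The starting point is part (c) of the Remark preceding the corollary: the sphere mean curvature $\frac{f'}{f}(r)$ is nonnegative and decreases monotonically to $h$ as $r\to\infty$. Nonnegativity shows that $f$ is nondecreasing, in particular bounded below by $f(1)>0$, and from $\log f(t)=\log f(1)+\int_1^t \frac{f'}{f}(r)\,dr$ together with $\frac{f'}{f}(r)\to h$ I would deduce, by a Cesàro argument, that $\frac{1}{t}\log f(t)\to h$. Since $\vol B_t(p)=\omega_n\int_0^t f(u)\,du$ and $f$ is nondecreasing, the sandwich $f(t-1)\le \int_0^t f\le t\,f(t)$ shows that $\int_0^t f$ has the same logarithmic growth rate as $f$, so that $\frac{1}{t}\log\vol B_t(p)\to h$ as well.

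With this in hand, the equivalence $h=0\iff$ subexponential volume growth is immediate: subexponential growth means precisely $\frac{1}{t}\log\vol B_t(p)\to 0$, which by the previous paragraph is the same as $h=0$. This step uses only the monotone convergence of $\frac{f'}{f}$ and not the finer structure of $f$. The trivial implication that polynomial volume growth implies subexponential volume growth closes one direction of the triangle.

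The remaining and genuinely nontrivial implication is that $h=0$ (equivalently, subexponential growth) forces polynomial growth, and this is exactly where Theorem \ref{thm:Ni} enters. Writing $f$ as an exponential polynomial $\sum_i (p_i(t)\sin(\beta_i t)+q_i(t)\cos(\beta_i t))e^{\alpha_i t}$, the growth rate $\frac{1}{t}\log f(t)$ equals the largest exponent $\alpha_i$ carrying a nonzero coefficient; hence $h=0$ means that all such $\alpha_i\le 0$. Since $f$ is nondecreasing and bounded below, the surviving top-order terms are those with $\alpha_i=0$, and these are polynomially bounded, while the terms with $\alpha_i<0$ decay. Therefore $f$, and with it $\vol B_t(p)=\omega_n\int_0^t f$, is bounded by a polynomial, giving polynomial volume growth.

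I expect the main obstacle to be precisely this last implication. Pure growth-rate bookkeeping can only yield the equivalence of $h=0$ with subexponential growth; it cannot by itself exclude intermediate behaviour such as $e^{\sqrt{t}}$, which is subexponential yet superpolynomial. Ruling this out is possible only because the exponential-polynomial rigidity of Theorem \ref{thm:Ni} leaves no room between the exponent $0$ (polynomial growth) and a strictly positive exponent (genuine exponential growth). A secondary point requiring care is the treatment of the oscillatory factors $\sin(\beta_i t),\cos(\beta_i t)$ in the top-order terms, whose sign behaviour must be reconciled with the positivity of $f$; however, they affect only lower-order growth and do not disturb the polynomial bound.
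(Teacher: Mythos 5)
Your proof is correct and follows essentially the same route as the paper: both arguments hinge on (i) the monotone convergence $\frac{f'}{f}(r)\to h$ to identify $h$ with the exponential growth rate of $f$ (your Ces\`aro integration of $\frac{f'}{f}$ is just the integrated form of the paper's l'H\^opital step), and (ii) Theorem \ref{thm:Ni}'s exponential-polynomial rigidity for the only nontrivial implication, subexponential $\Rightarrow$ polynomial. Your reorganization of the implications and your extra care with the oscillatory terms and with passing from $f$ to $\vol B_t(p)$ are refinements of detail, not a different method.
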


\begin{proof}
  Assume $h=0$. Then we have $\lim_{r \to \infty} \frac{f'(r)}{f(r)} =
  0$. Using l'H{\^o}pital, we obtain
  $$
  \lim\limits_{r \to \infty} \frac{\log f(r)}{r} = \lim\limits_{r \to
    \infty} \frac{f'(r)}{f(r)} = 0.
  $$
  This shows that $f(r)$ has subexponential volume growth. 

  Assume that $f(r)$ has subexponential volume growth. Since $f(r)$ is
  an exponential polynomial, this implies
  $$
  |f(r)| \leq C (1+r)^k \qquad \forall\; r \geq 0,
  $$
  with a suitable $C > 0$ and for some $k \geq 0$. Therefore, $(X,g)$
  has polynomial volume growth.

  Finally, assume that $(X,g)$ has polynomial volume growth. This implies that
  $$ \lim_{r \to \infty} \frac{\log f(r)}{r} = 0. $$
  On the other hand, we have
  $$ \lim_{r \to \infty} \frac{f'(r)}{f(r)} = h. $$
  Using l'H{\^o}pital, again, we conclude that
  $$ 0 = \lim_{r \to \infty} \frac{\log f(r)}{r} = 
  \lim_{r \to \infty} \frac{f'(r)}{f(r)} = h. $$ 
  This shows that $h=0$.
\end{proof}

\begin{rmk}
  In chapter \ref{minhorflat}, we will see that $h=0$ has an even much
  stronger implication: A harmonic manifold $(X,g)$ with $h=0$ is
  flat.
\end{rmk}

\section{Uniform divergence of geodesics}
\label{chap:unifdifgeod}

In this chapter, we prove that for every distance $d > 0$ and any
angle $\alpha > 0$ there exists a $t_0 > 0$, such that any two unit
speed geodesics $c_1,c_2$ starting at the same point and differing by
an angle $\ge \alpha$ will diverge uniformly in the sense that
$d(c_1(t),c_2(t)) \ge d$ for all $t \ge t_0$. For the proof, we start
with the following lemma.

\begin{lemma} \label{lem:jacrep}
  Let $(X,g)$ be a manifold without conjugate points. Denote by $S_v$
  the stable Jacobi tensor along the geodesic $c_v$ defined by $S_v =\
  \lim_{r \to \infty} S_{v,r}$, where $S_{v,r}$ is the Jacobi tensor
  along $c_v$ defined by the boundary conditions $S_{v,r}(0) = \id $ and
  $S_{v,r}(r) = 0$. Then we have
 \begin{itemize}
 \item[(i)] $S_v(t) = A_v(t) \int\limits_t^\infty (A_v^* A_v)^{-1}(u) du$,
 \item[(ii)] $ S_v'(0) -S_{v,r}'(0) = \int\limits_r^\infty (A_v^* A_v)^{-1}(u) du$.
 \end{itemize}
\end{lemma}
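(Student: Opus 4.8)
The plan is to first derive an explicit closed form for each finite-boundary tensor $S_{v,r}$, and then to obtain (i) by letting $r \to \infty$ and (ii) by comparing $S_v$ with $S_{v,r}$. Since $(X,g)$ has no conjugate points, $A_v$ is a nonsingular Lagrange tensor on $(0,\infty)$, so Proposition \ref{jac1} applies on that interval: fixing some $t_0 > 0$, there are constant tensors $C_1, C_2$ with
$$
S_{v,r}(t) = A_v(t)\Big(\int_{t_0}^t (A_v^* A_v)^{-1}(u)\, du\, C_1 + C_2\Big), \qquad t > 0.
$$
First I would impose $S_{v,r}(r) = 0$; as $A_v(r)$ is invertible this forces $\int_{t_0}^r (A_v^* A_v)^{-1} C_1 + C_2 = 0$, and substituting back collapses the representation to $S_{v,r}(t) = A_v(t)\int_r^t (A_v^* A_v)^{-1}(u)\, du\, C_1$.

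The main obstacle is the remaining condition $S_{v,r}(0) = \id$, since this representation is only valid for $t > 0$ and degenerates at $t = 0$: there $A_v(t) \to 0$ while the integral diverges, an indeterminate form. To resolve it I would invoke the tensor $Q_v(t) = A_v^{-1}(t) B_v(t)$ and the earlier identity $\int_t^r (A_v^* A_v)^{-1} = Q_v(t) - Q_v(r)$, which gives
$$
A_v(t)\int_r^t (A_v^* A_v)^{-1}(u)\, du = A_v(t) Q_v(r) - A_v(t) Q_v(t) = A_v(t) Q_v(r) - B_v(t).
$$
Letting $t \to 0^+$ and using $A_v(0) = 0$, $B_v(0) = \id$ yields the limit $-\id$, so that $S_{v,r}(0) = -C_1 = \id$, i.e.\ $C_1 = -\id$. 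Hence $S_{v,r}(t) = A_v(t)\int_t^r (A_v^* A_v)^{-1}(u)\, du$, the clean form on which everything else rests.

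For (i) I would then let $r \to \infty$: the existence of $S_v = \lim_r S_{v,r}$ together with the invertibility of $A_v(t)$ for $t > 0$ guarantees that $\int_t^r (A_v^* A_v)^{-1}$ converges to $\int_t^\infty (A_v^* A_v)^{-1}$, giving $S_v(t) = A_v(t)\int_t^\infty (A_v^* A_v)^{-1}(u)\, du$. For (ii) I would subtract the two representations to obtain $S_v(t) - S_{v,r}(t) = A_v(t)\int_r^\infty (A_v^* A_v)^{-1}(u)\, du$, where the integral is now a constant tensor independent of $t$. Differentiating in $t$ and evaluating at $t = 0$, where $A_v'(0) = \id$, produces $S_v'(0) - S_{v,r}'(0) = \int_r^\infty (A_v^* A_v)^{-1}(u)\, du$, as claimed. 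The only analytic subtlety worth flagging is the convergence of the improper integral defining $S_v$; I would derive it for free from the assumed existence of the limit $S_v$ rather than estimating $(A_v^* A_v)^{-1}$ directly.
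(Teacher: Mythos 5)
Your proof is correct. Note that the paper itself does not prove Lemma \ref{lem:jacrep} at all: it simply cites Eschenburg--O'Sullivan \cite{EOS} for it. What you have written is essentially the classical argument from that reference, but made self-contained inside the paper's own framework, which is a genuine gain: you use Proposition \ref{jac1} to write $S_{v,r}(t)=A_v(t)\left(\int_{t_0}^t(A_v^*A_v)^{-1}(u)\,du\,C_1+C_2\right)$ on $(0,\infty)$, kill $C_2$ with the condition $S_{v,r}(r)=0$, and then -- this is the nice touch -- use the paper's identity $\int_t^r(A_v^*A_v)^{-1}(u)\,du=Q_v(t)-Q_v(r)$ to rewrite $A_v(t)\int_r^t(A_v^*A_v)^{-1}\,du=A_v(t)Q_v(r)-B_v(t)$, whose limit $-\id$ as $t\to0^+$ resolves the $0\cdot\infty$ degeneracy and pins down $C_1=-\id$. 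The passage to (i) is also handled correctly: convergence of the improper integral is not assumed but extracted from $\int_t^r(A_v^*A_v)^{-1}=A_v(t)^{-1}S_{v,r}(t)$ and the assumed existence of $S_v$. The only step you state a bit too quickly is (ii): the identity $S_v(t)-S_{v,r}(t)=A_v(t)\int_r^\infty(A_v^*A_v)^{-1}(u)\,du$ is established only for $t>0$, so before differentiating and evaluating at $t=0$ you should remark that both sides are smooth Jacobi tensors (a Jacobi tensor times a constant tensor is again a Jacobi tensor), hence the identity and its $t$-derivative extend to $t=0$ by continuity; with $A_v'(0)=\id$ this gives (ii) exactly as you claim.
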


For a proof see \cite[pp. 227]{EOS}. 

\begin{cor} \label{cor:jacid} With the notation in Lemma \ref{lem:jacrep},
we have
$$
A_v'(t)A_v^{-1}(t) -S_v'(t)S_v^{-1}(t) = A_v^{-1}(t)^*(S_v'(0)
-S_{v,t}'(0))^{-1}A_v^{-1}(t).
$$
\end{cor}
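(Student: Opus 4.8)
The plan is to read off both second fundamental forms $A_v'(t)A_v^{-1}(t)$ and $S_v'(t)S_v^{-1}(t)$ from the integral representation of the stable tensor in Lemma \ref{lem:jacrep}(i), and then to identify the resulting middle factor via Lemma \ref{lem:jacrep}(ii). First I would abbreviate
$$
J(t) := \int\limits_t^\infty (A_v^* A_v)^{-1}(u)\,du,
$$
so that Lemma \ref{lem:jacrep}(i) becomes simply $S_v(t) = A_v(t)J(t)$. Here $J(t)$ is invertible for $t \neq 0$, since $S_v(t)$ is a nonsingular Jacobi tensor while $A_v(t)$ is nonsingular away from $0$; this is exactly the invertibility that makes the right-hand side of the asserted identity meaningful.

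The key computational step is to differentiate this product. Using $J'(t) = -(A_v^* A_v)^{-1}(t)$ together with the simplification $A_v(t)(A_v^* A_v)^{-1}(t) = (A_v^*)^{-1}(t)$, I would obtain
$$
S_v'(t) = A_v'(t)J(t) - (A_v^*)^{-1}(t).
$$
Multiplying on the right by $S_v^{-1}(t) = J(t)^{-1} A_v^{-1}(t)$ and cancelling the factor $J(t)J(t)^{-1}$ in the first term yields
$$
S_v'(t)S_v^{-1}(t) = A_v'(t)A_v^{-1}(t) - (A_v^*)^{-1}(t)\,J(t)^{-1}A_v^{-1}(t),
$$
and hence, after rearranging,
$$
A_v'(t)A_v^{-1}(t) - S_v'(t)S_v^{-1}(t) = (A_v^*)^{-1}(t)\,J(t)^{-1}A_v^{-1}(t).
$$

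It remains only to rewrite the central factor. By Lemma \ref{lem:jacrep}(ii) with $r = t$ one has $J(t) = \int_t^\infty (A_v^* A_v)^{-1}(u)\,du = S_v'(0) - S_{v,t}'(0)$, and since $(A_v^*)^{-1}(t) = A_v^{-1}(t)^*$, substituting gives the claimed formula. I do not expect a genuine obstacle here: the argument is essentially the differentiation of a single product followed by one cancellation. The only point deserving a remark is the invertibility of $J(t) = S_v'(0) - S_{v,t}'(0)$, which I would justify exactly as above from the nonsingularity of $S_v$ and of $A_v$ for $t \neq 0$.
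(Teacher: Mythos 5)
Your proof is correct, and it takes a genuinely different (and slightly more economical) route than the paper's. The paper's own argument starts from constancy of the Wronskian: evaluating $W(A_v,S_v)$ at $t=0$ gives $(A_v')^*(t)S_v(t) - A_v^*(t)S_v'(t) = \id$, which after multiplying by $(A_v^*)^{-1}(t)$ on the left and $S_v^{-1}(t)$ on the right, and using Lemma \ref{lem:jacrep} to write $S_v^{-1}(t) = (S_v'(0)-S_{v,t}'(0))^{-1}A_v^{-1}(t)$, still requires the symmetry of the shape operator $A_v'(t)A_v^{-1}(t)$ of geodesic spheres in order to convert $(A_v^*)^{-1}(t)(A_v')^*(t)$ into $A_v'(t)A_v^{-1}(t)$. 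You instead differentiate the product $S_v = A_v J$ from Lemma \ref{lem:jacrep}(i) and right-multiply by $S_v^{-1} = J^{-1}A_v^{-1}$; this produces the term $A_v'(t)A_v^{-1}(t)$ on the nose, so you need neither the Wronskian identity nor the symmetry of the second fundamental form --- given the lemma, your argument is just the product rule, the fundamental theorem of calculus, and part (ii) of the lemma. The paper's route showcases the Wronskian formalism, which it uses elsewhere; yours is more self-contained. One minor refinement: rather than deducing invertibility of $J(t)$ from nonsingularity of $S_v(t)$ (itself a nontrivial fact for manifolds without conjugate points, though presupposed by the statement of the corollary), you can note directly that $J(t) = \int_t^\infty (A_v^*A_v)^{-1}(u)\,du$ is positive definite, since each integrand is positive definite for $u > 0$; this is exactly the observation the paper makes in the proposition following the corollary.
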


\begin{proof}
Evaluating the Wronskian $W(A_v, S_v)(t)$ at $t= 0$ we obtain
$$
W(A_v, S_v)(t) =(A_v')^*(t) S_v(t) - A_v^*(t)S_v'(t) = \id
$$
which implies
$$
(A_v^*)^{-1}(t)(A_v')^*(t)- S_v'(t) S_v^{-1}(t)= (A_v^*)^{-1}(t)S_v^{-1}(t).
$$
Lemma \ref{lem:jacrep} yields
$$
S_v^{-1}(t)= (S_v'(0) -S_{v,t}'(0))^{-1}A_v^{-1}(t) .
$$
Since $B_v=A_v'A_v^{-1}$ is the second fundamental form of spheres
centered at $c_v(0)$, $B_v$ is a symmetric operator, i.e., 
$$
A_v'(t)A_v^{-1}(t) = (A_v'(t)A_v^{-1}(t))^* =(A_v^*)^{-1}(t)(A_v')^*(t).
$$
Combining these results yields the asserted identity.
\end{proof}

\begin{prop}
  Let $(X,g)$ be a noncompact harmonic manifold and $f$ its density
  function $f$. Then there exist constants $t_0 >0$ and $a>0$ such
  that
  $$
  \|A_v(t)\| \ge \frac{a}{\sqrt{\frac{f'(t)}{f(t)} -h}}
  $$
  for all $t \ge t_0$. 
\end{prop}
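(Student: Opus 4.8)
The plan is to bound $\frac{f'(t)}{f(t)}-h$ from below by $a^2/\|A_v(t)\|^2$; the proposition is then immediate. To do this I would rewrite $\frac{f'}{f}-h$ as the trace of a product of two symmetric positive definite tensors and estimate that trace. The Wronskian identity of Corollary \ref{cor:jacid} is exactly of this type, but it produces the \emph{wrong} sign: a short computation with the boxed determinant formula and $\lim_{r\to\infty}f(r-t)/f(r)=e^{-ht}$ gives $\det S_v(t)=e^{-ht}$, hence $\tr(S_v'S_v^{-1})=-h$, so taking the trace in Corollary \ref{cor:jacid} yields $\frac{f'}{f}(t)+h$, not $\frac{f'}{f}(t)-h$. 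Since $\frac{f'}{f}-h\to 0$ whereas $\frac{f'}{f}+h\to 2h$, it is the former quantity that must force $\|A_v(t)\|\to\infty$, so I would replace the stable tensor by the unstable one.

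Accordingly, I would introduce the unstable Jacobi tensor $\mathcal{U}_v(t):=S_{-v}(-t)$, the time reversal of the stable tensor of the reversed geodesic. A direct check shows that $\mathcal{U}_v$ solves the Jacobi equation along $c_v$ with $\mathcal{U}_v(0)=\id$, that $\det\mathcal{U}_v(t)=\det S_{-v}(-t)=e^{ht}$ (so $\tr(\mathcal{U}_v'\mathcal{U}_v^{-1})=h$), and that $W(A_v,\mathcal{U}_v)=\id$ on evaluating the Wronskian at $t=0$. Writing $\mathcal{U}_v=A_v\widetilde P_v$ with $\widetilde P_v:=A_v^{-1}\mathcal{U}_v$ and repeating the Wronskian computation of the Proposition preceding Corollary \ref{cor:jacid}, the constant $C_1=-W(A_v,\mathcal{U}_v)=-\id$ gives $\widetilde P_v'=-(A_v^*A_v)^{-1}$ and therefore, exactly as in Corollary \ref{cor:jacid},
\[
A_v'(t)A_v^{-1}(t)-\mathcal{U}_v'(t)\mathcal{U}_v^{-1}(t)=(A_v^*)^{-1}(t)\,N_v(t)\,A_v^{-1}(t),\qquad N_v:=\widetilde P_v^{-1}.
\]
Both shape operators on the left are symmetric, so $N_v$ is symmetric, and $\det N_v(t)=\det A_v(t)/\det\mathcal{U}_v(t)=f(t)e^{-ht}$. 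Taking traces (and using $\tr[(A_v^*)^{-1}N_vA_v^{-1}]=\tr[(A_v^*A_v)^{-1}N_v]$) gives the key identity
\[
\frac{f'(t)}{f(t)}-h=\tr\!\left[(A_v^*A_v)^{-1}(t)\,N_v(t)\right].
\]

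Next I would show $N_v(t)>0$ for $t>0$. The operator $A_v'(t)A_v^{-1}(t)-\mathcal{U}_v'(t)\mathcal{U}_v^{-1}(t)$ is the difference between the shape operator of the geodesic sphere $S(c_v(0),t)$ at $c_v(t)$ and that of the unstable horosphere through $c_v(t)$; both are centered behind $c_v(t)$, and the finite-radius sphere is more curved than the limiting horosphere, so by the standard Riccati comparison for manifolds without conjugate points this difference is positive semidefinite. As it is congruent to $N_v(t)$, we get $N_v(t)\ge 0$, and since $\det N_v(t)=f(t)e^{-ht}>0$ in fact $N_v(t)>0$. Now I combine the identity with two elementary estimates: for symmetric positive definite tensors $\tr[(A_v^*A_v)^{-1}N_v]\ge\lambda_{\min}((A_v^*A_v)^{-1})\,\tr(N_v)=\tr(N_v)/\|A_v(t)\|^2$, while the arithmetic--geometric mean inequality on the $(\dim X-1)$-dimensional normal bundle gives $\tr(N_v)\ge(\dim X-1)(\det N_v)^{1/(\dim X-1)}=(\dim X-1)\big(f(t)e^{-ht}\big)^{1/(\dim X-1)}$. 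Since $\frac{f'}{f}\ge h$, the map $t\mapsto f(t)e^{-ht}$ is nondecreasing, so $f(t)e^{-ht}\ge f(t_0)e^{-ht_0}$ for all $t\ge t_0>0$; hence $\tr(N_v(t))\ge a^2$ with $a^2:=(\dim X-1)\big(f(t_0)e^{-ht_0}\big)^{1/(\dim X-1)}$, a constant independent of $v$ because $f$ is. Combining the three inequalities gives $\frac{f'}{f}(t)-h\ge a^2/\|A_v(t)\|^2$, which is the assertion.

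The step I expect to be the main obstacle is getting the sign right: the identity directly available from Corollary \ref{cor:jacid} delivers $\frac{f'}{f}+h$, and one must pass to the unstable tensor (re-deriving the relevant pieces of Lemma \ref{lem:jacrep} and Corollary \ref{cor:jacid} by time reversal) to reach $\frac{f'}{f}-h$. Verifying that the resulting $N_v$ is positive definite via Riccati comparison is the second delicate point; once both are in place, the trace bound and the arithmetic--geometric mean estimate are routine.
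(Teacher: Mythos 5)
Your argument is correct, but it is genuinely different from the paper's, and the sign ``obstacle'' you build your proof around does not actually arise there: the paper never takes the trace of the left-hand side of Corollary \ref{cor:jacid}. Instead it bounds that left-hand side in \emph{norm} --- for $t\ge t_0$ with $b\coth bt_0=2$, the curvature-based Riccati estimates $-b\le A_v'A_v^{-1}(t)\le b\coth bt$ and $-b\le S_v'(t)S_v^{-1}(t)\le b$ give $\|(S_v'(0)-S_{v,t}'(0))^{-1}\|=\|A_v^*(t)\bigl(A_v'A_v^{-1}(t)-S_v'S_v^{-1}(t)\bigr)A_v(t)\|\le 3b\,\|A_v(t)\|^2$ --- while the quantity $\frac{f'}{f}(t)-h$ enters not through that left-hand side but as the trace of the positive definite middle factor $S_v'(0)-S_{v,t}'(0)=\int_t^\infty(A_v^*A_v)^{-1}(u)\,du$ of Lemma \ref{lem:jacrep}(ii), after which the elementary inequality $\|P^{-1}\|\ge 1/\|P\|\ge 1/\tr P$ for positive definite $P$ closes the proof with the stable tensor intact. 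Your route --- unstable tensor, the traced identity $\frac{f'}{f}(t)-h=\tr\bigl[(A_v^*A_v)^{-1}N_v\bigr]$, then $\tr(PQ)\ge \tr Q/\|A_v(t)\|^2$ and the arithmetic--geometric mean inequality with $\det N_v=f(t)e^{-ht}$ --- is sound at every step: the Wronskian computation giving $C_1=-\id$, the symmetry of $N_v$, and the comparison $A_v'A_v^{-1}(t)\ge\mathcal{U}_v'\mathcal{U}_v^{-1}(t)$ (this is the $r\to\infty$ limit of the monotonicity in \cite[Prop. 2.1]{RSh3}, which the paper itself invokes in Chapter \ref{chap:curvspheres}) all check out, and the nonsingularity of $\mathcal{U}_v(t)$ that you implicitly need in order to form $N_v$ comes for free from your own determinant computation, since $\det\mathcal{U}_v(t)=\lim_{r\to\infty}f(r+t)/f(r)=e^{ht}\neq0$. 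As for what each approach buys: the paper's proof needs the curvature bounds (hence a specific $t_0$ determined by $b\coth bt_0=2$) but no determinant asymptotics; yours dispenses with curvature bounds altogether, with everything reduced to the monotone convergence $f'/f\downarrow h$, and it produces an explicit constant $a^2=(n-1)\bigl(f(t_0)e^{-ht_0}\bigr)^{1/(n-1)}$ valid for any choice of $t_0>0$.
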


\begin{proof}
  We know that $(X,g)$ has no conjugate points and that the sectional
  curvature of $X$ is bounded. From the lower bound $-b^2 \le K_X$
  follows (see \cite[Cor. 2.12 in Section 1.2]{Kn}) that
  $$
  -b \le A_v'(t)A_v^{-1}(t) \le b \coth bt 
  $$ 
  for all $ t > 0 $, and
  $$
  -b \le S_v'(t) S_v^{-1}(t) \le b
  $$
  for all $t$. Choose $t_0 > 0$ such that $b \coth bt_0 =2$. Using the
  identity in Corollary \ref{cor:jacid}, we obtain for $t\ge t_0$
  \begin{eqnarray*}
    \| (S_v'(0) -S_{v,t}'(0))^{-1}\| &= &
    \|A_v(t)^*(A_v'(t)A_v^{-1}(t) -S_v'(t)S_v^{-1}(t))A_v(t)\| \\
    &\le &\|A_v(t) \|^2 3b.
  \end{eqnarray*}
  Since $S_v'(0) -S_{v,t}'(0)$ is positive definite by Lemma
  \ref{lem:jacrep}(ii), and
  $$
  \tr(S_v'(0) -S_{v,t}'(0)) =\frac{f'(t)}{f(t)} -h
  $$
  we obtain
  \begin{eqnarray*}
    \| (S_v'(0) -S_{v,t}'(0))^{-1}\| &\ge & 
    \frac{1}{\| (S_v'(0) -S_{v,t}'(0))\| }\\
    &\ge&  \frac{1}{\tr(S_v'(0) -S_{v,t}'(0))} = 
    \frac{1}{ \frac{f'(t)}{f(t)} -h }.
  \end{eqnarray*}
  This yields the required estimate.
\end{proof}

Using this we derive the uniform divergence of geodesics described above.

\begin{cor}\label{cor:uniformdiv}
  Let $c_v : [0, \infty) \to X$ and $c_w : [0, \infty) \to X$ be two
  geodesics with $v,w \in S_pX$. Then
  $$
  d(c_v(t), c_w(t)) \ge a(t)  \angle(v.w)
  $$
  where $a: [0,\infty) \to [0, \infty)$ is a function (not depending
  on $p \in X$) with $\lim\limits_{t \to \infty} a(t) = \infty$.
\end{cor}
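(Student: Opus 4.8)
The plan is to bound $d(c_v(t),c_w(t))$ from below by the rate at which the variation of geodesics joining $c_v$ to $c_w$ spreads, a quantity governed by the Jacobi tensor $A$. The essential point to notice first is that the preceding proposition controls only the operator norm $\|A_v(t)\|$, i.e.\ the \emph{largest} singular value of $A_v(t)$, whereas a lower bound for $d(c_v(t),c_w(t))$ that is valid for \emph{every} pair $v,w$ and is linear in the angle requires control of the \emph{smallest} singular value: for rays at small angle $\varepsilon$ the separation is, to first order, $|A_v(t)W|\varepsilon$, where $W$ is the (arbitrary) variation direction. Thus the main step is the companion estimate $\sigma_{\min}(A_v(t))\to\infty$, uniformly in $v$, and this is where the work lies.

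To obtain it I would revisit the identity of Corollary~\ref{cor:jacid}. Setting $M_v(t)=A_v'(t)A_v^{-1}(t)-S_v'(t)S_v^{-1}(t)$ and $P_t=(S_v'(0)-S_{v,t}'(0))^{-1}$, that corollary reads $M_v(t)=(A_v^{-1})^*(t)\,P_t\,A_v^{-1}(t)$. By Lemma~\ref{lem:jacrep}(ii) the tensor $S_v'(0)-S_{v,t}'(0)$ is positive definite, so $M_v(t)\ge 0$, and the curvature bound $-b^2\le K_X$ used in the proposition gives $\|M_v(t)\|\le\|A_v'A_v^{-1}\|+\|S_v'S_v^{-1}\|\le C(b)$ for $t\ge t_0$. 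For a unit vector $W$ one then has
$$
C(b)\ \ge\ \langle M_v(t)W,W\rangle\ =\ \langle P_t A_v^{-1}(t)W,\,A_v^{-1}(t)W\rangle\ \ge\ \frac{|A_v^{-1}(t)W|^2}{\tr\big(S_v'(0)-S_{v,t}'(0)\big)}\ =\ \frac{|A_v^{-1}(t)W|^2}{\frac{f'(t)}{f(t)}-h},
$$
whence $\|A_v^{-1}(t)\|^2\le C(b)\big(\tfrac{f'}{f}(t)-h\big)$ and therefore $\sigma_{\min}(A_v(t))\ge a_0(t):=\big(C(b)(\tfrac{f'}{f}(t)-h)\big)^{-1/2}$. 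Since $\tfrac{f'}{f}$ decreases monotonically to $h$, the function $a_0$ is increasing to $\infty$, and since it depends only on $f$ and $b$ it is independent of $p$.

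For the geometric step, Cartan--Hadamard makes $\exp_p$ a diffeomorphism. Given the minimizing geodesic $\sigma$ from $c_v(t)$ to $c_w(t)$, I would write $\exp_p^{-1}\sigma(\tau)=r(\tau)u(\tau)$ with $u(\tau)\in S_pX$; Gauss's lemma yields $|\dot\sigma|^2=\dot r^2+|A_{u}(r)\dot u|^2$, so that, using $\sigma_{\min}(A_u(r))\ge a_0(r)\ge a_0(r_*)$ with $r_*=\min_\tau r(\tau)$ and $\int|\dot u|\,d\tau\ge\angle(v,w)$ (the projection of $\sigma$ joins $v$ to $w$ on $S_pX$),
$$
d(c_v(t),c_w(t))\ \ge\ \int \sigma_{\min}(A_u(r))\,|\dot u|\,d\tau\ \ge\ a_0(r_*)\,\angle(v,w).
$$
Because $\sigma$ reaches radius $r_*$ while its endpoints lie at radius $t$, one also has $d(c_v(t),c_w(t))\ge 2(t-r_*)$. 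A case distinction ($r_*\ge t/2$ versus $r_*<t/2$, using $\angle(v,w)\le\pi$) then gives $d(c_v(t),c_w(t))\ge a(t)\,\angle(v,w)$ with $a(t):=\min\!\big(a_0(t/2),\,t/\pi\big)\to\infty$.

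The main obstacle is the first step: upgrading the proposition's control of the largest singular value to the uniform estimate $\sigma_{\min}(A_v(t))\to\infty$ on the smallest one. The computation above shows this follows from the very same identity once the roles of $A_v$ and $A_v^{-1}$ are interchanged, so conceptually it is not harder than the proposition, but it is indispensable. The only remaining subtlety is bounding the inward excursion $t-r_*$ of the minimizing geodesic, which is disposed of by the elementary length comparison used above.
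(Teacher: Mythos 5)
Your proof is correct, and its geometric half is exactly the paper's: the connecting geodesic is written in polar coordinates about $p$, the Gauss lemma gives $\|\dot\sigma\|^2=\dot r^2+\|A_u(r)\dot u\|^2$, the case $r_*<t/2$ is handled by the triangle inequality together with $\angle(v,w)\le\pi$, and the final function is the same $a(t)=\min\{t/\pi,\,a_0(t/2)\}$. Where you go beyond the paper is your first step, and it is a genuine improvement rather than a detour: the preceding proposition, as literally stated, bounds only the operator norm $\|A_v(t)\|$ from below (largest singular value), yet the paper's proof of the corollary applies that bound to the arbitrary direction $v'(s)$ in the integrand, which is legitimate only if one controls the \emph{smallest} singular value of $A_v(t)$ --- precisely the issue you flag. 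Your repair is the right one and uses nothing beyond the paper's own ingredients: by Corollary \ref{cor:jacid}, the positive definiteness in Lemma \ref{lem:jacrep}(ii), and the trace identity $\tr(S_v'(0)-S_{v,t}'(0))=\frac{f'}{f}(t)-h$, one gets for unit vectors $W$ the estimate $\langle M_v(t)W,W\rangle\ge \|A_v^{-1}(t)W\|^2/\bigl(\frac{f'}{f}(t)-h\bigr)$, hence $\|A_v^{-1}(t)\|\le\sqrt{C(b)\bigl(\frac{f'}{f}(t)-h\bigr)}$ and $\sigma_{\min}(A_v(t))\ge a_0(t)\to\infty$ uniformly in $v$ and $p$; this is the statement the paper's argument tacitly relies on, extracted from the very same identity. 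The one remaining (shared and harmless) imprecision is that this estimate is available only for radii $\ge t_0$, so for $t<2t_0$ one should simply define $a(t)$ to be small enough (even $0$); since the corollary asserts only the existence of some $a$ with $a(t)\to\infty$, this costs nothing.
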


\begin{proof}
 Let $c: [0,1] \to X$ be a geodesic connecting $c_v(t)$ with
  $c_w(t)$. Then $c$ is given by
  $$
  c(s) = \exp_p r(s) v(s),
  $$
  where  $v(s) \in S_p X$  such that $v(0) = v$ , $v(1) =w$ for
  all $0 \leq s \leq1$ and $r(0) = r(1) = t$. Then
  \begin{eqnarray*}
  \left.\frac{d}{ds} \right|_{s = s_0} c(s) &=& D \exp_p (
  r(s_0) v(s_0) )(r'(s_0) v(s_0) +  r(s_0) v'(s_0))\\
  &=& r'(s_0) c_{v(s_0)}' (r(s_0)) + A_{v(s_0)} ( r(s_0)) (v'(s_0)).
  \end{eqnarray*}
  Since $c_{v(s_0)}' (r(s_0)) \perp A_{v(s_0)} (r(s_0))
  (v'(s_0))$, we obtain
  \begin{eqnarray*}
    \left \| \left.\frac{d}{ds} \right |_{s=s_0} c \right
    \|^2 &= &(r'(s_0))^2 + ||A_{v(s_0)} ( r(s_0)) v'(s_0)||^2\\
   & \geq& ||A_{v(s_0)} ( r(s_0)) v'(s_0)||^2.
  \end{eqnarray*}
  If there exists $s_0 \in [0, 1]$ such that $r(s_0) \le \frac{t}{2}$
  then using the triangle inequality we have $d(c_v(t), c_w(t)) \ge
  t \ge (t/\pi) \angle(v,w)$. If this is not the case, we obtain for all $t > 0$
  \begin{eqnarray*}
    d(c_v(t), c_w(t)) = {\rm length}(c) &\ge& \int\limits_0^1 ||A_{v(s)} ( r(s)) v'(s)|| ds \\
    &\ge&\frac{a}{\sqrt{\frac{f'(t/2)}{f(t/2)} -h}} \int\limits_0^1||v'(s)|| ds\\
    &\ge&\frac{a}{\sqrt{\frac{f'(t/2)}{f(t/2)} -h}} \angle(v,w).
  \end{eqnarray*}
  The corollary follows now with the choice
  $$ a(t) = \min \left\{ \frac{t}{\pi}, 
  \frac{a}{\sqrt{\frac{f'(t/2)}{f(t/2)} -h}} \right\}. $$ 
\end{proof}

\section{Curvature properties of spheres and horospheres}
\label{chap:curvspheres}

This chapter is devoted to the proof of the following facts about spheres
and horospheres:

\begin{prop} \label{prop:curvsphhoro}
  Let $(X,g)$ be a noncompact harmonic manifold. Then
  \begin{enumerate}
  \item[(A)] There exist constants $C(R_0) > 0$ such that all spheres
    of radius $r \geq R_0 > 0$ (and horospheres) have sectional
    curvatures in $[-C(R_0),C(R_0)]$.
  \item[(B)] All spheres have constant scalar curvature where the
    constant depends only on the radius.
  \item[(C)] The horospheres have constant non-positive scalar
    curvature and the constant does not depend on the horosphere. The
    constant is zero if and only if $X$ has constant sectional
    curvature.
\end{enumerate}
\end{prop}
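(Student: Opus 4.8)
The plan is to deduce all three parts from the Gauss equation, fed by two trace identities coming from the Riccati equation. Write $n=\dim X$, fix $v\in SX$ with footpoint $p=\pi(v)$, and let $U=U_v(r)=A_v'(r)A_v^{-1}(r)$ be the (symmetric) shape operator of the geodesic sphere $S(p,r)$ at $c_v(r)$; its trace is the mean curvature $\tr U_v(r)=f'(r)/f(r)$. For the horosphere through $c_v(t)$ I write $\mathcal U_v(t)$ for its shape operator, obtained as the limit of the sphere shape operators along $c_v$: it is symmetric, satisfies the same Riccati equation, and has constant trace $\tr\mathcal U_v(t)=h$ along $c_v$. For an orthonormal pair $x,y$ tangent to a hypersurface $\Sigma$ (a sphere or a horosphere) with shape operator $U$, the Gauss equation reads
$$
K^\Sigma(x,y)=K^X(x,y)+\langle Ux,x\rangle\langle Uy,y\rangle-\langle Ux,y\rangle^2 ,
$$
and summing over an orthonormal frame $e_1,\dots,e_{n-1}$ of the normal space $N_{\dot c}$ gives
$$
\operatorname{scal}^\Sigma=\bigl(\operatorname{scal}^X-2\operatorname{Ric}(\dot c,\dot c)\bigr)+(\tr U)^2-\tr(U^2).
$$

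For part (A) I use that a harmonic manifold has bounded sectional curvature, say $|K^X|\le b^2$, together with the comparison bounds recalled in the proof of the Proposition preceding Corollary~\ref{cor:uniformdiv}: for $r\ge R_0>0$ one has $\|U_v(r)\|\le b\coth(bR_0)$ and $\|\mathcal U_v\|\le b$. Since $|\langle Ux,x\rangle\langle Uy,y\rangle-\langle Ux,y\rangle^2|\le 2\|U\|^2$, the Gauss equation bounds $|K^\Sigma|$ by a constant $C(R_0)$ depending only on $b$ and $R_0$, which proves (A) for spheres of radius $\ge R_0$ and, via the bound on $\mathcal U_v$, for horospheres.

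Parts (B) and (C) rest on the classical fact that harmonic manifolds are Einstein, so that $\operatorname{Ric}=\kappa g$ and hence both $\operatorname{scal}^X-2\operatorname{Ric}(\dot c,\dot c)=(n-2)\kappa$ and $\tr R(t)=\kappa$ are constant. The decisive input is the Riccati equation $U'+U^2+R=0$, obeyed by both shape operators; taking the trace yields $\tr(U^2)=-(\tr U)'-\kappa$. For a sphere $\tr U=f'/f$ depends only on $r$, so $\tr(U_v(r)^2)$ does as well, and the frame-summed Gauss equation gives $\operatorname{scal}^{S(p,r)}=(n-1)\kappa+(f'/f)^2+(f'/f)'$, which depends only on $r$; this is (B). For a horosphere $\tr\mathcal U_v(t)\equiv h$ along the geodesic, so $(\tr\mathcal U_v)'=0$ and $\tr(\mathcal U_v^2)=-\kappa$; substituting gives the horosphere-independent constant $\operatorname{scal}^{\mathrm{horo}}=(n-1)\kappa+h^2$.

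It remains to settle the sign and the rigidity in (C). Cauchy--Schwarz on the $(n-1)$-dimensional trace form gives $h^2=(\tr\mathcal U_v)^2\le(n-1)\tr(\mathcal U_v^2)=-(n-1)\kappa$, so $\operatorname{scal}^{\mathrm{horo}}=(n-1)\kappa+h^2\le 0$. The constant vanishes precisely when equality holds in Cauchy--Schwarz, i.e.\ $\mathcal U_v=\tfrac{h}{n-1}\,\id$ at every point of every horosphere. I then propagate this along each geodesic: since $\mathcal U_v(t)=\tfrac{h}{n-1}\id$ for all $t$, its derivative vanishes and the Riccati equation forces $R(t)=-\bigl(\tfrac{h}{n-1}\bigr)^2\id$ for all $t$ and all $v$, i.e.\ constant sectional curvature; the converse is clear, since horospheres in a space of constant curvature are flat. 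I expect this last step---upgrading the scalar Cauchy--Schwarz equality to rigidity of the full curvature operator via the Riccati equation---to be the main obstacle, the earlier parts being direct applications of the Gauss and Riccati equations.
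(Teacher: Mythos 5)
Your proof is correct, and parts (A) and (B) follow essentially the paper's own argument: (A) is the Gauss equation plus the Riccati comparison bounds (your shortcut $\|U_v(r)\|\le b\coth(bR_0)$ for $r\ge R_0$, using only monotonicity of $\coth$, replaces the paper's additional appeal to the monotonicity $B_{q,v}(r')\le B_{q,v}(r)$ from \cite{RSh3}, and is legitimate since $U_v(r)$ is symmetric with spectrum in $[-b,\,b\coth(bR_0)]$); (B) is exactly the twice-traced Gauss equation combined with the traced Riccati equation, your expression $(n-1)\kappa+(f'/f)^2+(f'/f)'$ being the paper's $\frac{f''}{f}(r)+(n-1){\rm Ric}^X$ from \eqref{eq:fricscal}. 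The genuine difference is in (C). The paper obtains ${\rm scal}^{\mathcal H}=(n-1){\rm Ric}^X+h^2$ indirectly: it argues that ${\rm scal}^{S_r}\to{\rm scal}^{\mathcal H}$, deduces from \eqref{eq:fricscal} that $\lim_{r\to\infty}f''/f$ exists, and then identifies this limit as $h^2$ by invoking Perron's theorem \cite{Per} on the asymptotics of solutions of $f_0''+\alpha f_0=0$. You instead apply the Gauss equation directly on the horosphere and feed in the traced Riccati identity $\tr(\mathcal U_v^2)=-(\tr\mathcal U_v)'-{\rm Ric}^X=-{\rm Ric}^X$, available because the horosphere mean curvature $\tr\mathcal U_v\equiv h$ is constant; this yields ${\rm scal}^{\mathcal H}=(n-2)\kappa+h^2+\kappa=(n-1)\kappa+h^2$ in one line, with no limiting process and no external ODE result. (The paper itself derives $\tr(\mathcal U_v^2)+{\rm Ric}^X=0$, but uses it only afterwards for the sign; you use it for the computation as well.) Your route is shorter and more self-contained, and it even recovers $\lim_{r\to\infty} f''/f=h^2$, i.e.\ \eqref{eq:ricscalhor}, as a corollary rather than needing it as input; the hypotheses it consumes — the Einstein property of harmonic spaces and the smoothness of horospheres together with the Riccati equation for their shape operators — are also used by the paper. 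The remaining steps (Cauchy--Schwarz $(\tr\mathcal U_v)^2\le(n-1)\tr(\mathcal U_v^2)$ for non-positivity, and the equality case forcing $\mathcal U_v=\frac{h}{n-1}\id$, hence via Riccati $R(t)=-\frac{h^2}{(n-1)^2}\id$ and constant sectional curvature) coincide with the paper's; note that your version handles $h=0$ and $h>0$ uniformly, whereas the paper treats $h=0$ as a separate flat case.
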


Recall that if $A_{v_0}$ is the Jacobi tensor along the geodesic $c_{v_0}$
with $A_{v_0}(0) = 0$ and $A_{v_0}'(0) = {\rm id}$, then $B_{v_0} = A_{v_0}' \cdot
A_{v_0}^{-1}$ is the second fundamental form of the sphere centered at
$c_{v_0}(0)$. $B_{v_0}$ is symmetric and satisfies the Ricatti equation $B_{v_0}'
+ B_{v_0}^2 + R = 0$. In the proof below we need another notation of the
second fundamental form: Let $S_r(p)$ be the sphere of radius $r > 0$
around $p$, $q \in S_r(p)$, and $v \in T_qX$ be the outward unit
normal vector of the sphere $S_r(p)$ at $q$. Let $v_0 \in T_pX$ be the
unit vector such that $v = c_{v_0}'(r)$. We also denote the second
fundamental form of $S_r(p)$ at $q$ by $B_{q,v}(r)$, that is, we have
$B_{q,v} = B_{v_0}(r)$ (see Figure \ref{fig_sff}).

\begin{figure}[h]
  \begin{center}
    \psfrag{p}{$p$} 
    \psfrag{q}{$q$}
    \psfrag{v0}{$v_0$}
    \psfrag{v}{$v$}
    \psfrag{Sr(p)}{$S_r(p)$}
    \psfrag{Bqv(r)=Bv0(r)}{$B_{q,v}(r)=B_{v_0}(r)$}
    \includegraphics[width=8cm]{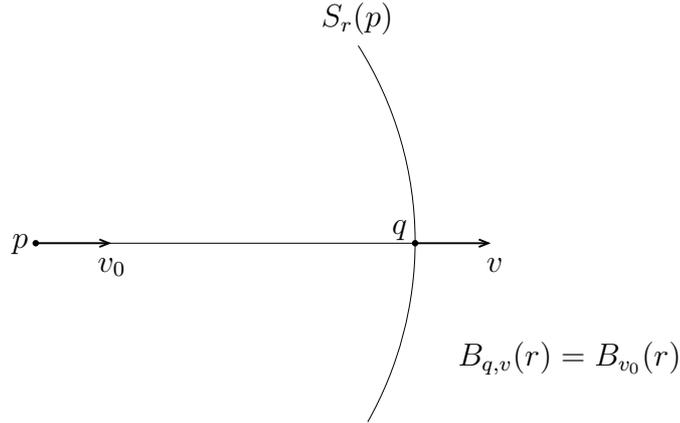}
  \end{center}
  \caption{Second fundamental forms of spheres}
  \label{fig_sff}
\end{figure}

\begin{proof}
  The Gauss equation for geodesic spheres implies
  \begin{multline*}
    \langle R^X(u,w)w,u \rangle = \langle R^{S_r(p)} (u,w)w,u \rangle \\
    + \langle u,B_{q,v}(r)w \rangle^2 - \langle B_{q,v}(r)u,u \rangle
    \cdot \langle B_{q,v}(r)w,w \rangle \qquad \forall u,w \in T_q S_r(p). 
  \end{multline*}
  The sectional curvature of a harmonic space is bounded. Therefore,
  there exists a constant $C_X > 0$ such that we have for all
  orthonormal vectors $u,w$:
  $$
  |K(\Span \{u,w\})| = | \langle R(u,w)w,u \rangle | \leq C_X.
  $$
  Since there exists $b > 0$ such that the curvature tensor along
  $c_{v_0}$ is bounded from below by $-b^2 \le R(r)$, we have (see
  \cite[Cor. 2.12 in Section 1.2]{Kn})
  $$ -b \leq B_{q,v}(r) \leq b \coth br.$$
  From \cite[Prop. 2.1]{RSh3} we also know that $B_{q,v}(r') \leq
  B_{q,v}(r)$ if $r' \geq r$. Therefore, we can find a constant $
  C_0(R_0)>0$ such that $\|B_{q.v}(r)\| \leq C_0(R_0)$ for all $r \geq
  R_0$. Hence, for $r \geq R_0$, we have
  \begin{eqnarray*}
  |K^{S_r(p)} (\Span \{u,w\})| &\leq& |K^X(\Span \{u,w\})| + 2 \|B_{q,v}(r)\|^2 \\
  &\leq& C_X + 2 C_0(R_0)^2 = C(R_0).
  \end{eqnarray*}
  This shows (A).

  \medskip

  For the proof of (B), we consider $u,w \in T_q S_r(p_0)$.\\
  Starting with the Gauss equation above for the geodesic sphere $S_r(p)$ and
  taking trace with respect to $u$ in $T_qS_r(p)$, we obtain
  \begin{multline*}
    {\rm Ric}^X(w,w) - \langle R^X(v,w) w,v \rangle = \\
    {\rm Ric}^{S_r(p)}(w,w) + \|B_{q,v}(r)w\|^2 - {\rm tr}[B_{q,v}(r)] \cdot
    \langle B_{q,v}(r)w,w \rangle\\
    = {\rm Ric}^{S_r(p)}(w,w) + \langle B_{q,v}(r)^2 w,w \rangle - {\rm tr}
    [B_{q,v}(r)] \cdot \langle B_{q,v}(r) w,w \rangle.
  \end{multline*}
  Taking trace again, now with respect to $w$ in $T_qS_r(p)$, we obtain
  $$
  (n-1) {\rm Ric}^X - {\rm Ric}^X = {\rm scal}^{S_r(p)}(q) + 
  {\rm tr}[B_{q,v}(r)^2] - ({\rm tr}[B_{q,v}(r)])^2,
  $$
  and therefore
  \begin{equation} \label{eq:spherric}
  (n-2) {\rm Ric}^X = {\rm scal}^{S_r(p)}(q) + \tr[B_{q,v}(r)^2] -
  (\tr[B_{q,v}(r)])^2
  \end{equation}
  The Ricatti equation 
  $$
  B_{v_0}'(r) + B_{v_0}^2(r) + R(r) = 0
  $$
  with $R(r)w = R(w,c_{v_0}'(r)) c_{v_0}'(r)$ yields, after applying the trace
  $$
  \tr [B_{v_0}'(r)] + \tr [B_{v_0}^2(r)] + {\rm Ric}^X = 0.
  $$
  From this and \eqref{eq:spherric} we conclude
  $$
  \tr[B_{v_0}'(r)] + (n-1) {\rm Ric}^X - {\rm scal^{S_r(p)}}(q) +
  (\tr[B_{v_0}(r)])^2 = 0.
  $$
  Since $\tr B_{v_0}(r) = \frac{f'}{f}(r)$, we obtain
  $$
  \left( \frac{f'}{f} \right)'(r) + (n-1) {\rm Ric}^X - {\rm
    scal}^{S_r(p)}(q) + \left( \frac{f'}{f}(r) \right)^2 = 0.
  $$
  Furthermore,
  $$
  \left( \frac{f'}{f} \right)'(r) = \frac{f'' f-(f')^2}{f^2}(r)
  $$
  implies
  \begin{equation} \label{eq:fricscal}
  \frac{f''}{f}(r) + (n-1) {\rm Ric}^X - {\rm scal}^{S_r(p)}(q) = 0,
  \end{equation}
  that is
  $$
  {\rm scal}^{S_r(p)}(q) = \frac{f''}{f}(r) + (n-1) {\rm Ric}^X.
  $$
  Hence ${\rm scal}^{S_r(p)}$ is constant, and the constant depends
  only on the radius $r$ but not on center $p$ of $S_r(p)$. Therefore,
  we can write it as ${\rm scal}^{S_r}$. This proves (B).
 
  \medskip

  Since the values ${\rm scal}^{S_r}$ converge to the scalar curvature
  ${\rm scal}^{\mathcal{H}}$ of any horosphere $\mathcal{H}$ as $r \to
  \infty$, we know that the scalar curvature of horospheres is
  constant and the constant does not depend on the
  horosphere. Moreover, we conclude that $\lim\limits_{r \to \infty}
  \frac{f''}{f}(r)$ exists and
  $$
  - \lim\limits_{r \to \infty} \frac{f''}{f}(r) = (n-1) {\rm Ric}^X -
  {\rm scal}^{\mathcal{H}}.
  $$
  Let us denote this limit by $\alpha$. Our next goal is to determine
  $\alpha$. We derive from \eqref{eq:fricscal} that $f$ satisfies the
  differential equation
  $$
  f''(r) + ((n-1) {\rm Ric}^X - {\rm scal}^{S_r})f(r) = 0.
  $$
  Note that we have  
  $$ \lim_{r \to \infty} (n-1) {\rm Ric}^X - {\rm scal}^{S_r} = \alpha. $$
  Using the result in \cite{Per}, we conclude that $\lim\limits_{r \to
    \infty} \frac{f'(r)}{f(r)} = \lim\limits_{r \to \infty}
  \frac{f_0'(r)}{f_0(r)}$, where $f_0$ is the solution of
  $$
  f_0''(r) + \alpha f_0(r) = 0. 
  $$
  On the other hand, we know that $\lim\limits_{r \to
    \infty} \frac{f'(r)}{f(r)} = h$. This shows that $\lim\limits_{r \to
    \infty} \frac{f_0'(r)}{f_0(r)} = h$, that is $\alpha = -h^2$. This shows
  that
  \begin{equation} \label{eq:ricscalhor} 
  (n-1) {\rm Ric}^X - {\rm scal}^{\mathcal{H}} = -h^2. 
  \end{equation}
  The Ricatti equation for orthogonal horospheres along $c_{v_0}$ yields
  $$
  \tr[(B_{v_0}^{\mathcal{H}})'(r)] + \tr [B_{v_0}^{\mathcal{H}}(r)^2]
  + {\rm Ric^X} = 0.
  $$
  Since the trace is linear and $\tr[B_{v_0}^{\mathcal{H}}(r)] = h$,
  the derivative vanishes and we end up with
  $$
  \tr [B_{v_0}^{\mathcal{H}}(r)^2] + {\rm Ric}^X = 0.
  $$
  Using $(n-1)\sum_{j=1}^{n-1} a_j^2 \ge \left(\sum_{j=1}^{n-1} a_j\right)^2$,
  we conclude
  \begin{equation} \label{eq:sqtrsq} 
  (n-1) \tr [B_{v_0}^{\mathcal{H}}(r)^2] \ge 
  \left( \tr[B_{v_0}^{\mathcal{H}}(r)] \right)^2 = h^2,
  \end{equation}
  which implies
  \begin{equation} \label{eq:richh} 
  (n-1) {\rm Ric}^X = -(n-1) \tr [B_{v_0}^{\mathcal{H}}(r)^2] \leq -h^2
  \end{equation}
  Combining \eqref{eq:ricscalhor} and \eqref{eq:richh}, we obtain
  $$
  {\rm scal}^{\mathcal{H}} = h^2 + (n-1) {\rm Ric}^X \leq h^2 - h^2 = 0.
  $$
  This shows the first part of (C), namely that the scalar curvature
  of horospheres is a non-positive constant.

  For the second part, we can assume that $h > 0$. (In the case $h=0$,
  the manifold $X$ is isometric to the flat $\R^n$, and all its
  horospheres are isometric to the flat $\R^{n-1}$.) The scalar curvature
  of the horospheres vanishes if and only if $(n-1){\rm Ric}^X = -h^2$, i.e.,
  if the inequality \eqref{eq:sqtrsq} holds with equality. The inequality
  \eqref{eq:sqtrsq} holds with equality if and only if
  $B_{v_0}^{\mathcal{H}}(r)$ is a multiple of the identity for every
  $r$. Since $\tr[B_{v_0}^{\mathcal{H}}(r)] = h$, this multiple does
  not depend on $v_0$ and $r$, and we have
  $$
  B_{v_0}^{\mathcal{H}}(r) = \frac{h}{n-1} \id.
  $$
  Looking at the Ricatti equation again
  $$
  (B_{v_0}^{\mathcal{H}})'(r) + (B_{v_0}^{\mathcal{H}}(r))^2 + R(r), 
  = 0,
  $$
  we see that
  $$
  R(r) = - \frac{h^2}{(n-1)^2} \id,
  $$
  i.e., the Jacobi operator along $c_{v_0}$ is a fixed constant
  multiple of the identity, which means that the sectional curvature
  of $X$ is constant.

  Therefore the scalar curvature of horospheres vanishes if and only
  if $X$ has constant sectional curvature. This yields (C).
\end{proof}

\section{Harmonic functions with polynomial growth}

For the reader's convenience, let us first recall the notion of sub-
and superharmonicity.

\begin{dfn}
  Let $(X,g)$ be a Riemannian manifold, $\Delta = {\rm div} \circ {\rm
    grad}$ be its Laplacian and $u \in C^2(X)$. $u$ is called {\em
    subharmonic} if $\Delta u \geq 0$. $u$ is called {\em
    superharmonic} if $\Delta u \leq 0$. Obviously, if $u$ is
  superharmonic then $-u$ is subharmonic, and vice versa. Moreover, a
  harmonic function is both sub- and superharmonic.
\end{dfn}

It is easy to see that subharmonic functions on arbitrary harmonic
manifolds satisfy the mean value inequality.

\begin{prop} (Mean Value Inequality) \label{prop:mvi} Let $(X,g)$ be a
  harmonic manifold and $u \in C^2(X)$ be subharmonic. Then we have for
  all $p \in X$ and $r > 0$,
  $$ u(p) \le \frac{1}{\vol S_r(p)} \int_{S_r(p)} u(q) d\mu_r(q). $$ 
\end{prop}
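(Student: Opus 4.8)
The plan is to study the spherical mean
$$
M(r) := \frac{1}{\vol S_r(p)} \int_{S_r(p)} u(q)\, d\mu_r(q)
$$
as a function of $r$, and to show that it is monotone nondecreasing with $M(r) \to u(p)$ as $r \to 0$. Granting this, the inequality $M(r) \ge u(p)$ is immediate, which is exactly the assertion.

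The decisive first step is to rewrite $M(r)$ in normal coordinates centered at $p$. Pulling back the surface measure on $S_r(p)$ via $\exp_p$, the element $d\mu_r$ at the point $q = \exp_p(rv)$ becomes $\det A_v(r)\, d\theta_p(v)$. Here is where harmonicity is used: by definition $\det A_v(r) = f(r)$ is independent of $v \in S_pX$, and $\vol S_r(p) = \omega_n f(r)$. Hence the density $f(r)$ cancels and
$$
M(r) = \frac{1}{\omega_n} \int_{S_pX} u(\exp_p(rv))\, d\theta_p(v),
$$
an honest average of $u \circ \exp_p$ over the fixed unit sphere $S_pX$. I would then differentiate under the integral sign (legitimate since $u \in C^2$ and $(v,r) \mapsto \exp_p(rv)$ is smooth), using $\frac{d}{dr} u(\exp_p(rv)) = \langle \grad u(c_v(r)), \dot c_v(r) \rangle$. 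Since $\dot c_v(r)$ is the outward unit normal $\nu$ to $S_r(p)$ at $c_v(r)$, converting back to an integral over the sphere gives
$$
M'(r) = \frac{1}{\omega_n f(r)} \int_{S_r(p)} \langle \grad u, \nu \rangle\, d\mu_r.
$$

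Finally I would apply the divergence theorem on the geodesic ball $B_r(p)$, which is a smooth domain because $\exp_p$ is a diffeomorphism in the absence of conjugate points, turning the boundary flux into $\int_{B_r(p)} \Delta u\, d\vol$. Since $u$ is subharmonic we have $\Delta u \ge 0$, and $f(r) > 0$ for $r > 0$, so $M'(r) \ge 0$ and $M$ is nondecreasing; combined with $M(r) \to u(p)$ as $r \to 0$ by continuity of $u$, this finishes the argument. The only genuinely nontrivial ingredient is the density cancellation in the second step, which is precisely the defining radiality of the volume density on a harmonic manifold; after that the reasoning is the classical Euclidean computation, and the subharmonicity enters only at the very end to sign the derivative of the mean.
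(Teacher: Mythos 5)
Your proof is correct, but it takes a different route from the paper. You work directly with the spherical mean $M(r)$: harmonicity enters through the density cancellation $d\mu_r = f(r)\,d\theta_p$, $\vol S_r(p) = \omega_n f(r)$, after which you differentiate under the integral sign and invoke the divergence theorem on the geodesic ball to get $M'(r) = \frac{1}{\omega_n f(r)}\int_{B_r(p)} \Delta u \,d\vol \ge 0$. The paper instead radialises $u$ via the operator $\pi_p$, uses the fact (stated there as well known, specific to harmonic manifolds) that $\Delta$ commutes with $\pi_p$, so that $u_0 = \pi_p u$ is again subharmonic and radial, writes $u_0 = F \circ d_p$, and then runs a purely one-dimensional argument: the polar form of the Laplacian gives $(fF')' \ge 0$, which after integration (using $f(0)=0=F'(0)$) yields $F' \ge 0$. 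Note that the paper's $F$ is exactly your $M$, so both proofs hinge on the same monotonicity statement; only the mechanism differs. Your version is the classical Euclidean argument transplanted, and it buys self-containedness — no appeal to the commutation $[\Delta,\pi_p]=0$, whose proof the paper omits — at the mild cost of justifying differentiation under the integral and Stokes on $B_r(p)$ (both unproblematic here, since $\exp_p$ is a diffeomorphism in the absence of conjugate points). The paper's version stays within the radialisation/ODE toolkit that it reuses elsewhere (e.g., for the functions $\mu$ and $g$ in the flatness proof), which is presumably why it was preferred.
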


\begin{proof}
  Let $\pi_p: C(X) \to C(X)$ be the radialisation, i.e.,
  $(\pi_p u)(p) = u(p)$ and  
  $$ (\pi_p u)(q) = \frac{1}{\vol S_r(p)} \int_{S_r(p)} u(q) d\mu_r(q) $$
  for all $q \in X$ with $d(p,q) = r$. Let $u$ be subharmonic, i.e.,
  $\Delta u \ge 0$. It is well known that $\pi_p$ and $\Delta$
  commute. Therefore, we obtain
  $$ \Delta (\pi_p u) = \pi_p (\Delta u) \ge 0, $$
  i.e., $u_0 = \pi_p u$ is again subharmonic. Since $u_0$ is radial around $p$, 
  we have $u_0 = F \circ d_p$ and, using \eqref{eq:laplpolar}, we have
  \begin{equation} \label{eq:laplF} 
  0 \le \Delta u_0(q) = F'' \circ d_p(q) + (\frac{f'}{f}F') \circ d_p(q). 
  \end{equation}
  Note that $F$ is an even function with $F(0) = u(p)$ and $F'(0) = 0$. Since
  $$ \frac{1}{\vol S_r(p)} \int_{S_r(p)} u(q) d\mu_r(q) = u_0(q) = 
  F \circ d_p(q), $$ 
  it suffices to show that $F(r) \ge F(0) = u(p)$ for all $r \ge
  0$. From \eqref{eq:laplF} we conclude
  $$ (f F')'(r) = f(r) F''(r) + f'(r) F'(r) \ge 0. $$
  Integrating over $[0,r]$ and using $f(0) = 0 = F'(0)$, we obtain
  $$ f(r) F'(r) \ge 0, $$
  and since $f(r) > 0$ for $r > 0$, we see that $F$ is monotone increasing on
  $[0,\infty)$. This shows $F(r) \ge F(0)$, finishing the proof.   
\end{proof}

In the remainder of this chapter, we consider a harmonic manifold
$(X,g)$ of polynomial volume growth. The main goal is to prove that
the vector space of all harmonic functions of polynomial growth of
order $\leq D$ is finite dimensional. A main tool in the proof is the
following result for general Riemannian manifolds.

\begin{lemma} (compare with \cite[Lemma 28.3]{Li}) \label{lem:PLi} Let
  $(X,g)$ be a Riemannian manifold, $p \in X$, and $M \ge 0$ be a
  non-negative number. Let $K$ be a finite dimensional linear space of
  functions on $X$ such that for each $u \in K$, there exists a
  constant $C_u > 0$ such that
  \begin{equation} \label{eq:growthcond} 
  \int_{B_\rho(p)} |u(x)|^2 dx \le C_u (1+\rho)^M \qquad \forall \rho
  \ge 0.
  \end{equation}
  Then, for every $\beta > 1, \delta > 0, \rho_0 > 1$, there exists $\rho >
  \rho_0$ such that the following holds: if $\{u_i\}_{i=1}^k$ with
  $k=\dim K$ is an orthonormal basis of $K$ with respect to the
  quadratic form $A_{\beta \rho}(u,v) : = \int\limits_{B_{\beta
      \rho}(p)} u(x) v(x) dx$, then
  $$
  \sum\limits_{i=1}^k \int\limits_{B_\rho(p)} |u_i(x)|^2 dx \geq 
  k \beta^{-(M + \delta)}.
  $$
\end{lemma}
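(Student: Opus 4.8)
The plan is to reduce the stated lower bound to a comparison of determinants of Gram matrices, and then to extract the good radius $\rho$ by a telescoping pigeonhole argument driven by the polynomial growth hypothesis. First I would fix a basis $e_1,\dots,e_k$ of $K$ and, writing $A_R(u,v)=\int_{B_R(p)} u v\,dx$ for the form at an arbitrary radius $R$, introduce the symmetric Gram matrices $G_R$ with entries $(G_R)_{ij}=A_R(e_i,e_j)$. Each $G_R$ is positive semidefinite, and since $K$ is a finite-dimensional space of functions, the restriction map $K\to L^2(B_R(p))$ is injective for all sufficiently large $R$; hence $G_R$ is positive definite (so $\det G_R>0$) once $R$ exceeds some threshold $R_1$, which I may assume is $\le\rho_0$. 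If $u_1,\dots,u_k$ is an $A_{\beta\rho}$-orthonormal basis, writing $u_i=\sum_j P_{ij}e_j$ the orthonormality condition reads $P G_{\beta\rho}P^{T}=\id$, whence $P^{T}P=G_{\beta\rho}^{-1}$, and therefore
\[
\sum_{i=1}^k \int_{B_\rho(p)} |u_i|^2\,dx
=\sum_{i=1}^k A_\rho(u_i,u_i)
=\tr\bigl(G_\rho P^{T}P\bigr)
=\tr\bigl(G_\rho G_{\beta\rho}^{-1}\bigr),
\]
which incidentally shows the quantity is independent of the chosen orthonormal basis.

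Next I would apply the arithmetic--geometric mean inequality to the eigenvalues of $G_\rho G_{\beta\rho}^{-1}$. This matrix is similar to the positive definite matrix $G_{\beta\rho}^{-1/2}G_\rho G_{\beta\rho}^{-1/2}$, so its eigenvalues are positive real numbers, and
\[
\tr\bigl(G_\rho G_{\beta\rho}^{-1}\bigr)
\ge k\,\bigl(\det(G_\rho G_{\beta\rho}^{-1})\bigr)^{1/k}
= k\left(\frac{\det G_\rho}{\det G_{\beta\rho}}\right)^{1/k}.
\]
It therefore suffices to locate a radius $\rho>\rho_0$ for which $\det G_\rho/\det G_{\beta\rho}\ge\beta^{-k(M+\delta)}$, equivalently $\det G_{\beta\rho}\le\beta^{k(M+\delta)}\det G_\rho$.

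To produce such a $\rho$, set $D(R):=\det G_R$. The growth hypothesis together with Cauchy--Schwarz gives $|(G_R)_{ij}|\le C(1+R)^M$ with $C=\max_j C_{e_j}$, so expanding the determinant yields a polynomial bound $D(R)\le C_1(1+R)^{kM}$ for all $R\ge0$. Now I would argue by contradiction: suppose no admissible $\rho$ exists, i.e. $D(\beta\rho)>\beta^{k(M+\delta)}D(\rho)$ for every $\rho>\rho_0$. Fixing any $\rho_*>\rho_0$ with $\rho_*\ge R_1$ and iterating along the geometric sequence $\rho_*,\beta\rho_*,\beta^2\rho_*,\dots$, a telescoping product gives $D(\beta^m\rho_*)>\beta^{mk(M+\delta)}D(\rho_*)$ for all $m\ge1$. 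Comparing this with the polynomial upper bound $D(\beta^m\rho_*)\le C_1(1+\beta^m\rho_*)^{kM}\le C_2\beta^{mkM}$ forces $\beta^{mk\delta}<C_2/D(\rho_*)$ for every $m$, which is absurd since the left-hand side tends to $\infty$. This contradiction yields the desired radius, and chaining the two displayed inequalities finishes the argument.

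The main obstacle is precisely this extraction of a good scale: the conclusion will generally fail for an arbitrarily prescribed $\rho$, since the one-step ratio $D(\beta\rho)/D(\rho)$ can be large at particular radii. The key point is that polynomial volume growth caps the \emph{average} logarithmic growth rate of $D$ across the scales $\beta^m\rho_*$, so at least one scale must exhibit one-step growth no larger than $\beta^{k(M+\delta)}$. The only genuinely technical subtlety is guaranteeing that the Gram matrices are positive definite (hence have strictly positive determinant) at the relevant scales, which is exactly where the finite-dimensionality of $K$ is used.
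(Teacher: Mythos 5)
Your proof is correct and follows essentially the same route as the paper's: the arithmetic--geometric mean inequality converts the trace bound into a determinant-ratio bound, which is then telescoped along the geometric scales $\beta^m\rho_*$ and contradicted by the polynomial bound on Gram determinants coming from \eqref{eq:growthcond}. The only differences are cosmetic --- you work with Gram matrices of a fixed basis where the paper uses the basis-free relative determinants $\det_{\rho'}A_\rho$ and places the contradiction hypothesis at the level of the trace rather than of the determinant ratio, and you are more explicit about positive definiteness of the forms $A_R$ at large radii, a point the paper glosses over.
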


\begin{proof}
  We assume that the lemma is wrong. Then, for all $\rho > \rho_0$,
  there exists an orthonormal basis $\{u_i\}$ with respect to
  $A_{\beta \rho}$ such that
  $$
  \tr_{\beta \rho} A_\rho = \sum\limits_{i=1}^k A_\rho(u_i,u_i) =
  \sum\limits_{i=1}^k \int\limits_{B_\rho(p)} |u_i(x)|^2 dx < k
  \beta^{-(M+\delta)}.
  $$
  Note that the trace does not depend on the choice of orthonormal
  basis $\{u_i\}$.  The arithmetic-geometric mean states that
  $(\det_{\beta \rho} A_\rho)^{\frac{1}{k}} \leq \frac{1}{k} tr_{\beta
    \rho} A_\rho$. This implies that
  \begin{equation} \label{eq:det0} 
  0 \le {\rm det}_{\beta \rho} A_\rho
  \leq \beta^{-k(M+\delta)} \qquad \forall\; \rho \ge \rho_0.
  \end{equation}
  Note that for $\rho,\rho'$ we have $\det_{\rho'} A_\rho=\det
  U_{\rho,\rho'}$, where the endomorphism $U_{\rho,\rho'}$ is defined
  by $A_\rho(u,v) = A_{\rho'}(U_{\rho,\rho'}u,v)$.  This implies that
  $\det_\rho A_\rho = 1$ and
  \begin{equation} \label{eq:detprod} {\rm det}_{\rho''} A_\rho =
    \left( {\rm det}_{\rho''} A_{\rho'} \right) \left( {\rm
        det}_{\rho'} A_\rho\right). 
  \end{equation} 
  Applying this identity iteratively to \eqref{eq:det0}, we obtain
  \begin{equation} \label{eq:detit} {\rm det}_{\beta^j \rho} A_\rho
    \leq \beta^{-jk(M+\delta)} \qquad \forall\; \rho \ge \rho_0,
  \end{equation}
  for all positive integers $j$.

  For a fixed orthonormal basis $\{g_i\}$ of $K$ with respect to
  $A_\rho$, the growth condition \eqref{eq:growthcond} implies that there
  exists a $C > 0$ such that
  $$
  \int\limits_{B_{\rho'(p)}} |g_i(x)|^2 dx\leq C (1 + \rho')^M
  \quad \forall\; \rho' \geq 0.
  $$
  This implies that
  \begin{eqnarray*}
    0 \le {\rm det}_\rho A_{{\beta^j \rho}} & = & 
    \det \left(\int\limits_{B_{\beta^j \rho}(p)} g_i(x) g_k(x) dx\right)\\
    & \leq & C' \sum\limits_{\sigma \in S_k} (1+\beta^j \rho)^{kM} 
    \le k!\; C''\; \beta^{jkM}\; \rho^{kM},
  \end{eqnarray*}
  with suitably chosen constants $C',C'' > 0$.  From this we conclude,
  using again \eqref{eq:detprod},
  \begin{equation} \label{eq:det2}
  {\rm det}_{\beta^j \rho} A_\rho = ({\rm det}_\rho A_{\beta^j \rho})^{-1} \geq 
  \frac{1}{k!\; C''}\; \beta^{-jkM}\; \rho^{-kM}.
  \end{equation}
  Combining \eqref{eq:detit} and \eqref{eq:det2}, we obtain for all
  fixed $\rho \ge \rho_0$ and all positive integers $j$,
  $$ \frac{1}{k!\; C''}\; \beta^{jk\delta} \leq \rho^{kM}. $$
  Since $\beta > 1$, the left hand side tends to infinity as $j \to
  \infty$, whereas the right hand side is a constant. This is the
  desired contradiction.
\end{proof}

\begin{rmk}
  The case considered in \cite[Lemma 28.3]{Li} is a Riemannian
  manifold $(X,g)$ of polynomial volume growth, i.e.,
  $$ \vol(B_\rho(p)) \le C_X(1+\rho)^\mu \qquad \forall \rho \ge 0, $$
  and a vector space $K$ such that each $u \in K$ has polynomial
  growth of degree at most $D$, i.e., there exists a constant $c_u >
  0$ such that
  $$ |u(x)| \le c_u(1+d_p(x))^D \qquad \forall x \in X. $$
  This implies that
  $$ \int_{B_\rho(p)} |u(x)|^2 dx \le C_X c_u (1+\rho)^{2D+\mu} \qquad \forall
  \rho' \ge 0,$$
  and Lemma \ref{lem:PLi} is applicable in this situation with $M = 2D+\mu$.
\end{rmk}

Now we present the main result. 

\begin{theorem} (see also \cite[Theorem 4.2]{LiW} for a more general
  situation) \label{thm:LiW}
  Let $(X,g)$ be a harmonic manifold of polynomial growth,
  $p_0 \in X$ and $D$ a positive integer. Let
  $$
  H_D(p_0) := \{\varphi \in C^2(X) \mid\; \Delta \varphi = 0,
  \exists\; c_1,c_2 > 0: |\varphi(x)| \leq c_1 + c_2 d_{p_0}(p)^D \}
  $$
  Then $\dim H_D(p_0) < \infty$.
\end{theorem}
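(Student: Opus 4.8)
The plan is to bound the dimension of \emph{every} finite-dimensional subspace $K \subseteq H_D(p_0)$ by a constant that does not depend on $K$; this forces $\dim H_D(p_0) < \infty$. The mechanism is Peter Li's trace/determinant argument: combine the abstract lower bound of Lemma \ref{lem:PLi} with a matching upper bound extracted from the mean value inequality.

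First I would check that Lemma \ref{lem:PLi} applies. Writing $\mu$ for the volume growth exponent, so $\vol B_\rho(p_0) \le C_X(1+\rho)^\mu$, every $\varphi \in H_D(p_0)$ satisfies
$$
\int_{B_\rho(p_0)} |\varphi|^2\, dx \le (c_1 + c_2\rho^D)^2\, \vol B_\rho(p_0) \le C_\varphi (1+\rho)^{2D+\mu},
$$
so the growth condition \eqref{eq:growthcond} holds with $M = 2D+\mu$, exactly as in the remark following the lemma. Hence, for any finite-dimensional $K \subseteq H_D(p_0)$ with $k = \dim K$ and any $\beta>1$, $\delta>0$, $\rho_0>1$, the lemma yields a radius $\rho > \rho_0$ and an $A_{\beta\rho}$-orthonormal basis $\{u_i\}$ of $K$ with $\sum_i \int_{B_\rho(p_0)}|u_i|^2 \ge k\beta^{-(M+\delta)}$.

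The core of the argument is a complementary upper bound for the same quantity. Since each $u_i$ is harmonic, $|u_i|^2$ is subharmonic, so Proposition \ref{prop:mvi}, integrated against $\vol S_s$ over $s \in [0,R]$, gives the solid mean value inequality $|u(x)|^2 \le \frac{1}{\vol B_R(x)}\int_{B_R(x)}|u|^2$ for every harmonic $u$. The evaluation functional $u \mapsto u(x)$ on the inner product space $(K, A_{\beta\rho})$ has squared norm $\sum_i |u_i(x)|^2$, which is independent of the basis and equals $\sup\{|u(x)|^2 : A_{\beta\rho}(u,u)=1\}$. For $x \in B_\rho(p_0)$ one has $B_{(\beta-1)\rho}(x) \subseteq B_{\beta\rho}(p_0)$, and since harmonic manifolds are volume-homogeneous ($\vol B_r(x) = \omega_n\int_0^r f$ is independent of $x$), the mean value inequality yields $\sum_i|u_i(x)|^2 \le 1/\vol B_{(\beta-1)\rho}$. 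Integrating over $B_\rho(p_0)$ produces
$$
\sum_{i=1}^k \int_{B_\rho(p_0)}|u_i|^2\, dx \le \frac{\vol B_\rho}{\vol B_{(\beta-1)\rho}}.
$$

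Finally I would close the two estimates against each other. By Corollary \ref{cor:h0poly}, polynomial growth forces $h=0$, so the exponential polynomial $f$ is in fact a genuine polynomial with positive leading coefficient; hence $V(r) := \vol B_r \sim c\, r^{\mu}$ and $V(\rho)/V((\beta-1)\rho) \to (\beta-1)^{-\mu}$ as $\rho\to\infty$. Choosing $\rho_0$ large enough that this ratio is at most $2(\beta-1)^{-\mu}$ for all $\rho > \rho_0$, the radius $\rho$ delivered by Lemma \ref{lem:PLi} satisfies both inequalities, giving $k\beta^{-(M+\delta)} \le 2(\beta-1)^{-\mu}$, that is $k \le 2\beta^{2D+\mu+\delta}(\beta-1)^{-\mu}$. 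As this is independent of $K$, we conclude $\dim H_D(p_0) < \infty$ (one may then optimize over $\beta>1$ for an explicit bound). The step I expect to be the main obstacle is matching the \emph{uncontrolled} radius $\rho$ produced by Lemma \ref{lem:PLi} with the asymptotic volume-ratio bound; this is precisely where the freedom to enlarge $\rho_0$, together with volume-homogeneity and the polynomiality of $f$, is indispensable.
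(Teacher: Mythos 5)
Your core argument is correct and rests on the same foundation as the paper's proof (Lemma \ref{lem:PLi} with $M=2D+\mu$, applied to arbitrary finite-dimensional subspaces), but the two proofs diverge at the complementary upper bound. The paper controls $\sum_j \int_{B_r(p_0)}\varphi_j^2$ by applying the maximum principle to the subharmonic function $\sum_j \varphi_j^2$, obtaining a single point $q\in\partial B_r(p_0)$, rotating the orthonormal basis so that all but one of the functions vanish at $q$, and then applying the mean value inequality once, at $q$, with the \emph{same} radius $r$ (volume homogeneity, $\vol B_r(p_0)=\vol B_r(q)$, is what makes the radii match). You instead prove the pointwise reproducing-kernel estimate $\sum_i u_i(x)^2=\sup\{u(x)^2 : A_{\beta\rho}(u,u)=1\}\le 1/\vol B_{(\beta-1)\rho}$ at \emph{every} $x\in B_\rho(p_0)$ and integrate. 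Both routes use exactly the same three ingredients (subharmonicity of squares of harmonic functions, Proposition \ref{prop:mvi}, volume homogeneity); yours is the standard trace estimate of Li--Wang type and avoids the basis rotation, at the cost of producing a volume ratio $\vol B_\rho/\vol B_{(\beta-1)\rho}$ instead of the constant $1$.

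That final ratio is where your write-up has a genuine flaw, though a completely harmless one. The claim that polynomial volume growth (via $h=0$, Corollary \ref{cor:h0poly}) makes the exponential polynomial $f$ ``a genuine polynomial with positive leading coefficient'' does not follow from growth and positivity alone: Theorem \ref{thm:Ni} permits summands like $t^{m}\sin(\beta t)$, so nothing you have cited excludes, say, $f(t)=t^{m}(2+\sin t)$. Ruling out oscillating leading terms requires an extra argument using the monotone convergence $f'/f\downarrow h$, which you did not give (and even then ``genuine polynomial'' is stronger than what is true, since bounded oscillating lower-order terms may persist). Fortunately the entire step is unnecessary: $\vol B_s(x)=\omega_n\int_0^s f(u)\,du$ is increasing in $s$ and independent of $x$, so for the choice $\beta=2$ (precisely the value the paper fixes) one has $(\beta-1)\rho=\rho$ and your ratio equals $1$ for \emph{every} $\rho$ — no asymptotics of $f$, and no issue with the uncontrolled radius delivered by Lemma \ref{lem:PLi}. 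Your two estimates then combine directly to $k\le 2^{2D+\mu+\delta}$, which is exactly the paper's bound; for any $\beta\ge 2$ the ratio is $\le 1$ by monotonicity alone. So the ``main obstacle'' you flagged dissolves once $\beta$ is chosen equal to $2$; as written, however, the appeal to polynomiality of $f$ is the one step of your proof that does not stand.
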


\begin{proof}
  Assume that $(X,g)$ is of polynomial volume growth of degree $\leq \mu$. 
  Henceforth, we fix the constants $\beta, \delta,
  \rho_0$ in Lemma \ref{lem:PLi}, in particular $\beta = 2$.

  Let $\mathcal{H} \subset H_D(p_0)$ be an arbitrary finite
  dimensional subspace, and denote its dimension by $k$. We will
  derive an upper bound on $k$. Recall that
  $$
  (f,g)_\rho := \int\limits_{B_\rho(p_0)} f(x)g(x) d \vol(x)
  $$
  is a proper inner product for every $\rho > 0$, since a function $f
  \in \mathcal{H}$ with $(f,f)_\rho = 0$ would have to vanish on
  $B_\rho(p_0)$. By the unique continuation principle for
  eigenfunctions (see \cite{Aro}), this would mean $f \equiv 0$. 

  The remark above shows that inequality \eqref{eq:growthcond} is
  satisfied with $M = 2D+\mu$. By Lemma \ref{lem:PLi}, we can then
  find an $r > \rho_0$ such that we have for all orthonormal bases
  $\{\varphi_i\} \subset \mathcal{H}$ with respect to
  $(\cdot,\cdot)_{2r}$:
  \begin{equation} \label{eq:on1} C k = C \sum\limits_{j=1}^k
    \int\limits_{B_{2r}(p_0)} \varphi_j^2(x) dx \leq
    \sum\limits_{j=1}^k \int\limits_{B_r (p_0)} \varphi_j^2(x) dx,
  \end{equation}
  with $C = 2^{-(2D+\mu+\delta)}$.

  Since $\sum_{j=1}^k \varphi_j^2$ is subharmonic, i.e., $\Delta
  (\sum\limits_{j=1}^k \varphi_j^2) \geq 0$, we can apply the maximum
  principle and obtain
  $$
  \int\limits_{B_r(p_0)} \sum\limits_{j=1}^k \varphi_j^2(x) dx \leq
  \sum\limits_{j=1}^k \varphi_j^2 (q) \cdot \vol (B_r(p_0)),
  $$
  for some $q \in \partial B_r(p_0)$. Since $(X,g)$ is harmonic, we
  have $\vol (B_r(p_0)) = \vol (B_r(q))$, and we obtain
  $$
  \int\limits_{B_r(p_0)} \sum\limits_{j=1}^k \varphi_j^2(x) dx \leq
  \sum\limits_{j=1}^k \varphi_j^2(q) \cdot \vol (B_r(q)).
  $$
  Choose an orthogonal transformation $A \in 0(k)$ such that the
  functions $\varphi_l = \sum\limits_{s=1}^k a_{ls} \varphi_s$ satisfy
  $\psi_2(q)=\cdots=\psi_k(q)=0$. Then $\sum\limits_{j=1}^k
  \varphi_j^2(x) = \sum\limits_{j=1}^k \psi_j^2(x)$ for all $x \in X$,
  and
  \begin{eqnarray}
    \int\limits_{B_r(p_0)} \sum\limits_{j=1}^k \varphi_j^2(x) dx &\leq& 
    \sum\limits_{j=1}^k \psi_j^2(q) \cdot \vol (B_r(p_0))
    = \vol (B_r(p_0)) \cdot \psi_1^2(q) \nonumber \\
    &=& \vol (B_r(q)) \cdot \psi_1^2(q) \le \int_{B_r(q)} \psi_1^2(x) dx 
    \label{eq:mviappl} \\
    &\le& \int_{B_{2r}(p_0)} \psi_1^2(x) dx \nonumber \\
    &=& \sum_{j,l=1}^k \int_{B_{2r}(p_0)} a_{1j} a_{1l} 
    \varphi_j(x) \varphi_l(x) dx \nonumber \\
    &=& \sum_{j=1}^k a_{1j}^2 \int_{B_{2r}(p_0)} \varphi_j^2(x) dx =  
    \sum\limits_{j=1}^k a_{1j}^2 = 1. \label{eq:orthappl}
  \end{eqnarray}
  Here, we used the Mean Value Inequality (Proposition \ref{prop:mvi})
  in \eqref{eq:mviappl}, and orthogonality of the functions
  $\varphi_j$ with respect to $(\cdot,\cdot)_{2r}$ in
  \eqref{eq:orthappl}.  Combining \eqref{eq:on1} with the inequality
  $\sum\limits_{j=1}^k \int\limits_{B_r (p_0)} \varphi_j^2(x) dx \le
  1$ just derived, we conclude that $C k \le 1$, i.e., $k = \dim
  \mathcal{H} \le 1/C = 2^{2D+\mu+\delta}$.

  Since $\mathcal{H} \subset H_D(p_0)$ was an arbitrary finite
  dimensional subspace, $H_D(p_0)$ itself must also be finite
  dimensional with dimension $\le 2^{2D+\mu+\delta}$.
\end{proof}

\section{Harmonic manifolds with $h=0$ are flat}
\label{minhorflat}

Recall from Corollary \ref{cor:h0poly} that harmonic manifolds $(X,g)$
with minimal horospheres, i.e., with $h=0$ ($h$ denotes the mean
curvature of the horospheres), must have polynomial volume growth. In
this chapter, we present the proof of the much stronger result, due to
\cite{RSh2}, that $h=0$ already implies that $(X,g)$ is flat.

The function $\mu$, which we introduce in the following proposition,
will play a crucial role in this proof. Let us first collect some
general properties of this function.

\begin{prop} \label{prop:mu} Let $(X,g)$ be a general harmonic
  manifold with density function $f(r)$. Then the function $\mu(r) =
  \frac{\int_0^r f(s)ds}{f(r)}$ satisfies the following properties:
  \begin{itemize}
  \item[(a)] We have $\mu(0) =0$, $\mu'(0) = \frac{1}{n}$, $\mu''(0) =
    0$, and $\mu'''(0) = \frac{2 {\rm Ric}^X}{n(n+2)}$.
  \item[(b)] We have $\mu(r) \ge 0$, for all $r \ge 0$.
  \item[(c)] We have $0 \le \mu'(r) \le 1$ for all $r \ge 0$.
  \item[(d)] We have $-\mu''(r) \mu(r) \le \frac{1}{4}$, for all $r \ge 0$. 
  \end{itemize}
\end{prop}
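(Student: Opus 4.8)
The plan is to establish the four items in the order (a)--(d), since the later parts rest on an identity and sign conditions derived from the earlier ones. For (a) I would first record the short-time expansion of the density function. From the Jacobi equation $A_v'' + R(t)A_v = 0$ with $A_v(0) = 0$, $A_v'(0) = \id$, one obtains $A_v(r) = r\,\id - \frac{r^3}{6} R_v + O(r^5)$, where $R_v = R(\cdot,v)v$ acts on the normal bundle. Taking determinants and using $\tr R_v = {\rm Ric}(v,v) = {\rm Ric}^X$ (constant, since $X$ is harmonic) gives
$$ f(r) = r^{n-1}\left(1 - \frac{{\rm Ric}^X}{6} r^2 + O(r^4)\right). $$
Integrating term by term yields $\int_0^r f(s)\,ds = \frac{r^n}{n} - \frac{{\rm Ric}^X}{6(n+2)} r^{n+2} + O(r^{n+4})$, and dividing by $f(r)$ and expanding the resulting geometric series produces
$$ \mu(r) = \frac{r}{n} + \frac{{\rm Ric}^X}{3n(n+2)} r^3 + O(r^5). $$
Reading off the Taylor coefficients gives $\mu(0)=0$, $\mu'(0)=\frac1n$, $\mu''(0)=0$, and $\mu'''(0)=\frac{2{\rm Ric}^X}{n(n+2)}$, which is (a).

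Part (b) is immediate: since $X$ has no conjugate points, $f(s) > 0$ for $s > 0$, so $\int_0^r f$ and $f(r)$ are positive for $r > 0$, whence $\mu(r) > 0$, while $\mu(0)=0$. For (c) I would differentiate the defining relation $\mu(r) f(r) = \int_0^r f(s)\,ds$ once, obtaining the basic identity
$$ \mu'(r) + \mu(r)\frac{f'(r)}{f(r)} = 1. $$
Because $\frac{f'}{f}\ge 0$ and $\mu\ge 0$ by (b), the bound $\mu'\le 1$ follows at once. The lower bound $\mu'\ge 0$ is equivalent to $\left(\int_0^r f\right) f'(r) \le f(r)^2$; here I would use that $\frac{f'}{f}$ is decreasing (Remark (c)), i.e. that $\log f$ is concave on $(0,\infty)$. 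The tangent-line inequality then gives $f(s) \le f(r)\,e^{(s-r)f'(r)/f(r)}$ for $0\le s\le r$, and integrating this exponential majorant over $[0,r]$ yields exactly $\int_0^r f \le f(r)^2/f'(r)$ (with $\mu'=1$ trivially where $f'(r)=0$).

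For (d) I would differentiate the identity from (c) a second time. Writing $\ell = f'/f$, one gets $\mu'' = -\mu'\ell - \mu\ell'$; multiplying by $-\mu$ and substituting $\mu\ell = 1-\mu'$ gives the clean identity
$$ -\mu''(r)\,\mu(r) = \mu'(r)\bigl(1-\mu'(r)\bigr) + \mu(r)^2\,\ell'(r). $$
Since $\ell = f'/f$ is decreasing we have $\ell'\le 0$, so the last term may be dropped; and since $0\le\mu'\le 1$ by (c), the elementary estimate $x(1-x)\le\tfrac14$ on $[0,1]$ gives $\mu'(1-\mu')\le\tfrac14$. Combining these yields $-\mu''\mu \le \tfrac14$.

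The only genuinely nontrivial step is the lower bound in (c): everything else is a Taylor computation (a), a positivity remark (b), or algebra combined with the already-established monotonicity of $f'/f$ (d). The crux is therefore the integral estimate $\int_0^r f \le f(r)^2/f'(r)$ coming from concavity of $\log f$, and I would take care to treat the degenerate case $f'(r)=0$ separately.
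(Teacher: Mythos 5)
Your proof is correct, and on three of the four items it runs essentially parallel to the paper: (a) is the same Taylor computation (your self-contained derivation from the Jacobi equation, giving $f(r)=r^{n-1}(1-\frac{\mathrm{Ric}^X}{6}r^2+O(r^4))$ with $\mathrm{Ric}^X=\mathrm{Ric}(v,v)$ for unit $v$, is consistent with the stated value $\mu'''(0)=\frac{2\mathrm{Ric}^X}{n(n+2)}$, whereas the paper quotes the expansion from Willmore); (b) is the same positivity remark; and your identity $-\mu''\mu=\mu'(1-\mu')+\mu^2\left(\frac{f'}{f}\right)'$ in (d) is just a rearrangement of the paper's computation of $\left(\frac{f'}{f}\right)'$ in terms of $\mu$, both then invoking $0\le\mu'\le1$ and $x(1-x)\le\frac14$. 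The genuine divergence is the lower bound $\mu'\ge0$ in (c). The paper notes that the identity $\mu'+\frac{f'}{f}\mu=1$ says precisely $\Delta(\mu\circ d_{p_0})=1$, so $\mu\circ d_{p_0}$ is subharmonic; the maximum principle on closed balls, together with the fact that a radial function is constant on each boundary sphere, then forces $\mu$ to be nondecreasing. You instead exploit the monotone decrease of $\frac{f'}{f}$ (log-concavity of $f$), integrating the tangent-line bound $f(s)\le f(r)e^{(s-r)f'(r)/f(r)}$ over $[0,r]$ to get $\int_0^r f\le f(r)^2/f'(r)$, which is exactly $\mu\cdot\frac{f'}{f}\le1$, i.e.\ $\mu'\ge0$, with the case $f'(r)=0$ handled directly. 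Both routes are legitimate within the paper's toolkit: the monotonicity of $\frac{f'}{f}$ is recorded in the remark following Theorem \ref{thm:Ni} (it comes from the Ranjan--Shah monotonicity of second fundamental forms of spheres) and is in any case indispensable for (d), where the paper uses it too; so your (c) costs nothing extra. What the paper's maximum-principle argument buys is independence of that monotonicity for part (c) itself, and it reuses a mechanism (radialisation plus subharmonicity) that appears elsewhere in the paper, e.g.\ in the mean value inequality. What your argument buys is that it is entirely one-dimensional and elementary, needing no PDE input beyond the ODE for $\mu$.

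One small inaccuracy worth fixing: the expansion $A_v(r)=r\,\id-\frac{r^3}{6}R_v+O(r^5)$ should read $O(r^4)$, since the $r^4$-coefficient of $A_v$ is $-\frac{1}{12}R_v'(0)$, which need not vanish. This does not affect your conclusion for $f$: the resulting $r^3$-term of $f(r)/r^{n-1}$ has coefficient $-\frac{1}{12}\tr R_v'(0)=-\frac{1}{12}(\nabla_v\mathrm{Ric})(v,v)=0$ because harmonic manifolds are Einstein (alternatively, $f(r)/r^{n-1}$ is even by the symmetry $f(t)=(-1)^{n-1}f(-t)$ noted in the paper), so $f(r)=r^{n-1}\left(1-\frac{\mathrm{Ric}^X}{6}r^2+O(r^4)\right)$ stands.
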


\begin{proof}

  {\bf (a)} Recall from \cite[Chapter 3.6]{Wi} that
  $$ f(r) = r^{n-1}(1 - \frac{\rm{Ric}^X}{6n}r^2 + O(r^4)). $$
  By integration, we obtain
  $$
  \int\limits_0^r f(t) dt = \frac{r^n}{n} (1 - \frac{{\rm
      Ric}^X}{6(n+2)} r^2 + O(r^4)).
  $$
  This implies that
  \begin{eqnarray*}
    \mu(r) = \frac{\int\limits_0^r f(s)ds}{f(r)} & = & \frac{r^r}{nr^{n-1}} 
    \frac{1 - \frac{{\rm Ric}^X}{6(n+2)} r^2 + O(r^4)}
    {1 - \frac{{\rm Ric}^X}{6n} r^2 + O(r^4)}\\
    & = & \frac{r}{n} \left(1 - \frac{{\rm Ric}^X}{6(n+2)} r^2 + O(r^4)\right)
    \left(1 + \frac{{\rm Ric}^X}{6n} r^2 + O(r^4)\right)\\
    & = & \frac{r}{n} \left(1 + \frac{{\rm Ric}^X}{6} r^2 \left(\frac{1}{n} - 
    \frac{1}{n+2}\right) + O(r^4)\right)\\
    & = & \frac{r}{n} + \frac{{\rm Ric}^X}{3n(n+2)} r^3 + O(r^5),
  \end{eqnarray*}
  which allows us to read off the results of (a). 

  {\bf (b)} This follows immediately from $f(r) > 0$ for all $r \ge 0$.

  {\bf (c)} Choose a point $p_0 \in X$. It is straightforward to see
  that $\mu$ satisfies the following differential equation
  \begin{equation} \label{eq:mudiffeqagain}
    \mu'(r) + \frac{f'}{f}(r) \mu(r) = 1, 
  \end{equation}
  i.e., $\Delta (\mu \circ d_{p_0}) = 1$, which shows that $\mu \circ
  d_{p_0}$ is subharmonic. Applying the maximum principle to $\mu
  \circ d_{p_0}$, we see that the restriction of this function to any
  closed ball around $p_0$ assumes its maximum at the boundary of this
  ball. But $\mu \circ d_{p_0}$ is constant along the boundary of any
  of these balls, and we conclude that
  $$ \mu'(r) \ge 0 \qquad \forall r > 0. $$
  On the other hand, using the above differential equation for $\mu$
  again, as well as $(f'/f)(r) \ge 0$ and $\mu(r) \ge 0$, we obtain
  $$ \mu'(r) = 1 - \frac{f'}{f}(r)\mu(r) \le 1. $$

  {\bf (d)} Rewriting \eqref{eq:mudiffeqagain}, we obtain
  $$
  \frac{f'}{f}(r) = \frac{1 - \mu'(r)}{\mu(r)},
  $$
  and, consequently, using the fact that $(f'/f)(r)$ converges
  monotonely decreasing to $h$,
  $$
  0 \geq \big(\frac{f'}{f} \big)'(r) = \frac{-\mu''(r)
    \mu(r)-(1-\mu'(r))\mu'(r)}{\mu^2(r)}.
  $$
  This implies that
  $$
  -\mu''(r) \mu(r) \leq (1 - \mu'(r)) \mu'(r).
  $$
  Since $0 \le \mu'(r) \le 1$, we conclude that 
  $$
  -\mu''(r) \mu(r) \leq \frac{1}{4}.
  $$
\end{proof}

We will also need the following general fact.

\begin{prop} \label{prop:gmu} Let $(X,g)$ be a general harmonic
  manifold and $p_0 \in X$. \ Let $\varphi \in C^2(X)$ be a function
  satisfying
  $$ \Delta \varphi = c, \qquad \varphi(p_0) = 0, $$
  for some constant $c \in \R$. Let $g \in C^2(\R)$ such that $g \circ
  d_{p_0} = \pi_{p_0}(\varphi) = \frac{1}{\omega_n}
  \int\limits_{S_{p_0}X} \varphi (c_w(r)) d \theta_{p_0}(w)$. Then we
  have
  $$ g'(r) = c \mu(r). $$
\end{prop}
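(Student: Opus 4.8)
The plan is to set up a differential equation for the radialized function $g$ and recognize it as the same first-order equation that $\mu$ satisfies. First I would exploit the fact, already used repeatedly in this chapter (e.g.\ in the proof of Proposition \ref{prop:mvi}), that the radialization operator $\pi_{p_0}$ commutes with the Laplacian $\Delta$. Since $\Delta \varphi = c$ is constant, we get $\Delta(\pi_{p_0}\varphi) = \pi_{p_0}(\Delta \varphi) = \pi_{p_0}(c) = c$. Writing $\pi_{p_0}\varphi = g \circ d_{p_0}$, I would then apply the formula for the Laplacian in polar coordinates (the same $\Delta u_0 = F'' \circ d_p + (\frac{f'}{f}F')\circ d_p$ used in \eqref{eq:laplF}) to obtain the radial ODE
$$ g''(r) + \frac{f'}{f}(r)\, g'(r) = c. $$

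Next I would multiply through by the density function $f(r)$ to put the equation in divergence form,
$$ (f g')'(r) = f(r) g''(r) + f'(r) g'(r) = c\, f(r), $$
and integrate over $[0,r]$. The boundary term vanishes at the lower limit: since $f(0) = 0$ and $g'(0) = 0$ (the latter because $g \circ d_{p_0}$ is smooth at $p_0$ and radial, forcing its radial derivative to vanish at the center, exactly as $F'(0)=0$ in the earlier proof), we get $f(r) g'(r) = c \int_0^r f(s)\, ds$. Dividing by $f(r) > 0$ for $r > 0$ and recalling the definition $\mu(r) = \frac{\int_0^r f(s)\,ds}{f(r)}$ from Proposition \ref{prop:mu} yields precisely $g'(r) = c\,\mu(r)$.

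The computation is essentially routine; the one point requiring slight care is the justification of the initial condition $g'(0) = 0$ and the vanishing of the boundary term at $r=0$. This is exactly where the smoothness of $\varphi$ (hence of its radialization) together with the behavior $f(0)=0$ enters, and it mirrors the argument already carried out in Proposition \ref{prop:mvi}; I do not expect a genuine obstacle here, only the need to state these facts cleanly. The normalization $\varphi(p_0)=0$ is not actually needed for the stated conclusion about $g'$ (it only fixes $g(0)=0$), so I would simply note that it pins down the constant of integration for $g$ itself.
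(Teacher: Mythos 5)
Your proposal is correct and follows essentially the same route as the paper: derive the radial ODE $g''+\frac{f'}{f}g'=c$ (the paper states it directly, while you make the commutation $\Delta\pi_{p_0}=\pi_{p_0}\Delta$ explicit), rewrite it as $(fg')'=cf$, and integrate using $f(0)=0$ and $g'(0)=0$ (which the paper also obtains from evenness of $g$). Your closing observation that $\varphi(p_0)=0$ only pins down $g(0)$ and is not needed for the formula for $g'$ is also consistent with what the paper actually uses.
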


\begin{proof}
  Note that $g(0) = \varphi(p_0) = 0$ and that $g$ is even, since
  $\pi_{p_0}(\varphi)$ is radial around $p_0$. Therefore, we have
  $g'(0)=0$. The function $g$ satisfies the differential equation
  $$ g''(r) + \frac{f'(r)}{f(r)} g'(r) = c, $$
  i.e,
  $$ f(r)g''(r) + f'(r)g'(r) = cf(r), $$
  which, in turn, transforms into
  $$ (fg')'(r) = cf(r). $$
  Integrating over $[0,r]$, and using $f(0)g'(0) = 0$, leads to
  $$ g'(r) = c \frac{\int_0^r f(t)dt}{f(r)} = c \mu(r). $$
\end{proof}

From now on, we assume that $(X,g)$ is a harmonic manifold with $h=0$,
i.e., all its horospheres are minimal. The main step in the proof of
flatness is to prove that $(X,g)$ is Ricci flat. We will show that
${\rm Ric}^X < 0$ would imply unboundedness of $-\mu'' \mu$ from
above, in contradiction to Proposition \ref{prop:mu}(d). On the other
hand, we cannot have ${\rm Ric}^X > 0$, because this would imply
compactness of $(X,g)$, by the Bonnet-Myers Theorem. But we only consider
noncompact harmonic manifolds.

Let $v \in S_{p_0}X$, and $b_v$ be the associated Busemann function
with $b_v(p_0) = 0$. Then
$$
\Delta b_v = h = 0,
$$
i.e., $b_v$ is harmonic. This implies that
\begin{equation} \label{eq:Db2}
\Delta b_v^2 = 2 \Vert \grad b_v \Vert^2 = 2,
\end{equation}
and we can apply Proposition \ref{prop:gmu} with $\varphi=b_v^2$. Then
the function $g \circ d_{p_0} := \pi_{p_0} (b^2)$ satisfies $g'(r) = 2
\mu(r)$. Our next goal is to prove that the function $g$ is a very
special exponential polynomial.

\begin{prop} \label{prop:g} $g$ has the following properties:
  \begin{itemize}
  \item[(a)] $g$ is an even function.
  \item[(b)] We have $0 \le g(r) \le r^2$, for all $r \ge 0$.
  \item[(c)] We have $0 \le g''(r) \le 2$, for all $r \ge 0$.
  \end{itemize}
\end{prop}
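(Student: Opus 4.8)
The plan is to establish the three properties in turn, drawing on the relation $g'(r) = 2\mu(r)$ together with the properties of $\mu$ collected in Proposition \ref{prop:mu}. For part (a), I would argue that $g$ is even because $g \circ d_{p_0} = \pi_{p_0}(b_v^2)$ is a radialisation, hence a radial function around $p_0$; radial functions depend only on $d_{p_0}$ and therefore give rise to even functions of the radial variable (equivalently, $g'(0) = 0$ as already noted in the proof of Proposition \ref{prop:gmu}, and $g'(r) = 2\mu(r)$ with $\mu$ odd to leading order forces evenness). This part should be essentially immediate.

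For part (c), I would use $g'(r) = 2\mu(r)$ to write $g''(r) = 2\mu'(r)$, and then invoke Proposition \ref{prop:mu}(c), which gives $0 \le \mu'(r) \le 1$. This yields $0 \le g''(r) \le 2$ directly, with no further work. So part (c) reduces entirely to the already-established bounds on $\mu'$.

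For part (b), the lower bound $g(r) \ge 0$ follows by integrating $g'(r) = 2\mu(r) \ge 0$ (using Proposition \ref{prop:mu}(b)) from $0$ to $r$, together with $g(0) = \varphi(p_0) = b_v^2(p_0) = 0$. For the upper bound $g(r) \le r^2$, I would integrate the inequality $g''(r) = 2\mu'(r) \le 2$ from part (c): integrating once with $g'(0) = 0$ gives $g'(r) \le 2r$, and integrating again with $g(0) = 0$ gives $g(r) \le r^2$. Alternatively, one can directly observe that $b_v^2 \le d_{p_0}^2$ pointwise (since the Busemann function is $1$-Lipschitz with $b_v(p_0)=0$, so $|b_v(q)| \le d_{p_0}(q)$), and radialising preserves this inequality, giving $g(r) \le r^2$ at once.

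I do not anticipate a serious obstacle here, as each part follows from combining $g' = 2\mu$ with the already-proven properties of $\mu$ and the elementary facts $g(0)=0$, $g'(0)=0$. The only point requiring a little care is part (b): one must decide between the purely analytic route (two integrations of the second-derivative bound) and the geometric route (the Lipschitz estimate on $b_v$), and ensure the radialisation step genuinely transfers the pointwise inequality $b_v^2 \le d_{p_0}^2$ to the bound $g \le r^2$. I would favour the analytic derivation since it keeps the argument self-contained within the framework already developed via $\mu$, but I would mention the geometric interpretation as it clarifies why $r^2$ is the natural bound.
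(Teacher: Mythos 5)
Your proposal is correct, and for parts (a), (c) and the lower bound in (b) it coincides with the paper's proof: evenness from radiality of $\pi_{p_0}(b_v^2)$, the bound $0 \le g'' \le 2$ from $g'' = 2\mu'$ and Proposition \ref{prop:mu}(c), and $g \ge 0$ from integrating $g' = 2\mu \ge 0$ with $g(0)=0$. The only divergence is the upper bound $g(r) \le r^2$: the paper uses precisely the ``geometric route'' you mention as an alternative, namely that $|b_v(q)| \le d_{p_0}(q)$ gives $b_v^2(c_w(r)) \le r^2$ pointwise, and averaging over $S_{p_0}X$ preserves this, so the bound is immediate from the definition of $g$ as a radialisation. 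Your preferred analytic route (integrating $g''\le 2$ twice, using $g'(0)=2\mu(0)=0$ and $g(0)=0$) is equally valid and has the mild advantage of staying entirely within the $\mu$-calculus already developed, at the cost of making the upper bound in (b) depend on (c); the paper's one-line geometric argument is independent of (c) and makes transparent why $r^2$ is the natural bound. Either way the proposition stands, and there is no gap in your argument.
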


\begin{proof}
  {\bf (a)} Since $\pi_{p_0}(b^2)$ is radial around $p_0$, $g$ must be even.

  {\bf (b)} $0 \le g(r)$ follows from $g(0) = b_v(p_0)^2 = 0$, $g(r) =
  \frac{1}{2} \int_0^r \mu(t) dt$ and $\mu(r) \ge 0$, for $r \ge
  0$. Moreover,
  $$ g(r) = \frac{1}{\omega_n} 
  \int_{S_{p_0}X} \underbrace{b_v^2(c_w(r))}_{\le r^2} \le r^2. $$
  
  {\bf (c)} This follows from Proposition \ref{prop:mu}(c).
 \end{proof}  

 The next proposition is a key ingredient for the proof that $g$ is an
 exponential polynomial.

\begin{prop} \label{prop:F}
  Let $(X,g)$ be a harmonic manifold with minimal
  horospheres. Consider the vector space
  $$
  \mathcal{F} = \{\phi : X \to {\R} \mid \exists\; c,c_1,c_2>0\;
  \text{with}\; \Delta \phi = c, |\phi(x)| \leq c_1+c_2 d_{p_0}(x)^2 \}.
  $$
  Then $\mathcal{F}$ is finite dimensional.
\end{prop}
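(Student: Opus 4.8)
The plan is to show that the map sending each $\phi \in \mathcal{F}$ to the pair $(c,\varphi)$ determined by its data exhibits $\mathcal{F}$ as an extension of a finite-dimensional space by the space $H_2(p_0)$ of harmonic functions of quadratic growth, which is finite dimensional by Theorem \ref{thm:LiW}. Concretely, I would first observe that the constant $c$ appearing in $\Delta \phi = c$ depends linearly on $\phi$, so the assignment $\phi \mapsto c$ is a linear functional $\Lambda : \mathcal{F} \to \R$. The kernel of $\Lambda$ consists of exactly those $\phi \in \mathcal{F}$ with $\Delta \phi = 0$, that is, harmonic functions with the quadratic growth bound $|\phi(x)| \le c_1 + c_2 d_{p_0}(x)^2$. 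This kernel is therefore contained in $H_2(p_0)$ (taking $D=2$ in Theorem \ref{thm:LiW}), hence finite dimensional.

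It then remains to control the image of $\Lambda$, which is automatic: since $\Lambda$ is a linear map into the one-dimensional space $\R$, its image has dimension at most $1$. By the rank--nullity theorem,
\begin{equation*}
\dim \mathcal{F} = \dim \ker \Lambda + \dim \operatorname{im} \Lambda \le \dim H_2(p_0) + 1 < \infty.
\end{equation*}
Thus the finite dimensionality of $\mathcal{F}$ reduces entirely to that of $H_2(p_0)$, and the only genuine input is Theorem \ref{thm:LiW}, whose hypothesis (polynomial volume growth) is available here because a harmonic manifold with minimal horospheres has $h=0$ and hence polynomial volume growth by Corollary \ref{cor:h0poly}.

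The one point that needs a little care — and which I expect to be the main (minor) obstacle — is verifying that $\ker \Lambda$ really lands inside $H_2(p_0)$ as defined, namely that the growth bound $|\phi(x)| \le c_1 + c_2 d_{p_0}(x)^2$ matches the form $|\varphi(x)| \le c_1 + c_2 d_{p_0}(p)^D$ with $D=2$. This is immediate from the definitions, so no real estimate is required; one only notes that a $\phi$ with $\Delta\phi = 0$ and quadratic growth is by definition an element of $H_2(p_0)$. An equally valid and slightly more self-contained alternative would be to fix one particular $\phi_0 \in \mathcal{F}$ with $\Delta \phi_0 \ne 0$ (for instance a suitable multiple of $\mu \circ d_{p_0}$, which satisfies $\Delta(\mu \circ d_{p_0}) = 1$ by \eqref{eq:mudiffeqagain} and has quadratic growth since $0 \le \mu'(r) \le 1$ forces $\mu(r) \le r$), and then write every $\phi \in \mathcal{F}$ as $\phi = \frac{\Lambda(\phi)}{\Lambda(\phi_0)}\phi_0 + (\phi - \frac{\Lambda(\phi)}{\Lambda(\phi_0)}\phi_0)$, where the second summand is harmonic of quadratic growth and the first ranges over a one-dimensional space; this makes the decomposition $\mathcal{F} = \R\,\phi_0 \oplus H_2(p_0)$ fully explicit.
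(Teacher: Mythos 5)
Your proposal is correct and is essentially the paper's own proof: the paper likewise considers the linear map $\Phi : \mathcal{F} \to \R$, $\Phi(\phi) = \Delta \phi$, identifies its kernel with $H_2(p_0)$ (finite dimensional by Theorem \ref{thm:LiW}), and concludes $\dim \mathcal{F} \leq \dim H_2(p_0) + 1 < \infty$. Your extra touches --- checking the polynomial volume growth hypothesis via Corollary \ref{cor:h0poly}, and the explicit splitting using $\mu \circ d_{p_0}$ --- merely make the same argument more self-contained.
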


\begin{proof}
  We know from Theorem \ref{thm:LiW} that
  $$
  H_2(p_0) = \{\phi : X \to {\R} \mid \Delta \phi=0, \exists\;
  c_1,c_2>0\; \text{with}\; |\phi(x)| \leq c_1+c_2 d_{p_0}(x)^2 \}
  $$
  is finite dimensional. The map $\Phi : \mathcal{F} \to {\R},
  \Phi(\phi) = \Delta \phi$ is linear and $\ker \Phi =
  H_2(p_0)$. Therefore
  $$
  \dim \mathcal{F} \leq \dim H_2(p_0) + 1 < \infty.
  $$
\end{proof}

Next, we introduce the concept of translation of a radial function. 

\begin{dfn}
  Let $\phi \in C(X)$ be a radial function around $p_0$ with $\phi(q) =
  F(d_{p_0}(q))$. The translation of $\phi$ to another point $p \in X$
  is denoted by $\phi_p$ and defined by
  $$
  \phi_p(q) = F(d_{p_0}(q)).
  $$
\end{dfn}

We now consider the radial function $\phi = \pi_{p_0}(b_v^2)$. Then
$\phi = g \circ d_{p_0}$ and $\phi_p = g \circ d_p$. From
\eqref{eq:Db2} we conclude that
$$ \Delta \phi = \Delta \pi_{p_0} (b_v^2) = \pi_{p_0} (\Delta b_v^2) = 2. $$
The translation of $\phi$ to any point $p \in X$ satisfies also
$$ \Delta \phi_p = 2, $$
since $\Delta$, applied to a radial function, is in radial coordinates
independent of the centre. Using \ref{prop:g}(b), we have, for all $p
\in X$,
$$ |\phi_p(x)| \le d_p(x)^2 \le (d_p(p_0) + d_{p_0}(x))^2 \le 
2 d_p(p_0)^2 + 2 d_{p_0}(x)^2, $$ 
which shows that $\phi_p \in {\mathcal F}$, for all $p \in X$.

Let $\gamma: \R \to X$ be a geodesic with $\gamma(0) = p_0$. For a
function $\psi \in C^\infty(X)$, we define $\gamma^* \psi \in
C^\infty({\R})$ via $(\gamma^* \psi)(t) = \psi(\gamma(t))$. Let
${\mathcal F}$ be the finite dimensional vector space introduced in
Proposition \ref{prop:F}. Then the vector space $\tilde{\mathcal{F}} =
\gamma^* \mathcal{F} \subset C^\infty ({\R})$ has also finite
dimension.

Note that $g = \gamma^* \phi \in \tilde{\mathcal{F}}$. Let $g_s(t) :=
g(t-s)$. Then we have, for all $s \in \R$, $g_s = \gamma^*
\phi_{\gamma(s)} \in \tilde{\mathcal{F}}$.  Applying Proposition
\ref{prop:finexppol}, we see that $g$ is an exponential polynomial.
From Proposition \ref{prop:g}, we know that $0 \le g(t) \leq t^2$ and that
$g$ is even. Therefore, we must have
\begin{eqnarray*}
  g(t) = \sum\limits_{i=1}^{N_1} a_i \cos(\alpha_i t) & + & 
  t \sum\limits_{i=1}^{N_2} b_i \sin(\beta_i t)\\
  & + & t^2 \sum\limits_{i=1}^{N_3} c_i \cos (\gamma_i t),
\end{eqnarray*}
with suitable constants $a_i, b_i, c_i, \alpha_i, \beta_i, \gamma_i \in \R$.

Since $g''(t) \leq 2$ by Proposition \ref{prop:g}(c), this simplifies
to
$$
g(t) = \sum\limits_{i=1}^N a_i \cos(\alpha_i t) + c t^2.
$$
Differentiating $g$ trice, we obtain
$$
\mu''(t) = \frac{1}{2}g'''(t) = \frac{1}{2} \sum\limits_{i=1}^N a_i
\alpha_i^3 \sin(\alpha_i t).
$$

Now, we assume that ${\rm Ric}^X < 0$. Proposition \ref{prop:mu}(a)
then tells us that $\mu'''(0) = \frac{2{\rm Ric}^X}{n(n+2)} < 0$,
which implies that there exist $\delta, r_0 > 0$ such that $\mu''(r_0)
= - \delta < 0$. Since $\mu''$ is a finite sum of sines, $\mu''$
is almost periodic, and there exists a sequence $r_k \to \infty$ such that 
$\mu''(r_k) = - \delta$ (see, e.g., \cite{Bohr}). This implies that 
$-\mu(r_k) \mu''(r_k) = \delta \mu(r_k) \to \infty$, because of  
$$ \lim_{r \to \infty} \frac{1}{\mu(r)} = 
\lim_{r \to \infty} \frac{f(r)}{\int_0^r f(t)dt} = \lim_{r \to \infty}
\frac{f'(r)}{f(r)} = h = 0. $$ 
But $\-\mu(r_k) \mu''(r_k) \to \infty$ is in contradiction to
\ref{prop:mu}(d). This implies that $X$ must be Ricci flat.

Next, we consider the Ricatti equation
$$ U_v'(r) + U_v^2(r) + R_v(r) = 0, $$  
where $U_v(r)$ (a self-adjoint endomorphism on $c_v'(r)^\bot \subset
T_{c_v(r)}X$) denotes the second fundamental form of the horosphere
through $c_v(r)$, and centered at $c_v(-\infty)$, and $R_v(r)(w) =
R(w,c_v'(r))c_v'(r)$ is the Jacobi operator. Note that $\tr U_v(r) =
(\Delta b_v)(c_v(r)) = h = 0$ and, therefore, also $\tr U_v'(r) = 
(\tr U_v)'(r) = 0$. This implies that, after taking traces,
$$
tr U_v^2(r) = -\tr R_v(r) = -{\rm Ric}^X = 0,
$$
i.e., $U_v^2(r) = 0$, which implies that $U_v(r) = 0$. Inserting this
back into the Riccati equation, we end up with $R_v(r) = 0$, which
shows that $(X,g)$ is flat, finishing the proof.

Finally, we like to present another (very strong) criterion, which
implies flatness. For non-unit tangent vectors $v \in TX$, let us
define the corresponding Busemann functions by $b_v : = \Vert v \Vert
b_{v/\Vert v \Vert}$.

\begin{prop} \label{prop:bndim}
  Let $(X,g)$ be a harmonic manifold of dimension $n$. If the vector
  space
  $$
  {\rm span} \{b_v \mid v \in T_pX \}
  $$
  is $n$-dimensional, then the map $F: X \to \R^n$,
  $$
  F(x) = (b_{e_1}(x), \ldots, b_{e_n}(x))
  $$
  is an isometry. In particular, $(X,g)$ is flat.
\end{prop}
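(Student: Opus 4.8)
The plan is to exploit the hypothesis $\dim \operatorname{span}\{b_v \mid v \in T_pX\} = n$ in order to linearize the assignment $v \mapsto b_v$, and then to read off from the eikonal property of Busemann functions that the gradients $\grad b_{e_i}$ form an orthonormal frame at every point of $X$. Here $\{e_i\}$ is understood to be an orthonormal basis of $T_pX$.

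First I would record two standard facts about Busemann functions on a complete simply connected manifold without conjugate points: each $b_w$ (for $w \in S_pX$) satisfies the eikonal equation $\|\grad b_w\| \equiv 1$ on all of $X$, and at the footpoint one has $\grad b_w(p) = \pm w$ (the sign depends on the chosen convention and is immaterial below). Consequently the linear map $\phi \mapsto \grad \phi(p)$ sends $b_{e_i}$ to $\pm e_i$; since $\{e_i\}$ is orthonormal, the functions $b_{e_1},\dots,b_{e_n}$ are linearly independent, and being $n$ elements of the $n$-dimensional space $V := \operatorname{span}\{b_v\}$, they form a basis of $V$.

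Next I would prove the linearity. For any unit $w$ we have $b_w \in V$, hence $b_w = \sum_i c_i\, b_{e_i}$ with uniquely determined coefficients $c_i$. Applying the linear functional $\phi \mapsto \grad\phi(p)$ to this identity gives $\pm w = \pm\sum_i c_i e_i$, whence $c_i = \langle w, e_i \rangle$ and
$$ b_w = \sum_{i=1}^n \langle w, e_i\rangle\, b_{e_i}. $$
Differentiating at an arbitrary point $x \in X$ yields $\grad b_w(x) = \sum_i \langle w, e_i\rangle \grad b_{e_i}(x)$, so the eikonal identity $\|\grad b_w(x)\| = 1$ becomes
$$ \sum_{i,j} \langle w, e_i\rangle \langle w, e_j\rangle\, G_{ij}(x) = 1, \qquad G_{ij}(x) := \langle \grad b_{e_i}(x), \grad b_{e_j}(x)\rangle, $$
valid for every unit $w$. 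Since the tuple $(\langle w, e_i\rangle)_i$ ranges over the entire unit sphere and satisfies $\sum_i \langle w, e_i\rangle^2 = 1$, the symmetric matrix $G(x) - \id$ gives a quadratic form vanishing on the whole unit sphere, which forces $G_{ij}(x) = \delta_{ij}$. Thus $\{\grad b_{e_i}(x)\}$ is an orthonormal frame at every $x \in X$.

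Finally I would conclude. With $F = (b_{e_1},\dots,b_{e_n})$, orthonormality of $\{\grad b_{e_i}\}$ gives, for all tangent vectors $Y,Z$,
$$ (F^* g_{\mathrm{eucl}})(Y,Z) = \sum_i \langle \grad b_{e_i}, Y\rangle \langle \grad b_{e_i}, Z\rangle = \langle Y, Z\rangle, $$
so $F$ is a local isometry onto $\R^n$. Since $(X,g)$ is complete, $F$ is a Riemannian covering map, and as $\R^n$ is simply connected this covering is trivial; hence $F$ is a global isometry and $(X,g)$ is flat. The main obstacle is precisely the linearization step, since this is the only place where the dimension hypothesis enters; the cleanest justification is the evaluation of gradients at $p$ carried out above, while the remaining ingredients (the eikonal property $\|\grad b_v\|\equiv 1$, and the passage from a local isometry of a complete simply connected manifold to a global one) are standard.
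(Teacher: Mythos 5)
Your proof is correct and follows essentially the same route as the paper's: both use the dimension hypothesis together with evaluation of gradients at $p$ to linearize $v \mapsto b_v$, then combine this linearity with the eikonal property $\Vert \grad b_v \Vert \equiv \Vert v \Vert$ to show that $\{\grad b_{e_i}(x)\}$ is an orthonormal frame at every $x$, so that $DF(x)$ is a linear isometry. The only differences are cosmetic (the paper derives $b_{v+w}=b_v+b_w$ and polarizes, where you expand in the basis $\{b_{e_i}\}$ and use a quadratic form on the unit sphere), and you are more explicit than the paper about the final step from pointwise isometry to global isometry via completeness and the covering argument.
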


\begin{proof}
  Let ${\mathcal B} := {\rm span} \{b_v \mid\; v \in T_p X \}$. Note
  that the map $F: {\mathcal B} \to T_pX$, $F(b) = {\rm grad}\;b(p)$ is
  a bijection, since both vectors spaces ${\mathcal B}$ and $T_pM$
  have the same dimension and $F$ is surjective, because of ${\rm
    grad}\;b_v(p) = -v$. Let $v,w \in T_pX$. Since
  $$ F(b_v+b_w) = -(v+w) = F(b_{v+w}), $$
  we conclude that $b_{v+w} = b_v + b_w$. 

  Let $q \in X$. It is sufficient to show that $DF(q): T_qX \to \R^n$
  is a linear isometry. Note that 
  $$
  DF(q)(w) = \left( \langle \grad b_{e_1}(q),w \rangle, \ldots, 
  \langle \grad b_{e_n}(q),w \rangle \right).
  $$
  We first show that
  \begin{equation} \label{eq:bv1v2}
  \langle \grad b_{v_1}(q), \grad b_{v_2}(q) \rangle = \langle v_1,v_2
  \rangle \quad \forall\; v_1,v_2 \in T_p X,
  \end{equation}
  using $b_{v_1+v_2} = b_{v_1} + b_{v_2}$. Note that $\Vert b_v(q)
  \Vert = \Vert v \Vert$ for all $v \in TX$. Using both facts, we obtain
  \begin{eqnarray*}
    \Vert v_1+v_2 \Vert^2 &=& \langle \grad b_{v_1+v_2}(q), 
    \grad b_{v_1+v_2}(q) \rangle\\
    &=& \langle \grad b_{v_1}(q), \grad b_{v_1}(q) \rangle
    + \langle \grad b_{v_2}(q), \grad b_{v_2}(q) \rangle\\
    && + \; 2 \langle \grad b_{v_1}(q), \grad b_{v_2}(q) \rangle\\
    &=& \Vert v_1 \Vert^2 + \Vert v_2 \Vert^2 + 
    2 \langle \grad b_{v_1}(q), \grad b_{v_2}(q) \rangle.
  \end{eqnarray*}
  This shows \eqref{eq:bv1v2}. Therefore $\{\grad b_{e_i}(q) \mid 1
  \leq i \leq n\}$ are an orthonormal basis of $T_q X$. This implies
  that 
  $$
  \Vert DF(q)(w)\Vert^2 = \sum\limits_{i=1}^n \langle \grad
  b_{e_i}(q),w \rangle^2 = \Vert w \Vert^2.
  $$
\end{proof}

\begin{rmk}
  It is tempting to use Proposition \ref{prop:bndim}, to cook up an
  alternative proof of the fact that $h=0$ implies flatness of
  $(X,g)$. But our attempt to do this, falls short. Nevertheless, let
  us see, how far we get: In the case $h=0$, the harmonic manifold
  $(X,g)$ has polynomial growth, by Corollary
  \ref{cor:h0poly}. Moreover, we have $\Delta b_v = 0$, and therefore
  $$ {\rm span} \{ b_v \mid v \in TX \} \subset H_1(p_0). $$
  Theorem \ref{thm:LiW} tells us that this span is finite
  dimensional. In view of Proposition \ref{prop:bndim}, we would like
  to show that
  $$ {\rm dim} \left( {\rm span}  \{ b_v \mid v \in T_pX \} \right) = n. $$
  But we do not know how to derive such a precise result on the dimension
  of this space. 
\end{rmk}

\section{Special eigenfunctions of geodesic spheres}

Let $(X,g)$ be an arbitrary harmonic manifold with reference point
$p_0$. Ranjan/Shah introduce in \cite{RSh2} an interesting family of
functions $\varphi_v$ on $X \setminus \{ p_0 \}$. It turns out that
these functions, restricted to the geodesic spheres $S_r(p_0)$, are
eigenfunctions of the Laplacian $\Delta^{S_r(p_0)}$ for all radii $r >
0$. These eigenfunctions have just two nodal domains, both with half
the volume of $S_r(p_0)$, and it is natural to assume that $\varphi_v$
are eigenfunctions to the smallest non-trivial eigenvalue of
$\Delta^{S_r(p_0)}$ (see \cite[p. 690, Remark (iii)]{RSh2}). But there
is currently no proof of this assumption.

We first define $w_{p_0}(q) \in S_{p_0}X$ by
$$
exp_{p_0}(d_{p_0}(q) w_{p_0}(q)) = q,
$$
where $d_{p_0}(q) = d(p_0,q)$.

\begin{prop} (see \cite[Section 4]{RSh2})
  For each $v \in T_{p_0}X$, let $\varphi_v: X \setminus \{p_0\} \to
  \R$ be the function
  $$
    \varphi_v(q) = \langle v, w_p(q) \rangle.
  $$
  Then the restriction of $\varphi$ to the geodesic sphere $S_r(p_0)
  \subset X$ is an eigenfunction of the Laplacian $\Delta^{S_r(p)}$.
  The corresponding eigenvalue is $- \left(\frac{f'}{f}\right)' (r) > 0$.
\end{prop}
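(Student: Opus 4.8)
The plan is to avoid computing the spherical Laplacian directly and instead to recognise $\varphi_v$ as the derivative of the distance function under a variation of the base point. For $v\in T_{p_0}X$ let $\gamma(s)=\exp_{p_0}(sv)$ and consider the function $(s,q)\mapsto d(\gamma(s),q)$, which is smooth for $q\neq\gamma(s)$ since $X$ is a Cartan--Hadamard manifold without conjugate points. The first variation of arc length gives
\[
\frac{d}{ds}\Big|_{s=0} d(\gamma(s),q)=\langle \grad_p d(p,q)\big|_{p=p_0},\,v\rangle=-\langle w_{p_0}(q),v\rangle,
\]
because the gradient of $p\mapsto d(p,q)$ at $p_0$ is the outward unit direction $-w_{p_0}(q)$ (the geodesic from $p_0$ to $q$ starts in the direction $w_{p_0}(q)$). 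Hence $\varphi_v(q)=\langle v,w_{p_0}(q)\rangle=-\frac{d}{ds}\big|_{s=0}d(\gamma(s),q)$.

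First I would record the harmonic condition in exactly the form needed: for any fixed base point $p$ the function $q\mapsto d_p(q)$ is radial around $p$, so the polar expression \eqref{eq:laplpolar} of the Laplacian, applied to $F=\mathrm{id}$, yields $\Delta_q d_p(q)=\frac{f'}{f}\big(d(p,q)\big)$; that is, the Laplacian of the distance function depends only on the distance. Applying this with $p=\gamma(s)$ and differentiating at $s=0$ — here $\Delta_q$ (acting in the $q$-variable) commutes with $\frac{d}{ds}$ by smoothness — the chain rule and the identity above give
\[
\Delta_q\varphi_v(q)=-\frac{d}{ds}\Big|_{s=0}\frac{f'}{f}\big(d(\gamma(s),q)\big)=-\Big(\tfrac{f'}{f}\Big)'(r)\,\frac{d}{ds}\Big|_{s=0}d(\gamma(s),q)=\Big(\tfrac{f'}{f}\Big)'(r)\,\varphi_v(q),
\]
with $r=d_{p_0}(q)$. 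This already identifies the eigenvalue.

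It remains to pass from the ambient Laplacian to $\Delta^{S_r(p_0)}$. The point is that $\varphi_v$ is radially constant: $\varphi_v(\exp_{p_0}(tw))=\langle v,w\rangle$ is independent of $t$, so $\partial_r\varphi_v=0$, and since radial geodesics are geodesics, $\mathrm{Hess}\,\varphi_v(N,N)=0$ for the radial unit field $N$. Thus \eqref{eq:laplpolar} (equivalently, the hypersurface relation between $\Delta^{S_r}$ and $\Delta^X$, whose correction terms involve exactly $\partial_r\varphi_v$ and $\mathrm{Hess}\,\varphi_v(N,N)$) gives $\Delta^{S_r(p_0)}\big(\varphi_v|_{S_r(p_0)}\big)=\Delta^X\varphi_v=\big(\tfrac{f'}{f}\big)'(r)\,\varphi_v$. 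Since $\big(\tfrac{f'}{f}\big)'(r)$ depends only on $r$, the restriction $\varphi_v|_{S_r(p_0)}$ is an eigenfunction, and because $\frac{f'}{f}$ is monotonically decreasing the eigenvalue of the positive Laplacian $-\Delta^{S_r}$ equals $-\big(\tfrac{f'}{f}\big)'(r)>0$, as asserted.

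The main obstacle I expect is one of rigour rather than of idea: justifying the interchange $\Delta_q\frac{d}{ds}=\frac{d}{ds}\Delta_q$ together with the smoothness and uniform control of $(s,q)\mapsto d(\gamma(s),q)$ near the relevant points, and keeping the sign in the first-variation identity straight. I note that a direct assault — writing $\grad\varphi_v=(A_v^{-1})^{\ast}v^{\perp}$ via Jacobi tensors and taking its divergence along $S_r$ — is possible but runs into the genuinely harder task of controlling the \emph{angular} derivative of the Jacobi tensor $A_v$; the reformulation above circumvents this completely, trading that angular derivative for the trivial base-point derivative of the radial quantity $\frac{f'}{f}$.
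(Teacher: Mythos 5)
Your proposal is correct and is essentially the paper's own argument: the authors likewise write $\varphi_v(q)=-\frac{d}{ds}\big|_{s=0}d(\gamma(s),q)$ for a curve $\gamma$ with $\gamma'(0)=v$, commute $\Delta_q$ with $\frac{d}{ds}$, use $\Delta_q d_{\gamma(s)}=\frac{f'}{f}\circ d_{\gamma(s)}$ from \eqref{eq:laplpolar}, and then identify $\Delta^X\varphi_v$ with $\Delta^{S_r(p_0)}\varphi_v$ because the radial derivatives of $\varphi_v$ vanish. The only cosmetic difference is that you take $\gamma$ to be the geodesic $\exp_{p_0}(sv)$ and spell out the Hessian term in the hypersurface comparison, neither of which changes the proof.
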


\begin{proof}
  Let $\gamma : (- \epsilon, \epsilon) \to X$ be a smooth curve with
  $\gamma'(0) = v \in T_{p_0}X$. Then we have
  \begin{eqnarray*}
    \frac{d}{ds} \bigg|_{s=0} d_q (\gamma(s)) & = & \langle \grad d_q(p), 
    \gamma'(0) \rangle\\
    & = & - \langle w_{p_0}(q), v \rangle = - \varphi_v(q).
  \end{eqnarray*}
  Using
  \begin{equation} \label{eq:laplpolar}
  \Delta_q^X (F\circ d_p)(q) = 
  F''\circ d_p(q) + \left(\frac{f'}{f}\; F'\right)\circ d_p(q)
  \end{equation}
  and the chain rule, we conclude
  \begin{eqnarray*}
    \Delta^X \varphi_v(q) & = & - \Delta_q \bigg( \frac{d}{ds} \bigg|_{s=0} d_q 
    (\gamma(s)) \bigg) = - \frac{d}{ds} \bigg|_{s=0} (\Delta_q d_{\gamma(s)})(q)\\
    & = & - \frac{d}{ds} \bigg|_{s=0} \frac{f'}{f} (d(\gamma(s),q)) = 
    - \bigg( \frac{f'}{f} \bigg)'(d_{p_0}(q))(- \varphi_v(q)).
  \end{eqnarray*}
  This implies 
  $$ \Delta^X \varphi_v(q) = \bigg( \frac{f'}{f} \bigg)' (d_{p_0}(q))
  \varphi_v(q). $$
  Since the radial derivatives of $\varphi_v(q)$ with respect to the the centre
  $p_0$ vanish, we have for all $q \in S_r(p_0)$ 
  $$ \Delta^X \varphi_v(q) = \Delta^{S_r(p_0)} \varphi_v(q), $$
  i.e.,
  $$ \Delta^{S_r(p_0)} \varphi_v(q) - \left(\frac{f'}{f}\right)' (r) 
  \varphi_v(q) = 0. $$
\end{proof}

\begin{rmk}
  Let $(X,g)$ be a harmonic manifold of dimension $n$ with reference
  point $p_0$.

  (a) The eigenspace ${\mathcal E}$ of $\Delta^{S_r(p_0))}$ to the
  eigenvalue $-\left(\frac{f'}{f}\right)'(r)$ has dimension $\ge n
  $. This follows from the fact that the map $T_{p_0}X \to {\mathcal
    E}$, $v \mapsto \varphi_v$ is linear and injective. 

  (b) We have the asymptotics $-(\frac{f'}{f})'(r) \to 0$ as $r \to
  \infty$, see Chapter \ref{chap:curvspheres}. It would be interesting
  to find out whether $-(\frac{f'}{f})' (r) > 0$ is the smallest non-zero
  eigenvalue of $S_r(p_0)$.

  (c) After the canonical identification of $S_r(p_0)$ with $S_{p_0}
  X$ via the exponential map and pullback of the Riemannian metric on
  $S_r(p_0) \subset X$, we obtain a family of Riemannian manifolds
  $(S_{p_0}X, g_r)$ such that the Laplace eigenfunction $\varphi_v$ is
  a spherical harmonic of degree $1$. In general, it is unlikely that
  spherical harmonics of higher degree are also Laplace
  eigenfunctions, but these spherical harmonics can be used to obtain
  orthonormal bases of the Hilbert spaces $L^2(S_{p_0}X, g_r)$, $r >
  0$, since the Riemannian measures of $(S_{p_0}X,g_r)$ are multiples
  of the Euclidean measure of $S_{p_0}X \subset T_{p_0}X$ (viewed as a
  round unit sphere).
\end{rmk}

Let $(X,g)$ be a harmonic manifold and $v \in T_p X$. Consider the
function $\varphi_v: X \to \mathbb{R}$ defined by
$$
\varphi_v(q) = \langle v,w_p q \rangle
$$
where $q = \exp_p (d(p,q)w_p(q))$. Since in the radial direction the
function is constant we have for $q = c_w(r)$: $\grad \varphi_v(q) \in
T_q S_{d(p,q)}(p)$. We want to calculate the gradient of $\varphi_v$:

Take a curve $\alpha : (- \epsilon, \epsilon) \to S_p X$ with
$\alpha(0) = w$. Then
$$
\varphi_v (c_{\alpha(s)}(r)) = \langle v, \alpha(s) \rangle
$$
This implies
$$
\left\langle \grad \varphi_v(q), \left.\frac{\partial}{\partial s}
\right|_{s=0} c_{\alpha(s)}(r) \right\rangle_q = \left\langle v,
\left.\frac{\partial}{\partial s} \right|_{s=0} \alpha(s) \right\rangle_p
$$

Let $A_w$ be orthonormal Jacobitensor along $c_w$ with $A_w(0) = 0$
and $A_w'(0) = \id$ . Then $\det(A_w(r)) = f(r)$ and
$$
\left\langle \grad \varphi_v(q), A_w(r) \left.\frac{\partial}{\partial s}
\right|_{s=0} \alpha(s) \right\rangle_q = \left\langle v,
\left.\frac{\partial}{\partial s} \right|_{s=0} \alpha(s) \right\rangle_p
$$
and therefore
\begin{eqnarray*}
\left\langle A_w^*(r) \grad \varphi_v(q), \left.\frac{\partial}{\partial s}
\right|_{s=0}\alpha(s) \right\rangle_p &=& \left\langle v,
\left.\frac{\partial}{\partial s} \right|_{s=0} \alpha(s) \right\rangle_p\\
&=& \left\langle v - \langle v,w \rangle w, \left.\frac{\partial}{\partial s}
\right|_{s=0} \alpha(s) \right\rangle_p.
\end{eqnarray*}

Since this holds for all $\left.\frac{\partial}{\partial s}
\right|_{s=0} \alpha(s)$ we have
\begin{equation} \label{eq:gradphivq}
A_w^*(r) \grad \varphi_v(q) = v -\langle v,w \rangle w
\end{equation}
and
$$
\grad \varphi_v(q) = (A_w^*(r))^{-1} (v -\langle v,w \rangle w).
$$
This yields
$$
\|\grad \varphi_v(q)\|^2 = \langle A_w^{-1} (r)(A_w^* (r))^{-1} (v -
\langle v,w \rangle w), v - \langle v,w \rangle w \rangle.
$$
Since $\varphi_v$ is eigenfunction with $\Delta^{S_r(p)} \varphi_v =
(\frac{f'}{f}(r)) \varphi_v$, we have
$$
\frac{\int\limits_{S_r(p)} \|\grad \varphi_v\|^2 d
  \mu_r}{\int\limits_{S_r(p)} \varphi_v^2 d \mu_r} = -
\frac{\int\limits_{S_r(p)} \Delta^{S_r(p)} \varphi_v(q) \varphi_v(q) d
  \mu_r(q)}{\int\limits_{S_r(p)} \varphi_v^2(q) d \mu_r(q)} = -\left(
  \frac{f'}{f} \right)'(r).
$$
On the other hand
$$
\int\limits_{S_r(p)}\varphi_v^2 d \mu_r = \int\limits_{S_pX} \langle
v,w \rangle^2 f(r) d \theta_p(w) = f(r) \frac{w_{n-1}}{n} \langle v,v
\rangle
$$
and
$$
\int\limits_{S_r(p)} \|\grad \varphi_v\|^2 d \mu_r = f(r)
\int\limits_{S_pX} \langle A_w^* A_w)^{-1} (r) \langle v - \langle v,w
\rangle w ), v - \langle v,w \rangle w \rangle d \theta_p(r)
$$
implies
$$
- \left( \frac{f'}{f} \right)'(r) \langle v,v \rangle =
\frac{n}{w_{n-1}} \int\limits_{S_pX} \langle (A_w^* A_w)^{-1} (r)
\langle v - \langle v,w \rangle w), v - \langle v,w \rangle w \rangle
d \theta_p
$$
for all $v \in T_p X$. Define the symmetric endomorphism
$$
H_w(r) u =
\begin{cases} (A_w^* (r) A_w(r))^{-1} (u) & \text{if $u \perp w$,}\\
0 & \text{if $u \in \Span\{w\}$.}
\end{cases}
$$
We obtain the identity
$$ 
\frac{n}{\omega_n} \int\limits_{S_pX} \langle H_w(r) v,v \rangle d
\theta_p(w) = - \left( \frac{f'}{f} \right)'(r) \langle v,v \rangle.
$$
Since both sides are symmetric linear forms, we conclude that
$$
\frac{n}{\omega_n} \int\limits_{S_pX} \langle H_w(r) u,v \rangle d \theta_p(w) = - \left( \frac{f'}{f} \right)'(r) \langle u,v \rangle
$$
and
\begin{equation}\label{eq:star}
  \frac{n}{\omega_n} \int\limits_{S_pX} H_w(r)(\cdot) d \theta_p(w) = 
  - \left( \frac{f'}{f} \right)'(r) \id_{T_pX}.
\end{equation}
Using the arithmetric-geometric mean, we have
\begin{equation} \label{eq:starstar} \left( \frac{1}{f^2(r)}
  \right)^{\frac{1}{n-1}} = \det \left(H_w(r) \big|_{w^ \perp}
  \right)^{\frac{1}{n-1}} \leq \frac{1}{n-1} tr H_w(r).
\end{equation}
Taking traces in \eqref{eq:star}
$$
\frac{n}{\omega_n} \int\limits_{S_pX} tr H_w(r)(\cdot) d
\theta_p(w) = -n \left( \frac{f'}{f} \right)' (r),
$$
and using \eqref{eq:starstar} yields the inequality:
$$
\frac{n(n-1)}{\omega_n} \int\limits_{S_pX} \left( \frac{1}{f^2(r)}
\right)^{\frac{1}{n-1}} d \theta_p(w) \leq -n \left( \frac{f'}{f}
\right)' (r).
$$
Therefore
$$
 (n-1) \frac{1}{f^{ \frac{2}{n-1}}(r)}  \leq - \left( \frac{f'}{f} \right)' (r).
$$
or equivalently

\begin{center}
  \fbox{\parbox{12cm} {
      \begin{equation} \label{eq:box} (n-1) \leq -f ^{\frac{2}{n-1}}
        (r) \left( \frac{f'}{f} \right)' (r).
      \end{equation}\\
      The density function $f(r)$ of every noncompact harmonic space 
      satisfies the differential inequality \eqref{eq:box}. In cases 
      $f(r) = r^{n-1}$ or $f(r) = \sin h^{n-1} (r)$ we have equality 
      in \eqref{eq:box}. }}
\end{center}

\begin{prop}
  The eigenvalue $-\left(\frac{f'}{f} \right)'(r)$ of the Laplacian on
  geodesic spheres of radius $r$ tends to zero as $r$ tends to
  $\infty$. If $-\left(\frac{f'}{f} \right)'(r)$ tends to zero on an
  exponential rate, the volume growth of $(X,g)$ is purely
  exponential.
\end{prop}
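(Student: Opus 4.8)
The statement splits into two independent claims, and the plan is to prove each by reducing it to asymptotics of $\frac{f'}{f}$ and $\frac{f''}{f}$ already established in the previous chapters.

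For the first claim, I would start from the identity
\[
-\left(\frac{f'}{f}\right)'(r) = \left(\frac{f'}{f}\right)^2(r) - \frac{f''}{f}(r),
\]
which is just $\left(\frac{f'}{f}\right)' = \frac{f''f-(f')^2}{f^2}$ rewritten, and then take the limit $r \to \infty$ of the two terms separately. By Remark (c) following Theorem \ref{thm:Ni} the mean curvature $\frac{f'}{f}(r)$ decreases monotonically to $h$, so the first term converges to $h^2$. For the second term I would combine \eqref{eq:fricscal}, which reads $\frac{f''}{f}(r) = \mathrm{scal}^{S_r}(q) - (n-1)\mathrm{Ric}^X$, with the convergence $\mathrm{scal}^{S_r} \to \mathrm{scal}^{\mathcal{H}}$ as $r \to \infty$ and the identity \eqref{eq:ricscalhor}, namely $(n-1)\mathrm{Ric}^X - \mathrm{scal}^{\mathcal{H}} = -h^2$, to obtain $\frac{f''}{f}(r) \to h^2$ as well. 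Subtracting the two limits gives $-\left(\frac{f'}{f}\right)'(r) \to h^2 - h^2 = 0$, as claimed.

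For the second claim, I would set $u(r) := \frac{f'}{f}(r)$, so that $u$ decreases monotonically to $h$ and $u'(r) = \left(\frac{f'}{f}\right)'(r) \le 0$. The exponential-rate hypothesis means there are constants $C,\lambda > 0$ with $-u'(r) \le C e^{-\lambda r}$ for all large $r$. Integrating from $r$ to $\infty$ and using $u(s) \to h$ yields
\[
0 \le u(r) - h = \int_r^\infty \big(-u'(s)\big)\, ds \le \frac{C}{\lambda}\, e^{-\lambda r},
\]
so $u - h$ is nonnegative and integrable on $[1,\infty)$. Writing $\log f(r) - hr = \big(\log f(1) - h\big) + \int_1^r \big(u(s) - h\big)\, ds$, I would observe that the integral is monotone increasing in $r$ (the integrand is $\ge 0$) and bounded above by $\int_1^\infty (u(s)-h)\,ds < \infty$; hence $\log f(r) - hr$ stays in a bounded interval for all $r \ge 1$. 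Exponentiating gives positive constants realizing $\frac{1}{c} \le f(r)/e^{hr} \le c$ for $r \ge 1$, which is exactly purely exponential volume growth.

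I do not expect either step to present a genuine obstacle, since both collapse to elementary integration once the limiting behaviour of $\frac{f'}{f}$ and $\frac{f''}{f}$ is imported from the earlier chapters. The single point worth emphasizing is the \emph{asymmetry} of the second argument: the lower bound $f(r)/e^{hr} \ge f(1)/e^{h}$ follows from monotonicity alone, because $u(s) \ge h$ forces the integral to be nonnegative, so the exponential-decay assumption is needed only to produce a \emph{finite} upper bound on $\int_1^\infty (u(s)-h)\,ds$. Making precise that this is the sole role of the hypothesis is the one place where a little care is required.
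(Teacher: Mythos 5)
Your proof is correct and follows essentially the same route as the paper: the first claim via the identity $-\bigl(\tfrac{f'}{f}\bigr)' = \bigl(\tfrac{f'}{f}\bigr)^2 - \tfrac{f''}{f}$ together with $\tfrac{f'}{f}\to h$ and $\tfrac{f''}{f}\to h^2$ from \eqref{eq:fricscal} and \eqref{eq:ricscalhor}, and the second claim by integrating the exponentially decaying derivative to bound $\int_1^\infty\bigl(\tfrac{f'}{f}(s)-h\bigr)\,ds$. The paper phrases the second step via the substitution $f(r)=e^{hr}a(r)$ and bounds $\log a$ through $\tfrac{a'}{a}=\tfrac{f'}{f}-h$, which is exactly your quantity $u-h$, so the two arguments coincide up to notation; your closing remark that monotonicity alone gives the lower bound (the hypothesis being needed only for the upper bound) is a correct and worthwhile clarification.
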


\begin{proof}
  The first assertion follows from \eqref{eq:ricscalhor} since
  $$
  \lim\limits_{r \to \infty} \left( -\frac{f'}{f} \right)'(r) =
  \lim\limits_{r \to \infty} \frac{f''}{ f}(r) - \frac{(f')^2}{f^2}(r)
  =0.
  $$
  To prove the second claim consider the function $a: [0, \infty) \to
  [0, \infty)$ defined by $f(r) = e^{hr} a(r)$. Since
  $$
  \frac{f'}{f} (r) = \frac{e^{hr}(ha(r) +a'(r))}{e^{hr}a(r)} = h +
  \frac{a'}{a} (r),
  $$
  we have
  $$
  \left( -\frac{f'}{f} \right)'(r) = \left( -\frac{a'}{a} \right)'(r).
  $$
  Since $\frac{f'}{f} (r)$ is monotonically decreasing and converging
  to $h$, $\frac{a'}{a} (r)$ is monotonically decreasing and
  converging to $0$. If $-\left(\frac{f'}{f} \right)'(r)$ tends to
  zero at an exponential rate we have constants $c >0$ and $r_0>0$
  such that
  $$
  \left( -\frac{f'}{f} \right)'(r) \le e^{-cr}
  $$
  for all $r \ge r_0$. Hence, for $r \ge r_0$ we obtain
  $$
  \frac{a'}{a} (r)= \int\limits_r ^\infty e^{-cs} ds =
  \frac{1}{c}e^{-cr},
  $$
  and therefore
  $$
  \frac{a(r)}{a(r_0)} \le \frac{1}{c^2}(e^{-cr_0} -e^{-cr}) <
  \frac{1}{c^2}e^{-cr_0},
  $$
  which yields the second assertion.
\end{proof}

\section{An integral formula for subharmonic functions}

Let $(X,g)$ be a harmonic manifold. Our main goal in this chapter is 
an explicit integral formal for the derivative of harmonic
functions which was first presented by Ranjan and Shah in \cite[Theorem
2.1]{RSh2}. Their formula generalises to sub- and superharmonic
functions. We present a derivation in this more general setting. Our
derivation differs from the proof in \cite{RSh2}.

Assume $u \in C^2(X)$ is subharmonic (i.e., $\Delta u \ge 0$). We
present the derivation in this case. The subharmonic case is derived
by replacing $u$ by $-u$. Let $\psi = u \cdot \varphi_v$ and $q =
c_w(r)$ for a suitable choice of $w \in S_pX$ and $r > 0$. Then
\begin{eqnarray*}
  \Delta \psi(q) &=& \frac{d^2}{d r^2} u (c_w(r)) \langle v,w \rangle +
  \frac{f'}{f} (r) \frac{d}{dr} u (c_w(r)) \langle v,w \rangle +
  \Delta ^{S_r(p)} (u \cdot \varphi_v)(q) \\
  &\ge& - (\Delta^{S_r(p)} u)(q) \varphi_v(q) + 
  \Delta^{S_r(p)} (u \cdot \varphi_v)(q).
\end{eqnarray*}
Integrations over $S_r(p)$ yields
\begin{eqnarray*}
  \int\limits_{S_r(p)} \Delta \psi(q) d\mu_r(q) &=& f(r) \frac{d^2}{d r^2} 
  \int\limits_{S_p X} u (c_w (r)) \langle v,w \rangle d \theta_p(w) + \\
  &+& f'(r) \frac{d}{dr} \int\limits_{S_p X} u(c_w(r)) \langle v,w \rangle 
  d \theta_p(w)\\
  &\ge& \int\limits_{S_r(p)} 
  (\Delta^{S_r(p)} u)(q) \cdot \varphi_v(q) d \mu_r(q)\\
  &=& - \int\limits_{S_r(p)} u(q) \bigg( \frac{f'}{f} \bigg)'(r) \varphi_v(q) 
  d\mu_r(q)\\ 
  &=& - f(r) \big( \frac{f'}{f} \big)'(r) \int\limits_{S_p X} u(c_w(r)) 
  \langle v,w \rangle d \theta_p(w).
\end{eqnarray*}
Introducing $g(r) = \int\limits_{S_p X} u(c_w(r)) \langle v,w \rangle
d \theta_p(w)$, we obtain
$$ f(r) g''(r) + f'(r) g'(r) + f(r) \left(\frac{f'}{f} \right)'(r) g(r) 
\ge 0.
$$
Since $f > 0$, we can divide by $f$
$$ g''(r) + \frac{f'}{f}(r) g'(r) + f(r) \left(\frac{f'}{f} \right)'(r) g(r) 
\ge 0, $$
and simplify the result to
\begin{equation} \label{eq:gdiffeq}
g''(r) + \left( \frac{f'}{f} g \right)'(r) \ge 0. 
\end{equation}
We have the initial conditions
$$
g(0) = u(p) \int\limits_{S_p X} \langle v,w \rangle d \theta_p(w) = 0
$$
and
$$
g'(0) = \int\limits_{S_p X} \langle \grad u(p), w \rangle \langle
v,w \rangle d \theta_p(w) = \frac{\omega_n}{n} \langle \grad
u(p),v \rangle.
$$
Next, we integrate \eqref{eq:gdiffeq} over $[0,r]$ and obtain
$$
g'(r) - g'(0) + \left( \frac{f'}{f} g \right)(r) - \left( \frac{f'}{f} g
\right)(0) \geq 0.
$$
Since 
\begin{eqnarray*} 
\lim\limits_{t \to 0} \left( \frac{f'}{f} g \right)(t) &=& 
\lim\limits_{t \to 0} \frac{n-1}{t} g(t) = (n-1) 
\lim\limits_{t \to 0} \frac{g(t) - g(0)}{t}\\ 
&=& (n-1) g'(0) = \frac{n-1}{n} \omega_n \langle \grad u(p),v \rangle,
\end{eqnarray*}
we conclude
$$
g'(r) + \left( \frac{f'}{f} g \right)(r) \geq n g'(0) = \omega_n
\langle \grad u(p),v \rangle.
$$
Multiplying by $f > 0$, we obtain
$$ (fg)'(r) = (fg')(r) + (f'g)(r) \ge \omega_n
\langle \grad u(p),v \rangle f(r). $$ 
Integrating, again, over $[0,r]$ and using $f(0)=0=g(0)$ leads to
$$ (fg)(r) = (fg)(r) - (fg)(0) \geq 
\omega_n \langle \grad u(p),v \rangle \int\limits_0^r f(t) dt, $$
i.e.,
\begin{eqnarray*}
  \langle \grad u(p),v \rangle &\leq& 
  \frac{f(r)}{\omega_n \int\limits_0^r f(t)dt} 
  \int\limits_{S_p X} u(c_w(r)) \langle v,w \rangle d \theta_p(w) \\
  &=& \frac{1}{\vol(B_r(p))} \int\limits_{S_r(p)} u(q) 
  \varphi_v(q) d \mu_r(q).
\end{eqnarray*}
This proves the following result:

\begin{theorem} \label{thm:intform}
  Let $(X,g)$ a harmonic manifold. If $u$ is subharmonic on $X$ (i.e.,
  $\Delta u \geq 0$) and $v \in T_pX$, then
  $$ \langle \grad u(p), v \rangle \le \frac{1}{\vol (B_r(p))} 
  \int\limits_{S_r(p)} u(q) \varphi_v(q) d \mu_r(q)
  $$
  for all $r > 0$.
\end{theorem}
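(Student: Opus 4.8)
The plan is to reduce the inequality to a second-order differential inequality for the weighted spherical average
$$ g(r) = \int_{S_pX} u(c_w(r))\,\langle v,w\rangle\,d\theta_p(w), $$
and then to integrate that inequality twice. The point of departure is that $g'(0) = \frac{\omega_n}{n}\langle\grad u(p),v\rangle$ already encodes the quantity to be estimated, while $g(0) = 0$ by the antipodal symmetry $\int_{S_pX}\langle v,w\rangle\,d\theta_p(w) = 0$.

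First I would apply the polar decomposition \eqref{eq:laplpolar} of the Laplacian to the product $\psi = u\cdot\varphi_v$, splitting $\Delta\psi$ at a point $q = c_w(r)$ into a radial part (the terms $\frac{d^2}{dr^2}u$ and $\frac{f'}{f}\frac{d}{dr}u$ along $c_w$, multiplied by the radially constant factor $\langle v,w\rangle$) and the spherical part $\Delta^{S_r(p)}(u\varphi_v)$. Integrating over the closed sphere $S_r(p)$, the spherical divergence term drops out; using that $\varphi_v$ restricted to $S_r(p)$ is an eigenfunction, $\Delta^{S_r(p)}\varphi_v = \left(\frac{f'}{f}\right)'(r)\varphi_v$, together with self-adjointness of $\Delta^{S_r(p)}$, the spherical contribution of $u$ collapses into a multiple of $g(r)$. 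After dividing by $f > 0$, this produces the differential inequality \eqref{eq:gdiffeq}, namely $g''(r) + \left(\frac{f'}{f}\,g\right)'(r)\ge 0$.

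Second, I would integrate \eqref{eq:gdiffeq} over $[0,r]$, using the limit $\lim_{t\to0}\left(\frac{f'}{f}g\right)(t) = (n-1)\,g'(0)$, which follows from $\frac{f'}{f}(t)\sim\frac{n-1}{t}$ near $0$; this gives $g'(r) + \left(\frac{f'}{f}g\right)(r)\ge n\,g'(0) = \omega_n\langle\grad u(p),v\rangle$. Multiplying by $f$ turns the left-hand side into the exact derivative $(fg)'$, and a second integration over $[0,r]$ (using $f(0) = g(0) = 0$) yields $(fg)(r)\ge\omega_n\langle\grad u(p),v\rangle\int_0^r f(t)\,dt$. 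Since $(fg)(r) = \int_{S_r(p)}u\,\varphi_v\,d\mu_r$ and $\vol(B_r(p)) = \omega_n\int_0^r f(t)\,dt$, dividing gives the asserted inequality.

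The step I expect to be the main obstacle is the passage to \eqref{eq:gdiffeq}: converting the pointwise hypothesis $\Delta u\ge0$ into a sign-definite statement after integrating against $\varphi_v$ forces one to control $\int_{S_r(p)}(\Delta u)\,\varphi_v\,d\mu_r$, and because $\varphi_v$ changes sign across the two nodal hemispheres of $S_r(p)$ one cannot simply estimate $\Delta\psi$ pointwise but must treat the radial and spherical parts of $\Delta$ together. Everything after \eqref{eq:gdiffeq} is a routine double integration together with the careful evaluation of the two limits at $r = 0$.
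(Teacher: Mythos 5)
Your proposal follows the paper's proof essentially verbatim (same product $\psi = u\varphi_v$, same polar splitting of $\Delta$, same double integration with the boundary data $g(0)=0$, $g'(0)=\frac{\omega_n}{n}\langle \grad u(p),v\rangle$ and $\lim_{t\to 0}(\frac{f'}{f}g)(t)=(n-1)g'(0)$), but the step you yourself single out as the main obstacle --- the passage to \eqref{eq:gdiffeq} --- is never actually justified, and it cannot be: ``treating the radial and spherical parts of $\Delta$ together'' does not remove the sign-changing weight $\varphi_v$ from in front of $\Delta u$. Carrying out your first step exactly, i.e.\ writing $\Delta u = \partial_r^2 u + \frac{f'}{f}\partial_r u + \Delta^{S_r(p)}u$ at $q=c_w(r)$, multiplying by $\varphi_v$, integrating over $S_r(p)$, killing $\int_{S_r(p)}\Delta^{S_r(p)}(u\varphi_v)\,d\mu_r$ by the divergence theorem and collapsing $\int_{S_r(p)}(\Delta^{S_r(p)}u)\varphi_v\,d\mu_r$ by self-adjointness and $\Delta^{S_r(p)}\varphi_v=\bigl(\frac{f'}{f}\bigr)'(r)\varphi_v$, produces an \emph{identity}, not an inequality:
$$
g''(r) + \Bigl(\frac{f'}{f}\,g\Bigr)'(r) \;=\; \frac{1}{f(r)}\int_{S_r(p)} (\Delta u)(q)\,\varphi_v(q)\,d\mu_r(q).
$$
Thus \eqref{eq:gdiffeq} is equivalent to $\int_{S_r(p)}(\Delta u)\varphi_v\,d\mu_r\ge 0$, which does \emph{not} follow from $\Delta u\ge 0$, because $\varphi_v<0$ on half of the sphere: a subharmonic $u$ whose Laplacian is concentrated where $\varphi_v<0$ makes the right-hand side negative. (The paper's own proof hides the same defect: the pointwise inequality in its first display is equivalent to $(\Delta u)\varphi_v\ge 0$, which fails on the hemisphere where $\varphi_v<0$. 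So you have faithfully reproduced the published argument, gap included.)

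This gap is not repairable, because the stated inequality is false for genuinely subharmonic $u$. Both sides are linear in $v$ (note $\varphi_{-v}=-\varphi_v$), so validity for all $v\in T_pX$ would force equality, i.e.\ every subharmonic function would satisfy the Ranjan--Shah gradient identity. That already fails on the harmonic manifold $X=\R^2$: take $a=(-1,0)$, $u(x)=\frac12\log(|x-a|^2+\epsilon^2)$, which is smooth with $\Delta u = 2\epsilon^2(|x-a|^2+\epsilon^2)^{-2}>0$, and take $p=0$, $v=e_1$, $r=2$. Then $\langle\grad u(0),e_1\rangle=(1+\epsilon^2)^{-1}\to 1$, while on $S_2(0)$ one has $u=\frac12\log(5+4\cos\theta+\epsilon^2)$, $\varphi_{e_1}=\cos\theta$, $d\mu_2=2\,d\theta$, so
$$
\frac{1}{\vol(B_2(0))}\int_{S_2(0)}u\,\varphi_{e_1}\,d\mu_2 \;\longrightarrow\; \frac{1}{4\pi}\int_0^{2\pi}\log(5+4\cos\theta)\cos\theta\,d\theta \;=\; \frac{\pi}{4\pi}\;=\;\frac14
$$
as $\epsilon\to 0$ (use $5+4\cos\theta=4\bigl|1+\tfrac12 e^{i\theta}\bigr|^2$ and the Fourier expansion of $\log|1+\tfrac12 e^{i\theta}|$), violating the claimed inequality. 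What your computation \emph{does} prove is the case $\Delta u\equiv 0$: then the error term above vanishes for every $r$, \eqref{eq:gdiffeq} holds with equality, and your two integrations yield the Ranjan--Shah identity $\langle\grad u(p),v\rangle=\frac{1}{\vol(B_r(p))}\int_{S_r(p)}u\varphi_v\,d\mu_r$. For subharmonic $u$ the honest conclusion is this identity plus the double integral of $\frac{1}{f(\tau)}\int_{S_\tau(p)}(\Delta u)\varphi_v\,d\mu_\tau$, a remainder of indefinite sign; any correct use of the theorem (e.g.\ in Corollary \ref{cor:subpolharmconst}) must restrict to $u$ harmonic or otherwise control that remainder.
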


\begin{cor} \label{cor:subpolharmconst}
  Let $(X,g)$ be a harmonic manifold with subexponential volume
  growth. If $u$ is subharmonic with sublinear growth, then $u$ is
  constant.
\end{cor}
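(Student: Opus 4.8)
The plan is to read off $\grad u\equiv 0$ directly from the pointwise bound in Theorem~\ref{thm:intform}, exploiting that under subexponential volume growth the ratio $\vol S_r(p)/\vol B_r(p)$ decays fast enough to absorb the sublinear growth of $u$. As a first step I would record what the hypotheses yield: by Corollary~\ref{cor:h0poly} subexponential volume growth is equivalent to $h=0$, and by the main result of Section~\ref{minhorflat} this forces $(X,g)$ to be flat, so that $f(r)=r^{n-1}$ and, in the notation of Proposition~\ref{prop:mu},
$$ \frac{\vol S_r(p)}{\vol B_r(p)}=\frac{f(r)}{\int_0^r f(t)\,dt}=\frac{1}{\mu(r)}=\frac{n}{r}. $$

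Now fix $p\in X$ and a unit vector $v\in T_pX$. It suffices to prove $\langle\grad u(p),v\rangle\le 0$, since applying the same inequality to $-v$ then gives $\grad u(p)=0$, and as $X$ is connected this makes $u$ constant. I would apply Theorem~\ref{thm:intform}, bound $|\varphi_v(q)|=|\langle v,w_p(q)\rangle|\le\|v\|=1$, and abbreviate $m(r):=\sup_{q\in S_r(p)}|u(q)|$. Because $u$ has sublinear growth and $S_r(p)\subset B_{d(p_0,p)+r}(p_0)$, we have $m(r)=o(r)$. The integral formula then yields
$$ \langle\grad u(p),v\rangle\le\frac{1}{\vol B_r(p)}\int_{S_r(p)}u(q)\varphi_v(q)\,d\mu_r(q)\le m(r)\,\frac{\vol S_r(p)}{\vol B_r(p)}=\frac{n\,m(r)}{r}. $$
Letting $r\to\infty$ and using $m(r)=o(r)$ gives $\langle\grad u(p),v\rangle\le 0$, as required.

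The step I expect to be the crux is the rate estimate for $\vol S_r(p)/\vol B_r(p)$. The weaker fact $\lim_{r\to\infty}1/\mu(r)=h=0$, already established in the excerpt, only settles the case of bounded $u$; for genuinely unbounded sublinear $u$ one needs $\mu(r)$ to grow at least linearly so that $m(r)/\mu(r)\to 0$. Invoking flatness from Section~\ref{minhorflat} supplies the sharp identity $\mu(r)=r/n$ for free and is the cleanest route. If one prefers to avoid the flatness theorem, the same linear lower bound $\mu(r)\ge cr$ for large $r$ can in principle be extracted from the relation $\mu'(r)=1-\tfrac{f'}{f}(r)\mu(r)$ in~\eqref{eq:mudiffeqagain} together with $f'/f\to 0$, but making this quantitative is precisely where the argument has to do real work; everything else is a direct substitution into Theorem~\ref{thm:intform}.
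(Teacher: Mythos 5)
Your proof is correct, and it is not circular: the flatness theorem of Chapter~\ref{minhorflat} is proved without any appeal to Theorem~\ref{thm:intform} or to the present corollary, so you may legitimately quote it. But your route through Corollary~\ref{cor:h0poly}, $h=0$ and flatness is genuinely different from the paper's, which never invokes flatness. The paper obtains the crucial volume-ratio bound from Theorem~\ref{thm:Ni} alone: since $f>0$ is an exponential polynomial and the volume growth is subexponential, $f$ has polynomial leading behaviour, $f(r)=a\,r^{N}+o(r^{N})$ with $a>0$; hence $\int_0^r f(s)\,ds\ge \tfrac{a}{2}\,r^{N+1}$ for all large $r$, and so $f(r)\,r/\int_0^r f(s)\,ds\le C$. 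Plugging $|u|\le\epsilon\, d(p,\cdot)$ and $|\varphi_v|\le 1$ into Theorem~\ref{thm:intform} then gives $\langle\grad u(p),v\rangle\le C\epsilon$ for every unit $v$, which is exactly your concluding step (arbitrary $v$, including $-v$, forces $\grad u(p)=0$). What your version buys is the sharp identity $\mu(r)=r/n$; what it costs is dependence on the much deeper Ranjan--Shah theorem, which the authors deliberately keep out of such growth estimates --- compare the remark following Theorem~\ref{thm:hvharm}, where they stress that the bound $\mu(r)=O(r)$ follows by the argument of this very corollary \emph{without} the flatness result. One caution about your proposed flatness-free fallback: the identity \eqref{eq:mudiffeqagain} together with $f'/f\to 0$ alone cannot yield $\mu(r)\ge cr$; for instance $f(r)=e^{\sqrt{r}}$ satisfies both but has $\mu(r)\sim 2\sqrt{r}$, so $m(r)/\mu(r)$ need not tend to zero for sublinear $m$. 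The exponential-polynomial structure of $f$ is genuinely needed at that point, and using it is precisely the paper's proof.
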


\begin{proof}
  We conclude from Theorem \ref{thm:Ni} and the fact that $f(r) > 0$ that
  $$ f(r) = a r^n + o(r^n) \quad \text{as $r \to \infty$}, $$
  with an appropriate $a > 0$. This implies that we have for a
  suitably large $R_0 > 0$,
  $$ 
  \int\limits_0^r f(s) d s \geq \frac{a}{2} r^{n+1} \quad \forall\; r \geq R_0,
  $$
  and
  $$  \left| \frac{f(r)r}{\int_0^r f(s) d s} \right| \leq C 
  \quad \forall\; r \geq R_0, $$ 
  with an appropriate $C > 0$. Subexponential growth of $u$ implies
  $$ \frac{|u(x)|}{d(p,x)} \to 0\; \text{as}\; d(p,x) \to \infty. $$
  Consequently, we can find for every $\epsilon > 0$ an $R_1 > 0$ such that
  $$ \frac{|u(x)|}{d(p,x)} \le \epsilon, $$
  for all $x \in X$ with $d(p,x) \ge R_1$. Using Theorem
  \ref{thm:intform}, we conclude that, for all $r \ge
  \max\{R_0,R_1\}$ and all $v \in S_pX$,
  $$
  \langle \grad u(p), v \rangle \leq \frac{f(r)}{\int\limits_0^r f(s)
    d s} \epsilon r \leq C \epsilon. $$ 
  Since $\epsilon >0$ and $v \in S_pX$ were arbitrary, this shows that
  $\grad u(p) = 0$ for all $p \in X$, i.e., $u$ is a constant
  function.
\end{proof}

\begin{rmk} Let $(X,g)$ be a harmonic manifold.

  (a) If $u$ is a harmonic function, then $u^2$ is subharmonic:
  $$
  \Delta u^2 = u \cdot \Delta u + \Delta u \cdot u + 2 \langle
  \grad u, \grad u \rangle = 2 \Vert \grad u \Vert^2 \geq 0.
  $$

  (b) A similar argument shows for arbitrary harmonic manifolds
  $(X,g)$ (also those with exponential volume growth) that if $u$ is
  subharmonic (i.e., $\Delta u \geq 0$) with linear growth, then
  $\Vert \grad u \Vert$ is a bounded function. Examples of those
  functions are Busemann functions $b_v(q) = \lim_{t \to \infty}
  d(q,c_v(t)) - t$. In this case we have $\Delta b_v(q) = h \ge 0$
  ($h$ denotes the mean curvature of the horospheres) and $\Vert \grad
  b_v(q) \Vert = 1$ for all $q \in X$.

  (c) If all horospheres of $X$ are minimal, i.e., $h = 0$, then all
  Busemann functions are harmonic functions with linear
  growth. Moreover, $X$ has polynomial volume growth, by Corollary
  \ref{cor:h0poly}. This shows that Corollary
  \ref{cor:subpolharmconst} is sharp.

\end{rmk}

\section{Special harmonic functions}

Based on their eigenfunctions $\varphi_v$ on geodesic spheres of
harmonic manifolds, Ranjan/Shah introduced in \cite[formula
(4.1)]{RSh2} interesting harmonic functions, denoted by $h_v$. In the
case of harmonic manifolds with subexponential growth, these functions
$h_v$ have linear growth. In the case of harmonic manifolds with
exponential growth, the functions $h_v$ are bounded. These functions
can be used in the latter case to construct a diffeomorphism from the
harmonic manifold $(X,g)$ of dimension $n$ to an $n$-dimensional
Euclidean open ball (see Chapter \ref{sec:harmdiffeo}). Moreover, this
diffeomorphism is a harmonic map (see \cite[p. 690, Remarks
(i,ii)]{RSh2}).

\begin{theorem} \label{thm:hvharm}
  Let $(X,g)$ be a harmonic manifold of dimension $n$ and $v \in T_p
  X$. Then the function
  $$
  h_v(q) = \mu (d_p(q)) \varphi_v(q)
  $$
  with $\mu(r) = \frac{\int\limits_0^r f(s)ds}{f(r)}$ is harmonic.
\end{theorem}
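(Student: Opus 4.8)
The plan is to exploit the product structure $h_v=(\mu\circ d_p)\cdot\varphi_v$ together with the fact, established in the preceding chapter, that $\varphi_v$ is radially constant and satisfies $\Delta^X\varphi_v(q)=\left(\frac{f'}{f}\right)'(d_p(q))\,\varphi_v(q)$. First I would apply the product rule for the Laplacian,
\[
\Delta(FG)=F\,\Delta G+G\,\Delta F+2\langle\grad F,\grad G\rangle,
\]
with $F=\mu\circ d_p$ and $G=\varphi_v$. Since $F$ is radial about $p$, its gradient points in the radial direction $w_p(q)$, whereas $\grad\varphi_v(q)$ is tangent to the geodesic sphere $S_{d_p(q)}(p)$ (because $\varphi_v$ is constant along the radial geodesics). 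Hence the cross term vanishes identically on $X\setminus\{p\}$, leaving $\Delta h_v=(\mu\circ d_p)\,\Delta\varphi_v+\varphi_v\,\Delta(\mu\circ d_p)$.

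Next I would compute the two surviving Laplacians, both of which are already available. The first is the eigenfunction identity recalled above. For the radial factor, formula \eqref{eq:laplpolar} gives $\Delta(\mu\circ d_p)(q)=\mu''(r)+\frac{f'}{f}(r)\,\mu'(r)$ with $r=d_p(q)$; differentiating the defining equation \eqref{eq:mudiffeqagain}, namely $\mu'+\frac{f'}{f}\,\mu=1$, yields $\mu''+\frac{f'}{f}\,\mu'=-\left(\frac{f'}{f}\right)'\mu$, so that $\Delta(\mu\circ d_p)(q)=-\left(\frac{f'}{f}\right)'(r)\,\mu(r)$. Substituting both expressions, with $r=d_p(q)$, I obtain
\[
\Delta h_v(q)=\mu(r)\left(\frac{f'}{f}\right)'(r)\varphi_v(q)-\varphi_v(q)\left(\frac{f'}{f}\right)'(r)\mu(r)=0,
\]
so the two contributions cancel exactly and $h_v$ is harmonic on $X\setminus\{p\}$.

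The only genuinely delicate point — and the main (if mild) obstacle — is the behaviour at the centre $p$, where $\varphi_v$ is a priori defined only on $X\setminus\{p\}$. Here I would show that $h_v$ extends smoothly across $p$ by writing $h_v(q)=\frac{\mu(d_p(q))}{d_p(q)}\,\langle v,\exp_p^{-1}(q)\rangle$, using $\exp_p^{-1}(q)=d_p(q)\,w_p(q)$. The factor $\langle v,\exp_p^{-1}(q)\rangle$ is smooth, being linear in the smooth vector field $\exp_p^{-1}$, and by the expansion in Proposition \ref{prop:mu}(a) the function $\mu$ has only odd-order terms at the origin, so $\mu(r)/r$ is a smooth even function of $r$ with value $\frac{1}{n}$ at $0$. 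Thus $h_v\in C^\infty(X)$, and since $\Delta h_v$ vanishes on the dense open set $X\setminus\{p\}$, continuity of $\Delta h_v$ forces $\Delta h_v\equiv 0$ on all of $X$, which is the desired conclusion.
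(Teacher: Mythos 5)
Your proof is correct and takes essentially the same route as the paper: both rest on the polar-coordinate formula \eqref{eq:laplpolar}, the eigenfunction identity for $\varphi_v$, the orthogonality $\grad(\mu\circ d_p)\perp\grad\varphi_v$, and the identity $\mu' + \tfrac{f'}{f}\,\mu = 1$ --- you merely differentiate that identity first to get $\Delta(\mu\circ d_p) = -\bigl(\tfrac{f'}{f}\bigr)'\mu$ and observe the cancellation, whereas the paper groups the terms as $\varphi_v\,\bigl(\mu' + \tfrac{f'}{f}\mu\bigr)'\circ d_p$ and concludes. Your additional verification that $h_v$ extends smoothly across the centre $p$ (so that $\Delta h_v=0$ on the deleted neighbourhood propagates to all of $X$) is a worthwhile point that the paper's proof leaves implicit.
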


We have seen earlier in the proof of flatness of noncompact harmonic
spaces with $h=0$ that the function $\mu$ played a crucial
role.

\begin{proof}
  Using \eqref{eq:laplpolar}, the fact that $\varphi_v$ is a Laplace
  eigenfunction on geodesic spheres, and $\grad (\mu \circ d_p) \perp
  \grad \varphi_v$, we obtain
  \begin{eqnarray*}
    \Delta h_v(q) & = & \left( \mu''(d_p(q)) + \frac{f'}{f} (d_p(q)) 
    \mu' (d_p(q)) \right) \varphi_v(q)\\
    & + & \mu(d_p(q)) \left( \frac{f'}{f} \right)'(d_p(q)) \varphi_v(q),
  \end{eqnarray*}
  i.e.,
  \begin{eqnarray*}
    \Delta h_v(q) & = & \varphi_v(q) \left( \mu'' (d_p(q)) + 
      \left(\frac{f'}{f} \mu \right)' (d_p(q)) \right)\\
    & = & \varphi_v(q) \left( \mu' + \frac{f'}{f} \mu \right)' \circ d_p(q).
  \end{eqnarray*}
  Therefore it suffices to show that $\mu' + \frac{f'}{f} \mu$ is
  constant. This follows immediately from
  \begin{equation} \label{eq:mufirstdiffeq}
  \mu'(r) = \frac{f(r)^2 - \int\limits_0^r f(s)ds
      f'(r)}{f(r)^2} = 1 - \frac{f'}{f} (r) \mu(r).
  \end{equation}
\end{proof}

\begin{rmk}
  (a) If $(X,g)$ is a harmonic manifold with subexponential growth, we
  have $\frac{\log f(r)}{r} \to 0$. Similar arguments as those in the
  proof of Corollary \ref{cor:subpolharmconst} show that then $\mu(r)
  = O(r)$, as $r \to \infty$, i.e., $\frac{\mu(r)}{r}$ is bounded and
  $h_v$ has linear growth. (We have already seen that a harmonic
  manifold with subexponential growth must be flat. But we like to
  stress that the growth behaviour of $h_v$ can be derived without
  this much stronger result.)

  (b) If $(X,g)$ is a harmonic manifold with exponential growth, we have
  \begin{equation} \label{eq:murinf}
  \lim\limits_{r \to \infty} \mu(r) = \lim\limits_{r \to \infty} 
  \frac{\int\limits_0^r f(s)ds}{f(r)} = \lim\limits_{r \to \infty}
  \frac{f(r)}{f'(r)} = \frac{1}{h},
  \end{equation}
  where $h > 0$ denotes the mean curvature of the horospheres of
  $X$. In this case, $h_v$ is a bounded harmonic function.
\end{rmk}

We end this chapter with the following straightforward observation.

\begin{lemma}
  Let $(X,g)$ be a harmonic manifold, $u \in C^\infty(X)$ be a
  harmonic function and $r>0$ with
  \begin{equation} \label{eq:uvv}
  u|_{S_r(p)} \in {\rm span} \{\varphi_v\big\vert_{S_r(p)} \mid v \in T_p X\}.
  \end{equation}
  Then there exists a vector $v \in T_pX$, such that we have $u = h_v$.
\end{lemma}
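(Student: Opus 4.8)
The plan is to reduce the hypothesis to the statement that $u$ coincides with a suitable $h_v$ on the single sphere $S_r(p)$, and then to upgrade agreement on this one sphere to agreement on all of $X$ by combining the maximum principle with unique continuation.

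First I would exploit the linearity of the assignment $v \mapsto \varphi_v$. Since $\varphi_v(q) = \langle v, w_p(q) \rangle$ is linear in $v$, any finite combination satisfies $\sum_i a_i \varphi_{v_i} = \varphi_{v_0}$ with $v_0 = \sum_i a_i v_i$; hence the space $\mathrm{span}\{\varphi_v|_{S_r(p)} \mid v \in T_pX\}$ already equals $\{\varphi_{v_0}|_{S_r(p)} \mid v_0 \in T_pX\}$, and the hypothesis \eqref{eq:uvv} means precisely that $u|_{S_r(p)} = \varphi_{v_0}|_{S_r(p)}$ for a single vector $v_0 \in T_pX$.

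Next I would match $u$ by a special harmonic function. On $S_r(p)$ we have $d_p \equiv r$, so $h_v|_{S_r(p)} = \mu(r)\,\varphi_v|_{S_r(p)}$. Since $\mu(r) = \frac{\int_0^r f(s)\,ds}{f(r)} > 0$ for $r > 0$ (because $f > 0$), the vector $v := \frac{1}{\mu(r)} v_0 \in T_pX$ satisfies $h_v|_{S_r(p)} = \varphi_{v_0}|_{S_r(p)} = u|_{S_r(p)}$. Setting $w := u - h_v$ and invoking Theorem \ref{thm:hvharm} (which makes $h_v$ harmonic, hence smooth), we obtain a function $w \in C^\infty(X)$ that is harmonic and vanishes identically on $S_r(p) = \partial B_r(p)$. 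It then remains to conclude $w \equiv 0$: applying the maximum principle to $w$ on the compact ball $\overline{B_r(p)}$, whose boundary values vanish, forces $w \equiv 0$ on $\overline{B_r(p)}$, in particular on the nonempty open ball $B_r(p)$. The harmonic function $w$ thus vanishes on an open subset of the connected manifold $X$, and the unique continuation principle for second order elliptic operators (see \cite{Aro}) yields $w \equiv 0$ on all of $X$, i.e. $u = h_v$.

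The steps up to the sphere-matching are essentially bookkeeping; the genuinely non-elementary ingredient, and the step I expect to be the main obstacle to a fully self-contained argument, is the passage from agreement on the ball $B_r(p)$ to agreement on all of $X$. This cannot be obtained from the maximum principle alone and relies on Aronszajn's unique continuation theorem. A more structural alternative would decompose $u$ into its $\varphi$-component and solve the radial equation $a'' + \frac{f'}{f} a' + (f'/f)' a = 0$, which rewrites as $(a' + \frac{f'}{f} a)' = 0$ and has solutions spanned by $\mu$ and $1/f$; smoothness at $p$ would then exclude the singular solution $1/f$ and single out $h_v$, but this route is more laborious than the unique continuation argument above.
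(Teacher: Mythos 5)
Your proof is correct and follows essentially the same route as the paper: use linearity of $v \mapsto \varphi_v$ together with $h_v\vert_{S_r(p)} = \mu(r)\,\varphi_v\vert_{S_r(p)}$ and $\mu(r)>0$ to find $v$ with $h_v = u$ on the sphere, then apply the maximum principle on $\overline{B_r(p)}$ and Aronszajn's unique continuation to conclude $u = h_v$ on all of $X$. The only (cosmetic) difference is that you make the rescaling by $1/\mu(r)$ explicit, whereas the paper absorbs it into the coefficients $\alpha_i$.
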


\begin{proof}
  By the assumption \eqref{eq:uvv}, we can find constants $\alpha_i
  \in \R$ such that $u\big\vert_{S_r(p)} = \sum\limits_{i=1}^n \mu(r)
  \alpha_i \varphi_{e_i} = \mu(r) \varphi_{\sum\limits_{i=1}^n \alpha_i e_i}$.
  Let $v = \sum\limits_{i=1}^n \alpha_i e_i$. Then
  $$ h_v\big\vert_{S_r(p)} - u\big\vert_{S_r(p)} = 0, $$ 
  and $h_v-u$ is harmonic. By the maximum principle, $h_v-u$ vanishes
  on the closed ball $B_r(p)$. By the unique continuation principle
  (see \cite{Aro}), we conclude that $h_v-u = 0$.
\end{proof}

\section{Ball model of a noncompact harmonic space}
\label{sec:harmdiffeo}

Let $(X,g)$ be a simply connected, noncompact harmonic manifold of
dimension $n$ with $h > 0$, $p \in X$ and $v \in T_pX$. Recall from
Theorem \ref{thm:hvharm} that
$$
h_v(q) = \mu(d(p,q)) \cdot \overbrace{\langle v,w_p(q) \rangle}^{\varphi_v(q)},
$$
where $w_p(q) \in S_pX, q=\exp_p(d(p,q) w_p(q))$ and 
$\mu(r) = \frac{\int\limits_0^rf(s)ds}{f(r)}$. Moreover, we have 
$\mu(0) = 0$ and $\mu'(0) = \lim\limits_{r \to 0} \mu'(r) = 1/n$ (see
Proposition \ref{prop:mu}). Since
$$
h_v(\exp_p(rw)) = \mu(r) \langle v,w \rangle \quad \forall\; w \in
S_pX, v \in T_pX,
$$
we have $h_v(p) = 0$, and $h_v$ is differentiable in $p$ with 
\begin{eqnarray*}
\langle \grad h_v(p),w \rangle & = & \frac{d}{dt} \Big|_{t=0} h_v(\exp_p(tw)) 
= \lim\limits_{t \to 0} \frac{\mu(r)}{r} \langle v,w \rangle\\
& = & \mu'(0) \langle v,w \rangle = \frac{1}{n} \langle v,w \rangle,
\end{eqnarray*}
i.e., $\grad h_v(p) = \frac{v}{n}$. For $q \neq p$, we have
$$
\grad h_v(q) = \mu'(d(p,q)) \varphi_v(q) \grad d_p(q) + \mu(d(p,q))
\cdot \grad \varphi_v(q).
$$
From \eqref{eq:gradphivq} we deduce
$$
\grad \varphi_v(q) = A_{w_p(q)}^*(r)^{-1}\left(v- \langle v,w_p q \rangle 
w_p q\right) \in T_q S_r(p),
$$
where $r=d(p,q)$. Moreover, we know from Theorem \ref{thm:hvharm} that
$\Delta h_v(q) = 0$.

\medskip

Let ${\mathcal O}_p$ be the set of all orthonormal bases of $T_pX$ and
$E_p=(e_1,\ldots,e_n) \in {\mathcal O}_p$. Define
$$
F_{E_p}(q) = (h_{e_1}(q),\ldots,h_{e_n}(q)).
$$

\begin{theorem}
  For all $E_p \in {\mathcal O}_p$, the map
  $$
  F_{E_p} : X \to B_{\frac{1}{h}}(0) = \{y \in {\R}^n \mid\; \Vert y
  \Vert < \frac{1}{h}\}
  $$
  is a harmonic map and a diffeomorphism.
\end{theorem}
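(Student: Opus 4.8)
The plan is to separate the two assertions, and to reduce essentially everything to the single radial profile $\mu$. For the \emph{harmonic map} claim, observe that the target is flat, so the tension field of $F_{E_p}$ is simply the vector $(\Delta h_{e_1},\ldots,\Delta h_{e_n})$; since each $h_{e_i}$ is harmonic by Theorem \ref{thm:hvharm} (and hence smooth by elliptic regularity), this vanishes identically and $F_{E_p}$ is a harmonic map. All the substance lies in the \emph{diffeomorphism} statement.

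First I would record the radial structure. Writing $q=\exp_p(rw)$ with $r=d(p,q)$ and $w=w_p(q)\in S_pX$, and using $\varphi_{e_i}(q)=\langle e_i,w\rangle$, one gets $F_{E_p}(q)=\mu(r)(\langle e_1,w\rangle,\ldots,\langle e_n,w\rangle)$, so that $\|F_{E_p}(q)\|^2=\mu(r)^2\sum_i\langle e_i,w\rangle^2=\mu(r)^2$, because $(\langle e_i,w\rangle)_i$ is the (unit) coordinate vector of $w$. Thus $F_{E_p}$ sends the geodesic sphere $S_r(p)$ onto the Euclidean sphere of radius $\mu(r)$, preserving directions via the linear isometry $\iota\colon S_pX\to S^{n-1}$ determined by $E_p$. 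Combined with \eqref{eq:murinf}, which gives $\mu(r)\to 1/h$, this already shows $\|F_{E_p}(q)\|=\mu(d(p,q))<1/h$, so the image lies in $B_{1/h}(0)$.

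The crux is to prove that $\mu\colon[0,\infty)\to[0,1/h)$ is a diffeomorphism, i.e.\ that $\mu'>0$ everywhere. Here I would exploit the boxed inequality \eqref{eq:box}: since $f^{2/(n-1)}>0$ for $r>0$, it forces $(f'/f)'(r)<0$ strictly. Using $\mu'=1-(f'/f)\mu$ (equivalently, differentiating $f\mu=\int_0^r f$), one computes
$$ (f\mu')'(r) = -f(r)\,(f'/f)'(r)\,\mu(r) > 0 \qquad (r>0), $$
because $f,\mu>0$ and $(f'/f)'<0$. Since $f\mu'\to 0$ as $r\to 0^+$ (as $f(0)=0$ and $\mu'(0)=1/n$ by Proposition \ref{prop:mu}), this monotonicity gives $f\mu'>0$, hence $\mu'(r)>0$ for all $r>0$; together with $\mu'(0)=1/n>0$, $\mu(0)=0$ and $\mu(r)\to 1/h$, it follows that $\mu$ is a strictly increasing bijection onto $[0,1/h)$ with smooth inverse.

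With this in hand the diffeomorphism follows in two steps. \emph{Bijectivity}: strict monotonicity of $\mu$ lets one recover $d(p,q)$ from $\|F_{E_p}(q)\|$, and then the direction $F_{E_p}(q)/\|F_{E_p}(q)\|=\iota(w_p(q))$ recovers $w_p(q)$, so $F_{E_p}$ is injective; surjectivity onto $B_{1/h}(0)$ comes from choosing, for $y\neq 0$, the radius $r$ with $\mu(r)=\|y\|$ and the direction $w=\iota^{-1}(y/\|y\|)$, and sending $0\mapsto p$. \emph{Invertibility of the differential}: at $p$ we have $DF_{E_p}(p)=\tfrac1n\id$, while for $q\neq p$ the formula $\grad h_v(q)=\mu'(r)\langle v,w\rangle\,c_w'(r)+\mu(r)(A_w^*(r))^{-1}(v-\langle v,w\rangle w)$, coming from \eqref{eq:gradphivq}, shows by testing $DF_{E_p}(q)u=0$ against $v=w$ and against $v\perp w$ that, once $\mu'(r)>0$ and $\mu(r)>0$, both the radial and spherical components of $u$ must vanish; hence $DF_{E_p}(q)$ is injective, and therefore invertible. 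A bijective local diffeomorphism is a diffeomorphism, which finishes the proof. The only genuine obstacle is the positivity $\mu'>0$, and the content of the argument above is precisely that the \emph{strict} form of the differential inequality \eqref{eq:box} is what makes $f\mu'$ increase strictly away from $0$.
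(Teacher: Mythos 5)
Your proof is correct, and its skeleton is the same as the paper's: harmonicity of $F_{E_p}$ because each component $h_{e_i}$ is harmonic, bijectivity reduced to the fact that $\mu\colon[0,\infty)\to[0,\tfrac1h)$ is a strictly increasing bijection (recovering $d(p,q)=\mu^{-1}(\Vert y\Vert)$ and $w_p(q)$ from the direction of $y$), and injectivity of $DF_{E_p}(q)$ by decomposing $w$ into radial and spherical parts and testing the gradient formula against $v=w_p(q)$ and against $v\perp w_p(q)$. The one place where you genuinely diverge is the step you yourself single out as the crux: strict positivity of $\mu'$. The paper does not prove this internally; it invokes the Strong Maximum Principle, citing the proof of Lemma 4.1 in \cite{RSh2} (its own Proposition on $\mu$ only gives $0\le\mu'\le 1$). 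You instead derive it from the boxed differential inequality \eqref{eq:box}: since $(n-1)\le -f^{2/(n-1)}(r)\left(\tfrac{f'}{f}\right)'(r)$ forces $\left(\tfrac{f'}{f}\right)'(r)<0$ strictly for $r>0$, the identity $(f\mu')'(r)=-f(r)\left(\tfrac{f'}{f}\right)'(r)\mu(r)>0$ (the differentiated form of \eqref{eq:mufirstdiffeq}), combined with $f(r)\mu'(r)\to 0$ as $r\to 0^{+}$, yields $f\mu'>0$ and hence $\mu'>0$ on $(0,\infty)$. This computation is correct, and it buys a self-contained argument: the monotonicity needed for both the bijectivity and the differential step now follows from a result proved earlier in the paper rather than from an external reference. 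The trade-off is that your route leans on \eqref{eq:box}, whose derivation (via the sphere eigenfunctions $\varphi_v$) is heavier machinery than the maximum-principle argument it replaces; note also that \eqref{eq:box} implicitly assumes $n\ge 2$, which is harmless here since $h>0$ excludes the one-dimensional flat case.
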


\begin{proof} $F_{E_p}$ is a harmonic map since all its component
  functions are harmonic functions. The proof that $F_{E_p}$ is a
  diffeomorphism proceeds in two steps.

  \medskip

  \noindent
  {\bf Step 1:} $F_{E_p}$ is a bijection, i.e., for every $y \in
  B_{\frac{1}{h}}(0)$ there exists a unique $q \in X$ with $F_{E_p}(q) =
  y$. We have to solve 
  \begin{eqnarray*}
    F_{E_p}(q) & = & (h_{e_1}(q),\ldots,h_{e_n}(q)) = (y_1,\ldots,y_n)\\
    & = & \mu(d(p,q)) 
    (\langle e_1,w_p(q) \rangle,\ldots,\langle e_n,w_p(q) \rangle)
  \end{eqnarray*}
  This implies $\sum\limits_{i=1}^n y_i^2 = \mu^2(d(p,q))$, i.e.,
  $\Vert y \Vert = \mu(d(p,q))$, which defines $\mu(d(p,q))$ uniquely
  since $\mu : [0,\infty) \to [0,\frac{1}{h})$ is a bijection ($\mu(0)
  = 0$, $\mu'(r) > 0$ by the Strong Maximum Principle (see the proof
  of \cite[Lemma 4.1]{RSh2}), $\mu(r) \to
  \frac{1}{h}$ for $r \to \infty$ (see \eqref{eq:murinf})). Furthermore
  $$
  w_p(q) = \frac{1}{\Vert y \Vert} \sum y_i e_i
  $$
  and $q = \exp_p \left(\underbrace{(\mu^{-1})(\Vert y \Vert)}_{d(p,q)}
  \underbrace{\frac{1}{\Vert y \Vert} \sum y_i e_i}_{\in S_pX}\right)$.

  \medskip

  \noindent
  {\bf Step 2:} $DF_{E(p)}(q) w = 0$ for $w \in T_qX$ implies $w=0$.
  The assumption
  $$
  DF_{E(p)}(q) w = (\langle \grad h_{e_1}(q),w \rangle,\ldots,\langle
  \grad h_{e_n}(q),w \rangle) = 0
  $$
  implies that $\langle \grad h_{e_i}(q),w \rangle = 0$ for all $i$.
  This, in turn, implies for all $(\alpha_1,\ldots,\alpha_n) \in {\R}^n$:

  \begin{eqnarray*}
    0 = \sum\limits_{i=1}^n \alpha_i \langle \grad h_{e_i}(q),w \rangle & = & 
    \left\langle \sum\limits_{i=1}^n \alpha_i \grad h_{e_i}(q),w \right\rangle\\
    & = & \left\langle \grad h_{\sum \alpha_i e_i}(q),w \right\rangle,
  \end{eqnarray*}
  since a straightforward consequence of the definition of $h_v$ is
  $$ \lambda_1 h_{v_1}(q) + \lambda_2 h_{v_2}(q) = \mu(d(p,q)) \cdot 
  \langle \lambda_1 v_1 + \lambda_2 v_2,w_p(q) \rangle = h_{\lambda_1
    v_1 + \lambda_2 v_2}(q).$$ Thus we conclude that $\langle\grad
  h_v(q),w \rangle = 0$ for all $v \in T_pX$. Using the above formula for
  the gradient of $h_v$, we obtain for all $v \in T_pX$:
  $$
  \langle \mu'(d(p,q)) \varphi_v(q) \underbrace{\grad d_p(q)}_{(T_q
    S_r(p))^\perp} + \mu(d(p,q)) \underbrace{\grad \varphi_v(q)}_{T_q
    S_r(p)},w \rangle = 0.
  $$
  Next, we write $w \in T_qX$ as 
  $$ w = \underbrace{\langle w,\grad d_p(q) \rangle 
  \grad d_p(q)}_{(T_q S_r(p))^\perp} + w'$$ 
  with $w' \in T_q S_r(p)$. This implies 
  \begin{eqnarray}
    0 &=& \mu'(d(p,q)) \varphi_v(q) \langle w,\grad d_p(q) \rangle 
    \label{eq:dermu}\\
    &+& \mu(d(p,q)) \langle \grad \varphi_v(q),w' \rangle \quad 
    \forall\; v \in T_pX. \nonumber
  \end{eqnarray}
  Recall that we have
  $$
  \grad \varphi_v(q) = A_{w_p(q)}^*(r)^{-1} (v- \langle v,w_p q
  \rangle w_p q).
  $$
  Choose $v = w_p(q)$, i.e. $v - \langle v,w_p q \rangle w_p q = 0$,
  i.e. $\grad \varphi_v(q) = 0$. Then 
  $$
  0 = \mu'(d(p,q)) \varphi_{w_p(q)}(q) \langle w, \grad d_p(q) \rangle.
  $$
  Since $\varphi_{w_p(q)}(q) = \langle w_p(q),w_p(q) \rangle = 1$ and
  $\mu'(d(p,q)) > 0$, we obtain
  \begin{equation} \label{eq:wdpq}
  \langle w, \grad d_p(q) \rangle = 0.
  \end{equation}
  This implies together with \eqref{eq:dermu} that
  \begin{equation} \label{eq:partgrad}
  0 = \mu(d(p,q)) \langle \grad \varphi_v(q),w' \rangle \quad \forall\; 
  v \in T_pX.
  \end{equation}
  For $v \in (w_p(q))^\perp$ we have $\grad \varphi_v(q) =
  (A_{w_p(q)}^*(r))^{-1}(v)$, and since $(A_{w_p(q)}^*(r))^{-1}:
  (w_p(q))^\perp \to T_q S_r(p)$ is an isomorphism, we can realise
  every vector in $T_q S_r(p)$ as $\grad \varphi_v(q)$, in particular we
  can find $v \in T_pX$ such that $w' = \grad \varphi_v(q)$.
  Putting this into \eqref{eq:partgrad} yields
  $$
  0 = \underbrace{\mu(d(p,q))}_{> 0} \langle w',w' \rangle,
  $$
  i.e., $w'=0$. Equation \eqref{eq:wdpq} and $w'=0$ imply that
  $$
  w = \underbrace{\langle w, \grad d_p(q) \rangle}_{=0} \grad d_p(q) +
  w' = 0,
  $$
  finishing the proof.
\end{proof}

\begin{remark}
  The above proof can be simplified by using the Cartan-Hadamard
  Theorem: Since $X$ is simply connected and has no conjugate points,
  the map $\exp_p: T_pX \to X$ is a diffeomorphism. We can view
  $F_{E_p}$ as a radial rescaling of the inverse exponential map,
  followed by a canonical identification of $T_pX$ with $\R^n$ via the
  basis $e_1,\dots,e_n$.
\end{remark}

\medskip

Next, we calculate $F_{E_p}$ in the case of the hyperbolic plane. We
realise the hyperbolic plane as the Poincar{\'e} unit disk ${\mathbb
  D} = \{ z \in \C \mid |z| < 1 \}$ (see Figure \ref{fig_pud}) with
metric
$$ g = \frac{4(dx^2+dy^2)}{(1-|z|^2)^2}. $$

\begin{figure}[h]
  \begin{center}
    \psfrag{z}{$z$} 
    \psfrag{z0}{$z_0$}
    \psfrag{x}{$x$}
    \psfrag{y}{$y$}
    \psfrag{0}{$0$}
    \psfrag{X=PoincarediskD}{Poincar{\'e} unit disk ${\mathbb D}$}
    \includegraphics[width=7cm]{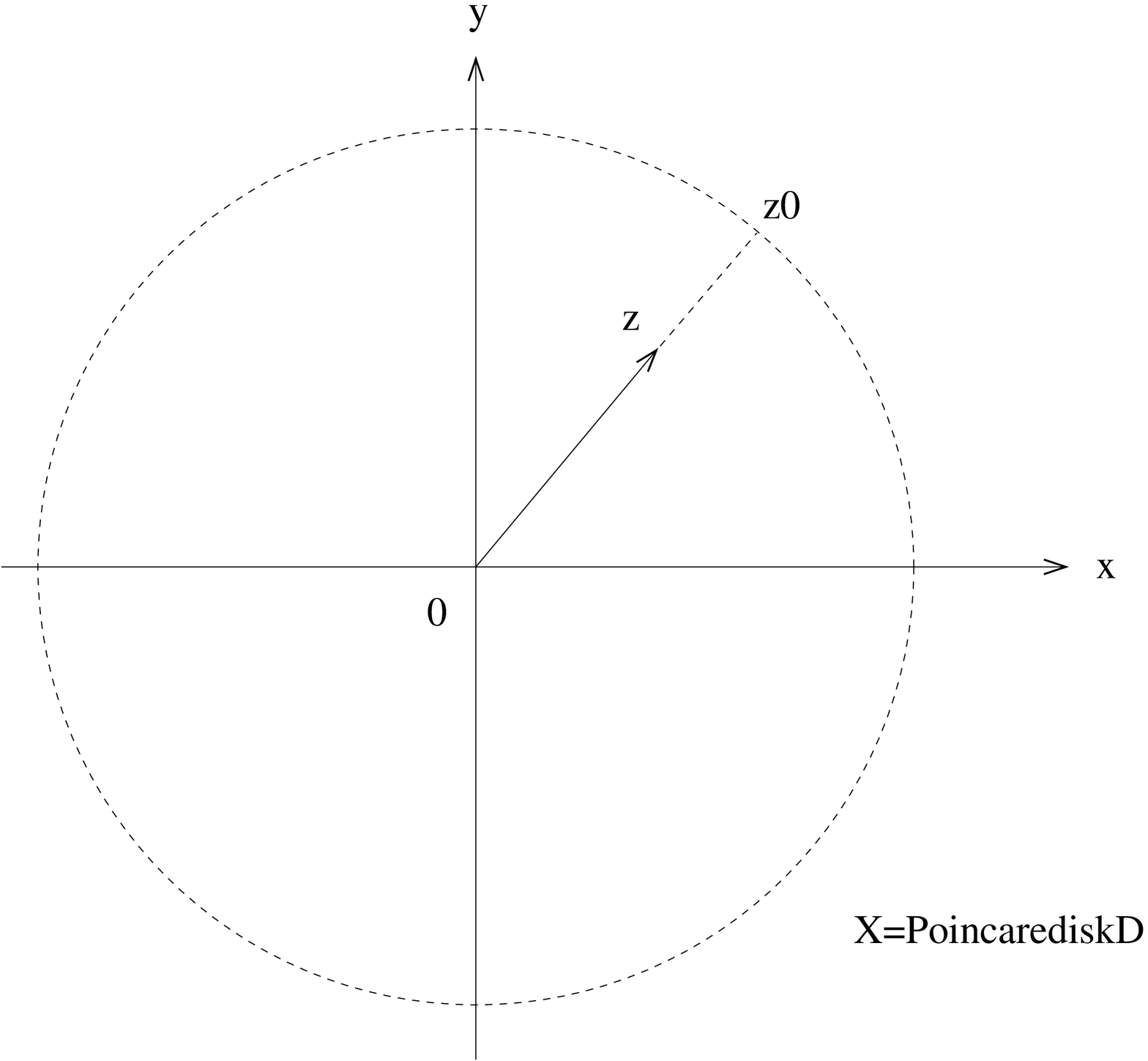}
  \end{center}
  \caption{The Poincar{\'e} unit disk model of the hyperbolic plane}
  \label{fig_pud}
\end{figure}

Let $z \in {\mathbb D}$ and $|z| = r<1$. Let $c : [0,r] \to
\mathbb{D}$ be the curve $c(t) = t \cdot z_0$ with $z_0 = \frac{z}{|z|}
\in S^1$. Then
$$
\Vert c'(t) \Vert_{\mathbb D}^2 = \frac{4}{1-t^2)^2}
$$
and
\begin{eqnarray*}
  d_{\mathbb D}(z,0) = \int\limits_0^r \Vert c'(t) \Vert_X\, dt = 
  2 \int\limits_0^r \frac{1}{1-t^2}\, dt &=& 
  2 \int\limits_0^{{\rm arctanh} (r)} \cosh^2 u(1-\tanh^2 u)\, du\\
  &=& 2 \int\limits_0^{{\rm arctanh}(r)} du = 2 {\rm arctanh}(r),
\end{eqnarray*}
using the substitution $t= \tanh(u)$, which implies $dt=(1-\tanh^2
u)du$ and $1-t^2 = 1 -\tanh^2 u = \frac{1}{\cosh^2 u}$.  Note that
$f(t) = \sinh(t)$, i.e.,
\begin{eqnarray*}
\mu(r) & = & \frac{\int\limits_0^r f(t)dt}{f(r)}\\
& = & \frac{\cosh r-1}{\sinh r}.
\end{eqnarray*}
Let $p = 0 \in {\mathbb D}$ and $E_p = \{ e_1,e_2 \}$, where $e_i$ is
the standard basis in $\R^2 \cong \C$. Note that $h=1$. Then
$$
F = F_{E_p} : {\mathbb D} \to B_{\frac{1}{h}}(0) = B_1(0)
$$
is given by $F(z) = \mu(d_{\mathbb D}(0,z)) \cdot
\frac{z}{|z|}$. Since $d_{\mathbb D}(0,z) = 2 {\rm arctanh}(r)$ for $z \in
{\mathbb D}$ with $|z|=r$, we obtain
\begin{eqnarray*}
  \mu(d_{\mathbb D}(0,z)) & = & \frac{\cosh (2 {\rm arctanh}(r))-1}
  {\sinh (2 {\rm arctanh}(r))}\\
  &  & \frac{\cosh^2({\rm arctanh}(r)) + \sinh^2({\rm arctanh}(r))-1}
  {2  \sinh({\rm arctanh}(r)) \cosh({\rm arctanh}(r))}\\
  & = & \frac{\sinh^2({\rm arctanh}(r))}{\sinh({\rm arctanh}(r)) 
    \cosh({\rm arctanh}(r))} = r.
\end{eqnarray*}
This implies that
$$ 
F(z) = r \cdot \frac{z}{r} = z,
$$
i.e., the model of the hyperbolic plane obtained via the harmonic map $F$ is 
the Poincar\'{e} unit disk, as we would have expected.

\section{The Busemann boundary}

Let $(X,g)$ be a noncompact complete Riemannian manifold and $p\in X$
fixed. Define $B^p: X \to C(X)$, where $B^p(y) =B^p_y: X \to
\mathbb{R}$ is given by
$$
B^p_y(x) = d(y,x) - d(p, y).
$$

\begin{lemma}
  Assume that $C(X)$ carries the topology of uniform convergence on
  compact sets.  Then $B^p: X \to C(X)$ is injective and continuous.
  Furthermore, $\overline {B^p(X)}$ is compact in $C(X)$.
\end{lemma}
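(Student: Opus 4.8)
The plan is to establish the three assertions separately, the first two by direct computation with the triangle inequality and the last via the Arzel\`a--Ascoli theorem; no geometry of the harmonic structure enters.

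For injectivity, suppose $B^p_{y_1} = B^p_{y_2}$, i.e.\ $d(y_1,x) - d(p,y_1) = d(y_2,x) - d(p,y_2)$ for all $x \in X$. Evaluating this identity at $x = y_1$ gives $d(y_1,y_2) = d(p,y_2) - d(p,y_1)$, and evaluating it at $x = y_2$ gives $d(y_1,y_2) = d(p,y_1) - d(p,y_2)$. Adding the two yields $2\,d(y_1,y_2) = 0$, hence $y_1 = y_2$. For continuity, I would bound the two difference quotients separately using the reverse triangle inequality: for every $x \in X$ one has $|B^p_{y}(x) - B^p_{y'}(x)| \le |d(y,x) - d(y',x)| + |d(p,y) - d(p,y')| \le 2\,d(y,y')$. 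Thus $y_n \to y$ forces $B^p_{y_n} \to B^p_y$ uniformly on all of $X$, in particular uniformly on compact sets, so $B^p$ is continuous (the convergence obtained is in fact stronger than required).

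The main point is the compactness of $\overline{B^p(X)}$, and the tool is the Arzel\`a--Ascoli theorem. Since $X$ is a connected complete Riemannian manifold it is locally compact and $\sigma$-compact, so a subset of $C(X)$ has compact closure in the topology of uniform convergence on compact sets exactly when it is pointwise bounded and equicontinuous. Both conditions are immediate from the triangle inequality. The family $\{B^p_y \mid y \in X\}$ is equi-Lipschitz in the variable $x$, since $|B^p_y(x_1) - B^p_y(x_2)| = |d(y,x_1) - d(y,x_2)| \le d(x_1,x_2)$ uniformly in $y$, giving equicontinuity; and it is pointwise bounded, since $|B^p_y(x)| = |d(y,x) - d(p,y)| \le d(p,x)$ uniformly in $y$, again by the reverse triangle inequality. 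Hence $B^p(X)$ is relatively compact and $\overline{B^p(X)}$ is compact.

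The only subtlety I anticipate is invoking the correct form of Arzel\`a--Ascoli: one needs the version for the compact-open (equivalently, uniform-on-compacta) topology on $C(X)$ over a locally compact, $\sigma$-compact domain $X$, rather than the classical statement for a fixed compact domain. Once the local compactness and $\sigma$-compactness of $X$ are recorded so that this version applies, all three claims reduce to elementary estimates with the distance function.
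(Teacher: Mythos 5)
Your proposal is correct, and for the injectivity and compactness claims it follows the paper's route exactly: injectivity by evaluating the identity $B^p_{y_1}=B^p_{y_2}$ at $x=y_1$ and $x=y_2$, and compactness of $\overline{B^p(X)}$ via Arzel\`a--Ascoli from the two estimates $|B^p_y(x_1)-B^p_y(x_2)|\le d(x_1,x_2)$ (equicontinuity, uniform in $y$) and $|B^p_y(x)|\le d(p,x)$ (pointwise boundedness, uniform in $y$). The only place where you genuinely diverge is the continuity of $B^p$: the paper first notes pointwise convergence $B^p_{y_n}\to B^p_y$ from continuity of $d$, then upgrades it to uniform convergence on compacta using equicontinuity and Arzel\`a--Ascoli, whereas you prove the single estimate $\sup_{x\in X}|B^p_y(x)-B^p_{y'}(x)|\le 2\,d(y,y')$, i.e.\ that $B^p$ is $2$-Lipschitz into $C(X)$ with the \emph{sup norm}. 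Your argument is more elementary (no compactness theorem is needed for this step) and gives a strictly stronger conclusion, namely uniform convergence on all of $X$ rather than only on compact sets; the paper's version, on the other hand, recycles the same equicontinuity estimate it needs anyway for the compactness claim, so nothing extra has to be verified. One small bookkeeping difference: where the paper appeals to metrizability of $C(X)$ (as a Fr\'echet space) to reduce compactness to sequential compactness and then extracts a convergent subsequence, you instead invoke the general form of Arzel\`a--Ascoli characterizing relatively compact subsets of $C(X)$ in the compact-open topology over a locally compact, $\sigma$-compact base; both are standard and correct.
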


\begin{proof}
  Consider $y, y' \in X$ such that $B^p_y = B^p_{y'}$. This implies
  $$
  d(x, y') -d(y', p) = d(x, y)-d(y, p)
  $$
  for all $x \in X$. In particular, for $x= y$ and $x= y'$ we obtain
  $$
  d(y, y') = d(y', p) - d(y, p) \; \text{and} \; -d(y', p) = d(y, y')
  - d(y,p),
  $$ 
  and therefore $d(y, y') =0$, which shows injectivity.

  To show that $B^p$ is continuous, we have to prove that, for each
  sequence $y_n $ converging to $y$, the sequence $B^p_{y_n}$ converges
  uniformly on compact subsets to $B^p_y$. The continuity of $d$
  implies that $B^p_{y_n}$ converges pointwise to $B_y$. Since
  $$
  |B^p_y(x) -B^p_{y}(x')| \le d(x, x')
  $$
  the family $B^p_{y_n}$ is equicontionous and by Arzela-Ascoli
  converges uniformly on compact subsets. Since $\overline {B^p(X)}$
  is a Fr{\'e}chet space, it is metrizable and we only have to check
  sequential compactness. Let $B^p_{y_n}$ be an arbitrary
  sequence. Since $|B^p_y(x)| \le d(x, p)$ is bounded on compact
  subsets independent of $y$, Arzela-Ascoli implies the existence of a
  subsequence converging uniformly on compact sets to a continuous
  function $f$.
\end{proof}

\begin{dfn}
  The set $\overline {B^p(X)}$ is called the {\em Busemann
    compactification} and $\partial_B^p X:=\overline {B^p(X)} \setminus
  B^p(X)$ is called the {\em Busemann boundary} with respect to $p$.
  Using the bijection $B^p: X \to B^p(X)$, a sequence $y_n \in X$
  converges to $\xi \in \partial_B^p X$ (in the {\em Busemann topology})
  iff $B^p_{y_n}$ converges to $\xi$ uniformily on compact subsets.
\end{dfn}
 
Note that the point $p$ is only a normalization (i.e., all functions
$f \in \overline {B^p(X)}$ satisfy $f(p) = 0$), and the convergence of
a sequence is independent of the choice of $p$. More precisely, if
$p'$ is another point, we have
$$
B_y^p(x) - B_y^{p'}(x) = d(y,p') -d(y, p) =B_ {y}^p(p').
$$
Hence, for a given sequence $y_n \in X$, the functions $B^p_{y_n}$ 
converge uniformly on compact sets to $\xi$ if and only if the
functions $ B_{y_n}^{p'} = B_{y_n}^{p} -B_{y_n}^{p}(p') $ converge
uniformly on compact sets to $\xi - \xi(p')$.

Note also that $y_n \to \xi \in \partial_B^p X$ means necessarily that
$d(p,y_n) \to \infty$: If $y_n$ would have a subsequence $y_{n_j}$
with $d(p,y_{n_j}) \le C$ for some $C > 0$, then a subsequence of
$y_{n_j}$ would be convergent to a point $y \in X$. Uniqueness of the
limit would imply $B_{y_n}^p \to B_y^p$, uniformly on compact sets,
but $B_y^p \not\in \partial_B^p X$.
  
If $(X,g)$ has no conjugate points, the Busemann boundary $\partial_B^p
X$ can be represented via Busemann functions. Let $S_v$ be the stable
Jacobi tensor along $c_v$ as defined in Lemma \ref{lem:jacrep}. The
corresponding unstable Jacobi tensor $U_v$ along $c_v$ is given by
$U_v(t) = S_{-v}(-t)$. We say that $(X,g)$ has {\em continuous
  asymptote} if the stable solution $v \mapsto S_v'(0)$ of the Ricatti
equation is continuous. Let us mention some important results for
manifolds without conjugate points (see \cite[Satz 3.5]{Kn2} and
\cite[Lemma 4.3]{Zi3}).

\begin{prop} \label{prop:busprops} Let $(X,g)$ be a complete, simply
  connected Riemannian manifold without conjugate points. For $v \in
  SX$ consider $b_{v,t}: X \to \mathbb{R}$, defined by $b_{v,t}(x) =
  d(x, c_v(t)) -t$. Then $b_v(x) = \lim\limits_{t \to \infty}
  b_{v,t}(x)$ exist and defines a $C^1$ function on $X$. The
   functions $\grad b_{v,t}$ converge to $\grad b_v$ in $C(X)$,
  i.e., the convergence is uniformly on compact sets. Furthermore, 
  $\grad b_v$ is Lipschitz continuous. 

  If $(X,g)$ has continuous asymptote, then the map $b: SX \to C(X)
  $ with $v \mapsto b_v$ is continuous.
\end{prop}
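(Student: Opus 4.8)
The plan is to treat the five assertions in turn, building on the Jacobi-tensor calculus of Lemma \ref{lem:jacrep} and Corollary \ref{cor:jacid} and on the curvature comparison bounds already used in this chapter.

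First I would settle existence of the limit and the Lipschitz bound by a monotonicity argument. Since $c_v$ has unit speed, $d(c_v(t),c_v(t'))=t'-t$ for $t'\ge t$, so the triangle inequality gives $b_{v,t'}(x)\le b_{v,t}(x)$; thus $t\mapsto b_{v,t}(x)$ is nonincreasing, and it is bounded below by $-d(x,c_v(0))$, so $b_v(x)=\lim_{t\to\infty}b_{v,t}(x)$ exists. Each $b_{v,t}$ is $1$-Lipschitz, since $|b_{v,t}(x)-b_{v,t}(x')|\le d(x,x')$, hence so is the pointwise limit $b_v$; in particular $b_v$ is continuous, and Dini's theorem turns the monotone convergence $b_{v,t}\downarrow b_v$ into uniform convergence on compact sets.

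Next I would prove $C^1$-regularity together with the gradient convergence. Because $X$ is simply connected without conjugate points, $\exp_{c_v(t)}$ is a diffeomorphism, so $b_{v,t}$ is smooth on $X\setminus\{c_v(t)\}$ with $\|\grad b_{v,t}\|=1$; its Hessian kills the radial direction and, on the orthogonal complement, equals the second fundamental form of the sphere $S(c_v(t),\ell)$ with $\ell=d(x,c_v(t))$. The comparison estimate $-b\le A_u'(\ell)A_u^{-1}(\ell)\le b\coth(b\ell)$ from \cite[Cor. 2.12]{Kn} then bounds $\|\operatorname{Hess} b_{v,t}\|$ by a fixed constant on any compact $K$ once $t$ is large, since $\ell\ge t-\max_{x\in K}d(x,c_v(0))\to\infty$. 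Hence $\{\grad b_{v,t}\}_{t\ge t_0}$ is uniformly bounded and uniformly Lipschitz on $K$, so equicontinuous; by Arzela-Ascoli any sequence $t_k\to\infty$ admits a subsequence along which $\grad b_{v,t}\to W$ uniformly on $K$. Together with $b_{v,t}\to b_v$ uniformly, this forces $b_v\in C^1$ with $\grad b_v=W$, and uniqueness of the limit then yields convergence of the whole family $\grad b_{v,t}\to\grad b_v$ in $C(X)$. The uniform Lipschitz bound passes to the limit, so $\grad b_v$ is Lipschitz.

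Finally, assuming \emph{continuous asymptote}, I would upgrade this to joint continuity in $(v,x)$. For fixed $t$ the map $(w,x)\mapsto\grad b_{w,t}(x)=\grad_x d(x,c_w(t))$ is continuous where defined, so it suffices to make the limit $t\to\infty$ locally uniform in $w$. By Corollary \ref{cor:jacid} the sphere second fundamental form along $c_w$ differs from the horospherical one by a term governed by $S_w'(0)-S_{w,t}'(0)=\int_t^\infty(A_w^\ast A_w)^{-1}(u)\,du$; continuous dependence of the Jacobi tensors $A_w$ and $S_{w,t}$ on $w$, combined with the hypothesis that $w\mapsto S_w'(0)$ is continuous, makes this difference, and hence the gradient direction, tend to its limit uniformly for $w$ near $v$ and $x\in K$. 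Thus $(w,x)\mapsto\grad b_w(x)$ is a locally uniform limit of continuous maps and so is jointly continuous; equivalently the stable vector field $-\grad b_w(x)$ depends continuously on $(w,x)$. Integrating this field from the continuously varying base point $\pi(w)=c_w(0)$, where $b_w(\pi(w))=0$, shows $(w,x)\mapsto b_w(x)$ is jointly continuous on $SX\times X$, and restriction to $SX\times K$ for compact $K$ gives $b_{v_n}\to b_v$ uniformly on $K$ whenever $v_n\to v$, i.e. $b:SX\to C(X)$ is continuous. I expect the genuine difficulty to lie entirely in this last step: converting the hypothesis that $w\mapsto S_w'(0)$ is continuous into honest local uniformity of $\grad b_{w,t}\to\grad b_w$, equivalently continuity of the stable vector field. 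The monotone-limit and Arzela-Ascoli arguments for a single $v$ are routine, but controlling $\int_t^\infty(A_w^\ast A_w)^{-1}(u)\,du$ uniformly for $w$ near $v$ is precisely what \emph{continuous asymptote} encodes through the identities of Lemma \ref{lem:jacrep} and Corollary \ref{cor:jacid}; this is where I would concentrate the effort, and where \cite[Lemma 4.3]{Zi3} and \cite[Satz 3.5]{Kn2} become essential.
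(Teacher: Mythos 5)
Your treatment of the first assertion is correct: monotonicity of $t\mapsto b_{v,t}(x)$ from the triangle inequality, the lower bound $b_{v,t}(x)\ge -d(x,c_v(0))$, the $1$-Lipschitz property of the limit, and Dini's theorem. Note, however, that the paper itself gives no proof of this proposition; it is quoted from \cite[Satz 3.5]{Kn2} and \cite[Lemma 4.3]{Zi3}, with the remark that the first two assertions are proved in \cite[Prop. 1 and 2]{Es} under the additional hypothesis of continuous asymptote. So your argument has to stand alone, and it has two genuine gaps. The first is in the $C^1$ step: the comparison $-b\le A'A^{-1}(\ell)\le b\coth(b\ell)$ of \cite[Cor. 2.12]{Kn} presupposes a lower sectional curvature bound $-b^2\le K_X$, which is not among the hypotheses. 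The proposition concerns arbitrary complete, simply connected manifolds without conjugate points, and these can have curvature unbounded below (already among Cartan--Hadamard manifolds). Without such a bound there is no uniform Hessian estimate for $b_{v,t}$ on a compact set $K$: the second fundamental form of the sphere $S(c_v(t),\ell)$ at $x\in K$ is governed by the curvature along the entire geodesic from $c_v(t)$ to $x$, so the estimate cannot be localized near $K$. Your Arzel\`a--Ascoli argument (equi-Lipschitz unit gradients, identification of every subsequential limit with $\grad b_v$) is the standard one and is valid whenever curvature is bounded below --- in particular for harmonic manifolds, which is all the paper ever applies this to --- but it does not prove the proposition as stated.

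The second gap is the continuous-asymptote statement, which you outline but do not prove. Everything hinges on the claim that continuity of $w\mapsto S_w'(0)$ makes the convergence $\grad b_{w,t}\to\grad b_w$ locally uniform in $w$; this is precisely the hard content of \cite[Prop. 1 and 2]{Es} and \cite[Lemma 4.3]{Zi3}, and it does not follow from Corollary \ref{cor:jacid} in the way you suggest. That identity compares $A_w'A_w^{-1}(t)-S_w'S_w^{-1}(t)$ with $\left(S_w'(0)-S_{w,t}'(0)\right)^{-1}$ at the points $c_w(t)$, i.e.\ along the geodesic $c_w$ itself, whereas what you need is control of $\grad b_{w,t}(x)$ at arbitrary points $x$ of a compact set $K$, which lie on different geodesics emanating from $c_w(t)$ and about which the corollary says nothing. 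Passing from Riccati data along $c_w$ to uniform control on all of $K$ is a substantial argument, not a formal consequence of continuous dependence of $A_w$ and $S_{w,t}$ on $w$; as you yourself concede, you would have to import it from the very references the paper cites, so this part of the proposal remains a plan rather than a proof.
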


\begin{remark}
  For a proof of the first two properties in Proposition
  \ref{prop:busprops} above under the additional assumption of a
  continuous asymptote see \cite[Prop. 1 and 2]{Es}. Note that
  noncompact harmonic manifolds have continuous asymptote (see Ranjan
  and Shah \cite{RSh3} or Zimmer \cite[Lemma 5.4]{Zi3}).
\end{remark}

The following proposition is due to Zimmer \cite[Prop. 2.11 and Lemma
4.5]{Zi3}. For convenience we provide a proof.

\begin{prop} \label{prop:busboundprops}
  Assume that $(X,g)$ is a complete, simply connected manifold without
  conjugate points such that the map $b: SX \to C(X)$ is
  continuous. Then the following properties hold.
  \begin{enumerate}
  \item The sequence $b_{v,t}(x)$ converges uniformly on compact
    subsets of $X \times SX$ to $b_v(x)$.
  \item Let $\partial_B^{p_0} X$ be the Busemann boundary of $X$ with
    respect to $p_0 \in X$. Then, for any $p \in X$ the map
    $\varphi_p^{p_0}: S_pX \to \partial_B^{p_0} X$ with $\varphi_p^{p_0}(v):= b_v -
    b_v(p_0)$ is a homeomorphism.
  \item A sequence $y_n = exp_p(t_n v_n) \in X$ with $v_n \in S_pX$
    and $t_n \ge 0$ converges to a point $\xi \in \partial_B^{p_0} X$ if
    and only if $t_n \to \infty$ and there exists $v \in S_pX$ with
    $v_n \to v$. In particular, $\xi$ is given by $ b_{v} -
    b_{v}(p_0)$.
  \end{enumerate}
\end{prop}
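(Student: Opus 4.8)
The plan is to reduce everything to a single uniform-convergence statement, namely part~(1), and then to extract parts~(2) and~(3) from it by soft topological arguments together with the gradient identity $\grad b_v(p)=-v$ already used in Proposition~\ref{prop:bndim}.

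For part~(1), I would first note that for fixed $(v,x)$ the map $t\mapsto b_{v,t}(x)=d(x,c_v(t))-t$ is monotonically decreasing: since $d(c_v(t),c_v(t+s))\le s$ (distance is bounded by arclength), the triangle inequality gives $b_{v,t+s}(x)\le b_{v,t}(x)$. Each function $(v,x)\mapsto b_{v,t}(x)$ is jointly continuous on $SX\times X$, and by Proposition~\ref{prop:busprops} its pointwise limit as $t\to\infty$ is $b_v(x)$. The standing hypothesis that $b:SX\to C(X)$ is continuous, together with continuity of each $b_v$, makes the limit $(v,x)\mapsto b_v(x)$ jointly continuous as well. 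Thus on every compact $K\subset X\times SX$ we have a monotone family of continuous functions converging pointwise to a continuous function, and Dini's theorem yields uniform convergence on $K$. This proves~(1).

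The engine for (2) and (3) is the following diagonal consequence of (1): if $v_n\to v$ in $S_pX$ and $t_n\to\infty$, then $b_{v_n,t_n}\to b_v$ uniformly on compacta. Indeed, on a compact $K\subset X$ and a compact neighbourhood $L\subset S_pX$ of $v$, part~(1) makes $b_{w,t}\to b_w$ uniform in $(w,x)\in L\times K$, while continuity of $b$ gives $b_{v_n}\to b_v$ uniformly on $K$; writing $b_{v_n,t_n}-b_v=(b_{v_n,t_n}-b_{v_n})+(b_{v_n}-b_v)$ controls both terms. Now set $y_n=\exp_p(t_nv_n)$. Since $X$ is simply connected without conjugate points, radial geodesics are minimizing, so $d(p,y_n)=t_n$ and $B^p_{y_n}=b_{v_n,t_n}$; combined with the elementary identity $B^{p_0}_y=B^p_y-B^p_y(p_0)$, the diagonal lemma gives $B^{p_0}_{y_n}\to b_v-b_v(p_0)$. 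This is exactly the ``if'' direction of~(3), and taking $v_n\equiv v$, $t_n=n$ shows that $b_v-b_v(p_0)$ is the limit of $B^{p_0}_{c_v(n)}$ with $d(p_0,c_v(n))\to\infty$, hence a genuine boundary point (by the earlier observation that convergence to $\partial_B^{p_0}X$ forces the footpoints to escape to infinity). For the ``only if'' direction, $y_n\to\xi\in\partial_B^{p_0}X$ forces $d(p_0,y_n)\to\infty$ and hence $t_n\to\infty$; if $(v_n)$ failed to converge, two subsequential limits $v\neq w$ would yield $b_v-b_v(p_0)=\xi=b_w-b_w(p_0)$, contradicting the injectivity proved next.

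For part~(2), $\varphi_p^{p_0}(v)=b_v-b_v(p_0)$ is continuous because $v\mapsto b_v$ is continuous into $C(X)$ and evaluation at $p_0$ is continuous, and it takes values in $\partial_B^{p_0}X$ by the previous paragraph. Injectivity follows from $\grad b_v(p)=-v$: if $b_v-b_v(p_0)=b_w-b_w(p_0)$ then $b_v-b_w$ is constant, whence $-v=\grad b_v(p)=\grad b_w(p)=-w$. Surjectivity is the ``only if'' direction of~(3): given $\xi\in\partial_B^{p_0}X$, choose $y_n\to\xi$, write $y_n=\exp_p(t_nv_n)$, pass to a subsequence with $v_n\to v\in S_pX$ (using compactness of the sphere) and $t_n\to\infty$, and apply the diagonal lemma to get $\xi=b_v-b_v(p_0)$. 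Finally, a continuous bijection from the compact space $S_pX$ onto the Hausdorff space $\partial_B^{p_0}X\subset C(X)$ is automatically a homeomorphism. The step I expect to be the main obstacle is part~(1): checking the precise hypotheses of Dini's theorem on $X\times SX$, in particular the joint continuity of the limit $b_v(x)$ (this is exactly where the continuity assumption on $b$, equivalently the continuous-asymptote property, is indispensable), and then upgrading the monotone convergence to the genuinely diagonal convergence $b_{v_n,t_n}\to b_v$ in which $v$ and $t$ vary simultaneously. Once that uniform statement is in place, the remainder is standard point-set topology plus the gradient identity.
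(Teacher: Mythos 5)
Your proof is correct and follows essentially the same route as the paper's: monotonicity of $t\mapsto b_{v,t}(x)$ plus Dini's theorem for (1), then the diagonal convergence $b_{v_n,t_n}\to b_v$ (which you isolate as an explicit lemma, whereas the paper uses it implicitly), compactness of $S_pX$, injectivity via $\grad b_v(p)=-v$, and the compact-to-Hausdorff argument for the homeomorphism in (2), with (3) obtained exactly as in the paper. The only slip is your justification that $b_v-b_v(p_0)$ is a genuine boundary point: the observation you cite goes in the wrong logical direction (convergence to the boundary forces the footpoints to escape, not conversely), but the fix is one line — $b_v-b_v(p_0)$ is unbounded below since $b_v(c_v(t))=-t$, while every $B^{p_0}_y$ satisfies $B^{p_0}_y\ge -d(p_0,y)$, so the limit of $B^{p_0}_{c_v(t)}$ lies in $\overline{B^{p_0}(X)}\setminus B^{p_0}(X)=\partial_B^{p_0}X$ (a point the paper itself also leaves implicit).
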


\begin{proof}
  (1) From the triangle inequality, we obtain for all $v \in SX $ and
  $x \in X$ that $b_{v,t}(x) \le b_{v,s}(x)$ if $s \le t$. Since the
  map $(x,v) \to b_v(x)$ is continuous, Dini's theorem implies that
  $b_{v,t}(x)$ converges uniformly on compact subsets
  of $X \times SX$ to $b_v(x)$.\\
  (2) Proposition \ref{prop:busprops} implies that $\varphi_p^{p_0}$ is
  continuous. To show surjectivity assume that $B^{p_0}_{y_n}$
  converges to $\xi \in \partial_B^{p_0} X$. Define $v_n \in S_pX$ and
  $t_n \ge 0$ such that $c_{v_n}(t_n) = y_n$. Then
  $$
  B^{p_0}_{y_n}(x) = d(x, c_{v_n}(t_n)) -d(c_{v_n}(t_n),p_0) =
  b_{v_n,t_n}(x) - b_{v_n,t_n}(p_0).
  $$
  By passing to a subsequence if necessary, we can assume that $v_n$
  converges to $v\in S_pX$. Recall that $t_n \to \infty$. Because of
  (1), the right hand converges to $\xi = b_v - b_v(p_0)$. This shows
  that $\varphi_p^{p_0}$ is surjective.  The map is also injective since
  $\varphi_p^{p_0}(v) =\varphi_p^{p_0}(w) $ implies $-v = \grad b_v(p) = \grad
  b_w(p) = -w$. Therefore, $\varphi_p^{p_0} : S_pX \to \partial_B^{p_0} X$ is
  continuous and bijective and, since $S_pX$ is compact, $\varphi_p^{p_0}$
  is a homeomorphism.\\
  (3) Let $y_n= exp_p(t_n v_n) \in X$ with $v_n \in S_pX$ and $t_n \ge
  0$ be a sequence. Assume that $y_n$ converges to $\xi \in \partial_B^{p_0}
  X$, i.e., $B^{p_0}_{y_n}$ converges to $\xi = b_v - b_v(p_0)$ for some
  $v \in S_pX$. Since $d(y_n,p) \to \infty$, we know that $t_n \to \infty$.
  As above we have
  $$
  B^{p_0}_{y_n}(x) = d(x, c_{v_n}(t_n)) -d(c_{v_n}(t_n),p_0) =
  b_{v_n,t_n}(x) - b_{v_n,t_n}(p_0).
  $$
  If $v_n \not\to v$, we would have a subsequence of $v_n$ converging
  to $w \in S_pX$ with $w \neq v$. But then, following the arguments
  in (2), this subsequence would converge to $b_w - b_w(p_0)$,
  violating the injectivity of the map $\varphi_p^{p_0}$. Conversely, if
  $v_n \to v$ and $t_n \to \infty$, (1) implies $B^{p_0}_{y_n} =
  b_{v_n,t_n} - b_{v_n,t_n}(p_0) \to b_{v} - b_{v}(p_0) =: \xi$,
  uniformily on compact subsets of $X$.
\end{proof}

\section{Visibility measures and their  Radon-Nykodym derivative}
\label{sect:visib}

Let $(X,g)$ be a noncompact, simply connected harmonic manifold of
dimension $n$. We choose a reference point $p_0 \in X$ and define
$\partial_B X := \partial_B^{p_0} X$. For any other point $p \in X$,
we know from Proposition \ref{prop:busboundprops} that the map
$\varphi_p^{p_0} \circ (\varphi_p^p)^{-1}: \partial_B X
\to \partial_B^p X$ is a homeomorphism, identifying both Busemann
boundaries in a canonical way.  Moreover, the homeomorphisms
$\varphi_p^{p_0}: S_pX \to \partial_B^{p_0} X$ motivate the following
definition.

\begin{dfn} \label{def:vismeas}
  Let ${\mathcal M}_1(\partial_B X)$ denote the space of Borel
  probability measures on the Busemann boundary $\partial_B X$. For every
  $p \in X$, we define $\mu_p \in {\mathcal M}_1(\partial_B X)$ via
  $$ \int_{\partial_B X} f(\xi)\, d\mu_p(\xi) = \frac{1}{\omega_n}
  \int_{S_pX} f(\varphi_p^{p_0}(v))\, d\theta_p(v) \quad \forall\, f
  \in C(\partial_B X), $$ 
  where $\omega_n$ is the volume of the $(n-1)$-dimensional
  standard unit sphere and $d\theta_p$ is the volume element of $S_pX$
  induced by the Riemannian metric.

  We call $\mu_p$ the {\em visibility measure} of $(X,g)$ at the point
  $p$.
\end{dfn}

We will see that any two visibility measures $\mu_p, \mu_q \in
{\mathcal M}_1(\partial_B X)$ are absolutely continuous, by calculating
their Radon-Nykodym derivative via a limiting process. 
This needs some preparations.

\begin{lemma} \label{lem:bijvismap}
  Let $(X,g)$ be a noncompact simply connected harmonic space. 
  For all $p, q \in X$ there exists a $t(p,q) > 0$ such that for all
  $t \ge t(p,q)$ and all $v \in S_qX$ the geodesic ray $c_v: [0,
  \infty) \to X$ intersects $S_t(p)$ in a unique point $F_t(v)$ (see
  Figure \ref{fig_ftqp}). In particular, the map $F_t: S_q X \to
  S_t(p)$ is bijective.
\end{lemma}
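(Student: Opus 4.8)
The plan is to reduce the whole statement to a one-dimensional fact about the function $\rho_v(s):=d(p,c_v(s))$, namely that for all sufficiently large $t$ it attains the value $t$ exactly once. First I would use that, by the Cartan--Hadamard theorem, $\exp_q:T_qX\to X$ is a diffeomorphism; hence there is a unique geodesic joining $q$ to any point and $d(q,c_v(s))=s$. Granting the ``exactly one intersection'' claim, bijectivity of $F_t$ is then immediate: if $F_t(v)=F_t(w)=y$, both rays reach $y$ at the parameter $d(q,y)$, so $v=w$ by uniqueness of geodesics from $q$; and given $y\in S_t(p)$, the unique geodesic from $q$ to $y$ determines a $v\in S_qX$ with $F_t(v)=y$, which is surjectivity.

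Existence of an intersection is the intermediate value theorem applied to $\rho_v$: we have $\rho_v(0)=d(p,q)=:r$ and, by the triangle inequality, $s-r\le\rho_v(s)\le s+r$, so $\rho_v(s)\to\infty$. The second inequality also shows that $\rho_v$ is $1$-Lipschitz, so every solution of $\rho_v(s)=t$ lies in $[t-r,t+r]$. Thus it suffices to produce an $S_0>0$, \emph{independent of} $v$, with $\rho_v'(s)>0$ for all $s\ge S_0$; then $\rho_v$ is strictly increasing on $[S_0,\infty)$, and for $t\ge t(p,q):=S_0+r$ all crossings lie in $[t-r,t+r]\subset[S_0,\infty)$, so there is exactly one.

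To establish this uniform monotonicity I would study $g_v(s):=s-\rho_v(s)$. Since $\rho_v$ is $1$-Lipschitz, $g_v'=1-\rho_v'\ge0$, so $g_v$ is nondecreasing, and $g_v(s)\in[-r,r]$, so it is bounded and converges to some $g_{v,\infty}$. In the Busemann notation of Proposition \ref{prop:busprops} one has $\rho_v(s)-s=b_{v,s}(p)$ and $g_{v,\infty}=-b_v(p)$, so $g_{v,\infty}-g_v(s)=b_{v,s}(p)-b_v(p)$. Since harmonic manifolds have continuous asymptote, Proposition \ref{prop:busboundprops}(1) gives $b_{v,s}(p)\to b_v(p)$ uniformly for $v\in S_qX$; hence the tail $\int_s^\infty(1-\rho_v'(\sigma))\,d\sigma=g_{v,\infty}-g_v(s)$ tends to $0$ uniformly in $v$. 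On the other hand, differentiating $\rho_v'(s)=\langle\grad d_p(c_v(s)),\dot c_v(s)\rangle$ along the geodesic gives $\rho_v''(s)={\rm Hess}\,d_p(\dot c_v,\dot c_v)=\langle B_{v_0}(\rho_v(s))u,u\rangle$, where $u$ is the component of $\dot c_v$ tangent to the sphere (the radial part of ${\rm Hess}\,d_p$ vanishes) and $B_{v_0}$ is the sphere's second fundamental form; the bound $-b\le B_{v_0}(\rho)\le b\coth(b\rho)$ (see \cite[Cor. 2.12 in Section 1.2]{Kn}) yields $|\rho_v''(s)|\le M$ once $\rho_v(s)\ge\rho_0$, i.e. for $s\ge\rho_0+r$, with $M$ independent of $v$. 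An elementary argument then finishes: choosing $S_0$ so that $g_{v,\infty}-g_v(s)<1/(4M)$ for all $s\ge S_0-1/(2M)\ge\rho_0+r$, a value $\rho_v'(s_*)\le0$ with $s_*\ge S_0$ would force $1-\rho_v'\ge\tfrac12$ on an interval of length $1/(2M)$ (by the Lipschitz bound on $\rho_v''$), contributing at least $1/(4M)$ to the tail increase of $g_v$ --- a contradiction. Hence $\rho_v'>0$ on $[S_0,\infty)$ for every $v$.

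The main obstacle is precisely this uniformity in $v$. The pointwise statement $\rho_v'(s)\to1$ is soft, but since $t(p,q)$ must not depend on $v$ I need $\rho_v'$ to become positive past a threshold $S_0$ common to all directions; this is what forces me to combine the uniform convergence of the truncated Busemann functions $b_{v,s}(p)$ (hence the continuous-asymptote property of harmonic manifolds) with the uniform curvature-type bound on $\rho_v''$.
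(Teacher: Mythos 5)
Your proof is correct, and it takes a genuinely different route from the paper's. The paper obtains existence the same way you do (for $t \ge d(p,q)+1$ the point $q$ lies in $B_t(p)$, so each ray from $q$ must cross $S_t(p)$), but it proves uniqueness by contradiction via Corollary \ref{cor:uniformdiv}: at a putative second crossing $q' = c_v(t_1)$ the incoming direction makes an angle at least $\pi/2$ with the outward radial direction $\dot c_w(t)$ (where $c_w(t)=q'$, $w \in S_pX$), and tracing both geodesics backwards from $q'$ for time $t$ forces $a(t)\,\pi/2 \le d(c_v(t_1-t),p) \le 2d(p,q)$, contradicting the choice $2d(p,q) \le a(t)$. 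You instead establish the stronger uniform statement that $\rho_v(s)=d(p,c_v(s))$ is strictly increasing for $s \ge S_0$ with $S_0$ independent of $v \in S_qX$, by combining two facts the paper has available but never combines in this way: the uniform (Dini) convergence $b_{v,s}(p) \searrow b_v(p)$ of Proposition \ref{prop:busboundprops}(1), which rests on the continuous-asymptote property of harmonic manifolds, and the bound $|\rho_v''(s)| = |\mathrm{Hess}\, d_p(\dot c_v(s),\dot c_v(s))| \le b\coth(b\rho_0)$ for $\rho_v(s)\ge \rho_0$, coming from the second-fundamental-form estimates for geodesic spheres. Your tail-versus-derivative contradiction is sound, including the care taken that $s_*-1/(2M) \ge \rho_0+r$ so the Hessian bound holds on the whole interval, and bijectivity of $F_t$ then follows from Cartan--Hadamard exactly as you say. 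The trade-off: the paper's argument is a short consequence of its own uniform-divergence corollary and stays within the toolkit it built for harmonic spaces, while yours avoids Corollary \ref{cor:uniformdiv} entirely, works verbatim on any complete simply connected manifold without conjugate points having bounded sectional curvature and continuous asymptote, and yields the extra geometric information of uniform radial monotonicity (large spheres about $p$ are star-shaped as seen from $q$), not merely that each ray crosses once. One cosmetic point: the $1$-Lipschitz property of $\rho_v$ follows from the triangle inequality applied to the pair $c_v(s_1)$, $c_v(s_2)$, not from the inequality $\rho_v(s)\le s+r$ itself.
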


\begin{figure}[h]
  \begin{center}
    \psfrag{SqX}{$S_qX$} 
    \psfrag{q}{$q$}
    \psfrag{v}{$v$}
    \psfrag{Ft(v)}{$F_t(v)$}
    \psfrag{St(p)}{$S_t(p)$}
    \psfrag{t}{$t$}
    \psfrag{p}{$p$}
    \includegraphics[width=8cm]{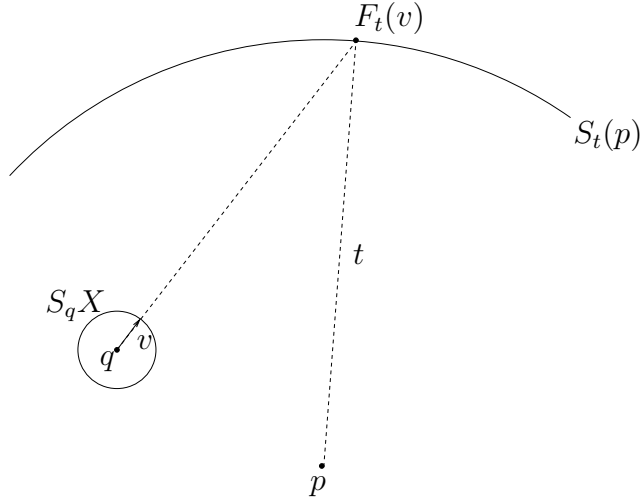}
  \end{center}
  \caption{Illustration of the map $F_t: S_qX \to S_t(p)$}
  \label{fig_ftqp}
\end{figure}

\begin{proof}
  Let $a(t)$ be as in Corollary \ref{cor:uniformdiv} . Choose $t_0$
  such that for all $t \ge t_0$ we have $2 d(p,q) \le a(t)$. Define
  $$
  t(p,q) = \max \{ d(p,q) +1, t_0 \}.
  $$
  In particular, $q$ lies in the ball of radius $t$ around $p$, for
  all $t \ge t(p,q)$, and hence for all $v \in S_qX$ the geodesic ray
  $c_v: [0, \infty) \to X$ intersects $S_t(p)$. Let $t \ge t(p,q)$,
  and assume that $q' = c_v(t_1)$ is the second intersection
  point. Let $w \in S_pX$ be the unique vector such that $c_w(t) =
  q'$. Since $\dot c_v(t_1)$ either points into $B_t(p)$ or is tangent
  to $S_t(p)$ we have
  $$
  \angle (\dot c_v(t_1), \dot c_w(t)) \ge \pi/2.
  $$
  Using the triangle inequality we obtain
  $$
  t- d(p,q) \le t_1 \le t + d(p,q)
  $$
  Using Corollary \ref{cor:uniformdiv}, we obtain for all $s \ge 0$
  $$
  d(c_v(t_1-s), c_w(t-s)) \ge a(s) \pi/2.
  $$
  In particular for $s =t$ this yields 
  $$
  a(t) \pi/2 \le d(c_v(t_1-t), p) \le d(c_v(t_1-t),q ) + d(q,p) \le 2
  d(p,q) \le a(t),
  $$
  which is a contradiction. Hence, a second intersection point cannot
  occur.
\end{proof}

\begin{prop}
  Let $(X,g)$ be a complete, simply connected noncompact manifold
  without conjugate points and $p,q \in X$. Consider the map $F_t: S_q
  X \to S_t(p)$, where $F_t(v)$ is the first intersection point of the
  geodesic ray $c_v: [0,\infty) \to X$ with $S_t(p)$. If $q$ is
  contained in the ball of radius $t$ about $p$, this map is well
  defined. Then the Jacobian of $F_t$ is given by
  $$
  \Jac F_t(v) = \frac{\det A_v(d(q,F_t(v)))}{\langle N_p(F_t(v)),N_q(F_t(v)) 
  \rangle},
  $$
  where $N_x(y) = (\grad d_x)(y)$ and $A_v$ is Jacobitensor along
  $c_v$ with $A_v(0) = 0$ and $A_v'(0) = \id$. Note that we have $\det
  A_v(s) = f(s)$ if $(X,g)$ is harmonic.
\end{prop}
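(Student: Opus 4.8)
The plan is to realise $F_t$ as a radially reparametrised exponential map and differentiate it. Since $\exp_q$ is a diffeomorphism by Cartan--Hadamard and $c_v|_{[0,r(v)]}$ is the unique minimizing geodesic from $q$ to its first intersection with $S_t(p)$, we may write $F_t(v)=\exp_q(r(v)\,v)$ with $r(v)=d(q,F_t(v))$; transversality of the crossing makes $v\mapsto r(v)$ smooth by the implicit function theorem. Fixing $v$ and $w\in T_v(S_qX)=v^\perp$, I would pick a curve $v(\epsilon)\subset S_qX$ with $v(0)=v$, $v'(0)=w$, and differentiate $F_t(v(\epsilon))=\exp_q(r(v(\epsilon))\,v(\epsilon))$. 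The curve $\epsilon\mapsto r(v(\epsilon))\,v(\epsilon)$ in $T_qX$ has velocity $dr_v(w)\,v+r(v)\,w$, so by the Gauss Lemma the radial part maps to $dr_v(w)\,\dot c_v(r(v))=dr_v(w)\,N_q(F_t(v))$, while the spherical part $r(v)\,w$ maps to the Jacobi field $A_v(r(v))w$ (the one with $J(0)=0$, $J'(0)=w$). Hence
$$DF_t(v)(w)=A_v(r(v))\,w+dr_v(w)\,N_q(F_t(v)),$$
and $A_v(r(v))w\perp N_q(F_t(v))$.

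Next I would determine $dr_v(w)$ from the constraint that $F_t$ takes values in $S_t(p)$: differentiating $d(p,F_t(v(\epsilon)))\equiv t$ gives $\langle DF_t(v)(w),N_p(F_t(v))\rangle=0$. Abbreviating $N_q=N_q(F_t(v))$, $N_p=N_p(F_t(v))$, this says exactly that $DF_t(v)(w)$ arises from $A_v(r(v))w\in N_q^\perp$ by subtracting the unique multiple of $N_q$ that produces a vector in $N_p^\perp$. In other words $DF_t(v)=P\circ A_v(r(v))$, where $P\colon T_{F_t(v)}X\to N_p^\perp$ is the projection along $N_q$,
$$P(u)=u-\frac{\langle u,N_p\rangle}{\langle N_q,N_p\rangle}\,N_q.$$
Here $\langle N_q,N_p\rangle>0$ because $q$ lies inside $B_t(p)$ (as $t\ge t(p,q)>d(p,q)$), so $c_v$ exits $S_t(p)$ transversally at $F_t(v)$ and $P$ is well defined and invertible on $N_q^\perp$.

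Consequently the Jacobian factorises as $\Jac F_t(v)=\det A_v(r(v))\cdot\bigl|\det(P|_{N_q^\perp})\bigr|$, with $\det A_v(r(v))=f(r(v))$ in the harmonic case and positive since there are no conjugate points. The only genuine point of care is the elementary computation of $\det(P|_{N_q^\perp})$ for the restricted projection $P|_{N_q^\perp}\colon N_q^\perp\to N_p^\perp$ between the two hyperplanes, each with its induced inner product. I would settle it by decomposing $T_{F_t(v)}X$ orthogonally along the $2$-plane $\Pi=\Span\{N_p,N_q\}$: on $\Pi^\perp\subset N_q^\perp\cap N_p^\perp$ the map $P$ is the identity, and on the complementary lines $N_q^\perp\cap\Pi\to N_p^\perp\cap\Pi$ a direct two-dimensional computation in terms of $\angle(N_q,N_p)$ gives the factor $1/\langle N_q,N_p\rangle$. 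Since both decompositions are orthogonal, the determinant multiplies out to $1/\langle N_q,N_p\rangle$, and we obtain
$$\Jac F_t(v)=\frac{\det A_v(d(q,F_t(v)))}{\langle N_p(F_t(v)),N_q(F_t(v))\rangle},$$
as asserted.
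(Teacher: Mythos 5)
Your proposal is correct and takes essentially the same route as the paper's proof: both realise $F_t(v)=\exp_q(r(v)\,v)$, split $DF_t(v)$ via the Gauss Lemma into the Jacobi part $A_v(r(v))$ plus a radial multiple of $N_q$, and account for the tilt between the hyperplanes $N_q^\perp$ and $N_p^\perp$ through a projection whose determinant produces the factor $\langle N_p,N_q\rangle$. The only difference is bookkeeping: you factor $DF_t=P\circ A_v(r(v))$ with the oblique projection $P$ onto $N_p^\perp$ along $N_q$ (determining $dr_v(w)$ explicitly from the constraint), whereas the paper post-composes with the orthogonal projection $L\colon N_p^\perp\to N_q^\perp$, $L(w)=w-\langle w,N_q\rangle N_q$, to get $L\circ DF_t=A_v(r(v))$ and then computes $\Jac L=|\langle N_p,N_q\rangle|$; since $P$ and $L$ are mutually inverse as maps between the two hyperplanes, the two determinant computations are the same.
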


\begin{proof}
  Choose a curve $\gamma : (- \epsilon, \epsilon) \to S_q X$ with
  $\gamma(0) = v \in S_q X$. Then
  $$
  F_t(\gamma(s)) = \exp_q (d(q, F_t(\gamma(s)) \cdot \gamma(s)),
  $$
  and, using the chain rule and the product rule,
  \begin{multline*}
    D F_t(v)(\dot{\gamma}(0)) = \\
    D \exp_q (d(q,F_t(v)) \cdot v)(\langle N_q(F_t(v)), D F_t(v)
    \dot{\gamma}(0) \rangle v + d(q,F_\gamma(v)) \cdot
    \dot{\gamma}(0)).
  \end{multline*}
  Note that $\dot{\gamma}(0) \perp v$.

  We have
  $$
  D \exp_q(t v)(t w) = Y(t)(w) = J(t),
  $$
  where $Y$ is the Jacobi tensor along $c_v$ with $Y(0) = 0$ and
  $Y'(0) = \id$, and therefore $J$ is a Jacobi field along $c$
  satisfying $J(0) =0$ and $J'(0) =w$. Note that $Y$ and $A_v$ are
  related by $A_v = Y \Big\vert_{(c_v')^\perp}$. In particular, we
  have
  $$
  D \exp_q(tv)(tv) = t D \exp_q(tv)(v) = t c_v'(t).
  $$
  This yields
  \begin{eqnarray*}
    D F_t(v)(\dot{\gamma}(0)) & = & 
    \langle N_q(F_t(v)), D F_t(v) \dot{\gamma}(0) \rangle\; 
    D \exp_q(d(q,F_t(v))  v)(v)\\
    & + & A_v(d(q,F_t(v)))(\dot{\gamma}(0))\\
    & = & \langle N_q(F_t(v)), D F_t(v) \dot{\gamma}(0) \rangle\; 
    c_v' (d(q,F_t(v)))\\
    &  &+ A_v(d(q,F_t(v)))( \dot{\gamma}(0)).
  \end{eqnarray*}
 
  Consequently,
  \begin{multline*}
  D F_t(v) (\dot{\gamma}(0)) = \\
  \langle N_q(F_t(v)), D F_t(v) \dot{\gamma}(0) \rangle N_q(F_t(v)) + 
  A_v(d(q,F_t(v))) (\dot{\gamma}(0)).
  \end{multline*}
  Next, we introduce the map
  \begin{eqnarray*}
  L_x: N_p(x)^\perp &\to& N_q(x)^\perp,\\
  L_x(w) &=& w - \langle w, N_q(x) \rangle N_q(x).
  \end{eqnarray*}
  Then we have
  $$
  L_{F_t(v)} (D F_t(v)(\dot{\gamma}(0))) = A_v (d(q,F_t(v)))(\dot{\gamma}(0)).
  $$
 
  The finish the proof of the above Proposition, we need the following lemma.

  \begin{lemma}
  $\Jac L_x = | \langle N_p(x), N_q(x) \rangle |$.
  \end{lemma}

  \begin{proof}
    Consider
    $$
    N_p(x)^\perp \cap N_q(x)^\perp = \{w \in T_x X \mid\; 
    \langle w,N_p(x) \rangle = 0 \; \; \text{and} \; \; 
    \langle w,N_q(x) \rangle = 0 \}.
    $$
    Then $N_p(x)^\perp \cap N_q(x)^\perp$ has co-dimension one in
    $N_p(x)^\perp$ and $L_x$ is the identity on $N_p(x)^\perp \cap
    N_q(x)^\perp$. Let
    $$
    w_0 = N_q(x) - \langle N_q(x), N_p(x) \rangle N_p(x) \in N_p(x)^\perp.
    $$
    The vector $w_0$ is orthogonal to $N_p(x)^\perp \cap
    N_q(x)^\perp$ since for all $w \in N_p(x)^\perp \cap
    N_q(x)^\perp$ we have $\langle w,N_p(x) \rangle = 0$ and $\langle
    w,N_q(x) \rangle = 0$, and therefore
    $$
    \langle w,w_0 \rangle = \langle \underbrace{w,N_q(x)}_{= 0} \rangle - 
    \langle N_q(x), N_p(x) \rangle \langle 
    \underbrace{w, N_p(x)}_{= 0} \rangle = 0.
    $$
    Moreover, $L_x w_0$ is also orthogonal to $N_p(x)^\perp \cap
    N_q(x)^\perp$:
    \begin{eqnarray*}
      L_x w_0& = &w_0 - \langle w_0, N_q(x) \rangle N_q(x)\\
      &= &N_q(x) - \langle N_q(x), N_p(x) \rangle N_p(x) -
      \langle N_q(x), N_q(x) \rangle N_q(x)\\
      &&+ \langle N_q(x), N_p(x))^2 \rangle N_q(x)\\
      &= &\langle N_p(x), N_q(x) \rangle (\langle N_p(x), N_q(x) \rangle N_q(x) 
      - N_p(x)),
    \end{eqnarray*}
    and consequently $\langle w,L_x w_0 \rangle = 0$ for all $w$
    satisfying $\langle w,N_p(x) \rangle= \langle w,N_q(x) \rangle =
    0$.  Consequently:
    $$
    \Jac L_x = \frac{||L_x w_0||}{||w_0||}.
    $$
    Since
    \begin{eqnarray*}
    \|L_x w_0\|^2& = &\langle N_p(x), N_q(x) \rangle^2 
    (\langle N_p(x),N_q(x) \rangle^2 + 1 - 2 \langle N_p(x),N_q(x) \rangle^2)\\
    &= &\langle N_p(x),N_q(x) \rangle^2 (1 - \langle N_p(x),N_q(x) \rangle^2)
    \end{eqnarray*}
    and
    \begin{eqnarray*}
    \|w_0\|^2& =& 1 + \langle N_p(x),N_q(x) \rangle^2 - 
    2 \langle N_p(x),N_q(x) \rangle^2 \\
    &=& 1 - \langle N_p(x),N_q(x) \rangle^2,
    \end{eqnarray*}
    we obtain
    \begin{eqnarray*}
    \Jac L_x &= &\left( \frac{\langle N_p(x),N_q(x) \rangle^2 
    (1 - \langle N_p(x),N_q(x) \rangle^2)}{1 - \langle N_p(x),N_q(x) \rangle^2} 
    \right)^{1/2}\\
    &= &| \langle N_p(x),N_q(x) \rangle |,
    \end{eqnarray*}
    which yields the lemma.
  \end{proof}

  Finally, $L_{F_t(v)} \circ DF_t(v) = A_v (d(q,F_t(v)))$ implies that
  $$
  \Jac F_t(v) = \frac{\det A_v(d(q,F_t(v)))}{\Jac\; L_{F_t}(v)} =
  \frac{\det A_v (d(q,F_t(v)))}{\langle N_p(F_t(v)),N_q(F_t(v))
    \rangle},
  $$
  finishing the proof of the proposition.
\end{proof}

\begin{figure}[h]
  \begin{center}
    \psfrag{SqX}{$S_qX$} 
    \psfrag{q}{$q$}
    \psfrag{v}{$v$}
    \psfrag{Ft(v)}{$F_t(v)$}
    \psfrag{St(p)}{$S_t(p)$}
    \psfrag{t}{$t$}
    \psfrag{p}{$p$}
    \psfrag{Bt(v)}{$B_t(v)$}
    \includegraphics[width=8cm]{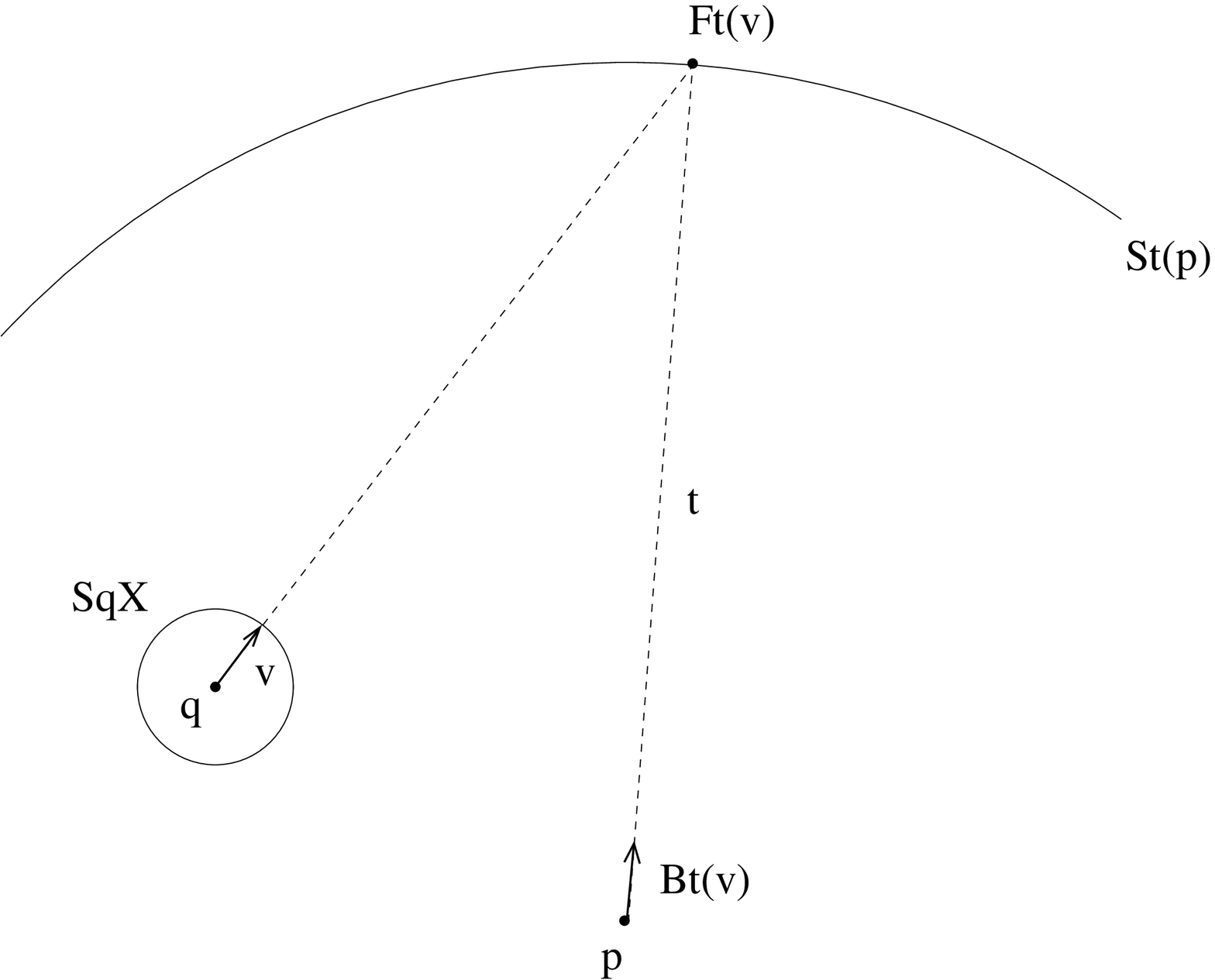}
  \end{center}
  \caption{Illustration of the map $B_t: S_qX \to S_pX$}
  \label{fig_btv}
\end{figure}

\begin{cor} \label{cor:JacBt} Let $(X,g)$ be a noncompact, simply
  connected harmonic space. Let $B_t: S_q X \to S_p X, v \mapsto
  \frac{1}{t} \exp_p^{-1} \circ F_t(v)$ (see Figure
  \ref{fig_btv}). Then we have
  $$ \Jac B_t(v) = \frac{f(d(q,F_t(v))}{f(t)} \cdot \frac{1}{\langle
    N_p(F_t(v)),N_q(F_t(v)) \rangle}.
  $$
\end{cor}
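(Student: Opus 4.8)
The plan is to factor $B_t$ as a composition whose two factors have Jacobians we can handle separately, and then to apply the chain rule together with the proposition just proved. Observe that
\[
B_t = G_t^{-1} \circ F_t,
\]
where $G_t : S_pX \to S_t(p)$ is the radial exponential map $G_t(w) = \exp_p(tw) = c_w(t)$, and $F_t$ is the map of the preceding proposition. Indeed, the map $\frac{1}{t}\exp_p^{-1}$ appearing in the definition of $B_t$ is exactly the inverse of $G_t$: since $X$ is simply connected and without conjugate points, $\exp_p$ is a diffeomorphism by Cartan--Hadamard, so $G_t^{-1}(x) = \frac{1}{t}\exp_p^{-1}(x)$ is well defined. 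Because the previous proposition already supplies
\[
\Jac F_t(v) = \frac{\det A_v(d(q,F_t(v)))}{\langle N_p(F_t(v)),N_q(F_t(v))\rangle} = \frac{f(d(q,F_t(v)))}{\langle N_p(F_t(v)),N_q(F_t(v))\rangle}
\]
in the harmonic case, it only remains to compute $\Jac G_t$.

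First I would compute the differential of $G_t$. Fix $w \in S_pX$ and a tangent vector $u \in T_w(S_pX) = w^\perp$, and choose a curve $\gamma$ in $S_pX$ with $\gamma(0)=w$ and $\dot\gamma(0)=u$. The geodesic variation $(s,t) \mapsto \exp_p(t\gamma(s))$ produces a variation field $J(t) = \frac{\partial}{\partial s}\big|_{s=0}\exp_p(t\gamma(s))$ that is a Jacobi field along $c_w$ with $J(0)=0$ and $J'(0)=u$. Hence $DG_t(w)(u) = J(t) = A_w(t)(u)$, where $A_w$ is the Jacobi tensor along $c_w$ with $A_w(0)=0$ and $A_w'(0)=\id$. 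Thus $DG_t(w)$ equals $A_w(t)$ as a linear map $w^\perp \to c_w'(t)^\perp = T_{c_w(t)}S_t(p)$, and therefore $\Jac G_t(w) = |\det A_w(t)| = f(t)$ by harmonicity. The crucial feature is that this value does not depend on $w$.

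Finally, by the inverse function theorem, $\Jac(G_t^{-1})(x) = 1/\Jac G_t\bigl(G_t^{-1}(x)\bigr) = 1/f(t)$ for every $x \in S_t(p)$, in particular at $x = F_t(v)$. The chain rule then yields
\[
\Jac B_t(v) = \Jac(G_t^{-1})(F_t(v)) \cdot \Jac F_t(v) = \frac{1}{f(t)} \cdot \frac{f(d(q,F_t(v)))}{\langle N_p(F_t(v)),N_q(F_t(v))\rangle},
\]
which is the asserted formula. The main obstacle is the bookkeeping in the middle step: one must correctly identify the domain $T_w(S_pX)=w^\perp$ of $DG_t(w)$ with the domain of the Jacobi tensor $A_w(t)$ and its codomain $c_w'(t)^\perp = T_{c_w(t)}S_t(p)$, and verify that the variation field is precisely the Jacobi field with initial conditions $J(0)=0$, $J'(0)=u$, so that $\det A_w(t)=f(t)$ genuinely computes $\Jac G_t(w)$. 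Once this identification is in place, the remainder is a routine combination of the inverse function theorem, the chain rule, and the preceding proposition.
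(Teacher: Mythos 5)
Your proof is correct and follows essentially the same route as the paper: the paper likewise writes $B_t$ as $F_t$ followed by the inverse of the radial map $w \mapsto \exp_p(tw)$, uses the standard Jacobi-field formula $D\exp_p(tu)(w) = \frac{1}{t}A_u(t)(w)$ (equivalently your $DG_t(w) = A_w(t)$ on $w^\perp$), and then invokes harmonicity $\det A_w(t) = f(t)$ together with the chain rule. Your write-up merely makes the bookkeeping of domains, codomains, and the variation-field argument explicit, which the paper leaves implicit.
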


\begin{proof}
  Let $u \in S_p X$. Then $D \exp_p(tu) : u^\perp \to T_{\exp_p(tu)}
  S_t(p)$ is given by $D \exp_p(tu) (w ) = \frac{1}{t} A_u(t)(w)$, and
  therefore with $u = B_t(v)$,
  \begin{eqnarray*}
  \Jac B_t(v) &=& \frac{1}{\det A_u(t)} \cdot \Jac F_t(v)\\ 
  &=& \frac{\det A_v(d(q,F_t(v)))}{\det A_u(t)} \cdot 
  \frac{1}{\langle N_p(F_t(v)),N_q(F_t(v)) \rangle}.
  \end{eqnarray*}
  Since $X$ is harmonic, we have $\det A_v(s) = f(s)$, which finishes
  the proof of the corollary.
\end{proof}

Let $f \in C(\partial_B X)$. We know from Lemma \ref{lem:bijvismap}
that $B_t: S_qX \to S_pX$ is a bijection, for $t > 0$ large
enough. Then we have with $f_1 = f \circ \varphi_p^{p_0}$:
\begin{eqnarray*}
  \int_{\partial_B X} f(\xi)\, d\mu_p(\xi) &=& 
  \frac{1}{\omega_n} \int_{S_pX} f_1(w)\; d\theta_p(w) \\
  &=& \frac{1}{\omega_n} \int_{S_qX} (f_1 \circ B_t)(v) (\Jac B_t)(v)\;
  d\theta_q(v).
\end{eqnarray*}
We will show that 
\begin{itemize}
\item[(i)] $\lim_{t \to \infty} B_t = (\varphi_p^{p_0})^{-1} \circ
  (\varphi_q^{p_0})$,
\item[(ii)] There exist constants $t_0 > 0$ and $C > 0$ such that
$$| \Jac B_t(v) | \le C \quad \forall\, v \in S_qX, \, t \ge t_0. $$  
\item[(iii)] We have, for all $v \in S_qX$,
$$ \lim_{t \to \infty} \Jac B_t(v) = e^{-h b_v(p)}. $$
\end{itemize}
Having these facts, we conclude with Lebesgue's dominated convergence that
\begin{eqnarray*}
  \int_{\partial_B X} f(\xi)\, d\mu_p(\xi) &=& 
  \lim_{t \to \infty} \frac{1}{\omega_n} \int_{S_qX} (f_1 \circ B_t)(v) 
  (\Jac B_t)(v)\; d\theta_q(v) \\
  &=& \frac{1}{\omega_n} \int_{S_qX} (f \circ \varphi_q^{p_0}) e^{-h b_v(p)}
  \; d\theta_q(v) \\
  &=& \int_{\partial_B X} f(\xi) e^{-h b_{q,\xi}(p)}\, d\mu_q(\xi),  
\end{eqnarray*} 
with $b_{q,\xi} = \xi - \xi(q)$ for $q \in X$ and $\xi \in \partial_B
X$. This shows the following fact:

\begin{theorem}
  Let $(X,g)$ be a simply connected noncompact harmonic manifold with
  reference point $p_0 \in X$. Let $(\mu_p)_{p \in X}$ be the
  associated family of visibility measures. Then these measures are
  pairwise absolutely continuous and we have
  $$ \frac{d\mu_p}{d\mu_q}(\xi) = e^{-h b_{q,\xi}(p)}. $$
\end{theorem}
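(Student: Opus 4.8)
The plan is to reduce the whole statement to the three convergence facts (i)--(iii) recorded above and then to pass to the limit by Lebesgue's dominated convergence theorem. By Lemma~\ref{lem:bijvismap} the map $B_t:S_qX\to S_pX$ is a bijection for all $t\ge t(p,q)$, and the change of variables already carried out gives, with $f_1=f\circ\varphi_p^{p_0}$,
$$\int_{\partial_B X} f(\xi)\,d\mu_p(\xi)=\frac{1}{\omega_n}\int_{S_qX}(f_1\circ B_t)(v)\,(\Jac B_t)(v)\,d\theta_q(v).$$
Corollary~\ref{cor:JacBt} factors the Jacobian into $f(s_t)/f(t)$, with $s_t:=d(q,F_t(v))$, times $1/\langle N_p(F_t(v)),N_q(F_t(v))\rangle$, so the entire argument comes down to controlling these two factors together with the convergence of $B_t$.

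For fact (i) I would observe $F_t(v)=c_v(s_t)=\exp_q(s_t v)$ with $s_t\to\infty$ (since $|s_t-t|\le d(p,q)$); Proposition~\ref{prop:busboundprops}(3) at the basepoint $q$ then gives $F_t(v)\to\varphi_q^{p_0}(v)$ in the Busemann topology. Since $F_t(v)=\exp_p(t\,B_t(v))$ and $S_pX$ is compact, any subsequential limit $w$ of $B_t(v)$ satisfies $\varphi_p^{p_0}(w)=\varphi_q^{p_0}(v)$ by Proposition~\ref{prop:busboundprops}(3) at the basepoint $p$; injectivity of $\varphi_p^{p_0}$ forces $w=(\varphi_p^{p_0})^{-1}(\varphi_q^{p_0}(v))$, so the whole family converges, proving (i). For the first Jacobian factor I would evaluate $d(p,c_v(s))-s\to b_v(p)$ at $s=s_t$, obtaining $s_t-t\to -b_v(p)$, and then write $\log\frac{f(s_t)}{f(t)}=\int_t^{s_t}\frac{f'}{f}(r)\,dr$. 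Because $\frac{f'}{f}$ decreases monotonically to $h$, the integrand is uniformly close to $h$ on the relevant interval, so this integral tends to $-h\,b_v(p)$ and $\frac{f(s_t)}{f(t)}\to e^{-h b_v(p)}$; the same estimate, bounding $\left|\int_t^{s_t}\frac{f'}{f}\right|\le d(p,q)\cdot\frac{f'}{f}(t-d(p,q))$, shows this factor stays uniformly bounded in $v$.

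The main obstacle is the second factor: the alignment $\langle N_p(F_t(v)),N_q(F_t(v))\rangle\to 1$ and, decisively, its \emph{uniformity in $v$}, which is what makes (ii) work. Writing $y:=F_t(v)$, the vectors $N_p(y)$ and $N_q(y)$ are the outward radial directions at $y$ of the geodesics from $y$ to $p$ and to $q$, so their inner product is $\cos\gamma$ with $\gamma=\angle_y(p,q)$. I would bound $\gamma$ using the uniform divergence of geodesics emanating from $y$ (Corollary~\ref{cor:uniformdiv}): if $u_1,u_2\in S_yX$ point toward $p$ and $q$, then $d(c_{u_1}(\tau),c_{u_2}(\tau))\ge a(\tau)\,\gamma$, while the choice $\tau=\min(d(y,p),d(y,q))\ge t-d(p,q)$ keeps $c_{u_1}(\tau)$ and $c_{u_2}(\tau)$ within $d(p,q)$ of $p$ and $q$, hence at mutual distance $\le 3\,d(p,q)$. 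Thus $\gamma\le 3\,d(p,q)/a(t-d(p,q))$, which tends to $0$ and depends only on $d(p,q)$ and $t$, not on $v$. This yields $\langle N_p,N_q\rangle\to 1$ uniformly, completing (ii) and supplying the second half of (iii).

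Finally, with (i)--(iii) established, dominated convergence lets me pass the limit inside the integral: the integrand converges pointwise to $(f\circ\varphi_q^{p_0})(v)\,e^{-h b_v(p)}$ and is dominated by a constant. Rewriting the limit integral through the definition of $\mu_q$, and using that $v\in S_qX$ gives $b_v(q)=0$ so that $b_{q,\xi}(p)=b_v(p)$ for $\xi=\varphi_q^{p_0}(v)$, I obtain $\int f\,d\mu_p=\int f\,e^{-h b_{q,\xi}(p)}\,d\mu_q$ for every $f\in C(\partial_B X)$, whence $\frac{d\mu_p}{d\mu_q}(\xi)=e^{-h b_{q,\xi}(p)}$. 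Since $|b_{q,\xi}(p)|\le d(p,q)$, this density is bounded above and below by positive constants, giving the pairwise mutual absolute continuity.
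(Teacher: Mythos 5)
Your proposal is correct, and its skeleton is the same as the paper's: the change of variables through $B_t$, the reduction to the facts (i)--(iii), and the passage to the limit by dominated convergence, concluding with the identification $b_{q,\xi}(p)=b_v(p)$ and the remark that the density is bounded. The genuine differences lie in how the two key asymptotic facts are established. For the ratio $f(d(q,F_t(v)))/f(t)$, the paper invokes Nikolayevsky's Theorem~\ref{thm:Ni}: since $f$ is an exponential polynomial, $f(t)/(t^m e^{ht})\to 1$ (see \eqref{eq:flim}), whence $f(t-a)/f(t)\to e^{-ha}$, and it sandwiches $d(q,F_t(v))$ between $t-b_v(p)\mp\epsilon$; the same asymptotics also give the uniform bound needed in (ii). You instead write
$$\log\bigl(f(s_t)/f(t)\bigr)=\int_t^{s_t}\frac{f'}{f}(r)\,dr, \qquad s_t=d(q,F_t(v)),$$
and use only that $f'/f$ decreases monotonically to $h$: this yields the pointwise limit $e^{-hb_v(p)}$ (via $s_t-t\to -b_v(p)$) and the uniform bound $\bigl|\log(f(s_t)/f(t))\bigr|\le d(p,q)\,\frac{f'}{f}(t-d(p,q))$ in one stroke. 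Your route is more elementary and self-contained, since it removes the dependence of this theorem on the exponential-polynomial structure of the density function. Second, where the paper disposes of Lemma~\ref{lem:NpNq} with ``an easy consequence of Corollary~\ref{cor:uniformdiv}'', you supply the actual argument --- uniform divergence applied at the moving point $y=F_t(v)$, giving $\angle_y(p,q)\le 3d(p,q)/a(t-d(p,q))$ uniformly in $v$; note this implicitly uses that the function $a$ of Corollary~\ref{cor:uniformdiv} is nondecreasing, which holds because $f'/f$ is nonincreasing. Your treatment of (i) coincides with the paper's (subsequential limits via Proposition~\ref{prop:busboundprops}(3) plus injectivity and compactness), and in fact you state the limit map correctly, whereas the paper's displayed identities in its proof of (i) inadvertently swap $\varphi_p^{p_0}$ and $\varphi_q^{p_0}$. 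Finally, your closing observation that $|b_{q,\xi}(p)|\le d(p,q)$ bounds the density above and below makes the mutual absolute continuity explicit, a point the paper leaves implicit.
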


It remains to prove (i), (ii) and (iii) above.

\bigskip

{\bf Proof of (i):} Let $t_n \to \infty$ and $s_n \ge 0$, $w_n = B_{t_n}(v)
\in S_pX$ such that $y_n = \exp_q(s_n v) = \exp_p(t_n w_n)$. We
obviously have $s_n \to \infty$ and $y_n \to b_v - b_v(q)$. Let
$w_{n_j}$ be a convergent subsequence of $w_n = B_{t_n}(v)$ with limit
$w \in S_pX$. Then we have $y_{n_j} \to b_w - b_w(p)$, by
Proposition \ref{prop:busboundprops}(3), and
$$ \varphi_p^{p_0}(v) = b_v - b_v(q) = b_w - b_w(p) = \varphi_q^{p_0}(w). $$
This shows that $\lim_{n \to \infty} B_{t_n}(v) = (\varphi_q^{p_0})^{-1} \circ
\varphi_p^{p_0}(v)$. \qed

\bigskip

For the proof of (ii), we need the following lemma:

\begin{lemma} \label{lem:NpNq}
  For every $\epsilon > 0$, there exists $t_0 > 0$ such that we have
  for all $v \in S_qX$
  $$
  |\langle N_p(F_t(v)),N_q(F_t(v)) \rangle - 1| < \epsilon \quad
  \forall\; t \ge t_0.
  $$
\end{lemma}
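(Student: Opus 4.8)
The plan is to interpret the quantity $\langle N_p(F_t(v)), N_q(F_t(v))\rangle$ as the cosine of the angle, measured at the far point $x := F_t(v)$, between the two minimizing geodesics joining $x$ to the fixed points $p$ and $q$, and then to show that this angle tends to $0$ uniformly in $v$ as $t \to \infty$. Since $N_x(y) = (\grad d_x)(y)$ is the unit vector at $y$ pointing away from $x$, the vectors $u_1 := -N_p(x)$ and $u_2 := -N_q(x)$ are precisely the initial velocities at $x$ of the minimizing geodesics from $x$ to $p$ and from $x$ to $q$. Writing $\beta := \angle(u_1,u_2)$, we have $\langle N_p(x), N_q(x)\rangle = \langle u_1,u_2\rangle = \cos\beta$, so it suffices to prove that $\beta \to 0$ uniformly as $t\to\infty$.

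To control $\beta$, I would apply the uniform divergence estimate of Corollary \ref{cor:uniformdiv} at the base point $x$ to the two geodesics $c_{u_1}, c_{u_2}$ emanating from $x$, obtaining
$$ d(c_{u_1}(\tau), c_{u_2}(\tau)) \ge a(\tau)\, \beta \qquad \text{for all } \tau \ge 0, $$
where $a$ is the universal function with $a(\tau)\to\infty$. The decisive choice is $\tau = t = d(p,x)$: then $c_{u_1}(t) = p$ exactly, while $c_{u_2}(t)$ is the point at arclength $t$ along the unit-speed geodesic from $x$ through $q$, so that
$$ d(c_{u_2}(t), q) = |t - d(q,x)| = |d(p,x) - d(q,x)| \le d(p,q) $$
by the triangle inequality (using that geodesics are globally minimizing by Cartan--Hadamard).

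Combining these facts, a further application of the triangle inequality gives
$$ d(c_{u_1}(t), c_{u_2}(t)) = d(p, c_{u_2}(t)) \le d(p,q) + d(q, c_{u_2}(t)) \le 2\, d(p,q), $$
whence $\beta \le 2 d(p,q)/a(t)$. Since $\cos\beta \le 1$, this yields $|\langle N_p(x), N_q(x)\rangle - 1| = 1 - \cos\beta \le \beta^2/2 \le 2 d(p,q)^2/a(t)^2$. Because $a(t)\to\infty$, one can then choose $t_0 \ge t(p,q)$ — with $t(p,q)$ as in Lemma \ref{lem:bijvismap}, guaranteeing that $F_t$ is well defined — large enough that the right-hand side is below $\epsilon$ for all $t \ge t_0$, uniformly in $v \in S_qX$. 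The main point, and the only place where genuine geometry enters, is the uniform boundedness of the numerator $d(c_{u_1}(t), c_{u_2}(t))$ at the scale $\tau = t$: the two far-away geodesics stay within the fixed distance $2d(p,q)$ of each other at parameter $t$ precisely because $p$ and $q$ are a fixed distance apart, and this bounded numerator plays off against the unbounded denominator $a(t)$. Establishing this bound uniformly in $v$ is the crux, and it is handled cleanly by the triangle inequality together with $|d(p,x)-d(q,x)| \le d(p,q)$.
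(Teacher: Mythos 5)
Your proof is correct and follows exactly the route the paper intends: the paper's own proof is the single remark that the lemma is ``an easy consequence of Corollary \ref{cor:uniformdiv}'', and your argument—applying the uniform divergence estimate at the far point $F_t(v)$ to the unit vectors $-N_p(F_t(v))$, $-N_q(F_t(v))$ and bounding $d(c_{u_1}(t),c_{u_2}(t))\le 2\,d(p,q)$ via the triangle inequality—is precisely the deduction being alluded to, spelled out with correct uniformity in $v$.
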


\begin{proof}
  This is an easy consequence of corollary \ref{cor:uniformdiv}.
\end{proof}

\bigskip

{\bf Proof of (ii):} Since $f(t) > 0$ is an exponential polynomial,
there exists $m > 0$ such that 
\begin{equation} \label{eq:flim}
\frac{f(t)}{t^m e^{ht}} \to 1\quad \text{as}\; t \to \infty.
\end{equation}
Therefore, there exists $t_0 > 0$ such that
$$ \frac{1}{2} t^m e^{ht} \le f(t) \le \frac{3}{2} t^m e^{ht} $$
for all $t \ge t_0$. Using Lemma \ref{lem:NpNq} and increasing $t_0 >
0$ if necesary, we can also assume that
$$ \langle N_p(F_t(v)),N_q(F_t(v)) \rangle \ge \frac{1}{2} $$
for all $t \ge t_0$. Since $d(q,F_t(v)) \le t + d(p,q)$, we conclude
from Corollary \ref{cor:JacBt} for all $t \ge t_0$ and all $v \in
S_qX$,
$$ |\Jac B_t(v)| \le 6 \frac{(t+d(p,q))^me^{h(t+d(p,q))}}{t^me^{ht}} \le 
6 \left( 1 + \frac{d(p,q)}{t_0} \right)^m e^{h d(p,q)}. \qed $$ 

\bigskip

{\bf Proof of (iii):} This is a immediate consequence of Lemma
\ref{lem:NpNq} and the following Lemma:

\begin{lemma}
  Using the notation above we have that
  $$
  \lim_{t \to \infty} \frac{f(d(q,F_t(v)))}{f(t)} = e^{-hb_v(p)}
  $$
\end{lemma}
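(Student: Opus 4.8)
The plan is to track the arclength parameter of the point $F_t(v)$ along the ray $c_v$, relate it to $t$ through the Busemann function $b_v$, and then feed the resulting asymptotic into the known growth behaviour $f(t) \sim t^m e^{ht}$ recorded in \eqref{eq:flim}.

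First I would observe that, since $(X,g)$ has no conjugate points, geodesic rays are minimizing. Hence, writing $s = s(t) := d(q, F_t(v))$, we have $F_t(v) = c_v(s)$, and by the very definition of $F_t$ this point lies on $S_t(p)$, i.e.\ $d(p, c_v(s)) = t$. The triangle inequality gives $|t - s| = |d(p, c_v(s)) - d(q, c_v(s))| \le d(p,q)$, so $s \to \infty$ precisely when $t \to \infty$, and the two limits may be used interchangeably.

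The central step is to identify $\lim_{t \to \infty}(t - s)$. Here $b_v$ is the Busemann function of the ray $c_v$ emanating from $q$, normalised by $b_v(q) = 0$, so by Proposition \ref{prop:busprops} we have $b_v(p) = \lim_{\tau \to \infty}\big(d(p, c_v(\tau)) - \tau\big)$. Since $t - s = d(p, c_v(s)) - s$ is exactly this expression evaluated along the parameter $s$, I conclude that $t - s \to b_v(p)$ as $t \to \infty$. I expect this to be the main (though mild) obstacle: one must recognise that the defining relation $d(p, F_t(v)) = t$, after the substitution $s = d(q, F_t(v))$, turns directly into the Busemann limit, and one must be careful to confirm that $s$ genuinely tends to infinity.

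Finally I would invoke \eqref{eq:flim}, which supplies an $m > 0$ with $f(t)/(t^m e^{ht}) \to 1$. Writing
\[
\frac{f(s)}{f(t)} = \frac{f(s)}{s^m e^{hs}} \cdot \frac{t^m e^{ht}}{f(t)} \cdot \left( \frac{s}{t} \right)^m e^{-h(t-s)},
\]
the first two factors tend to $1$ as $s, t \to \infty$, while $s/t \to 1$ (since $s = t - b_v(p) + o(1)$) and $t - s \to b_v(p)$ force the last factor to $e^{-h b_v(p)}$. This yields $\lim_{t \to \infty} f(d(q, F_t(v)))/f(t) = e^{-h b_v(p)}$, completing the proof.
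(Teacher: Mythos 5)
Your proposal is correct and follows essentially the same route as the paper: both proofs hinge on recognising that $t - d(q,F_t(v)) = d(p,c_v(s)) - s \to b_v(p)$ via the Busemann limit, and then feeding this into the asymptotic $f(t)\sim t^m e^{ht}$ of \eqref{eq:flim}. The only cosmetic difference is that the paper controls the error by an $\epsilon$-squeeze using the monotonicity of $f$ together with the fixed-shift limit $\lim_{t\to\infty} f(t-a)/f(t) = e^{-ha}$, whereas you substitute $s = t - b_v(p) + o(1)$ directly into the asymptotic, which works equally well.
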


\begin{proof}
  Note that $B^q_{F_tv}(x) = d(x, F_t(v)) - d(q,F_t(v))$ converges
  uniformly to $b_v$. Hence, for all $\epsilon > 0$ there exists $t_0
  > 0$ such that for all $t \geq t_0$ we have
  $$
  |b_v(p) - \underbrace{d(p,F_t(v))}_{= t} + d(q,F_t(v))| < \epsilon.
  $$
  This implies
  $$
  - \epsilon + t - b_v(p) \leq d(q,F_t(v)) \leq \epsilon + t - b_v(p).
  $$
  Note that $f(r)$ is monotone, since $\frac{f'}{f}(r) \ge h \geq
  0$. Therefore,
  $$
  f(- \epsilon + t - b_v(p)) \leq f(d(q,F_t(v))) \leq f(\epsilon + t - b_v(p))
  $$
  and
  $$
  \frac{f(- \epsilon + t - b_v(p))}{f(t)} \leq
  \frac{f(d(q,F_t(v)))}{f(t)} \leq \frac{f(\epsilon + t
    -b_v(p))}{f(t)}
  $$
  Using \eqref{eq:flim}, we obtain for $a \in \mathbb{R}$.
  \begin{eqnarray*}
    \lim\limits_{t \to \infty} \frac{f(t-a)}{f(t)} &=& \lim\limits_{t \to
      \infty} \frac{f(t-a)}{(t-a)^m e^{h(t-a)}} \cdot \frac{(t-a)^m
      e^{h(t-a)}}{t^m e^{ht}} \cdot \frac{t^{m} e^{ht}}{f(t)}\\
    &=& \lim\limits_{t \to \infty} (\frac{t-a}{t})^m e^{-ha} = e^{-ha}.
  \end{eqnarray*}
  Hence, for all $ \epsilon > 0$, we have
  $$
  e^{-h(b_v(p) + \epsilon)} \leq \liminf\limits_{t \to \infty}
  \frac{f(d(q,F_t(v)))}{f(t)} \leq \limsup\limits_{t \to \infty}
  \frac{f(d(q,F_t(v)))}{f(t)} \leq e^{-h(b_v(p)-\epsilon)}.
  $$
  This implies the claim.
\end{proof}

\section{The Green's kernel and the Martin boundary}
\label{Greenskernel}

Let $(X,g)$ be a simply connected noncompact harmonic
manifold. In this chapter we calculate explicitly the Green's kernel,
which is the smallest non-negative fundamental solution of the Laplace
equation on $X$, i.e., $G$ is a function defined on $X \times X
\backslash \{(x,x) \mid x \in X\}$, and having the following
properties:
\begin{enumerate}
\item[(a)] $\Delta_x G(x,y) = 0 \quad \forall\; x \neq y$,\\[-.2cm]
\item[(b)] $G(x,y) \ge 0$ for all $x \neq y$,\\[-.2cm]
\item[(c)] For all $y \in X$, we have 
$\inf_{x \in X, x \neq y} G(x,y) = 0$,\\[-.2cm]
\item[(d)] $\int\limits_X G(x,y) \Delta \varphi(y) dy = -\varphi(x)
  \quad \forall\; \varphi \in C_0^\infty(X)$.
\end{enumerate}
$G$ can also be expressed with the help of the smallest positive
fundamental solution of the heat equation $p_t(x,y)$ as
$$ G(x,y) = \int_0^\infty p_t(x,y)dt. $$
This implies that $G(x,y)=G(y,x)$. For $\dim X \ge 3$, the Green's
kernel has a singularity at the diagonal with the asymptotic
$$ G(x,y) = \frac{c_n}{d(x,y)^{n-2}}(1+o(1)) \quad \text{as $x \to y$}, $$
with a fixed constant $c_n > 0$ depending on the dimension.
 
Since the heat kernel $p_t(x,y)$ of a harmonic space $X$ only depends
on $d(x,y)$ (see \cite[Thm 1.1]{Sz}), the same holds true for the
Green's kernel, i.e., there exists a function $\widetilde G$ such that
$G(x,y) = \widetilde G(d(x,y))$.

Property (a) means that for $r=d(y,x)=d_y(x)$ we have
$$
0 = \Delta_x(\widetilde{G} \circ d_y)(x) = \widetilde{G}''(r) +
\frac{f'}{f}(r) \widetilde{G}'(r),
$$
i.e.
$$
(f \widetilde{G}'')(r) + (f' \widetilde{G}')(r) = 0,
$$
i.e., $(f \widetilde{G}')'(r) = 0$. Integration yields $f \tilde{G}' +
\beta =0$. We choose $\beta = \frac{1}{\omega_n} = \frac{1}{\vol
  (S_yX)}$ and obtain $\widetilde{G}'(r) = -\frac{\beta}{f(r)}$.

This leads us to consider $\widetilde{G}(r) = \beta
\int\limits_r^\infty \frac{dt}{f(t)}$. The above derivations show that
$$
G: (X \times X) \backslash \{(x,x) \mid x \in X\} \to {\R},
$$ 
defined by $G(x,y) = \widetilde{G}(d(x,y))$, satisfies $0 = \Delta_x
G(x,y)$ for all $x \neq y$, which shows (a) for this choice of $G$.

\smallskip

The properties (b) und (c) for this choice of $G(x,y)$ are easily
verified.

\smallskip

The following calculation shows (d). We have for all $\varphi \in
C_0^\infty(X)$:
\begin{multline*}
  \langle G(x, \cdot),\Delta \varphi \rangle = \\
  = \int\limits_X G(x,y) \Delta \varphi(y) dy = \int\limits_0^ \infty
  \int\limits_{S_xX} f(r) \widetilde{G}(r)
  (\Delta \varphi)(c_v(r)) d\theta_x(v)\; dr\\
  = \int\limits_0^\infty f(r) \widetilde{G}(r) \int\limits_{S_xX}
  (\varphi \circ c_v)''(r) + \frac{f'}{f}(r) (\varphi \circ c_v)'(r)
  d\theta_x(v)\;
  dr\\
  = \beta \int\limits_0^\infty f(r) \int\limits_r^\infty
  \frac{1}{f(t)} \int\limits_{S_xX} (\varphi \circ c_v)''(r) +
  \frac{f'}{f}(r) (\varphi \circ c_v)'(r)d\theta_x(v)\; dt\; dr\\
  = \beta \int\limits_{S_xX} \int\limits_0^\infty \frac{1}{f(t)}
  \int\limits_0^t \underbrace{f(r)(\varphi \circ c_v)''(r) +
    f'(r)(\varphi \circ c_v)'(r)}_{(f \cdot(\varphi \circ c_v)')'(r)}
  dr\; dt\; d\theta_x(v)\\
  = \beta \int\limits_{S_xX} \int\limits_0^\infty \frac{1}{f(t)} f(t)
  (\varphi \circ c_v)'(t) dt\; d\theta_x(v)\\
  = - \beta \int\limits_{S_xX} (\varphi \circ c_v)(0) d\theta_x(v) =
  - \varphi(x) \cdot \beta \cdot \vol(S_xX) = - \varphi(x).
\end{multline*}

We conclude that the Green's kernel of a noncompact harmonic manifold
$(X,g)$ has the form

\vspace*{.5cm}

\begin{center}
  \fbox{\parbox{12cm} {\[G(x,y) = \widetilde G(d(x,y)) =
      \frac{1}{\omega_n} \int\limits_{d(x,y)}^\infty
      \frac{dt}{f(t)}.\]}}
\end{center}

\vspace*{.5cm}

%(see Cao, Ancona, Anderson, Schoen)\\

Let $p_0 \in X$ be a fixed reference point. We define
$$
\Sigma_{p_0} : = \{\sigma = (x_m) \subset X \mid\; d(x_m,p_0) \to
\infty, \, \frac{G(x,x_m)}{G(p_0,x_m)} \to K_\sigma(x)\},
$$
where the convergence $\frac{G(x,x_m)}{G(p_0,x_m)} \to K_\sigma(x)$ is
meant uniformly on all compact subsets of $X$. Let $\sigma = (x_m),
\sigma' = (x_m') \in \Sigma$. We call $\sigma \sim \sigma'$ if and
only if $K_\sigma = K_\sigma'$.

\begin{dfn}
  The {\em Martin boundary} $\partial_\Delta^{p_0} X$ is defined as
  $$
  \partial_\Delta^{p_0} X = \Sigma_{p_0}/_\sim
  $$
  $\partial_\Delta^{p_0} X$ carries a metric, defined by
  $$
  d_\Delta^{p_0}(\sigma,\sigma') : = - \sup\limits_{x \in B(p_0,1)}
  |K_\sigma(x) - K_{\sigma'}(x)|.
  $$
\end{dfn}

\begin{example}(Martin Boundary of Euclidean space)
  Assume that $X = \R^n$ with $n \ge 3$, i.e., $X$ is the flat
  Euclidean space. Then the Green's kernel is given by
  $$ G(x,y) = \frac{1}{(n-2)\; \omega_n\; |x-y|^{n-2}}. $$
  Let $p_0 = 0$. We have 
  $$ \frac{G(x,x_m)}{G(p_0,x_m)} = 
  \left( \frac{|x_m|}{|x-x_m|} \right)^{n-2}. $$
  Now, $d(x_m,p_0) \to \infty$ means that $|x_m| \to \infty$. Since
  $$ |x_m| - |x| \le |x-x_m| \le |x_m| + |x|, $$
  we conclude that, for $|x_m| \to \infty$, 
  $$ \lim \frac{G(x,x_m)}{G(p_0,x_m)} = 1, $$
  i.e., $\partial_\Delta^0 \R^n$ consists of a single point, the constant
  harmonic function $1$. 

  Since $\partial_B^0 \R^n$ is a topological sphere of dimension
  $n-1$, we have $\partial_B^0 \R^n \neq \partial_\Delta^0 \R^n$.
\end{example}

However, Busemann boundary and Martin boundary agree for non-flat noncompact
harmonic spaces.

\begin{theorem} \label{thm:BMboundiso}
  Let $(X,g)$ be a noncompact harmonic manifold with $h > 0$. Then we
  have
  $$
  \partial_B^{p_0} X = \partial_\Delta^{p_0} X
  $$
  as topological spaces.
\end{theorem}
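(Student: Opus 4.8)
The plan is to construct an explicit homeomorphism $\Theta\colon \partial_B^{p_0}X \to \partial_\Delta^{p_0}X$ which sends the Busemann point $\xi=b_v$ to the Martin kernel $e^{-h b_v}$. The starting point is the explicit formula $G(x,y)=\widetilde G(d(x,y))$ with $\widetilde G(r)=\frac{1}{\omega_n}\int_r^\infty \frac{dt}{f(t)}$ derived above. First I would determine the decay of $\widetilde G$. Since $h>0$ we have $f(r)\to\infty$, so $1/f(r)\to 0$, and l'H\^opital applied to $\widetilde G(r)\big/\bigl(1/f(r)\bigr)$ gives
$$
\lim_{r\to\infty} \omega_n\, f(r)\,\widetilde G(r)
= \lim_{r\to\infty} \frac{f(r)}{f'(r)} = \frac{1}{h},
$$
using only that $\frac{f'}{f}(r)$ converges to $h$. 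Hence $\widetilde G(r)\sim \frac{1}{\omega_n h\, f(r)}$ as $r\to\infty$.

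Next I would compute the Martin kernel along a Busemann-convergent sequence. Let $x_m=\exp_{p_0}(t_m v_m)$ with $t_m=d(p_0,x_m)\to\infty$ and $v_m\to v\in S_{p_0}X$. Proposition~\ref{prop:busboundprops}(1) yields $d(x,x_m)-t_m=b_{v_m,t_m}(x)\to b_v(x)$ uniformly on compact sets, so $d(x,x_m)=t_m+b_v(x)+o(1)$. Writing
$$
\frac{G(x,x_m)}{G(p_0,x_m)}
= \frac{\widetilde G(d(x,x_m))\,f(d(x,x_m))}{\widetilde G(t_m)\,f(t_m)}
\cdot \frac{f(t_m)}{f(d(x,x_m))},
$$
the first factor tends to $1$ by the decay estimate of the previous paragraph. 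For the second factor, the relation $\log f(t)-\log f(t-a)=\int_{t-a}^t \frac{f'}{f}\to ha$ (again using $\frac{f'}{f}\to h$) gives $\lim_{t\to\infty}\frac{f(t-a)}{f(t)}=e^{-ha}$; with $a=-b_v(x)$ this forces $\frac{f(t_m)}{f(d(x,x_m))}\to e^{-h b_v(x)}$. Thus the Martin kernel equals $e^{-h b_v}$, and the convergence is uniform on compacta because the Busemann convergence is. In particular $e^{-h b_v(p_0)}=1$, so $\sigma=(x_m)\in\Sigma_{p_0}$ with $K_\sigma=e^{-h b_v}$.

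I would then define $\Theta(\xi)=\bigl[\,e^{-h\xi}\,\bigr]$, using the parametrization $\xi=b_v=\varphi_{p_0}^{p_0}(v)$. Choosing $x_m=c_v(m)$ shows $\Theta$ is well defined and that every Busemann point is realized by a sequence in $\Sigma_{p_0}$. Conversely, for an arbitrary $\sigma=(x_m)\in\Sigma_{p_0}$ one writes $x_m=\exp_{p_0}(t_m v_m)$, passes to a subsequence $v_{m_k}\to v$ (possible since $S_{p_0}X$ is compact), and the computation above forces $K_\sigma=e^{-h b_v}$; hence $\Theta$ is surjective. Injectivity follows since $h>0$, so $e^{-h b_v}=e^{-h b_{v'}}$ implies $b_v=b_{v'}$, and then $v=v'$ because $\grad b_v(p_0)=-v$. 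Finally, noncompact harmonic manifolds have continuous asymptote, so by Proposition~\ref{prop:busprops} the map $b\colon SX\to C(X)$, $v\mapsto b_v$, is continuous; composing with $v\mapsto e^{-h b_v}$ shows $\Theta$ is continuous for the topologies of uniform convergence on compacta. As $\partial_B^{p_0}X\cong S_{p_0}X$ is compact and $\partial_\Delta^{p_0}X$ is metric (hence Hausdorff), the continuous bijection $\Theta$ is automatically a homeomorphism.

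The step I expect to require the most care is upgrading all pointwise asymptotics to \emph{uniform} convergence on compact sets, since the Martin topology is defined through uniform convergence of the normalized kernels $G(\cdot,x_m)/G(p_0,x_m)$ on compacta. This amounts to checking that both the decay estimate for $\widetilde G$ and the Busemann asymptotics $d(x,x_m)-t_m\to b_v(x)$ hold uniformly for $x$ in a fixed compact set and as $v_m\to v$ — precisely the uniformity furnished by Proposition~\ref{prop:busboundprops}(1) together with the exponential-polynomial structure of $f$ recorded in \eqref{eq:flim}.
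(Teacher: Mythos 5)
Your proof is correct, and its global skeleton coincides with the paper's: both identify the Martin kernel of a Busemann-convergent sequence as $e^{-h\xi}$, deduce from this a canonical bijection $\partial_B^{p_0}X \to \partial_\Delta^{p_0}X$ (surjectivity by extracting a convergent subsequence of directions, injectivity from $h>0$ together with $\grad b_v(p_0)=-v$), check continuity, and finish with the continuous-bijection-from-compact-to-Hausdorff argument. Where you genuinely differ is in the derivation of the central asymptotics. The paper sandwiches $d(x,x_n)$ between $d(p_0,x_n)+\xi(x)\mp\epsilon$ and then proves that
$$
\int_{s+a}^\infty \frac{dt}{f(t)} \Big/ \int_{s}^\infty \frac{dt}{f(t)} \longrightarrow e^{-ha}
$$
uniformly for $a$ in compacta, via a Cauchy mean value theorem argument (the functions $\psi_a$) combined with the asymptotics \eqref{eq:flim}, which rest on the exponential-polynomial structure of $f$, i.e.\ on Theorem \ref{thm:Ni}. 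You instead factor the kernel quotient into a $\bigl(\widetilde{G}\cdot f\bigr)$-ratio and an $f$-ratio, compute $\omega_n f(r)\widetilde{G}(r)\to 1/h$ by l'H\^{o}pital, and handle the $f$-ratio through $\log f(t)-\log f(t-a)=\int_{t-a}^{t}\frac{f'}{f}\to ha$. This buys a real simplification: your route uses only that $f'/f$ decreases to $h>0$ (which already forces $f(r)\ge f(1)e^{h(r-1)}$, hence integrability of $1/f$), and never needs \eqref{eq:flim}; consequently the appeal to \eqref{eq:flim} in your final paragraph is superfluous, since uniformity on a compact set $K$ follows from Proposition \ref{prop:busboundprops}(1) together with the bound $|d(x,x_m)-t_m|\le \max_{x\in K} d(p_0,x)$ and $\sup_{s\ge t_m-D}\left|\frac{f'}{f}(s)-h\right|\to 0$. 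Two harmless slips: the parenthetical choice should read $a=b_v(x)$ rather than $a=-b_v(x)$ (the asserted limit $e^{-hb_v(x)}$ is nevertheless correct), and in the surjectivity step it is worth saying explicitly that the kernel limit along the subsequence equals $K_\sigma$ because the full sequence converges by the definition of $\Sigma_{p_0}$.
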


\begin{proof}
  We first show $\partial_{p_0} X = \partial_\Delta X$ as sets.\\

\item[(a)] Our first goal is: If $\xi \in \partial_B^{p_0} X$, and $(x_n)$ is a
  sequence in $X$ such that $x_n \to \xi$ in the Busemann topology,
  then we have the following uniform convergence on compacta: 
  \begin{equation} \label{eq:Greenquot}
  \frac{G(x,x_n)}{G(p_0,x_n)} \to e^{-h\xi(x)}.
  \end{equation}
  This implies that $(x_n)$ is also convergent to a point in the
  Martin boundary, and we obtain a canonical injective map
  $$
  \partial_B^{p_0}(X) \to \partial_\Delta^{p_0}(X),
  $$
  $\xi \mapsto (x_n)$, where $(x_n) $ is any sequence with $x_n \to
  \xi$ in
  the Busemann topology.\\

  Proof of \eqref{eq:Greenquot}: 
  We first prove: Let $I$ be a compact interval. Then
  $$
   \frac{\int_{s+a}^\infty \frac{dt}{f(t)}}{\int_s^\infty \frac{dt}{f(t)}} \to e^{-ha} 
$$
uniformly  on $I$ as $s \to \infty$. To prove this consider
$$
\psi_a(u) = \int_{\frac{1}{u}+a}^\infty \frac{dt}{f(t)}.
 $$ 
 Then $\lim_{u \to 0} \psi_a(u) = 0$ and by the mean value theorem we
 obtain
 $$
(\ast) \quad \frac{\psi_a(u) }{\psi_0(u) } = \frac{\psi_a(0)- \psi_a(u)}{\psi_0(u) - \psi_0(u) } = \frac{\psi_a'(x)}{\psi_0'(x)}
 $$
 for some $x \in (0, u)$.
 Since 
 $$
 \psi_a'(x) = \frac{1}{f(\frac{1}{x}+a) x^2}
 $$
 we obtain
 $$
 \left| \frac{\psi_a(u) }{\psi_0(u) } - e^{-ha} \right| =  \left| \frac{f( \frac{1}{x})}{f( \frac{1}{x} +a)} - e^{-ha} \right|
 $$
 for some $x \in (0, u)$. Using  \ref{eq:flim} we obtain that  $ \frac{\psi_a(u) }{\psi_0(u)} $
 converges on compact intervals uniformly to $e^{-ha}$ as $u \to 0$ which is equivalent to the assertion above.\\
  Let $h_y(x) =
  \frac{G(x,y)}{G(p_0,y)}$. Since $G(x,y) = \frac{1}{\omega_n}
  \int\limits_{d(x,y)}^\infty \frac{dt}{f(t)}$, we have
  $$
  h_y(x) = \frac{\int_{d(x,y)}^\infty \frac{dt}{f(t)}}
  {\int_{d(p_0,y)}^\infty \frac{dt}{f(t)}}.
  $$
  Let $x_n \to \xi \in \partial_B^{p_0} X$ in the Busemann topology. By
  definition this implies that, for each compact subset $K \subset X$
  and all $\epsilon >0$, there exists $n_0(\epsilon,K) > 0$, such that for
  all $n \geq n_0(\epsilon,K)$ we have
  $$
  |d(x,x_n) - d(p_0,x_n) - \xi(x)| \leq \epsilon
  $$
  for all $n \geq n_0(\epsilon,K)$ and $x \in K$. In particular ,we
  have for all $n \geq n_0(\epsilon,K)$ and $x \in K$
  $$
  d(p_0,x_n) + \xi(x) - \epsilon \leq d(x,x_n) \leq d(p_0,x_n) + \xi +
  \epsilon.
  $$

  Since $\frac{1}{f(t)} > 0$, we conclude that
  $$
  \frac{\int_{d(p_0,x_n)+\xi(x)- \epsilon}^\infty
    \frac{dt}{f(t)}}{\int_{d(p_0,x_n)}^\infty \frac{dt}{f(t)}} \leq
  h_{x_n}(x) \leq \frac{\int_{d(p_0,x_n)+ \xi(x)+ \epsilon}^\infty
    \frac{dt}{f(t)}}{\int_{d(p_0,x_n)}^\infty \frac{dt}{f(t)}}
  $$
  Since $\xi$ is bounded on $K$ we can choose for each $\epsilon  >0$ (using ($ \ast$)) a number $n_1(\epsilon, K) $, such that for all $n \ge n_1(\epsilon,K)$,
  $$
e^{-h (\xi(x) + \epsilon)} 
   \leq h_{x_n}(x) \leq e^{-h (\xi(x) - \epsilon)}. 
 $$ 
  This implies, that
  $$
  \lim\limits_{n \to \infty} h_{x_n}(x) = e^{-h\xi(x)},
  $$
  uniformly on each compact set.
  Therefore,
  $$ 
  \lim\limits_{n \to \infty} \frac{G(x_n,x)}{G(x_n,p_0)} =
  e^{-h\xi(x)}
  $$
  converges uniformly on compact sets and
  $(x_n)$ converges to a point in the Martin boundary.\\

\item[(b)] Let $\sigma=(x_n)$ be convergent to a point in the Martin
  boundary, i.e. $d(x_n,p_0) \to \infty$ and
  $\frac{G(x_n,x)}{G(x_n,p_0)} \to K_\sigma$, uniformly on
  compacta. Since $x_n \in X \subset \overline{B^{p_0}(X)}$ and
  $\overline{B^{p_0}(X)}$ is compact, there exists a subsequence
  $x_{n_j} \in X$ such that $x_{n_j} \to \xi \in
  \overline{B^{p_0}(X)}$, in the Busemann topology. Since $d(x_n,p_0)
  \to \infty$, $\xi \in \partial_B^{p_0} X$ we obtain that
  $$
  \frac{G(x_{n_j},x)}{G(x_{n_j},p_0)} \to e^{-h\xi(x)}.
  $$
  Therefore, $ K_\sigma = e^{-h\xi}$. Let $(x_{n_j}')$ be an arbitary
  subsequence of $(x_n)$ which is convergent in the Busemann topology
  to a point $\xi' \in \partial_B^{p_0} X$. Then, from
  $\frac{G(x_n,x)}{G(x_n,x_0)} \to K_\sigma$, we conclude that
  $$
  e^{-h \xi} = e^{-h \xi'},
  $$
  i.e., $\xi_0 = \xi'$. Hence $(x_n)$ is convergent in the Busemann
  topology. This shows that the above map $\partial_B^{p_0}(X)
  \to \partial_\Delta^{p_0}(X)$ is also surjective.\\

\item[(c)] Assume $\xi_n \in \partial_B^{p_0}(X)$ converges to $\xi
  \in \partial_B^{p_0}(X)$, i.e., we have uniform convergence of $ \xi_n
  \to \xi$, on each compact subset $K \subset X$. In particular,
  $$
  \sup\limits_{x \in B_1(x_0)} | \xi_n(x) - \xi(x)| \to 0,
  $$
  and, consequently, $\sup\limits_{x \in B_1(x_0)} |e^{-h \xi_n(x)} -
  e^{-h \xi (x) } |\to 0$. This means that a convergent sequence
  $\xi_n$ in the Busemann topology is also convergent in the
  Martin boundary topology.\\

  Therefore, we obtain that the map $\partial_B^{p_0}(X)
  \to \partial_\Delta^{p_0}(X)$ with $\xi \to (x_n)$ where $(x_n)$ is
  a sequence converging to $\xi$, is a continuous bijection.  Since
  $\partial_B^{p_0}(X)$ is compact, this map is a homeomorphism.
\end{proof}

\section{Representation of bounded harmonic functions}
\label{chp:repbdharmfunc} 

Let $(X,g)$ be a simply connected noncompact harmonic
manifold. One of the merits of the Martin boundary theory, which we like to
recall, is the representation of bounded harmonic functions (see
e.g. \cite{Wo} for more details).

For $ p_0 \in X$ define the set
$$
H_{p_0}(X) = \{ F \in C^2(X) \mid \Delta F = 0, F(p_0) = 1 , F \ge 0
\}
$$
of normalized positive harmonic functions on $X$. This set is convex
and compact with respect to the topology of uniform convergence on
compact subsets. We call an element $F \in H_{p_0}(X)$ {\em minimal},
if for any two function $F_1, F_2 \in H_{p_0}(X)$ with
$$
F = \lambda F_1 + (1- \lambda) F_2
$$
we have $F_1 = F_2$. Note, that the minimal set is contained in the
Martin boundary. Therefore,
$$
\partial_{\Delta, \min}^{p_0}(X) := \{ K_\sigma \mid \sigma
\in \partial_\Delta^{p_0}(X), K_\sigma \; \; \text{is minimal} \}
$$
is the set of minimal elements in $ H_{p_0}(X)$.  Using Choquet theory
we obtain for each $F \in H_{p_0}(X) $ a unique probability measure
$\nu_F$ on $ \partial_{\Delta, \min}^{p_0}(X)$ with
$$
F(x) = \int\limits_{ \partial_{\Delta, \min}^{p_0}(X)} K_\sigma(x) d\nu_F( \sigma).
$$
If we do not insists in probability measures, we can represent in this
way positive harmonic functions which are not normalized as well. If
$F, G$ are positive harmonic functions with $F \le G$ then $G-F$ is a
positive harmonic function and
$$
\mu_G = \nu_F + \nu_{G-F} \le \nu_F.
$$
This implies that $\nu_F $ is absolutely continuous to $\nu_G$.  Of
fundamental importance is the {\em harmonic measure} $\nu_{p_0} :=\nu_1$,
i.e the unique probablity on $ \partial_{\Delta, \min}^{p_0}(X)$
defined by
$$
1 =  \int\limits_{  \partial_{\Delta, \min}^{p_0}(X)} K_\sigma(x)  d\nu_{p_0}( \sigma).
$$
Using this measure, one can represent all bounded harmonic functions on
$X$ in the following way. Consider first a harmonic function $F$ with
$0 \le F \le M$ for some $M \ge 0$. This implies $ \nu_F \le M
\nu_{p_0}$. In particular the Radon-Nikodym derivative
 $$
 \varphi_F :=\frac{d \nu_F}{d \nu_{p_0}}
 $$
 exists and defines an element in $L^1( \partial_{\Delta,
   \min}^{p_0}(X))$ with $ 0 \le \varphi_F \le M$.  Hence,
 $$
 F(x) = \int\limits_{ \partial_{\Delta, \min}^{p_0}(X)} K_\sigma(x)
 d\nu_F( \sigma)= \int\limits_{ \partial_{\Delta, \min}^{p_0}(X)}
 K_\sigma(x) \varphi_F( \sigma)d\nu_{p_0}( \sigma).
 $$
 
 Now, consider an arbitrary bounded harmonic function $F$ and assume
 that $-M \le F \le M$ for some positive number $M$. Then
 $$
 F(x) +M = \int\limits_{ \partial_{\Delta, \min}^{p_0}(X)} K_\sigma(x)
 \varphi_{F+M}( \sigma)d\nu_{p_0}( \sigma),
 $$
 where $0 \le \varphi_{F+M} \le 2M$. Hence,
  \begin{eqnarray*}
    F(x) &=&  
    \int\limits_{ \partial_{\Delta, \min}^{p_0}(X)} K_\sigma(x)  
    \varphi_{F+M}( \sigma)d\nu_{p_0}( \sigma)-
    \int\limits_{  \partial_{\Delta, \min}^{p_0}(X)} K_\sigma(x)M  d\nu_{p_0}( \sigma)\\
    &=& \int\limits_{ \partial_{\Delta, \min}^{p_0}(X)} K_\sigma(x)  
    (\varphi_{F+M}( \sigma) -M)d\nu_{p_0}( \sigma)\\
    &=&\int\limits_{ \partial_{\Delta, \min}^{p_0}(X)} K_\sigma(x) 
    \varphi_{F}( \sigma)d\nu_{p_0}( \sigma),
  \end{eqnarray*}
  where $-M \le \varphi_{F}:=\varphi_{F+M} -M \le M$. In particular,
  the following theorem holds (see also \cite[Theorem (24.12)]{Wo}):

  \begin{theorem} \label{thm:harmoncrep} Denote by
    $L^\infty( \partial_{\Delta, \min}^{p_0}(X))$ the set of bounded
    $L^1$-functions on $\partial_{\Delta, \min}^{p_0}(X)$, and by
    $H^\infty(X)$ the set of bounded harmonic functions on $X$. Then
    the map $H: L^\infty( \partial_{\Delta, \min}^{p_0}(X)) \to
    H^\infty(X)$ with
    $$
    H( \varphi)(x):= \int\limits_{ \partial_{\Delta, \min}^{p_0}(X)}
    K_\sigma(x) \varphi( \sigma)d\nu_{p_0}( \sigma)
    $$
    defines a linear isomorphism.
  \end{theorem}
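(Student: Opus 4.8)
The plan is to establish the three properties that together say $H$ is a linear isomorphism: that $H$ is well defined and linear, that it is onto, and that it is one-to-one. Linearity is immediate since $\varphi \mapsto H(\varphi)(x)$ is linear for every fixed $x$. For well-definedness I would first bound $H(\varphi)$: because $K_\sigma \ge 0$ and $\int_{\partial_{\Delta,\min}^{p_0}(X)} K_\sigma(x)\, d\nu_{p_0}(\sigma) = 1$ for every $x \in X$ — this being exactly the Martin representation of the constant function $1$, the defining property of the harmonic measure $\nu_{p_0}$ — one gets the pointwise estimate $|H(\varphi)(x)| \le \|\varphi\|_\infty$, so $H(\varphi) \in H^\infty(X)$ provided it is harmonic. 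Harmonicity follows because each Martin kernel $K_\sigma$ is a harmonic function and superpositions of Martin kernels against finite Borel measures are again harmonic; concretely one may interchange the spherical mean-value operator $\pi_p$, which characterises harmonicity on a harmonic manifold (compare the proof of Proposition \ref{prop:mvi}), with the boundary integral by Fubini, using the local boundedness and joint continuity of $(x,\sigma) \mapsto K_\sigma(x)$ furnished by the Harnack inequality.

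Surjectivity has in effect already been carried out in the computation preceding the theorem: given a bounded harmonic function $F$ with $-M \le F \le M$, the positive harmonic function $F+M$ has a Choquet--Martin representation with density $\varphi_{F+M} \in L^\infty(\partial_{\Delta,\min}^{p_0}(X))$ against $\nu_{p_0}$, and then $\varphi_F := \varphi_{F+M} - M$ lies in $L^\infty(\partial_{\Delta,\min}^{p_0}(X))$ and satisfies $H(\varphi_F) = F$. I would simply record this as the surjectivity of $H$.

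The heart of the matter, and the step I expect to be the main obstacle, is injectivity. Suppose $H(\varphi) = 0$. Writing $\varphi = \varphi^+ - \varphi^-$ and putting $F := H(\varphi^+) = H(\varphi^-)$, the two nonnegative measures $\varphi^+\, d\nu_{p_0}$ and $\varphi^-\, d\nu_{p_0}$ on the minimal Martin boundary both represent the single positive harmonic function $F$ through the Martin kernel. The decisive input is the uniqueness clause of the Martin representation theorem: the representing measure of a positive harmonic function, carried by the minimal boundary, is unique (see \cite{Wo}). This forces $\varphi^+\, d\nu_{p_0} = \varphi^-\, d\nu_{p_0}$, hence $\varphi^+ = \varphi^-$ $\nu_{p_0}$-almost everywhere, i.e.\ $\varphi = 0$ in $L^\infty$. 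The point requiring care is the legitimacy of this uniqueness statement: it rests on identifying $\partial_{\Delta,\min}^{p_0}(X)$ with the extreme points of the compact convex base $H_{p_0}(X)$ and on the latter being a metrizable Choquet simplex. Both hold in our setting — $H_{p_0}(X)$ is compact in the topology of uniform convergence on compacta, which is metrizable since $X$ is $\sigma$-compact — so the classical uniqueness theorem applies, completing the proof that $H$ is a bijection and hence a linear isomorphism.
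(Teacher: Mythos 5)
Your proposal is correct and follows essentially the same route as the paper: your surjectivity step is exactly the computation the paper performs just before stating the theorem (representing $F+M$ by a density $\varphi_{F+M} \le 2M$ and setting $\varphi_F = \varphi_{F+M}-M$), and your well-definedness and injectivity steps rest on the normalization $\int K_\sigma(x)\,d\nu_{p_0}(\sigma)=1$ together with the uniqueness of the representing measure on $\partial_{\Delta,\min}^{p_0}(X)$, which is precisely the Choquet-theoretic input the paper invokes and otherwise delegates to \cite[Theorem (24.12)]{Wo}. The one point you assert rather than prove --- that $H_{p_0}(X)$ is a Choquet \emph{simplex} (equivalently, the lattice property of the cone of positive harmonic functions), which is what uniqueness genuinely requires beyond metrizability --- is likewise taken for granted in the paper, so your argument is complete at the paper's own level of rigor and in fact makes explicit the injectivity that the paper leaves to the citation.
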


  Now let $(X,g)$ be a harmonic manifold with $h >0$. Then as we have
  shown above (see Theorem \ref{thm:BMboundiso}), the Martin boundary
  $\partial_{\Delta}^{p_0}(X))$ and the Busemann boundary
  $\partial_{B}^{p_0}(X)$ are isomorphic and, using the identification
  $\sigma \to \xi$, we have $K_\sigma = e^{-h \xi}$.  As has been
  observed by Zimmer \cite{Zi3}, all functions $e^{-h \xi}$ are minimal
  in $ H_{p_0}(X)$.  Let $\mu_{p_0}$ be the visibility measure with
  respect to $p_0$ introduced above. We have that
  $$
  1 = \int\limits_{\partial_{B}^{p_0}(X))} e^{-h \xi} d\mu_{p_0}
  $$
  which by the discussion above implies that $\mu_{p_0}$ is the
  harmonic measure on $\partial_{B}^{p_0}(X))$.

  Related to the Martin representation is the {\em Dirichlet problem
    at infinity}, which deals with the following question.  Let
  $\varphi: \partial_B^{p_0}(X) \to {\R}$ be a continuous function.
  Is there a harmonic function $F$ on $X$ such that
  $$
  \lim_{x \to \xi} F(x) = \varphi(\xi)?
  $$
  By the maximal principle, $F$ is unique if it exists. A natural
  candidate for the solution is the function $H_\varphi$ defined by
  the integral presentation:
  $$
  H_\varphi(x) = \int\limits_{  \partial_B^{p_0}(X) } \varphi(\xi) d \mu_x(\xi) =
  \int\limits_{ \partial_B^{p_0}(X)} \varphi(\xi) e^{-h \xi(x)}
    d \mu_{p_0}(\xi).
  $$
  Obviously, $H_\varphi$ is the solution of the Dirichlet problem if
  and only if
   $$
   \lim_{x \to \xi} \mu_x = \delta_\xi,
   $$
   where $ \delta_\xi$ is the Dirac measure at $\xi
   \in \partial_\Delta^{p_0}(X)$ and the limit is taken in the weak
   topology of Borel probability measure on
   $\partial_\Delta^{p_0}(X)$. We will see in Chapter
   \ref{chp:dirichprob} that this holds if $(X,g)$ has purely
   exponential volume growth.

\newpage

\part{Noncompact harmonic manifolds with 
purely exponential volume growth}

From now on, all harmonic manifolds $(X,g)$ under consideration are
assumed to be noncompact, connected and simply connected, and of purely
exponential volume growth. As shown in the paper \cite{Kn3} by the
first author, purely exponential volume growth, geometric rank $1$,
Gromov hyperbolicity and the Anosov-property of the geodesic flow
(with respect to the Sasaki-metric) are equivalent for harmonic spaces
$(X,g)$. Furthermore, nonpositive curvature or more generally no focal
points implies any of the above conditions.

This part covers the following new results for harmonic manifolds with
purely exponential volume growth:
\begin{enumerate}
\item Agreement of the geometric boundary and the Busemann boundary as
  topological spaces.
\item Solution of the Dirichlet Problem at infinity, and an explicit
  integral presentation using the visibility measures at infinity. The
  solution of the Dirichlet Problem follows also from the general
  theory of \cite{Anc1,Anc2} for Gromov hyperbolic spaces (which is
  related to earlier work for spaces of negative curvature by
  \cite{AnSch}, see also \cite[Chapter II]{SchY}). However, in the
  case of harmonic spaces, these results can be deduced in much more
  direct and geometric way.
\item Polynomial volume growth of all horospheres. As an application,
  we also pove a mean value property of bounded harmonic functions at
  infinity. This latter result follows from a modification of the
  arguments given in \cite{CaSam} for negatively curved asymptotically
  harmonic spaces.
\end{enumerate}

\section{Gromov hyperbolicity}

We start this chapter by introducing the Gromov product. 

\begin{dfn}
  Let $(X,d)$ be a metric space and $x_0 \in X$ a reference point. The
  {\em Gromov product} $(x,y)_{x_0}$ of $x,y \in X$ is defined as
  $$
  (x,y)_{x_0} = \frac{1}{2} (d(x,x_0) + d(y,x_0) - d(x,y))
  $$
\end{dfn}

Note that the Gromov product $(x,y)_{x_0}$ is non-negative, by the
triangle inequality. A metric space $(X,d)$ is called a {\em geodesic
  space}, if any two points $x,y \in X$ can be connected by a
geodesic, i.e., if there exists a curve $\sigma_{xy}: [0,d(x,y)] \to
X$ connecting $x$ and $y$, such that $d(\sigma_{xy}(s),\sigma_{xy}(t))
= |t-s|$ for all $s,t \in [0,d(x,y)]$.

\begin{lemma} \label{lem:gromprod}
  Let $(X,d)$ be a geodesic space. Then we have, for $x,y,x_0 \in X$, 
  $$ (x,y)_{x_0} \leq d(x_0,\sigma_{xy}). $$
\end{lemma}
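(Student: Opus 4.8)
The plan is to exploit the defining property of a geodesic, namely that $\sigma_{xy}$ realizes the distance $d(x,y)$ and is parametrized by arclength, so that every point on it splits this distance additively. I would fix an arbitrary point $z = \sigma_{xy}(t)$ on the geodesic, compare the distances from $x_0$ to $x$, to $y$, and to $z$ via the triangle inequality, and then take the infimum over $z$ at the very end.

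First I would record that for $z = \sigma_{xy}(t)$ with $t \in [0, d(x,y)]$ the geodesic property gives $d(x,z) = t$ and $d(z,y) = d(x,y) - t$, so that $d(x,z) + d(z,y) = d(x,y)$. Next, the triangle inequality yields $d(x,x_0) \le d(x,z) + d(z,x_0)$ and $d(y,x_0) \le d(y,z) + d(z,x_0)$. Adding these two inequalities and substituting the additivity identity gives
$$ d(x,x_0) + d(y,x_0) \le d(x,y) + 2\, d(z,x_0). $$
Rearranging and dividing by $2$ produces precisely $(x,y)_{x_0} \le d(z,x_0)$, by the definition of the Gromov product.

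Since the point $z$ on $\sigma_{xy}$ was arbitrary, I would conclude by passing to the infimum over all points of the geodesic, obtaining
$$ (x,y)_{x_0} \le \inf_{z \in \sigma_{xy}} d(z, x_0) = d(x_0, \sigma_{xy}), $$
which is the asserted inequality.

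I do not anticipate any genuine obstacle: the argument is nothing more than the triangle inequality applied pointwise along $\sigma_{xy}$ together with the additivity of distance along the geodesic. The only point deserving a moment's care is that $d(x_0, \sigma_{xy})$ should be read as the infimum of $d(x_0, z)$ over $z$ on the geodesic, which need not be attained when $X$ is not proper; but since the estimate $(x,y)_{x_0} \le d(z,x_0)$ holds for \emph{every} such $z$, passing to the infimum is immediate and requires no compactness.
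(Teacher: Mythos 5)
Your proof is correct and is essentially the paper's own argument: both rest on the additivity $d(x,y)=d(x,z)+d(z,y)$ along $\sigma_{xy}$ combined with the two triangle inequalities $d(x,x_0)\le d(x,z)+d(z,x_0)$ and $d(y,x_0)\le d(y,z)+d(z,x_0)$. The only (inessential) difference is that the paper fixes a point of $\sigma_{xy}$ realizing $d(x_0,\sigma_{xy})$ at the outset, whereas you run the estimate for an arbitrary point and take the infimum at the end; incidentally, your worry about attainment is moot, since $\sigma_{xy}$ is the continuous image of a compact interval, hence compact, so the infimum is always attained.
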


\begin{proof}
 
 Consider $x' \in \sigma_{xy}$ such that
 $ d(x_0, \sigma_{xy}) = d(x_0, x')$. Then
 \begin{eqnarray*}
      (x,y)_{x_0} &=& \frac{1}{2} (d(x,x_0) + d(y,x_0) - d(x,y)) \\
&=& \frac{1}{2} (d(x,x_0) + d(y,x_0) - d(x,x') - d(x',y))\\
 & \leq&  \frac{1}{2} ((d(x',x_0) + d(x',x_0)) = d(x_0, \sigma_{xy})
 \end{eqnarray*}
 \end{proof}

\begin{dfn}
  A geodesic space $(X,d)$ is called {\em $\delta$-hyperbolic} if
  every geodesic triangle $\Delta$ is $\delta$-thin, i.e., every side
  of $\Delta$ is contained in the union of the $\delta$-neighborhoods
  of the other two sides. If a geodesic space $(X,d)$ is
  $\delta$-hyperbolic for some $\delta \ge 0$, we call $(X,d)$ a
  Gromov hyperbolic space.
\end{dfn}

Let us recall the following two general results for Gromov hyperbolic
spaces.  Note that one of the inequalities in Proposition
\ref{prop:gromprod2} was stated in Lemma \ref{lem:gromprod}.

\begin{prop}(see \cite[Prop. 1.3.6]{CDP}) \label{prop:gromprod1}
  Let $(X,d)$ be a $\delta$-hyperbolic space. Then we have for all
  $x_0,x,y,z \in X$:
  $$
  (x,y)_{x_0} \geq \min \{(x,z)_{x_0}, (y,z)_{x_0}\} - 8 \delta.
  $$
\end{prop}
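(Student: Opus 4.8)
The plan is to reduce the four-point inequality to a comparison between the Gromov product $(a,b)_{x_0}$ and the distance $d(x_0,\sigma_{ab})$ from the base point to a geodesic joining $a$ and $b$, and then to exploit the thinness of the triangle with vertices $x,y,z$. Lemma \ref{lem:gromprod} already supplies one half of the comparison, namely $(a,b)_{x_0}\le d(x_0,\sigma_{ab})$ for any two points $a,b$ and any geodesic $\sigma_{ab}$ joining them. The first main step is to establish a reverse estimate $d(x_0,\sigma_{ab})\le (a,b)_{x_0}+4\delta$, valid in every $\delta$-hyperbolic geodesic space, so that $(a,b)_{x_0}$ and $d(x_0,\sigma_{ab})$ agree up to an additive error of order $\delta$.

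To prove this reverse estimate I would work inside the geodesic triangle with vertices $x_0,a,b$ and consider its three internal (inscribed) points: the point $\alpha\in\sigma_{x_0a}$ and the point $\beta\in\sigma_{x_0b}$, both at distance $(a,b)_{x_0}$ from $x_0$, together with the point $\gamma\in\sigma_{ab}$ at distance $(x_0,b)_a$ from $a$. A standard consequence of $\delta$-thinness is that these three points are pairwise within $4\delta$ (the triangle has \emph{insize} bounded by $4\delta$). Granting this, the point $\gamma\in\sigma_{ab}$ satisfies $d(x_0,\sigma_{ab})\le d(x_0,\gamma)\le d(x_0,\alpha)+d(\alpha,\gamma)\le (a,b)_{x_0}+4\delta$, as required.

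With the two-sided comparison in hand, I would apply $\delta$-thinness to the triangle with vertices $x,y,z$ itself. Choosing $p\in\sigma_{xy}$ with $d(x_0,p)=d(x_0,\sigma_{xy})$, thinness yields a point $q\in\sigma_{xz}\cup\sigma_{yz}$ with $d(p,q)\le\delta$, whence $\min\{d(x_0,\sigma_{xz}),d(x_0,\sigma_{yz})\}\le d(x_0,\sigma_{xy})+\delta$. Combining this with the comparison estimate and with Lemma \ref{lem:gromprod} gives
$$(x,y)_{x_0}\ \ge\ d(x_0,\sigma_{xy})-4\delta\ \ge\ \min\{d(x_0,\sigma_{xz}),d(x_0,\sigma_{yz})\}-5\delta\ \ge\ \min\{(x,z)_{x_0},(y,z)_{x_0}\}-5\delta,$$
which is even slightly stronger than the asserted bound with $8\delta$.

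The main obstacle is the insize estimate invoked in the second paragraph, i.e.\ the claim that the three inscribed points of a geodesic triangle are pairwise $4\delta$-close. This is the genuinely metric content of the argument and the only place where $\delta$-thinness enters in an essential, quantitative way; everything else is triangle-inequality bookkeeping. I would prove it by transporting the thinness hypothesis along the sides: parametrising $\sigma_{x_0a}$ and $\sigma_{x_0b}$ by arclength from $x_0$ and comparing the two geodesics on the initial segment of length $(a,b)_{x_0}$, one shows they stay within $O(\delta)$ of each other, and a symmetric analysis at the vertices $a$ and $b$ locates $\gamma$. Tracking the constants carefully here is precisely what fixes the final multiple of $\delta$; since the statement only demands $8\delta$, there is ample slack.
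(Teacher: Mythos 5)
The paper gives no proof of this proposition at all: it is quoted verbatim from the literature, with the citation \cite[Prop.~1.3.6]{CDP}, so there is no in-paper argument to compare yours against. Judged on its own merits, your proof is correct and self-contained, and it is the standard way to pass from the paper's slim-triangle definition of $\delta$-hyperbolicity to the four-point (Gromov product) condition. The one step you flag as the ``main obstacle'' --- that the three internal points $\alpha\in\sigma_{x_0a}$, $\beta\in\sigma_{x_0b}$, $\gamma\in\sigma_{ab}$ are pairwise within $4\delta$ --- is indeed true and closes without difficulty: by slimness, $\alpha$ is $\delta$-close to some $p\in\sigma_{x_0b}\cup\sigma_{ab}$, and comparing distances to the endpoints (using that $p$ lies on a geodesic side, so its distances to the two endpoints of that side sum exactly) shows $p$ is within $\delta$ of $\beta$ in the first case and of $\gamma$ in the second; thus each internal point is $2\delta$-close to at least one of the other two, and the trichotomy (if $d(\alpha,\beta)\le 2\delta$ then $\gamma$ is $2\delta$-close to one of them, otherwise $\gamma$ is forced to be $2\delta$-close to both $\alpha$ and $\beta$) yields pairwise bounds of $4\delta$. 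With that, your chain
$$(x,y)_{x_0}\ \ge\ d(x_0,\sigma_{xy})-4\delta\ \ge\ \min\{d(x_0,\sigma_{xz}),d(x_0,\sigma_{yz})\}-5\delta\ \ge\ \min\{(x,z)_{x_0},(y,z)_{x_0}\}-5\delta$$
is valid and even sharpens the stated constant from $8\delta$ to $5\delta$. Note also that your intermediate estimate $d(x_0,\sigma_{ab})\le (a,b)_{x_0}+4\delta$ is a sharpened form of the paper's Proposition \ref{prop:gromprod2} (cited from \cite{CDP} with constant $32\delta$); this sharpening is essential to your argument, since invoking Proposition \ref{prop:gromprod2} as stated would only give the four-point inequality with constant $33\delta$, too weak for the claim.
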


\begin{prop}(see \cite[Prop. 3.2.7]{CDP}) \label{prop:gromprod2}
  Let $(X,d)$ be a $\delta$-hyperbolic space. Then we have
  $$
  (x,y)_{x_0} \leq d(x_0,\sigma_{xy}) \leq (x,y)_{x_0} + 32 \delta.
  $$
\end{prop}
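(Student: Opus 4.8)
The left inequality $(x,y)_{x_0} \le d(x_0,\sigma_{xy})$ is already established for arbitrary geodesic spaces in Lemma \ref{lem:gromprod}, so the whole task reduces to the upper bound $d(x_0,\sigma_{xy}) \le (x,y)_{x_0} + 32\delta$. The plan is to exploit the thin-triangles hypothesis on the geodesic triangle with vertices $x_0, x, y$ and sides $\sigma_{x_0 x}$, $\sigma_{x_0 y}$, $\sigma_{xy}$. The key idea is to single out the correct \emph{internal point} $m$ on the side $\sigma_{xy}$, namely the point determined by $d(x,m) = (y,x_0)_x$; since the Gromov product satisfies $0 \le (y,x_0)_x \le d(x,y)$, this point is well defined, and one checks directly that simultaneously $d(y,m) = (x,x_0)_y$. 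The reason for this choice is the algebraic identity $d(x_0,x) - (y,x_0)_x = (x,y)_{x_0}$ (and, symmetrically, $d(x_0,y) - (x,x_0)_y = (x,y)_{x_0}$), which I would verify by expanding both Gromov products.

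First I would apply the $\delta$-thin condition to the point $m \in \sigma_{xy}$: it must lie within distance $\delta$ of a point $m'$ on one of the two remaining sides. Suppose $m'$ lies on $\sigma_{x_0 x}$, the other case being symmetric under interchanging $x$ and $y$. Then $|d(x,m') - d(x,m)| \le d(m,m') \le \delta$, so $d(x,m') \ge (y,x_0)_x - \delta$, and since $m'$ lies on the geodesic from $x_0$ to $x$ we obtain $d(x_0,m') = d(x_0,x) - d(x,m') \le d(x_0,x) - (y,x_0)_x + \delta = (x,y)_{x_0} + \delta$, using the identity above. A final application of the triangle inequality gives $d(x_0,m) \le d(x_0,m') + \delta \le (x,y)_{x_0} + 2\delta$. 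As $m \in \sigma_{xy}$, this yields $d(x_0,\sigma_{xy}) \le (x,y)_{x_0} + 2\delta$, which is in fact sharper than the asserted bound; the generous constant $32\delta$ in the cited statement from \cite{CDP} reflects a different but equivalent four-point definition of hyperbolicity, whereas the thin-triangles definition adopted in this paper yields the cleaner constant directly.

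I do not expect a serious obstacle, since the argument is short; the only genuine step of insight is the correct choice of the internal point $m$, which is precisely what makes the distance $d(x_0,m')$ collapse onto the Gromov product. If one instead insisted on reproducing the constant $32\delta$ literally by working from the Gromov four-point inequality of Proposition \ref{prop:gromprod1} rather than from the thin-triangles condition, the estimate would become longer and the bookkeeping of accumulated multiples of $\delta$ would be the main nuisance; but because hyperbolicity is defined here via thin triangles, the direct route above is both valid and more transparent.
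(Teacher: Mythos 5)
Your proof is correct. Note, however, that the paper does not actually prove this proposition: it only proves the left inequality (as Lemma \ref{lem:gromprod}, valid in any geodesic space) and quotes the two-sided estimate from \cite[Prop. 3.2.7]{CDP}, where hyperbolicity is set up via the Gromov four-point condition rather than via thin triangles, which is where the constant $32\delta$ originates. Your argument, by contrast, is a self-contained derivation from the paper's own thin-triangle definition: the choice of the internal point $m \in \sigma_{xy}$ with $d(x,m) = (y,x_0)_x$ (equivalently $d(y,m) = (x,x_0)_y$), the identity $d(x_0,x) - (y,x_0)_x = (x,y)_{x_0}$, and the thinness of the triangle with vertices $x_0, x, y$ combine exactly as you say to give $d(x_0,\sigma_{xy}) \le d(x_0,m) \le (x,y)_{x_0} + 2\delta$, which implies the stated bound since $\delta \ge 0$. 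What your route buys is transparency and a sharper constant adapted to the definition actually used in this paper; what the citation buys the authors is brevity and consistency with Proposition \ref{prop:gromprod1}, which is quoted from the same source in the same convention. Your closing remark about the origin of the constant $32\delta$ is exactly the right explanation for the discrepancy.
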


Now assume that $X$ is a harmonic manifold. We have seen in proposition
\ref{prop:busboundprops} that the Busemann boundary $\partial_B^{p_0} X$ 
of $X$ with
    respect to $p_0 \in X$ can be identified with  $S_pX$. The map
    $\varphi_p^{p_0}: S_pX \to \partial_B^{p_0} X$ with $\varphi_p^{p_0}(v):= b_v -
    b_v(p_0)$ is a homeomorphism. A sequence $x_n = exp_p(t_n v_n) \in X$ with $v_n \in S_pX$
    and $t_n \ge 0$ converges to a point $\xi \in \partial_B^{p_0} X$ if
    and only if $t_n \to \infty$ and there exists $v \in S_pX$ with
    $v_n \to v$. In particular, $\xi$ is given by $ b_{v} -
    b_{v}(p_0)$. Hence, a sequence $x_n$ converges in the Busemann topology to infinity if and only if
     $d(x_n,p) \to \infty$ and $\angle_p(x_n,x_m) \to 0$ for
    $n,m \to \infty$. \\
   Assuming additionally that  $X$ has purely exponential volume growth and therefore is Gromov hyperbolic
   we show that $x_n$ converges to infinity if and only if
    $$ \lim_{n,m \to \infty} (x_n,x_m)_{p} = \infty. $$
    
 We note that this is used for general Gromov hyperbolic manifolds as a definition for convergence to infinity.    (see 
\cite[Section 2.2]{BS}).

The following result is the main result of
this chapter.

\begin{theorem} \label{thm:gromconv}
  Let $X$ be harmonic manifold with purely exponential volume growth,
  $p \in X$ and $\{x_n\}$ be a sequence in $X$. The following are
  equivalent.
  \begin{itemize}
  \item[(a)]
  The sequence $\{x_n\}$ converges in the Busemann topology to infinity, i.e.
   $d(x_n,p) \to \infty$ and $\angle_p(x_n,x_m) \to 0$ for
    $n,m \to \infty$. 
      \item[(b)] $(x_n,x_m)_p \to \infty$ for $n,m \to \infty$. 
  \end{itemize}
\end{theorem}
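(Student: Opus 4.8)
The plan is to write each point as $x_n = \exp_p(t_n v_n)$ with $t_n = d(p,x_n)$ and $v_n \in S_pX$, so that $\angle_p(x_n,x_m) = \angle(v_n,v_m)$, and to analyse $(x_n,x_m)_p$ through the two geodesic rays $c_{v_n},c_{v_m}$. Two elementary facts are recorded at the outset: from $d(x_n,x_m)\ge |t_n-t_m|$ one gets $(x_n,x_m)_p \le \min(t_n,t_m)$, and directly $(x_n,x_n)_p = t_n$. It is worth noting that the implication (a)$\Rightarrow$(b) will use only the curvature lower bound together with Corollary~\ref{cor:uniformdiv}, whereas Gromov hyperbolicity (which holds here by purely exponential volume growth, see \cite{Kn3}) is needed only for (b)$\Rightarrow$(a).

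For (a)$\Rightarrow$(b) I fix a target $R>0$ and set $R'=R+1$. The idea is \emph{not} to compare the rays at the large radius $\min(t_n,t_m)$, but to cross between them at the fixed radius $R'$. Concatenating the geodesic segments from $x_n$ to $c_{v_n}(R')$, from $c_{v_n}(R')$ to $c_{v_m}(R')$, and from $c_{v_m}(R')$ to $x_m$ (legitimate once $t_n,t_m\ge R'$) gives
\[
d(x_n,x_m) \le (t_n-R') + d(c_{v_n}(R'),c_{v_m}(R')) + (t_m-R').
\]
Since $-b^2\le K_X$, Rauch comparison yields $\|A_v(R')\|\le \frac{\sinh(bR')}{b}$, and integrating the Jacobi tensor $A_{w}(R')$ along the arc in $S_pX$ from $v_n$ to $v_m$ gives $d(c_{v_n}(R'),c_{v_m}(R'))\le \frac{\sinh(bR')}{b}\,\angle(v_n,v_m)$. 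Hence
\[
(x_n,x_m)_p = \tfrac12\big(t_n+t_m-d(x_n,x_m)\big) \ge R' - \tfrac{\sinh(bR')}{2b}\,\angle(v_n,v_m).
\]
As $R'$ is fixed, the coefficient $\frac{\sinh(bR')}{2b}$ is a constant; since $t_n\to\infty$ and $\angle(v_n,v_m)\to 0$ by (a), one can choose $N$ with $t_n,t_m\ge R'$ and $\angle(v_n,v_m)\le \frac{2b}{\sinh(bR')}$ for all $n,m\ge N$, forcing $(x_n,x_m)_p\ge R$. As $R$ was arbitrary, $(x_n,x_m)_p\to\infty$.

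For (b)$\Rightarrow$(a), taking $m=n$ gives $t_n=(x_n,x_n)_p\to\infty$, which is the first half of (a). For the angle, I argue by contradiction: if $\angle(v_n,v_m)\not\to 0$, there are subsequences and $\alpha_0>0$ with $\angle(v_n,v_m)\ge\alpha_0$ while $g:=(x_n,x_m)_p\to\infty$. Let $X$ be $\delta$-hyperbolic and set $a_n=c_{v_n}(g)$, $a_m=c_{v_m}(g)$, the points at distance $g=(x_n,x_m)_p\le\min(t_n,t_m)$ from $p$ on the sides $[p,x_n]$, $[p,x_m]$ of the triangle $\Delta(p,x_n,x_m)$. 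A standard consequence of $\delta$-thinness (see \cite{CDP}; compare Propositions~\ref{prop:gromprod1} and~\ref{prop:gromprod2}) is that two such internal points satisfy $d(a_n,a_m)\le 4\delta$. On the other hand, Corollary~\ref{cor:uniformdiv} gives $d(a_n,a_m)=d(c_{v_n}(g),c_{v_m}(g))\ge a(g)\,\angle(v_n,v_m)\ge a(g)\,\alpha_0$. Combining, $a(g)\le 4\delta/\alpha_0$, contradicting $a(g)\to\infty$ as $g\to\infty$. Hence $\angle(v_n,v_m)\to 0$, completing (a).

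The main obstacle is (a)$\Rightarrow$(b): geodesic rays here diverge exponentially (the Rauch factor $\sinh(bR')$ is essentially sharp, as purely exponential growth forces exponential divergence), so a naive comparison of $c_{v_n}$ and $c_{v_m}$ at radius $\sim t_n$ is defeated by the uncontrolled rate at which $\angle(v_n,v_m)\to 0$. The resolution is to decouple the two scales by crossing between the rays at a \emph{fixed} finite radius $R'$ depending only on the target $R$, which keeps the exponential factor bounded and lets $\angle(v_n,v_m)\to 0$ finish the estimate. A secondary point is the $\delta$-hyperbolic ``internal points are close'' estimate used in (b)$\Rightarrow$(a); although standard, one should either cite \cite{CDP} or re-derive it from thinness of $\Delta(p,x_n,x_m)$ by locating a point on $[x_n,x_m]$ within $\delta$ of both remaining sides and transporting the uniform-divergence bound to it.
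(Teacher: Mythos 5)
Your proof is correct, but your treatment of (a)$\Rightarrow$(b) takes a genuinely different and more elementary route than the paper's. The paper starts from the same comparison fact you use (a small angle at $p$ forces $d(c_{px_n}(R),c_{px_m}(R))\le 1$, by the lower curvature bound), but converts it into a lower bound on the Gromov product by showing that the geodesic $\sigma_{x_nx_m}$ avoids the ball $B_{R-\frac{1}{2}}(p)$ and then invoking Proposition~\ref{prop:gromprod2}, namely $d(p,\sigma_{x_nx_m})\le (x_n,x_m)_p+32\delta$; thus Gromov hyperbolicity enters \emph{both} directions of the paper's proof. Your concatenated-path estimate $d(x_n,x_m)\le (t_n-R')+d(c_{v_n}(R'),c_{v_m}(R'))+(t_m-R')$ bounds the Gromov product from below directly from its definition, so this direction needs only the curvature lower bound, the Rauch/Jacobi-field estimate $\Vert A_v(R')\Vert\le \sinh(bR')/b$, and the fact that geodesics emanating from $p$ are minimizing (no conjugate points); it makes transparent that (a)$\Rightarrow$(b) holds even in flat space and that hyperbolicity is responsible only for the converse. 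For (b)$\Rightarrow$(a) the two arguments are essentially the same in substance: thinness of $\Delta(p,x_n,x_m)$ produces a point on each of the sides $[p,x_n]$ and $[p,x_m]$, equidistant from $p$ at distance roughly $(x_n,x_m)_p$ and at mutual distance $O(\delta)$, and Corollary~\ref{cor:uniformdiv} then forces the angle to zero. The difference is cosmetic: the paper constructs these points by hand (a point $p_1$ on $\sigma_{x_nx_m}$ lying in both $\delta$-tubes, with feet $y_n$ and $z_m$ adjusted to equal distance from $p$, giving $d(y_n,z_m)\le 4\delta$) and concludes directly via $\angle_p(x_n,x_m)\le 4\delta/a(d(y_n,p))$, whereas you quote the standard internal-points lemma and argue by contradiction. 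That lemma does hold with a universal constant (it is in \cite{CDP}), and since any bound $C\delta$ suffices for your contradiction, the caveat you flag about the precise constant is harmless; if you prefer to avoid the citation, the connectedness argument you sketch (the subsets of $[x_n,x_m]$ lying within $\delta$ of each of the other two sides are closed and cover the segment, hence intersect) is exactly how the paper produces $p_1$.
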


\begin{proof}
  $(b) \Rightarrow (a)$: It was shown in \cite{Kn3} that $X$ is
  $\delta$-hyperbolic for some $\delta \ge 0$. Let $(x_n,x_m)_p \to
  \infty$.  We know from Lemma \ref{lem:gromprod} that $d(p,x_n),
  d(p,x_m) \ge (x_n,x_m)_p$, which shows that $d(p,x_n) \to \infty$ as
  $n \to \infty$. It remains to show that $\angle_p(x_n,x_m) \to
  0$. Let $U_{px_n}, U_{px_m}$ be $\delta$-tubes around the geodesic
  arcs $\sigma_{px_n}$ and $\sigma_{px_m}$. Then the geodesic
  $\sigma_{x_n x_m}$ must contain a point $p_1 \in U_{px_n} \cap
  U_{px_m}$. We conclude from Lemma \ref{lem:gromprod} that
  $$ d(p_1,p) \ge d(\sigma_{x_nx_m},p) \ge (x_n,x_m)_p. $$
  Let $\gamma_1$ and $\gamma_2$ be the shortest curves connecting
  $p_1$ with $\sigma_{px_m}$ and $\sigma_{px_m}$ at the points $y_n$
  and $y_m$, see Figure \ref{fig:gromthin}. Then
  $d(p_1,y_n),d(p_1,y_m) \leq \delta$, which implies $d(y_n,y_m) \le 2\delta$
  and
  $$ d(y_n,p), d(y_m,p) \ge (x_n,x_m)_p - \delta. $$

  \begin{figure}[h]
  \begin{center}
    \psfrag{Upxn}{$U_{px_n}$} 
    \psfrag{Upxm}{$U_{px_m}$}
    \psfrag{xn}{$x_n$}
    \psfrag{xm}{$x_m$}
    \psfrag{p1}{$p_1$}
    \psfrag{p}{$p$}
    \psfrag{yn}{$y_n$}
    \psfrag{ym}{$y_m$}
         \includegraphics[width=10cm]{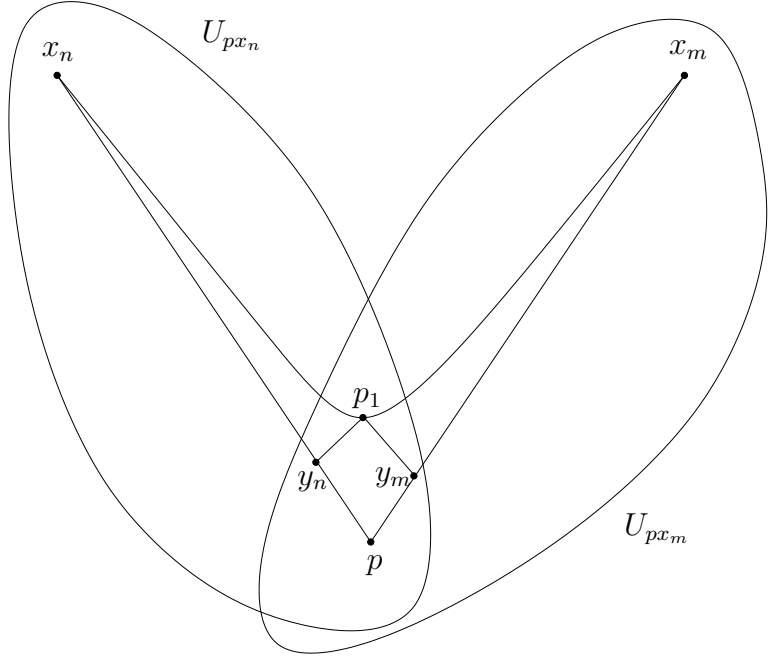}
  \end{center}
  \caption{Illustration of the proof of $(b) \Rightarrow (a)$ in Theorem
    \ref{thm:gromconv}}
  \label{fig:gromthin}
  \end{figure}

  We assume, without loss of generality, that $d(y_n,p) \ge
  d(y_m,p)$. Let $z_m \in \sigma_{px_m}$ be such that $d(p,z_m) =
  d(p,y_n)$. This implies that
  $$ d(y_n,p) = d(z_m,p) \ge (x_n,x_m)_p - \delta. $$
  Since 
  $$ d(y_m,p) \le d(z_m,p) = d(y_n,p) \le d(y_m,p) + d(y_n,y_m)
  \le d(y_m,p) + 2 \delta, $$ 
  and since $y_m, z_m$ lie on the same geodesic arc $\sigma_{px_m}$,
  we have $d(y_m,z_m) \le 2 \delta$. This implies that
  $$ 
  d(z_m,y_n) \le d(y_m,y_n) + d(z_m,y_n) \le 2\delta + 2 \delta = 4 \delta. 
  $$ 
  Using Corollary \ref{cor:uniformdiv}, we conclude that
  $$ 4 \delta \ge {\rm length}(\sigma_{y_nz_m}) \ge a(d(y_n,p)) 
  \angle_p(x_n,x_m). $$ Since $d(y_n,p) \to \infty$, we also have
  $a(d(y_n,p)) \to \infty$, which
  implies that $\angle_p(x_n,x_m) \to 0$.\\

  $(a) \Rightarrow (b)$: Assume $\angle_p(x_n,x_m) \to 0$ and
  $d(x_n,p) \to \infty$ for $n,m \to \infty$. For all $R > 0$, there
  exists $n_0(R) \ge 0$, such that for all $n,m \geq n_0(R)$:
  \begin{equation} \label{eq:smdist}  
  d(p,x_n), d(p,x_m) \ge R \quad \text{and} \,
  d(c_{px_n}(R),c_{px_m}(R)) \leq 1,
  \end{equation}
  since $\angle_p(x_n,x_m) \to 0$ for $n,m \to \infty$. Note that the
  constant $n_0(R)$ does not depend on $p$, but only on the values
  $d(p,x_n)$ and $\angle_p(x_n,x_m)$, since $X$ has a uniform lower
  curvature bound.

  We show now the following: {\em The geodesic arc $\sigma_{x_n x_m}$ has
  empty intersection with the open ball $B_{R-\frac{1}{2}}(p)$ for all
  $n,m \geq n_0(R)$.}

  If $\sigma_{x_n x_m} \cap B_R(p) = \emptyset$, there is nothing to
  prove. If $\sigma_{x_n x_m} \cap B_R(p) = \emptyset$, there exists a
  first $t_0 > 0$ and a last $t_1 > 0$ such that
  $$
  q_1 = \sigma_{x_n x_m}(t_0), \, q_2 = \sigma_{x_n x_m}(t_1) \in S_R(p),
  $$
  where $S_R(p)$ denotes the sphere of radius $R > 0$ around $p$ (see
  Figure \ref{fig:ballR}). Then we have
  $$
  d(q_1,q_2) = l(\sigma_{x_n x_m}) - d(x_n,q_1) - d(x_m,q_2).
  $$

  \begin{figure}[h]
  \psfrag{xn}{$x_n$} 
  \psfrag{xm}{$x_m$}
  \psfrag{q1}{$q_1$}
  \psfrag{q2}{$q_2$}
  \psfrag{SR(p)}{$S_R(p)$}
  \psfrag{spxn(R)}{$\sigma_{px_n}(R)$}
  \psfrag{spxm(R)}{$\sigma_{px_m}(R)$}
  \begin{center}
         \includegraphics[width=8cm]{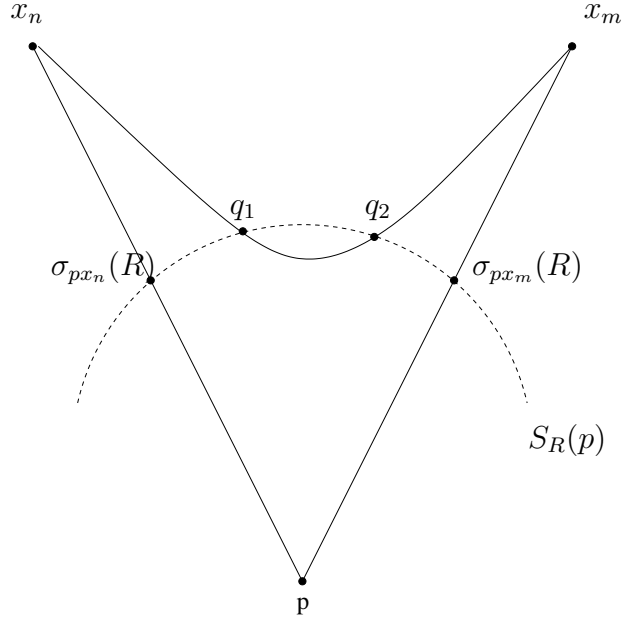}
  \end{center}
  \caption{Illustration of the proof of $(a) \Rightarrow (b)$ in Theorem
    \ref{thm:gromconv}}
  \label{fig:ballR}
  \end{figure}
  
  Using \eqref{eq:smdist}, we have 
  \begin{eqnarray*}
    l(\sigma_{x_n x_m}) & \leq & d(x_n,\sigma_{px_n}(R)) + d(\sigma_{px_n}(R), 
    \sigma_{px_m}(R)) + d(\sigma_{px_m}(R),x_m)\\
    & \leq & d(x_n, \sigma_{px_n}(R)) + d(x_m,\sigma_{px_m}(R)) + 1,
  \end{eqnarray*}
  which implies that
  \begin{equation} \label{eq:dq1q2}\begin{aligned}
  d(q_1,q_2) &\leq \left(d(x_n,\sigma_{px_n}(R)) - d(x_n,q_1)\right) \\ &\qquad +
  \left( d(x_m,\sigma_{px_m}(R)) - d(x_m,q_2) \right) + 1. \end{aligned}
  \end{equation}
  Since $d(p,x_n) = R+d(\sigma_{px_n}(R),x_n) \leq d(q_1,x_n) + R$ (by
  the triangle inequality), we obtain $d(x_n,q_1) - d(x_n,\sigma_{px_n}(R))
  \geq 0$, and similarly $d(x_m,q_2) - d(x_m,\sigma_{px_m}(R)) \geq 0$. This,
  together with \eqref{eq:dq1q2} shows $d(q_1,q_2) \leq 1$. But then the
  geodesic segment of $\sigma_{x_nx_m}$ between $q_1$ and $q_2$ cannot enter
  the ball $B_{R-\frac{1}{2}}(p)$.\\

  Therefore, we have for all $n,m \ge n_0(R)$,
  $$
  R- \frac{1}{2} \leq d(p,\sigma_{x_n x_m}) \leq (x_n,x_m)_p + 32\delta, 
  $$
  using Proposition \ref{prop:gromprod2}. This shows that 
  $$
  (x_n, x_m)_p \to \infty \quad \text{as $n,m \to \infty$.}
  $$
\end{proof}

\section{The geometric boundary of a harmonic space with
  purely exponential volume growth}
\label{sect:geombound}

This chapter provides a purely self-contained introduction into the
geometric boundary $X(\infty)$, based on equivalent geodesic rays, and
its associated cone topology for harmonic spaces with purely exponential volume growth. 

We like to mention that all results in the later chapters
\ref{chp:dirichprob}, \ref{chp:horopoly} and \ref{chp:meanvalueinf}
could have also been formulated in terms of the Busemann boundary
$\partial_B X$ instead of the geometric boundary $X(\infty)$, in which
case the current chapter as well as the following Chapter
\ref{sect:busgeombd} would be of no relevance for these later results.

We first recall the classical definition of the geometric boundary.

\begin{dfn}
  The {\em geometric boundary} $X(\infty)$ of the harmonic space $(X,g)$
  consists of all equivalence classes of geodesic rays, where two unit
  speed geodesic rays $\sigma_1, \sigma_2 : [0, \infty) \to X$ are
  equivalent if there exists $C > 0$ such that
  $$ d (\sigma_1(t), \sigma_2(t) \leq C $$ 
  for all $ t \geq 0$.  The equivalence class of a (unit speed)
  geodesic ray $\sigma$ is denoted by $[\sigma]$. The geodesic ray
  with initial vector $v \in SX$ is denoted by $\sigma_v$.
\end{dfn}

\begin{prop} \label{prop:unifdiv}
  For every $p \in X$, the map
  \begin{eqnarray*}
  \Phi_p: S_p X &\to& X(\infty),\\
  \Phi_p(v) &=& [\sigma_v]
  \end{eqnarray*}
  is injective.
\end{prop}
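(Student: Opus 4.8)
The plan is to argue by contraposition and to invoke the uniform divergence of geodesics already established in Corollary \ref{cor:uniformdiv}. First I would suppose that $v, w \in S_pX$ satisfy $\Phi_p(v) = \Phi_p(w)$, i.e., that the geodesic rays $\sigma_v$ and $\sigma_w$ are equivalent. By the definition of the equivalence relation underlying $X(\infty)$, this means that there exists a constant $C > 0$ with
$$ d(\sigma_v(t), \sigma_w(t)) \le C \qquad \text{for all } t \ge 0. $$

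The key step is then to confront this boundedness with the quantitative lower bound for the divergence. Since $\sigma_v = c_v$ and $\sigma_w = c_w$ are the unit-speed geodesics emanating from $p$ with initial vectors $v$ and $w$, Corollary \ref{cor:uniformdiv} applies directly and yields
$$ d(\sigma_v(t), \sigma_w(t)) = d(c_v(t), c_w(t)) \ge a(t)\, \angle(v,w) \qquad \text{for all } t \ge 0, $$
where $a \colon [0,\infty) \to [0,\infty)$ satisfies $\lim_{t \to \infty} a(t) = \infty$. Combining the two estimates gives $a(t)\,\angle(v,w) \le C$ for every $t \ge 0$. Letting $t \to \infty$ and using $a(t) \to \infty$ forces $\angle(v,w) = 0$, and since $v$ and $w$ are unit vectors based at the same point $p$, this yields $v = w$. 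This establishes injectivity of $\Phi_p$.

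There is essentially no obstacle here: the statement is an immediate consequence of Corollary \ref{cor:uniformdiv}, and no additional geometric input is required. The only point worth highlighting is that the divergence function $a$ produced in that corollary is independent of the basepoint $p$, so the conclusion is in fact uniform over all $p \in X$. I would note as an aside that even a purely qualitative divergence statement (distinct rays from a common point have unbounded distance) would suffice for injectivity; Corollary \ref{cor:uniformdiv} simply makes this divergence explicit and quantitative, which is what renders the proof a one-line deduction.
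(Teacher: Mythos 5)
Your proof is correct and follows exactly the paper's route: the paper also derives injectivity as an immediate consequence of Corollary \ref{cor:uniformdiv}, and your write-up simply makes explicit the one-line deduction (bounded distance from equivalence versus the lower bound $a(t)\,\angle(v,w)$ with $a(t)\to\infty$) that the paper leaves implicit.
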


\begin{proof}
  This is an immediately consequence of the uniform divergence of geodesics
  (see Corollary \ref{cor:uniformdiv}).
\end{proof}

\begin{remark} Proposition \ref{prop:unifdiv} holds in the general
  context of noncompact harmonic manifolds (without the purely
  exponential volume growth condition).
\end{remark}

\begin{lemma} \label{lem:anosA}
  There exists a universal constant $A > 0$, only depending on $X$,
  such that for all unit speed geodesic rays $\sigma_1, \sigma_2 : [0,
  \infty) \to X$ with $\sigma_1(0) = \sigma_2(0)$ we have
  $$
  d(\sigma_1(t), \sigma_2(t)) \leq A d(\sigma_1(T), \sigma_2(T)) \quad 
  \forall\; 0 \leq t \leq T.
  $$
\end{lemma}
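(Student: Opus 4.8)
The plan is to combine a coarse estimate, valid when the endpoints $\sigma_1(T),\sigma_2(T)$ are far apart, with a fine Jacobi-field estimate, needed only when they are close. Both rely on the standing hypotheses for purely exponential volume growth: by \cite{Kn3} the space $(X,g)$ is Gromov $\delta$-hyperbolic for some $\delta\ge 0$ and its geodesic flow is Anosov, and by Corollary \ref{cor:uniformdiv} geodesics from a common point diverge uniformly. Throughout I write $p=\sigma_1(0)=\sigma_2(0)$ and $\rho(t)=d(\sigma_1(t),\sigma_2(t))$.

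First I would establish a purely metric, additive bound. Consider the geodesic triangle with vertices $p,\sigma_1(T),\sigma_2(T)$, whose sides are the two segments $\sigma_i|_{[0,T]}$ and a minimizing geodesic $B$ of length $\rho(T)$ joining $\sigma_1(T)$ to $\sigma_2(T)$. Fixing $t\le T$ and using $\delta$-thinness, the point $\sigma_1(t)$ lies within $\delta$ of $B$ or of $\sigma_2|_{[0,T]}$. If it is $\delta$-close to some $\sigma_2(t')$, the reverse triangle inequality forces $|t-t'|\le\delta$ and hence $\rho(t)\le 2\delta$. If instead it is $\delta$-close to a point $B(r)$ (parametrized by arclength from $\sigma_1(T)$), comparing distances to $\sigma_1(T)$ gives $r\ge (T-t)-\delta$, and then
\[ \rho(t)\le \delta+(\rho(T)-r)+(T-t)\le \rho(T)+2\delta. \]
In either case $\rho(t)\le\rho(T)+2\delta$, so $\rho(t)\le 2\rho(T)$ whenever $\rho(T)\ge 2\delta$. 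This settles every pair of rays whose endpoints at time $T$ are not too close.

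It remains to treat the regime $\rho(T)<2\delta$, where the additive bound is too weak (one cannot rule out a priori that $\rho$ bulges to size $\asymp\delta$ and returns). Here $\rho(t)<4\delta$ for all $t\le T$, so the two segments stay within a fixed small tube; on such a scale, using the two-sided curvature bounds of the harmonic space, the true distance $\rho(s)$ is comparable (with constants depending only on $X$) to the norm $\|J(s)\|$ of the Jacobi field $J$ along $\sigma_1$ with $J(0)=0$ recording the variation toward $\sigma_2$. I would then invoke the Anosov property: decomposing $J=J^s+J^u$ into its stable and unstable parts, the condition $J(0)=0$ gives $\|J^s(0)\|=\|J^u(0)\|$, and the uniform forward contraction of $J^s$ together with the uniform expansion of $J^u$ yields, for $0\le t\le T$,
\[ \|J(t)\|\le \|J^s(t)\|+\|J^u(t)\|\le C\|J^u(0)\|+e^{-\lambda(T-t)}\|J^u(T)\|\le A_0\|J(T)\|, \]
where the unstable part is estimated \emph{backward} from $T$ at the Anosov rate $\lambda$. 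Translating back through the comparison gives $\rho(t)\le A_0'\rho(T)$, and taking $A$ to be the maximum of $2$, $A_0'$, and a constant covering the remaining compact range of small $T$ (where continuity and the uniform curvature bounds give a bound) completes the proof.

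The hard part is the fine estimate. The delicate points are, first, making the comparison $\rho(s)\asymp\|J(s)\|$ precise and uniform over all $v\in SX$ on the noncompact manifold $X$, which requires the uniform two-sided curvature bounds and the absence of conjugate points; and second, ensuring the Anosov constants $C,\lambda$ are genuinely uniform and that the transient from the stable part is absorbed. This is exactly why one estimates the unstable part backward from time $T$: its lower expansion rate, rather than any (possibly larger) forward growth rate, controls $\|J^u(t)\|$, and matching these rates is what makes the multiplicative constant $A$ depend only on $X$.
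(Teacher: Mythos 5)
Your coarse regime is correct: the thin-triangle argument does give $\rho(t)\le\rho(T)+2\delta$ for $\rho(t)=d(\sigma_1(t),\sigma_2(t))$, hence $\rho(t)\le 2\rho(T)$ once $\rho(T)\ge 2\delta$, and this is a clean use of the Gromov hyperbolicity from \cite{Kn3} that the paper's own proof does not make. The genuine gap is in the fine regime, exactly at the step you yourself flag as delicate: the assertion that $\rho(s)$ is uniformly comparable to the norm $\|J(s)\|$ of a \emph{single} Jacobi field along $\sigma_1$ with $J(0)=0$. This comparison does not follow from two-sided curvature bounds and absence of conjugate points. What first variation actually gives is $\rho(s)\le\int_0^1\|A_{v(u)}(s)\,v'(u)\|\,du$, the length of the curve $u\mapsto\exp_p(s\,v(u))$ swept out by a one-parameter \emph{family} of radial geodesics, i.e.\ an integral of a family of Jacobi fields vanishing at $p$. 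Collapsing this family to the single field $J$ requires uniform control of second-order terms of $\exp_p$ over the unbounded interval $[0,T]$, and the naive Taylor error grows exponentially in $s$ at a rate unrelated to the Anosov rates, so it cannot be absorbed; no citable result provides this comparison in the stated generality. If instead you keep the family (a great-circle arc in $S_pX$), then your stable/unstable computation correctly bounds each $\|A_{v(u)}(t)v'(u)\|$ by a constant times $\|A_{v(u)}(T)v'(u)\|$ --- this is precisely the theorem of Bolton \cite{Bo} which the paper cites, so that part of your argument is a re-proof of a known fact --- but you are then stuck needing the length of the spherical curve $u\mapsto\exp_p(T\,v(u))$ to be bounded by a constant times $\rho(T)$, and a curve on the sphere $S_T(p)$ joining $\sigma_1(T)$ to $\sigma_2(T)$ can a priori be far longer than the distance between its endpoints. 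As written, your fine regime controls Jacobi fields, not distances.

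The paper closes exactly this hole by parametrizing the family of radial geodesics differently: take the \emph{minimizing} geodesic $c:[0,1]\to X$ from $\sigma_1(T)$ to $\sigma_2(T)$, whose length is exactly $\rho(T)$, write it in polar coordinates $c(s)=\exp_p(r(s)v(s))$, and rescale radially, $c_{t/T}(s)=\exp_p\bigl(\tfrac{t}{T}r(s)v(s)\bigr)$, to obtain a curve from $\sigma_1(t)$ to $\sigma_2(t)$. Its tangent vector splits orthogonally into a radial part of norm at most $|r'(s)|$ and the tangential part $A_{v(s)}\bigl(\tfrac{t}{T}r(s)\bigr)v'(s)$, which Bolton's estimate bounds by $A_2\,\|A_{v(s)}(r(s))v'(s)\|$; hence $\rho(t)\le {\rm length}(c_{t/T})\le A_2\,{\rm length}(c)=A_2\,\rho(T)$, with no distance-versus-Jacobi-field comparison ever needed. (Two caveats the paper also handles: if $c$ dips into the ball $B_{T/2}(p)$, then already $\rho(T)\ge T$ and trivially $\rho(t)\le 2t\le 2\rho(T)$; and Bolton's estimate applies only at radii above a universal constant, which is why the compact ranges $0\le t\le T\le R$ are treated separately using the curvature bounds and Corollary \ref{cor:uniformdiv}, much as in your closing remark.) If you replace your single-field comparison by this radial projection of the minimizing geodesic, your proof becomes complete; your hyperbolicity step then becomes superfluous, since the paper's argument covers all sizes of $\rho(T)$ at once.
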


\begin{proof}
  Because of the uniform lower and upper bound on the sectional
  curvature of $X$, there exist $A_0 \ge 1$ and $\epsilon > 0$, only
  depending on $X$, such that
  $$ d(\sigma_1(t),\sigma_2(t)) \le A_0 d(\sigma_1(T),\sigma_2(T)) \quad
  \forall\; 0 \leq t \leq T \leq \epsilon. $$ 
  Using again the lower curvature bound on $X$ and Corollary
  \ref{cor:uniformdiv}, we can find for every $R > 0$ a constant
  $A_1(R) \ge 1$ such that
  $$ d(\sigma_1(t),\sigma_2(t)) \le A_1(X,R) d(\sigma_1(T),\sigma_2(T)) \quad
  \forall\; \epsilon \leq t \leq T \leq R. $$ 
  We assume now that $R > 0$ is chosen large enough that $R/2$ is greater
  than a certain universal constant $c>0$, introduced later and only
  depending on $X$. It remains to show that there exists a universal
  constant $A_2 \ge 1$, only depending on $X$, such that
  $$ d(\sigma_1(t),\sigma_2(t)) \le A_2(X,R) d(\sigma_1(T),\sigma_2(T)) \quad
  \forall\; R \leq t \leq T. $$
  Let $T > R$, and $c : [0,1] \to X$ be a geodesic connecting $\sigma_1(T)$ with
  $\sigma_2(T)$, written as
  $$
  c(s) = \exp_p r(s) v(s),
  $$
  where $p = \sigma_1(0)$, $r(0) = r(1) = T$ and $v(s) \in S_p X$ for
  all $0 \leq s \leq1$. Assume first that there is $s_0 \in (0,1)$
  such that $r(s_0)= d(c(s_0),p) \le T/2$. Then
  $d(\sigma_1(T),\sigma_2(T)) \ge T$ and we have
  $$ 
  d(\sigma_1(t),\sigma_2(t)) \le 2t \le 2T \le 2 d(\sigma_1(T),\sigma_2(T)). 
  $$ 
  So we can disregard this case and assume that $r(s_0) = d(c(s_0),p)
  > T/2$, for all $s_0 \in [0,1]$. Let $R \leq t = \delta T$, and
  $c_\delta : [0,1] \to X$ be given by $c_\delta(s) = \exp_p \delta
  r(s) v(s)$. Then
  \begin{eqnarray*}
  \left.\frac{d}{ds} \right|_{s = s_0} c_\delta(s) &=& D \exp_p (\delta
  r(s_0) v(s_0) (\delta r'(s_0) v(s_0) + \delta r(s_0) v'(s_0))\\
  &=& r'(s_0) c_{v(s_0)}' (\delta r(s_0)) + A_{v(s_0)} (\delta r(s_0)) (v'(s_0)%).
  \end{eqnarray*}
  Since $c_{v(s_0)}' (\delta r(s_0)) \perp A_{v(s_0)} (\delta r(s_0))
  (v'(s_0))$, we obtain
  \begin{multline*}%
    \left \| \left.\frac{d}{ds} \right |_{s=s_0} c_\delta(s) \right
    \|^2 = (r'(s_0))^2 + ||A_{v(s_0)} (\delta r(s_0)) v'(s_0)||^2\\
    \leq (r'(s_0))^2 + \|A_{v(s_0)} (\delta r(s_0)) A_{v(s_0)}^{-1}
    (r(s_0))||^2 \cdot \|A_{v(s_0)} (r(s_0)) (v'(s_0))\|^2.
  \end{multline*}
  Let $B(t) = A_{v(s_0)}(t) A_{v(s_0)}^{-1} (r(s_0))$. This is an
  orthogonal Jacobi-Tensor along $c_{v(s_0)}$. Let $w \in (c_{v(s_0)}'
  (r(s_0)))^\perp$ with $||w|| = 1$. Then $J(t) = B(t) w$ is a Jacobi
  field with $J(0) = 0, J(r(s_0)) = w$. Using the Anosov property of
  the geodesic flow and the theorem in \cite[p. 107]{Bo}, we
  condude the existence of a universal $A_2 \geq 2$, such that
  \begin{eqnarray} \label{eq:jacobiest}
  \|J(t)\| \leq A_2\; \|J(r(s_0))\| = A_2
  \end{eqnarray}
  for all $t \in [c,r(s_0)]$ with a universal constant $c > 0$, only
  depending on $X$. Therefore
  $$
   \|B(t)\| \leq A_2 
  $$
  for all $c \leq t \leq r(s_0)$. Note that $\delta r(s_0) > \delta
  \frac{T}{2} \ge \frac{R}{2}$, and that we assumed earlier that $R/2 > c$. 
  This implies that
   \begin{eqnarray*}
     \left \| \left.\frac{d}{ds} \right|_{s=s_0} c_\delta(s) \right\|^2 &\leq& A_2^2 
     \left((r'(s_0))^2 + \|A_{v(s_0)}(r(s_0)) v'(s_0)\|^2 \right)\\
     &\leq& A_2^2 \left \| \left.\frac{d}{ds} \right|_{s=s_0} c(s) \right\|^2,
   \end{eqnarray*}
  which shows that
  \begin{eqnarray*} 
    {\rm length}(c_\delta) &=& \int\limits_0^1 \left\| \left.\frac{d}{ds} 
      \right|_{s=s_0} c_\delta(s)\right\| ds \\
    &\leq& A_2 \int\limits_0^1 \left\|\left.\frac{d}{ds} \right|_{s=s_0}
      c(s)\right \| ds = A_2\; d(\sigma_1(T), \sigma_2(T)).
   \end{eqnarray*}
   Since $c_\delta$ connects $\sigma_1(\delta T)$ with
   $\sigma_2(\delta T)$, we conclude
  $$
   d(\sigma_1(t), \sigma_2(t)) \leq A_2\; 
   d(\sigma_1(T), \sigma_2(T)) \quad \text{for all $R \le t = \delta T \leq T$}.
   $$
\end{proof}

\begin{prop}
  For every $p \in X$, the map $\Phi_p : S_p X \to X(\infty)$ is bijective.
\end{prop}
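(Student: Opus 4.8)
Since Proposition~\ref{prop:unifdiv} already gives injectivity of $\Phi_p$, the plan is to establish surjectivity: given an arbitrary unit speed geodesic ray $\sigma:[0,\infty)\to X$ with $\sigma(0)=q$, I want to produce $v\in S_pX$ with $[\sigma_v]=[\sigma]$. The natural candidate arises from a limiting procedure. For each $n$, let $\tau_n$ be the geodesic ray from $p$ passing through $\sigma(n)$, with initial vector $v_n=\dot\tau_n(0)\in S_pX$ and $\tau_n(L_n)=\sigma(n)$, where $L_n=d(p,\sigma(n))$. By compactness of $S_pX$ I pass to a subsequence with $v_n\to v$, and set $\sigma_v$ to be the ray from $p$ in direction $v$. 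Continuous dependence of geodesics on initial conditions then yields $\tau_n\to\sigma_v$ uniformly on every compact parameter interval.

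The crux is to bound $d(\sigma_v(t),\sigma(t))$ uniformly in $t$, and here Lemma~\ref{lem:anosA} is the key instrument. The subtlety is that $\tau_n$ and $\sigma$ emanate from different points, so the lemma cannot be applied to them directly from $p$; instead I exploit that they meet at $\sigma(n)$ and compare their time-reversals based at $\sigma(n)$. Writing $\rho_1(s)=\tau_n(L_n-s)$ and $\rho_2(s)=\sigma(n-s)$ (both extended to full rays using completeness), these are unit speed geodesic rays from the common point $\sigma(n)$, and since $|L_n-n|\le d(p,q)$ one checks $d(\rho_1(n),\rho_2(n))=d(\tau_n(L_n-n),q)\le 2\,d(p,q)$. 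Applying Lemma~\ref{lem:anosA} with $T=n$ gives, for all $0\le s\le n$,
$$ d(\tau_n(L_n-s),\sigma(n-s)) = d(\rho_1(s),\rho_2(s)) \le A\,d(\rho_1(n),\rho_2(n)) \le 2A\,d(p,q), $$
and substituting $u=n-s$ turns this into $d(\tau_n(L_n-n+u),\sigma(u))\le 2A\,d(p,q)$ for $0\le u\le n$.

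To conclude, I fix $t\ge 0$ and let $n\to\infty$ along the chosen subsequence. Since $|L_n-n|\le d(p,q)$, the parameters $L_n-n+t$ remain in the fixed compact interval $[\,t-d(p,q),\,t+d(p,q)\,]$, so uniform convergence $\tau_n\to\sigma_v$ there, together with the $1$-Lipschitz continuity of $\sigma_v$, gives
$$ d(\tau_n(L_n-n+t),\sigma_v(t)) \le d(\tau_n(L_n-n+t),\sigma_v(L_n-n+t)) + |L_n-n| \longrightarrow d(p,q). $$
Combining with the previous display at $u=t$, I obtain $d(\sigma_v(t),\sigma(t))\le (2A+1)\,d(p,q)$ for all $t\ge d(p,q)$, while the range $t<d(p,q)$ is handled by the trivial bound $d(\sigma_v(t),\sigma(t))\le 2t+d(p,q)$. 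This shows $\sigma_v\sim\sigma$, so $\Phi_p$ is surjective and hence bijective.

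I expect the main obstacle to be precisely this intermediate-time control: for a general manifold without conjugate points one has no convexity of the distance function, and the distance between two geodesics at small parameter cannot be read off from their distance at large parameter. The reversal trick reduces everything to a comparison of geodesics issuing from a \emph{common} point $\sigma(n)$, which is exactly the situation governed by Lemma~\ref{lem:anosA}; and that lemma is available only because of the Anosov property of the geodesic flow, which is where the standing assumption of purely exponential volume growth enters the argument.
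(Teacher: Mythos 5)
Your proof is correct and follows essentially the same route as the paper: surjectivity is obtained by aiming geodesics from $p$ at the far points $\sigma(n)$, controlling intermediate distances via Lemma \ref{lem:anosA} applied to the time-reversed rays issuing from the common point $\sigma(n)$, and extracting a convergent subsequence of directions in the compact sphere $S_pX$. The only differences are cosmetic: you spell out the time-reversal step that the paper's proof uses implicitly, and you compare with $\sigma_v$ after passing to the limit rather than first bounding $d(c_{v_n}(t),\sigma(t))$ and then letting $v_n \to v_0$; both arguments end with a uniform bound of the form $(2A+1)\,d(p,q)$.
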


\begin{proof}
  In view of Proposition \ref{prop:unifdiv}, it suffices to prove
  surjectivity of $\Phi_p$. Let $[\sigma] \in X(\infty)$ and
  $\sigma(0) = q$. Choose $t_n \to \infty$ and $v_n \in S_p X$
  and $s_n \in {\R}$ such that $\sigma(t_n) = \exp_p(s_n
  v_n)$. Obviously
  $$
  |t_n - s_n| \leq d(p,q).
  $$
  Then $d(p, \sigma(t_n)) = d(q_n, \sigma(t_n)) = s_n$, where $q_n : =
  \sigma(t_n - s_n)$. By Lemma \ref{lem:anosA}, we have
  \begin{eqnarray*}
    d(c_{v_n}(t), \sigma(t_n - s_n + t)) \leq A\; d(p,q_n) & \leq & 
    A\; (d(p,q) + |t_n - s_n|)\\
    & \leq & (A+1)\; d(p,q) 
  \end{eqnarray*}
  for all $t \in [0,s_n]$. This implies that
  \begin{eqnarray*}
    d(c_{v_n}(t), \sigma(t)) & \leq & 
    d(c_{v_n}(t), \sigma(t_n-s_n+t)) + d(\sigma(t_n-s_n+t),\sigma(t))\\
    & \leq & (A+1)\; d(p,q) + |t_n-s_n| \leq (A+2)\; d(p,q)
  \end{eqnarray*}
  for all $t \in [0,s_n]$. Since $S_p X$ compact, there exists $v_0
  \in S_p X$ such that a subsequence of $v_n$ converges to $v_0$.  We
  conclude that $d(c_{v_0}(t), \sigma(t) \leq (A+2)\; d(p,q)$ for all $t
  \geq 0$, i.e., $[\sigma] = [c_{v_0}]$. This shows surjectivity.
\end{proof}

The next proposition is an easy consequence of the uniform divergence
of geodesics: For every distance $d>0$ and every angle $\epsilon>0$
there exists a uniform radius $\widehat{R}$ such that any two points
$p,q$ at distance $\le d$ and outside any ball of radius $\widehat{R}$
will be seen from the center of this ball in an angle $\le \epsilon$.

\begin{prop} \label{prop:hatR}
  For $d > 0$ and $\epsilon > 0$ there exists $\widehat{R} =
  \widehat{R}(d,\epsilon,X) \geq 0$, such that for all $p_0 \in X$ and
  for all $ p,q \not \in B_{\widehat{R}}(p_0)$ with $d(p,q) \leq d$ we
  have
  $$
  \angle_{p_0}(p,q) \leq \epsilon.
  $$
\end{prop}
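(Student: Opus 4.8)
The plan is to argue by contraposition using the uniform divergence of geodesics (Corollary \ref{cor:uniformdiv}). Since $X$ is simply connected and has no conjugate points, $\exp_{p_0}$ is a diffeomorphism, so I may write $p = c_v(s)$ and $q = c_w(t)$ for unique unit vectors $v,w \in S_{p_0}X$ and radial parameters $s = d(p_0,p)$, $t = d(p_0,q)$; moreover $\angle_{p_0}(p,q) = \angle(v,w)$. The hypotheses become $s,t \ge \widehat R$ and $d(p,q) \le d$, and the goal is to force $\angle(v,w) \le \epsilon$ by taking $\widehat R$ large.

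The first observation is that the two radii differ only a little. By the triangle inequality, $|s-t| = |d(p_0,p) - d(p_0,q)| \le d(p,q) \le d$. Assuming without loss of generality that $s \le t$ (the situation being symmetric in $p,q$), this lets me compare the rays at the common parameter $s$. Corollary \ref{cor:uniformdiv}, applied at the footpoint $p_0$ with the basepoint-independent function $a$, gives
$$ d(c_v(s), c_w(s)) \ge a(s)\,\angle(v,w). $$
Since $c_w$ has unit speed and, being a minimizing geodesic in a Cartan--Hadamard manifold, realizes distances along itself, we have $d(c_w(s), c_w(t)) = t - s \le d$. Combining this with the reverse triangle inequality yields
$$ d(p,q) = d(c_v(s), c_w(t)) \ge d(c_v(s), c_w(s)) - d(c_w(s), c_w(t)) \ge a(s)\,\angle(v,w) - d, $$
so that $a(s)\,\angle(v,w) \le d(p,q) + d \le 2d$, i.e. $\angle_{p_0}(p,q) = \angle(v,w) \le 2d/a(s)$.

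It then remains only to make the right-hand side small. Because $a(t) \to \infty$ as $t \to \infty$, I can choose $\widehat R = \widehat R(d,\epsilon,X)$ so large that $a(s) \ge 2d/\epsilon$ for all $s \ge \widehat R$. Crucially, this choice depends only on $d$, $\epsilon$, and the function $a$, which is itself independent of the basepoint, so the resulting $\widehat R$ is uniform in $p_0$. For any $p,q \notin B_{\widehat R}(p_0)$ with $d(p,q) \le d$ both radii satisfy $s,t \ge \widehat R$, hence $\angle_{p_0}(p,q) \le 2d/a(s) \le \epsilon$, as required. I expect no serious obstacle here; the only point needing care is the reduction to a common parameter, and this is exactly what the bound $|s-t| \le d$ supplies, allowing the discrepancy $t-s$ to be absorbed as a harmless additive constant rather than appearing as an uncontrolled error growing with the distance to $p_0$.
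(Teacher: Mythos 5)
Your proof is correct and follows essentially the same route as the paper: both arguments reduce to the common radial parameter (the paper compares $p$ with $q_0 = c_w(t_1)$, you compare $c_v(s)$ with $c_w(s)$, which is the same comparison up to a rearranged triangle inequality), bound that distance by $2d$ using $|s-t|\le d$, and then invoke the basepoint-independent divergence function $a$ from Corollary \ref{cor:uniformdiv} together with $a(t)\to\infty$ to choose $\widehat R$.
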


\begin{proof}
  Since $a(t) \to \infty$ as $t \to \infty$, we can choose
  $\widehat{R} > 0$ such that, for all $t \ge \widehat{R}$, we have
  $\frac{2d}{a(t)} \le \epsilon$. Let $p,q \not\in
  B_{\widehat{R}}(p_0)$ with $d(p,q) \le d$. Let $v,w \in S_{p_0}X$
  and $t_1, t_2 > \widehat{R}$ such that $p = c_v(t_1)$ and $q =
  c_w(t_2)$. We conclude from the triangle inequality that $|t_2 -
  t_1| \le d$. Let $q_0 = c_w(t_1) \not\in B_{\widehat{R}}(p_0)$. Then
  $$ d(p,q_0) \le d(p,q) + d(q,q_0) \le d + |t_2 - t_1| \le 2d. $$ 
  Then Corollary \ref{cor:uniformdiv} implies that 
  $$ \angle_{p_0}(p,q) = \angle_{p_0}(p,q_0) \le \frac{d(p,q_0)}{a(t_1)}
  \le \frac{2d}{a(t_1)} \le \epsilon, $$ 
  since $t_1 \ge \widehat{R}$. This finishes the proof.
\end{proof}

The next result states that far out points of a geodesic ray $\sigma$,
seen from another point $p$ at bounded distance $\le d$ from
$\sigma(0)$, appear under a very small angle.

\begin{prop} \label{prop:STangle}
  Let $d > 0$ be given. Let $\sigma: [0,\infty) \to X$ be a unit speed
  geodesic ray with $q:=\sigma(0)$. Let $p \in B_d(q) = \{ z
  \in X \mid d(z,q) \le d \}$. For $T > 0$, let $\sigma_T :
  [0,d(p,\sigma(T))] \to X$ be the unit speed geodesic from $p$ to
  $\sigma(T)$. Then for $\epsilon > 0$ there exists $C = C(X,d,\epsilon) > 0$
  such that for all $S,T \geq C$,
  $$
  \angle_p(\sigma_S'(0),\sigma_T'(0)) \le \epsilon.
  $$
\end{prop}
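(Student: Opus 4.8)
The plan is to combine two ingredients: the initial fellow-traveling of the two geodesics $\sigma_S$ and $\sigma_T$ (both emanating from $p$, with far endpoints lying on the common ray $\sigma$), and the uniform divergence of geodesics issuing from $p$ (Corollary~\ref{cor:uniformdiv}). Since $X$ has purely exponential volume growth, it is Gromov hyperbolic, say $\delta$-hyperbolic, by \cite{Kn3}; this is what drives the fellow-traveling. The heuristic is that two geodesics from $p$ whose endpoints both lie far out along $\sigma$ must stay within a bounded distance of each other for a long initial stretch, and a bounded separation at a \emph{fixed finite} time $t_0$ translates, via Corollary~\ref{cor:uniformdiv}, into an angle at $p$ of size at most $(\text{bounded})/a(t_0)$, which is small once $a(t_0)$ is large.

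First I would set up the geodesic triangle with vertices $p$, $\sigma(S)$, $\sigma(T)$. As $X$ is simply connected without conjugate points, geodesics between points are unique, so the side joining $\sigma(S)$ and $\sigma(T)$ is exactly the subarc $\tau$ of $\sigma$ between these parameters, while the other two sides are $\sigma_S$ and $\sigma_T$. Because $d(p,\sigma(s)) \ge d(q,\sigma(s)) - d(p,q) \ge s - d$ for every $s \ge 0$, the arc $\tau$ is far from $p$: one has $d(p,\tau) \ge \min(S,T) - d$. I would then invoke $\delta$-thinness: for a parameter $t$ with $t < d(p,\tau) - \delta$, the point $\sigma_S(t)$ cannot lie within $\delta$ of $\tau$ (otherwise $d(p,\tau) \le t + \delta$), so it lies within $\delta$ of some $\sigma_T(t')$. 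Comparing distances to $p$ gives $|t - t'| = |d(p,\sigma_S(t)) - d(p,\sigma_T(t'))| \le \delta$, and hence $d(\sigma_S(t), \sigma_T(t)) \le 2\delta$.

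To conclude, given $\epsilon > 0$ I would fix $t_0$ so large that $a(t_0) \ge 2\delta/\epsilon$, which is possible since $a(t) \to \infty$. Then I would set $C = t_0 + d + \delta + 1$. For $S, T \ge C$ one has $t_0 < \min(S,T) - d - \delta \le d(p,\tau) - \delta$, so the previous step yields $d(\sigma_S(t_0), \sigma_T(t_0)) \le 2\delta$; moreover $t_0 < d(p,\sigma(S))$ and $t_0 < d(p,\sigma(T))$, so $\sigma_S(t_0) = c_{\sigma_S'(0)}(t_0)$ and $\sigma_T(t_0) = c_{\sigma_T'(0)}(t_0)$ lie on the geodesics considered in Corollary~\ref{cor:uniformdiv}. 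Applying that corollary at the basepoint $p$ gives
$$
a(t_0)\, \angle_p(\sigma_S'(0), \sigma_T'(0)) \le d(\sigma_S(t_0), \sigma_T(t_0)) \le 2\delta,
$$
so that $\angle_p(\sigma_S'(0), \sigma_T'(0)) \le 2\delta/a(t_0) \le \epsilon$, as required.

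The step I expect to be the main obstacle is the fellow-traveling estimate --- specifically, guaranteeing that the $\delta$-close point on the remaining two sides lies on $\sigma_T$ rather than on the third side $\tau$. This is precisely where it is essential that the endpoints $\sigma(S), \sigma(T)$ recede to infinity along $\sigma$, forcing $d(p,\tau)$ to grow and thereby opening up an arbitrarily long window $[0, d(p,\tau) - \delta]$ on which the two geodesics stay $2\delta$-close. One could alternatively try to derive this fellow-traveling from Lemma~\ref{lem:anosA}, since both $\sigma_S, \sigma_T$ start at $p$, but controlling the separation at the far end appears less direct than the thin-triangle argument, so I would keep the hyperbolicity route as the primary plan.
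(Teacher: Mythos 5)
Your proof is correct, but it takes a genuinely different route from the paper's. The paper never forms the triangle $(p,\sigma(S),\sigma(T))$: assuming $S\ge T$, it first shows $\angle_{\sigma(T)}(p,q)\le\delta$ via its Proposition \ref{prop:hatR} (the uniform ``far away and close together implies small angle at the observer'' statement, itself a consequence of Corollary \ref{cor:uniformdiv}), uses this together with an external result (\cite[Cor. 4.5]{Kn3}) to produce a time $t_0$ with $d(\sigma(T),\sigma_S(t_0))\le 2\rho+1$, and then applies Proposition \ref{prop:hatR} a second time to the pair of points $\sigma(T)\in\sigma_T$ and $\sigma_S(t_0)\in\sigma_S$, both far from $p$, to get the angle bound $\epsilon$. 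You instead use the hyperbolicity in its rawest form: $\delta$-thinness of the triangle whose third side is, by Cartan--Hadamard uniqueness of geodesics, exactly the subarc $\tau$ of $\sigma$; since $d(p,\tau)\ge\min(S,T)-d$ grows, you get $2\delta$-fellow-traveling of $\sigma_S$ and $\sigma_T$ on a window $[0,d(p,\tau)-\delta]$ of growing length, and then you convert a bounded separation at one \emph{fixed} time $t_0$ into a small angle by choosing $t_0$ with $a(t_0)\ge 2\delta/\epsilon$ in Corollary \ref{cor:uniformdiv}. Your mechanism for producing smallness is thus the divergence rate $a(t)\to\infty$ at a fixed time, whereas the paper's is the shrinking of visual angles of bounded-size objects as they recede to infinity (the function $\widehat R$). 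What your argument buys is self-containedness and economy: it avoids both Proposition \ref{prop:hatR} and the citation to \cite[Cor. 4.5]{Kn3}, needing only thin triangles, geodesic uniqueness, and Corollary \ref{cor:uniformdiv}, and all distance comparisons (e.g. $|t-t'|\le\delta$ from comparing distances to $p$) are elementary. What the paper's approach buys is that it develops and exercises exactly the toolkit (Proposition \ref{prop:hatR}) that it reuses immediately afterwards in Corollary \ref{cor:Tangle} and Proposition \ref{prop:neighborinc}, so the overhead is amortized across the chapter. Two cosmetic points in your write-up: you should note that $a(t_0)>0$ (guaranteed by choosing $t_0$ large, since $a(t)\to\infty$) before dividing, and that $t_0$ lies in the domains of both segments $\sigma_S,\sigma_T$, which you did check via $d(p,\sigma(T))\ge T-d>t_0$; neither is a gap.
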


\begin{proof}
  The details of the following proof are illustrated in Figure
  \ref{fig_propangle}. By \cite{Kn3}, $X$ is $\rho$-Gromov hyperbolic
  for some $\rho >0$. Since $X$ has lower curvature bound, we can find
  $\delta > 0$ such that for all $z \in X$ and for all $ v_1,v_2 \in
  S_zX$ we have
  \begin{equation} \label{eq:smanglesmdist}
  \angle_z(v_1,v_2) \leq \delta \quad \Rightarrow \quad
  d(c_{v_1}(\rho+1),c_{v_2}(\rho+1)) \leq 1.
  \end{equation}

  \begin{figure}[h]
  \begin{center}
    \psfrag{p}{$p$} 
    \psfrag{q}{$q$}
    \psfrag{ld}{$\le d$}
    \psfrag{le}{$\le \epsilon$}
    \psfrag{R0}{$R_0$}
    \psfrag{gR}{$\ge \widehat{R}$}
    \psfrag{ld}{$\le d$}
    \psfrag{ldelta}{$\le \delta$}
    \psfrag{l1}{$\le 1$}
    \psfrag{s(T)}{$\sigma(T)$}
    \psfrag{s(S)}{$\sigma(S)$}
    \psfrag{s}{$\sigma$}
    \psfrag{sS}{$\sigma_S$}
    \psfrag{sT}{$\sigma_T$}
    \psfrag{ss(t0)}{$\sigma_S(t_0)$}
    \psfrag{Sr+1(s(T))}{$S_{\rho+1}(\sigma(T))$}
    \psfrag{l2r+1}{$\le 2\rho +1$}
        \includegraphics[width=12cm]{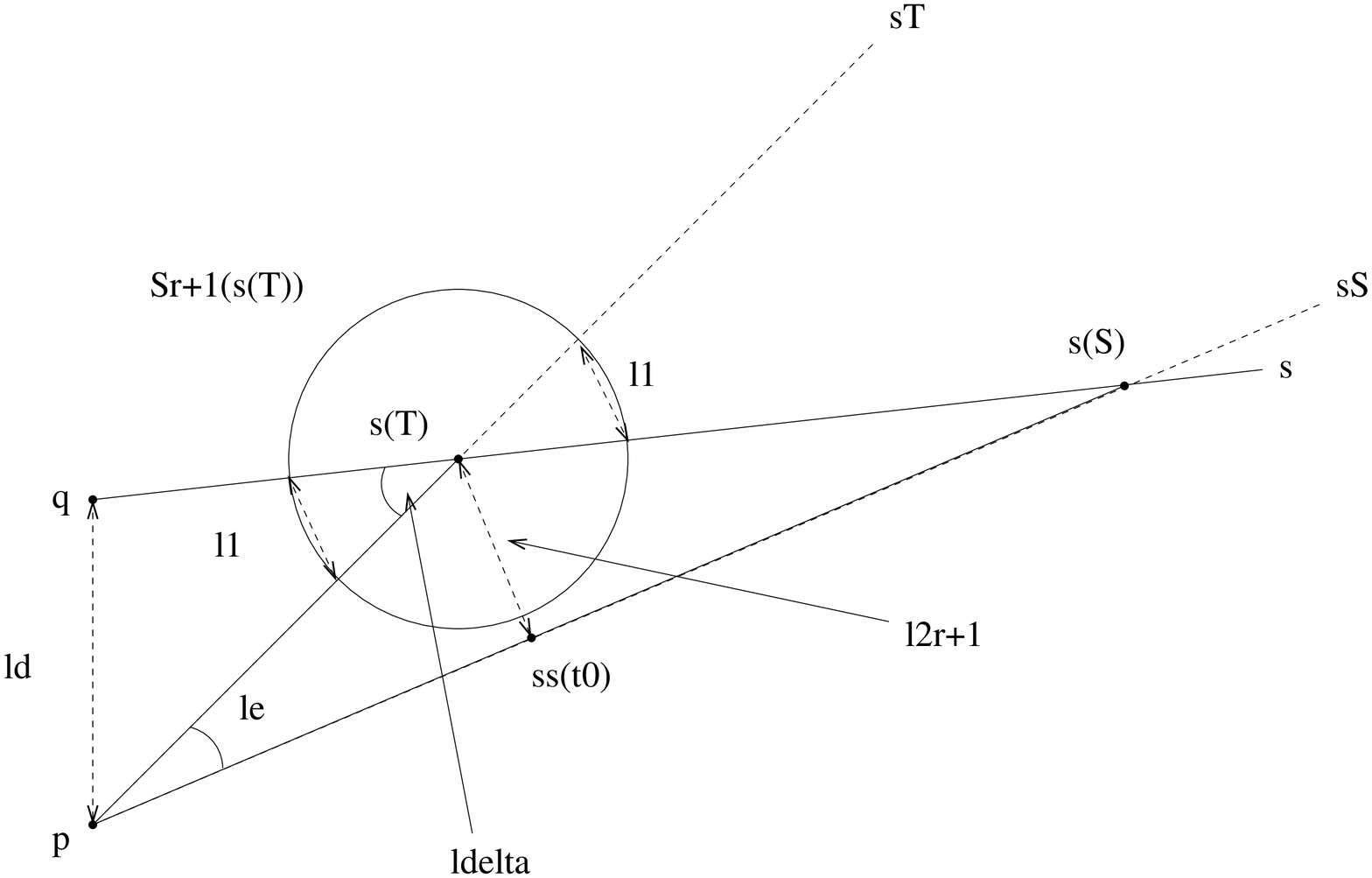}
  \end{center}
  \caption{Illustration of the proof of Proposition \ref{prop:STangle}}
  \label{fig_propangle}
  \end{figure}

  Choose
  $$
  C: = \max \{\widehat{R}(d,\delta,X), 
  \widehat{R}(2 \rho+1,\epsilon,X) +2 \rho+1\} + d
  $$
  with $\widehat{R}(d,\epsilon,X)$ as in Proposition
  \ref{prop:hatR}. Without less of generality, we can assume that $S
  \ge T$. We know that
  $$
  d(p,\sigma(T)), d(q,\sigma(T)) \geq T - d \geq \widehat{R}(d,\delta,X),
  $$
  and conclude from Proposition \ref{prop:hatR} that
  $$
  \angle_{\sigma(T)}(p,q) \leq \delta.
  $$
  Using \eqref{eq:smanglesmdist}, this implies that
  $$
  d(\sigma_T(d(p,\sigma(T)) \pm(\rho+1)), \sigma(T \pm(\rho+1))) \leq 1.
  $$
  Using \cite[Cor. 4.5]{Kn3}, we conclude that there exists $t_0 > 0$
  such that $d(\sigma(T), \sigma_S(t_0)) \leq 2 \rho+1$. (In the case
  $S \in [T,T+(\rho+1)]$, we choose $t_0 = d(p,\sigma(S)$ and have
  $\sigma_S(t_0)=\sigma(S)$.) Since $T \geq C \geq \widehat{R}(2
  \rho+1,\epsilon,X) + 2\rho+1 + d$, we have
  $$
  d(\sigma(T),p) \geq d(\sigma(T),q) - d(p,q) = T-d(p,q) \geq
  \widehat{R}(2 \rho+1, \epsilon,X) + 2\rho+1
  $$
  and
  \begin{eqnarray*}
    d(\sigma_S(t_0),p) & \geq & d(\sigma(T),p) - d(\sigma_S(t_0), \sigma(T))\\
    & \geq & \widehat{R} (2 \rho+1,\epsilon,X) + 2\rho+1 - (2\rho+1) = 
    \widehat{R}(2\rho+1,\epsilon,X).
  \end{eqnarray*}
  Using Propositon \ref{prop:hatR} again, we conclude that
  $$
  \angle_p(\sigma(T), \sigma_S(t_0)) = 
  \angle_p(\sigma_T'(0),\sigma_S'(0)) \leq \epsilon,
  $$
  finishing the proof of the proposition.
\end{proof}

This proposition has the following limit version (for $S \to \infty$).

\begin{cor} \label{cor:Tangle} Let $d > 0$ be given. Let $\sigma:
  [0,\infty) \to X$ be a unit speed geodesic ray with
  $q:=\sigma(0)$. Let $p \in B_d(q) = \{ z \in X \mid d(z,q) \le d \}$
  and $v \in S_pX$ such that $[\sigma_v] = [\sigma]$. Then for
  $\epsilon > 0$ we have
  $$
  \angle_p(v,\sigma_T'(0)) \le \epsilon
  $$
  for all $T \ge C(X,d,\epsilon)$ with $\sigma_T$ and $C(X,d,\epsilon)$ 
  as in Proposition \ref{prop:STangle}. 
\end{cor}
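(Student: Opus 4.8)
The plan is to obtain the corollary from Proposition \ref{prop:STangle} by passing to the limit $S \to \infty$, so that the only genuine work is to identify the limit of the aiming directions $\sigma_S'(0)$ with the asymptotic direction $v$. First I would fix $T \ge C(X,d,\epsilon)$ and recall that Proposition \ref{prop:STangle} already supplies $\angle_p(\sigma_S'(0),\sigma_T'(0)) \le \epsilon$ for every $S \ge C(X,d,\epsilon)$. Since the angle $\angle_p(\cdot,\cdot) = \arccos\langle\cdot,\cdot\rangle$ is continuous on $S_pX \times S_pX$, the corollary follows immediately once one knows that $\lim_{S\to\infty}\sigma_S'(0) = v$ in $S_pX$. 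Thus everything reduces to this convergence statement, which I expect to be the main obstacle.

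I would establish the convergence by compactness of $S_pX$: it suffices to show that every subsequential limit of $\sigma_S'(0)$ equals $v$. Let $S_k \to \infty$ with $\sigma_{S_k}'(0) \to w \in S_pX$, and put $s_k = d(p,\sigma(S_k))$, so that $\sigma(S_k) = \exp_p(s_k\,\sigma_{S_k}'(0))$ and $|S_k - s_k| \le d(p,q) \le d$ by the triangle inequality. This is precisely the configuration analysed in the proof of bijectivity of $\Phi_p$ (the proposition following Lemma \ref{lem:anosA}): applying Lemma \ref{lem:anosA} to the time-reversed geodesics sharing the common endpoint $\sigma(S_k)$ yields the estimate $d\big(c_{\sigma_{S_k}'(0)}(t),\sigma(t)\big) \le (A+2)\,d(p,q)$ for all $t \in [0,s_k]$. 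Since $s_k \to \infty$ and the geodesic flow depends continuously on its initial vector, fixing $t$ and letting $k \to \infty$ (so that $c_{\sigma_{S_k}'(0)}(t) \to c_w(t)$) gives $d(c_w(t),\sigma(t)) \le (A+2)\,d(p,q)$ for every $t \ge 0$. Hence $[\sigma_w] = [\sigma]$. By hypothesis $[\sigma_v] = [\sigma]$, and the injectivity of $\Phi_p$ recorded in Proposition \ref{prop:unifdiv} (a consequence of the uniform divergence of geodesics) forces $w = v$. As every subsequential limit equals $v$, the full limit $\sigma_S'(0) \to v$ follows.

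With this in hand I would finish by letting $S \to \infty$ in $\angle_p(\sigma_S'(0),\sigma_T'(0)) \le \epsilon$ and invoking continuity of $\angle_p$, which gives $\angle_p(v,\sigma_T'(0)) \le \epsilon$ for all $T \ge C(X,d,\epsilon)$. The delicate ingredient is the limit identification $\sigma_S'(0)\to v$; the rest is an assembly of Proposition \ref{prop:STangle}, the bounded-distance estimate of Lemma \ref{lem:anosA}, and the injectivity of $\Phi_p$.
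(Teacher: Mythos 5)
Your proof is correct and follows essentially the same route as the paper: reduce everything to showing $\lim_{S\to\infty}\sigma_S'(0)=v$, identify the limit via the bounded-distance estimate from Lemma \ref{lem:anosA} applied to time-reversed geodesics together with the injectivity of $\Phi_p$ (Proposition \ref{prop:unifdiv}), and then let $S\to\infty$ in the inequality of Proposition \ref{prop:STangle}. The only cosmetic difference is that you recycle the computation from the surjectivity proof of $\Phi_p$ and argue by uniqueness of subsequential limits, whereas the paper first extracts the limit $v_0$ from the Cauchy property supplied by Proposition \ref{prop:STangle} and then compares $\sigma(s)$ with $\sigma_T(s)$ directly; the substance is identical.
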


\begin{proof}
  We only need to show that $\lim_{T \to \infty} \sigma_T'(0) =
  v$. Then we obtain for $T \ge C(X,d,\epsilon)$:
  $$ \angle_p(v,\sigma_T'(0)) \le \limsup_{S \to \infty} 
  \angle_p(\sigma_S'(0),\sigma_T'(0)) \le \epsilon, $$ using
  Proposition \ref{prop:STangle}.

  We know from Proposition \ref{prop:STangle} that, for any $T_n \to
  \infty$, $\sigma_{T_n}'(0)$ is a Cauchy sequence in $S_pX$. Since
  $S_pX$ is compact, there exists $v_0 = \lim_{T \to \infty}
  \sigma_T'(0)$. So it remains to show that $v_0 = v$. Note that we have
  $$ T-d(p,q) \le d(p,\sigma(T)) \le T+d(p,q). $$
  Let $v_1 = -\sigma'(T)$ and $v_2 = - \sigma_T'(d(p,\sigma(T)))$. Figure
  \ref{fig_limpropangle} illustrates the following inequalities.

  \begin{figure}[h]
  \begin{center}
    \psfrag{p=sT(0)}{$p=\sigma_T(0)$} 
    \psfrag{q=s(0)}{$q=\sigma(0)$}
    \psfrag{ld}{$\le d$}
    \psfrag{s(T)=sT(dpsT)}{$\sigma(T)=\sigma_T(d(p,\sigma(T)))$}
    \psfrag{s}{$\sigma$}
    \psfrag{sT}{$\sigma_T$}
    \psfrag{s(s)}{$\sigma(s)$}
    \psfrag{sT(s)}{$\sigma_T(s)$}
    \psfrag{v1}{$v_1$}
    \psfrag{v2}{$v_2$}
        \includegraphics[width=12cm]{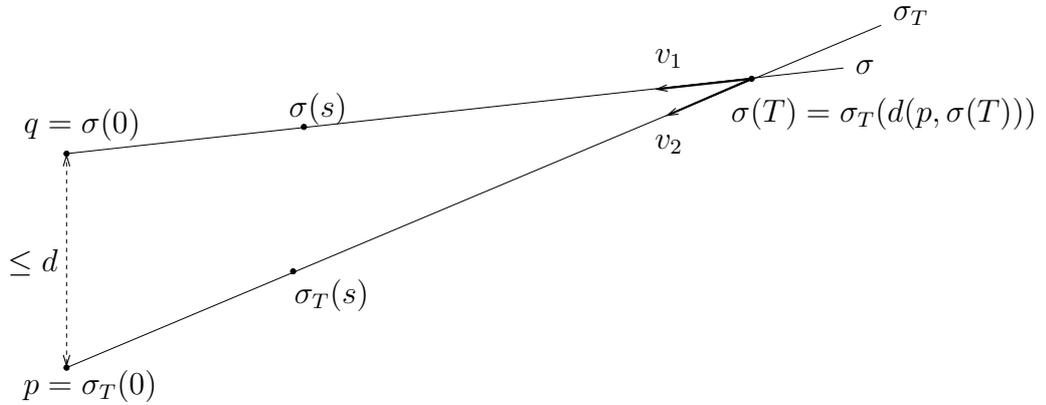}
  \end{center}
  \caption{Illustration of the proof of Corollary \ref{cor:Tangle}}
  \label{fig_limpropangle}
  \end{figure}

  Using Lemma \ref{lem:anosA}, we have for $0 \le s \le T-d(p,q)$:
  \begin{eqnarray*}
    d(\sigma(s),\sigma_T(s)) &=& 
    d(\sigma_{v_1}(T-s),\sigma_{v_2}(d(p,\sigma(T))-s)) \\
    &\le& d(\sigma_{v_1}(T-s),\sigma_{v_2}(T-s)) + \\
    && d(\sigma_{v_2}(T-s),\sigma_{v_2}(d(p,\sigma(T))-s)) \\
    &\le& A\; d(\sigma_{v_1}(T),\sigma_{v_2}(T)) + |T-d(p,\sigma(T))| \\
    &\le& A\; \left(d(q,p)+d(\sigma_{v_2}(d(p,\sigma(T))),\sigma_{v_2}(T)) 
    \right) + d(p,q) \\
    &\le& 2A\; d(p,q) + d(p,q) \le (2A+1)d.
  \end{eqnarray*}
  Taking the limit, we conclude that 
  $$ d(\sigma(s),\sigma_{v_0}(s)) \le (2A+1)d $$
  for all $s \ge 0$, i.e., $[\sigma_{v_0}] = [\sigma]$. Proposition
  \ref{prop:unifdiv} implies that $v_0 = v$. 
\end{proof}

For $v_0 \in S_xX$, $R > 0$, $\delta > 0$, we define
\begin{equation} \label{eq:Uv0Rd}
U(v_0,R,\delta) : = \{\sigma_v(t) \mid\; t > R, v \in S_xX\;
\text{with}\; \angle_x(v_0,v) < \delta\},
\end{equation}
which we consider as geometric neighbourhoods of points at the
geometric boundary.

\begin{prop} \label{prop:neighborinc}
  Let $d > 0$ be given. Let $v_0 \in S_pX$ and $v_1 \in S_qX$ with
  $[\sigma_{v_0}] = [\sigma_{v_1}]$ and $d(p,q) \le d$. Then for all
  $R,\delta > 0$ there exist $R'=R'(X,d,R,\delta)$ and
  $\delta'=\delta'(X,d,R,\delta)$ such that
  $$
  U(v_1,R',\delta') \subset U(v_0,R,\delta).
  $$
\end{prop}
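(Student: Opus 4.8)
The plan is to verify the inclusion pointwise: take an arbitrary $y \in U(v_1,R',\delta')$ and show that, for suitable $R'$ and $\delta'$, it already lies in $U(v_0,R,\delta)$. Write $y = \sigma_w(t)$ with $t > R'$ and $w \in S_qX$ satisfying $\angle_q(v_1,w) < \delta'$. Since $X$ has no conjugate points, $\exp_p$ is a diffeomorphism, so there are a unique $v \in S_pX$ and $s = d(p,y)$ with $y = \sigma_v(s)$; the membership $y \in U(v_0,R,\delta)$ is then equivalent to the two conditions $s > R$ and $\angle_p(v_0,v) < \delta$. The radial condition is immediate from the triangle inequality, $s = d(p,y) \ge d(q,y) - d(p,q) \ge t - d > R' - d$, so it suffices to demand $R' \ge R + d$.

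For the angular condition I would exploit that $\angle_p$ is the Riemannian distance on the round sphere $S_pX$, hence a genuine metric obeying the triangle inequality. Fix $\epsilon = \delta/4$, let $\rho_s$ and $\tau_s$ denote the unit speed geodesics from $p$ to $\sigma_{v_1}(s)$ and to $\sigma_w(s)$ respectively, and choose a large parameter $T$ (specified below). Noting that $v = \tau_t'(0)$, I decompose
\[
\angle_p(v_0,v) \le \angle_p(v_0,\rho_T'(0)) + \angle_p(\rho_T'(0),\tau_T'(0)) + \angle_p(\tau_T'(0),\tau_t'(0)).
\]
The first term is $\le \epsilon$ by Corollary \ref{cor:Tangle} applied to the ray $\sigma_{v_1}$ from $q$ with limiting direction $v_0$ (using $[\sigma_{v_0}]=[\sigma_{v_1}]$ and $d(p,q)\le d$), provided $T \ge C(X,d,\epsilon)$. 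The third term is $\le \epsilon$ by Proposition \ref{prop:STangle} applied to the ray $\sigma_w$ from $q$, comparing its far-out points $\sigma_w(T)$ and $\sigma_w(t)=y$ as seen from $p$; this requires $T \ge C(X,d,\epsilon)$ and $t > R' \ge C(X,d,\epsilon)$, which I build into the choice of $R'$.

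The only substantial term is the middle one, which I would control via Proposition \ref{prop:hatR}. Pick $d_0 > 0$ and the radius $\widehat R(d_0,\epsilon,X)$ so that any two points lying outside $B_{\widehat R}(p)$ at mutual distance $\le d_0$ are seen from $p$ under an angle $\le \epsilon$. Setting $T = \max\{C(X,d,\epsilon),\, \widehat R(d_0,\epsilon,X)+d\}$ guarantees $d(p,\sigma_{v_1}(T)), d(p,\sigma_w(T)) \ge T - d \ge \widehat R(d_0,\epsilon,X)$, so both points lie outside $B_{\widehat R}(p)$. It remains to make them $d_0$-close. Because the sectional curvature is bounded below by $-b^2$, the Jacobi tensor satisfies $\|A_v(T)\| \le \sinh(bT)/b$, and the same variation-of-geodesics estimate as in the proof of Corollary \ref{cor:uniformdiv} yields
\[
d(\sigma_{v_1}(T),\sigma_w(T)) \le \frac{\sinh(bT)}{b}\,\angle_q(v_1,w) < \frac{\sinh(bT)}{b}\,\delta'.
\]
Since $T$, $d_0$ and $\widehat R$ now depend only on $X,d,\epsilon=\delta/4$, I can finally set $\delta' = b\,d_0/\sinh(bT)$, which depends only on $X,d,\delta$, so that the middle term is $\le \epsilon$ as well. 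Collecting the three estimates gives $\angle_p(v_0,v) \le 3\epsilon = 3\delta/4 < \delta$, and together with $s > R$ this proves $y \in U(v_0,R,\delta)$. The explicit choices are $R' = \max\{R+d,\, C(X,d,\delta/4)\}$ and $\delta' = b\,d_0/\sinh(bT)$.

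The step I expect to be the main obstacle is the uniformity of $\delta'$: a naive appeal to continuous dependence of $\exp_q(T\cdot)$ on the initial direction at the single point $v_1 \in S_qX$ would only produce a $\delta'$ depending on $q$ and $v_1$, whereas the statement demands $\delta' = \delta'(X,d,R,\delta)$. This is precisely why I replace continuity by the curvature-comparison bound $\|A_v(T)\| \le \sinh(bT)/b$, which holds uniformly over $X$. One should also confirm the absence of circularity: the argument invokes only Propositions \ref{prop:unifdiv}, \ref{prop:hatR}, \ref{prop:STangle} and Corollary \ref{cor:Tangle}, none of which presupposes that the cone topology is independent of the basepoint.
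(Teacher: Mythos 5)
Your proof is correct and follows essentially the same route as the paper's: the identical three-angle decomposition at $p$, with the outer terms controlled by Corollary \ref{cor:Tangle} and Proposition \ref{prop:STangle}, the middle term by Proposition \ref{prop:hatR} applied to two nearby far-out points on the rays $\sigma_{v_1}$ and $\sigma_w$, and the radial condition from the triangle inequality via $R' \ge R+d$. The only cosmetic differences are that you decouple the comparison time $T$ from $R'$ and make $\delta'$ explicit through the Rauch bound $\|A_u(T)\| \le \sinh(bT)/b$, whereas the paper takes the comparison point at parameter $R'$ itself and merely asserts the existence of a uniform $\delta'$ from the lower curvature bound.
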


\begin{proof}
  We choose
  $$ R' := \max \left\{ R+d, C(X,d,\frac{\delta}{4}),
    \widehat{R}(1,\frac{\delta}{4},X) + d \right\}, $$ 
  where $\widehat{R}$ and $C$ are defined as in Propositions
  \ref{prop:hatR} and \ref{prop:STangle}. Let $\sigma =
  \sigma_{v_1}$. Since $R' \ge C(X,d,\frac{\delta}{4})$, we conclude
  with Corollary \ref{cor:Tangle} that
  \begin{equation} \label{eq:angle1} 
  \angle_p(v_0,\sigma(R')) \le \frac{\delta}{4}. 
  \end{equation}
  Since the curvature of $X$ is bounded from below, there exists
  $\delta' > 0$, only depending on $R'$ and $X$, such that 
  $$ \sigma_v(R') \in B_1(\sigma(R')) \qquad \forall\; v \in S_qX\;
  \text{with}\; \angle_p(v,v_1) < \delta'. $$
  Figure \ref{fig_neighborinc} illustrates the following arguments.
 
  \begin{figure}[h]
  \begin{center}
    \psfrag{p}{$p$} 
    \psfrag{q}{$q$}
    \psfrag{ld'}{$\le \delta'$}
    \psfrag{ld/4}{$\le \delta/4$}
    \psfrag{s(R')}{$\sigma(R')$}
    \psfrag{B1(s(R'))}{$B_1(\sigma(R'))$}
    \psfrag{s}{$\sigma$}
    \psfrag{sv(t)}{$\sigma_v(t)$}
    \psfrag{sv(R')}{$\sigma_v(R')$}
    \psfrag{v0}{$v_0$}
    \psfrag{v1}{$v_1$}
        \includegraphics[width=12cm]{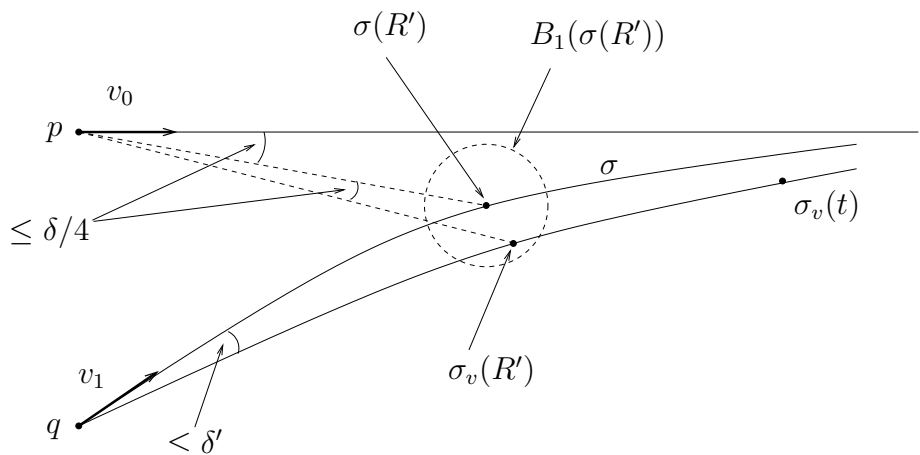}
  \end{center}
  \caption{Illustration of the proof of Proposition
    \ref{prop:neighborinc}}
  \label{fig_neighborinc}
  \end{figure}

  Let $z \in U(v_1,R',\delta')$. Then $z = \sigma_v(t)$ with 
  $\angle_p(v,v_1) < \delta'$ and $t > R'$. 

  Since $R' \ge \widehat{R}(1,\frac{\delta}{4},X)+d$, we have
  $$ 
  d(p,\sigma(R')), d(p,\sigma_v(R')) \ge \widehat{R}(1,\frac{\delta}{4},X)
  $$
  and $d(\sigma(R'),\sigma_v(R')) \le 1$. Using Proposition
  \ref{prop:hatR}, we conclude that
  \begin{equation} \label{eq:angle2} 
  \angle_p(\sigma(R'),\sigma_v(R') \le \frac{\delta}{4}. 
  \end{equation}
  Since $t \ge R' \ge C(X,d,\frac{\delta}{4})$, we deduce from Proposition
  \ref{prop:STangle}:
  \begin{equation} \label{eq:angle3} 
  \angle_p(\sigma_v(R'),\sigma_v(t)) \le \frac{\delta}{4}. 
  \end{equation}
   
  Bringing \eqref{eq:angle1}, \eqref{eq:angle2} and \eqref{eq:angle3}
  together, we obtain
  \begin{eqnarray*}
    \angle_p(z,v_0) &\le& \angle_p(\sigma_v(t),\sigma_v(R')) + 
    \angle_p(\sigma_v(R'),\sigma(R')) + \angle_p(\sigma(R'),v_0) \\
    &\le& \frac{\delta}{4} + \frac{\delta}{4} + \frac{\delta}{4} < \delta
  \end{eqnarray*}
  and 
  $$ d(z,p) \ge d(\sigma_v(t),q)-d(p,q) \ge t-d > R'-d \ge R, $$
  i.e., $z \in U(v_0,R,\delta)$. This finishes the proof.
\end{proof}  

Let $(X,g)$ be a noncompact harmonic space {\em with purely
  exponential volume growth}. The geometric compactification
$\overline{X} = X \cup X(\infty)$ is now the disjoint union of all the
points in $X$ and the equivalence classes $[\sigma]$ of unit speed
geodesic rays $\sigma: [0,\infty) \to X$. The above considerations
lead to the following natural topology on $\overline{X}$: A basis of
this compact topological space is given by the open balls
$U_\epsilon(p) = \{ x \in X \mid d(x,p) < \epsilon \}$ with $p \in X$
(neighbourhoods of finite points $p \in X$) and the sets
$U(v_0,R,\delta)$, defined in \eqref{eq:Uv0Rd} (neighbourhoods of the
infinite point $[\sigma_{v_0}] \in X(\infty)$). For all $p \in X$, the
map
\begin{eqnarray*}
\Phi_p: \{v \in T_p X \mid\; ||v|| \leq 1\} = B_p(1) &\longrightarrow& X \cup 
X(\infty),\\
\Phi_p(v) &=& 
\begin{cases}
\exp_p \frac{v}{1-\Vert v \Vert} & \text{if $\Vert v \Vert < 1$,}\\
[\sigma_v] & \text{if $\Vert v \Vert = 1$,}
\end{cases}
\end{eqnarray*}
is a homeomorphism. The same holds true for the restriction: For all
$p \in X$, the map
\begin{equation} \label{eq:idbdhom} 
\Phi_p: S_pX \to X(\infty), \quad \Phi_p(v) = [\sigma_v] 
\end{equation}
is a homeomorphism.

\section{Busemann functions and the geometric boundary}
\label{sect:busgeombd}

We begin with the following definition:

\begin{dfn}
  Let $(X,g)$ be a connected noncompact complete Riemannian
  manifold. Two unit vectors $v,w \in SX$ are {\em asymptotic
    directions}, if the corresponding geodesic rays $\sigma_v,
  \sigma_w: [0,\infty) \to X$ with $\sigma_v'(0) = v$ and
  $\sigma_w'(0) = w$ stay within bounded distance, i.e., there is a
  constant $C > 0$ such that
  $$ d(\sigma_v(t),\sigma_w(t)) \le C $$
  for all $t \ge 0$. In other words, $v$ and $w$ are asymptotic
  directions iff $\sigma_v$ and $\sigma_w$ define the same equivalence
  class.
\end{dfn}

Let $(X,g)$ be a noncompact harmonic space. For all $v \in SX$ and $t
\in \R$ let $b_{v,t}(q) = d(q,\sigma_v(t))-t$. The Busemann function $b_v$
is then defined as
$$
b_v(q) = \lim\limits_{t \to \infty} b_{v,t}(q).
$$

\begin{prop}
  Let $(X,g)$ be a noncompact harmonic space with purely exponential
  volume growth and $v \in SX$. Then the Busemann function $b_v$ is
  differentiable, and the vector field $Z(q) = -\grad b_v(q)$ is a
  vector field of asymptotic directions.
\end{prop}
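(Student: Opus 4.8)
The differentiability of $b_v$ requires no new work. Since $(X,g)$ is a noncompact harmonic manifold it has continuous asymptote, so Proposition \ref{prop:busprops} already guarantees that $b_v$ is $C^1$, that $\grad b_{v,t} \to \grad b_v$ uniformly on compact subsets as $t \to \infty$, and that $\grad b_v$ is Lipschitz. As $\|\grad b_v(q)\| = 1$ for all $q$, the field $Z = -\grad b_v$ is a continuous unit vector field on $X$. It remains to identify $Z(q)$ geometrically: the plan is to show that for every $q \in X$ the geodesic ray $\sigma_{Z(q)}$ determined by $\sigma_{Z(q)}(0) = q$ and $\sigma_{Z(q)}'(0) = Z(q)$ satisfies $[\sigma_{Z(q)}] = [\sigma_v]$, which is exactly the statement that $Z$ is a field of asymptotic directions.

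I would compute $Z(q)$ as a limit of initial velocities of approximating geodesics and then recognise that same limit as the asymptotic direction at $q$. Fix $q \in X$, and for $T > 0$ let $\sigma_T$ be the unit speed geodesic from $q$ to $\sigma_v(T)$. Because $X$ is a Cartan--Hadamard manifold (simply connected, no conjugate points), the function $x \mapsto d(x,\sigma_v(T))$ is smooth away from $\sigma_v(T)$ with unit gradient pointing radially away from $\sigma_v(T)$; since $b_{v,T}(x) = d(x,\sigma_v(T)) - T$ this yields
\[
-\grad b_{v,T}(q) = \sigma_T'(0).
\]
Letting $T \to \infty$ and invoking the uniform convergence $\grad b_{v,T} \to \grad b_v$ from Proposition \ref{prop:busprops} gives
\[
\lim_{T \to \infty} \sigma_T'(0) = -\grad b_v(q) = Z(q).
\]

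On the other hand, the bijectivity of $\Phi_q : S_qX \to X(\infty)$ established for purely exponential volume growth (the homeomorphism \eqref{eq:idbdhom}) provides a unique $w \in S_qX$ with $[\sigma_w] = [\sigma_v]$. Applying Corollary \ref{cor:Tangle} with the ray $\sigma = \sigma_v$, base point $q$, distance parameter $d = d(q,\sigma_v(0))$, and this $w$, we obtain $\angle_q(w,\sigma_T'(0)) \to 0$, i.e.\ $\sigma_T'(0) \to w$ as $T \to \infty$. Comparing with the previous limit and using uniqueness of limits in the compact sphere $S_qX$ forces $Z(q) = w$, so $[\sigma_{Z(q)}] = [\sigma_w] = [\sigma_v]$. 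Since $q$ was arbitrary, $Z = -\grad b_v$ is a vector field of asymptotic directions.

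The only substantial ingredient is Corollary \ref{cor:Tangle} (together with the bijectivity of $\Phi_q$): this is the step through which the purely exponential volume growth hypothesis enters, since those results rest on the Gromov hyperbolicity of $X$ and on the uniform divergence and contraction estimates for geodesics available in that setting. The remaining steps --- the identification $-\grad b_{v,T}(q) = \sigma_T'(0)$ and the interchange of the two limits --- are routine, relying only on the smoothness of the distance function on a Cartan--Hadamard manifold and on Proposition \ref{prop:busprops}, so I expect no further difficulty there.
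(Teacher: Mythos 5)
Your proof is correct, and the route differs from the paper's in its second half. Both arguments begin the same way: differentiability and the uniform convergence $\grad b_{v,t} \to \grad b_v$ come from Proposition \ref{prop:busprops}, and both exploit the identification of $-\grad b_{v,t}(q)$ with the initial velocity of the unit speed geodesic from $q$ to $\sigma_v(t)$. From there the paper stays self-contained: it applies Lemma \ref{lem:anosA} to the two geodesics emanating from $\sigma_v(t)$ (run backwards towards $q$ and towards $\sigma_v(t-d(q,\sigma_v(t)))$ respectively) and obtains the explicit estimate $d(\sigma_v(s),\sigma_{-\grad b_{v,t}(q)}(s)) \le (2A+1)\,d(p,q)$ on the whole parameter range; letting $t \to \infty$ with $s$ fixed then gives the same quantitative bound for the limit ray $\sigma_{Z(q)}$, which is exactly asymptoticity. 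You instead identify the limit of $\sigma_T'(0)$ abstractly: surjectivity of $\Phi_q : S_qX \to X(\infty)$ from \eqref{eq:idbdhom} produces $w \in S_qX$ with $[\sigma_w]=[\sigma_v]$, Corollary \ref{cor:Tangle} gives $\sigma_T'(0) \to w$, and uniqueness of limits in $S_qX$ forces $Z(q)=w$. This is legitimate and involves no circularity, since Corollary \ref{cor:Tangle}, Proposition \ref{prop:STangle} and the bijectivity of $\Phi_q$ are all established in the preceding chapter without reference to the present proposition; what it buys is brevity, as no new estimate has to be derived once that chapter is in place. What it costs is that you route through heavier machinery (Gromov hyperbolicity enters via Proposition \ref{prop:STangle}, on top of Lemma \ref{lem:anosA}, which both approaches ultimately need), and you lose the explicit bound $(2A+1)\,d(p,q)$ on $d(\sigma_v(s),\sigma_{Z(q)}(s))$ that the paper's computation provides. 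Two cosmetic remarks: Corollary \ref{cor:Tangle} requires $d>0$, so when $q=\sigma_v(0)$ you should take, say, $d=\max\{d(q,\sigma_v(0)),1\}$; and there is no genuine ``interchange of two limits'' in your argument --- you simply compute the single limit $\lim_{T\to\infty}\sigma_T'(0)$ in two ways, which is exactly why the conclusion is immediate.
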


\begin{proof}
  We know from Proposition \ref{prop:busprops} that $b_v$ is
  differentiable and that we have $\grad b_v = \lim_{t \to \infty}
  \grad b_{v,t}$, where the convergence is uniform on compact sets.\\

  Let $w = - \grad b_v(q) = \lim_{t \to \infty} - \grad b_{v,t}(q)$. Since we have
  $$
  d(\sigma_v(s+(t-d(q,\sigma_v(t)))), \sigma_{-\grad b_{v,t}(q)}(s))
  \leq A\; d(\sigma_v(t-d(q,\sigma_v(t))),q),
  $$
  for all $0 \leq s \leq d(q,\sigma_v(t))$, by Lemma \ref{lem:anosA},
  we conclude
  \begin{eqnarray*}
    d(\sigma_v(s),\sigma_{-\grad b_{v,t}(q)}(s)) & \leq & |t-d(q,c_v(t))| + 
    A\; d(\sigma_v(t-d(q,\sigma_v(t))),q)\\
    & \leq & |t-d(q,\sigma_v(t))| + A\left(|t-d(q,\sigma_v(t))| + 
      d(p,q)\right)\\
    & \leq & d(p,q) + A (d(p,q) + d(p,q)) = (2A+1)d(p,q),
  \end{eqnarray*}
  for all $0 \leq s \leq d(q(c_v(t)))$. Keeping $s > 0$ fixed, and
  taking the limit $t \to \infty$, we obtain 
  $$
  d(\sigma_v(s), \sigma_{-\grad b_v(q)}(s)) \leq (2A+1) d(p,q) \quad
  \forall\; s \geq 0,
  $$
  i.e., $v$ and $Z(q) = -\grad b_v(q)$ are asymptotic directions.
\end{proof}

\begin{cor} \label{cor:busediff}
  Let $(X,g)$ be a noncompact connected harmonic space with
  purely exponential volume growth. If $v,w \in SX$ are asymptotic
  directions, then $b_v - b_w$ is constant.
\end{cor}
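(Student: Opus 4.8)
The plan is to prove the stronger pointwise statement that $\grad b_v(q) = \grad b_w(q)$ for every $q \in X$; since $X$ is connected, this forces $b_v - b_w$ to have vanishing differential and hence to be constant. This reduces the corollary to a local statement at each point, which can be attacked using only the injectivity of the boundary map $\Phi_q$ together with the fact, established in the proposition immediately preceding this corollary, that the negative gradient field of a Busemann function consists of asymptotic directions.

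First I would fix $q \in X$ and recall that $\grad b_v(q)$ and $\grad b_w(q)$ are unit vectors (Busemann functions have unit gradient, cf.\ the remark following Theorem \ref{thm:intform}), so that $-\grad b_v(q), -\grad b_w(q) \in S_qX$. By the proposition just above, the unit-speed ray $\sigma_{-\grad b_v(q)}$ stays within bounded distance of $\sigma_v$, and likewise $\sigma_{-\grad b_w(q)}$ stays within bounded distance of $\sigma_w$. The hypothesis of the corollary is that $\sigma_v$ and $\sigma_w$ are themselves within bounded distance. Since ``staying within bounded distance'' is transitive (bounded plus bounded is bounded), the rays $\sigma_{-\grad b_v(q)}$ and $\sigma_{-\grad b_w(q)}$ are within bounded distance of one another; that is, $-\grad b_v(q)$ and $-\grad b_w(q)$ are asymptotic directions emanating from the \emph{same} point $q$.

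Next I would invoke the injectivity of $\Phi_q \colon S_qX \to X(\infty)$ from Proposition \ref{prop:unifdiv}, which rests on the uniform divergence of geodesics (Corollary \ref{cor:uniformdiv}). Two asymptotic unit vectors based at $q$ determine the same equivalence class $[\sigma] \in X(\infty)$, so injectivity yields $-\grad b_v(q) = -\grad b_w(q)$, i.e.\ $\grad b_v(q) = \grad b_w(q)$. As $q$ was arbitrary, $\grad(b_v - b_w) \equiv 0$ on the connected manifold $X$, whence $b_v - b_w$ is constant.

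I do not expect a genuine obstacle here: the whole argument is a clean combination of the transitivity of the asymptotic relation with the injectivity of $\Phi_q$. The only point demanding a little care is to confirm that $-\grad b_v(q)$ is indeed the initial direction of a unit-speed ray asymptotic to $\sigma_v$ (so that it lies in $S_qX$ and defines a boundary point), which is precisely the content of the preceding proposition; everything else is formal.
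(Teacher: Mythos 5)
Your proposal is correct and is essentially identical to the paper's own proof: both deduce from the preceding proposition that $-\grad b_v(q)$ and $-\grad b_w(q)$ are asymptotic to $v$ and $w$ respectively, use transitivity of the asymptotic relation together with the hypothesis, and then apply the injectivity of $\Phi_q$ from Proposition \ref{prop:unifdiv} to conclude $\grad b_v(q) = \grad b_w(q)$ for all $q$, whence $b_v - b_w$ is constant on the connected manifold $X$.
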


\begin{proof}
  For all $q \in X$, the vectors $-\grad b_v(q), -\grad b_w(q) \in S_q
  X$ are asymptotic to $v$ and, therefore, asymptotic to each
  other. Because of Proposition \ref{prop:unifdiv}, we have
  $$
  -\grad b_v(q) = -\grad b_w(q) \quad \forall q \in X.
  $$
  This implies that $\grad(b_v - b_w) \equiv 0$ and, therefore, $b_v -
  b_w$ must be constant on $X$.
\end{proof}

\begin{dfn}
  Let $(X,g)$ be a noncompact connected harmonic space with purely
  exponential volume growth. For $p \in X$ and $\xi \in X(\infty)$, we
  define
  $$
  b_{p, \xi}(q) = b_v(q),
  $$
  where $v \in S_pX$ is given by $[\sigma_v] = \xi$. Note that
  $$ b_{p,\xi}(p) = 0 \quad \text{and}\, -\grad b_{p,\xi}(p) = v. $$
\end{dfn}

The next result states that the Busemann boundary and the geometric
boundary agree for noncompact harmonic spaces with purely exponential
volume growth.

\begin{theorem} \label{thm:busegeombd}
  Let $(X,g)$ be a noncompact connected harmonic space with purely
  exponential volume growth and $p_0 \in X$ be a reference point. Then
  there is a canonical homeomorphism $\partial_B^{p_0} X \to
  X(\infty)$, given by $b_v \mapsto [\sigma_v]$ for all $v \in
  S_{p_0}X$.
\end{theorem}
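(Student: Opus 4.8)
The plan is to realize the asserted map as a composition of two homeomorphisms sharing the common domain $S_{p_0}X$, so that the statement reduces entirely to results already established earlier in the excerpt.

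First I would record that for $v \in S_{p_0}X$ the geodesic ray satisfies $\sigma_v(0) = p_0$, whence the Busemann function obeys $b_v(p_0) = \lim_{t \to \infty}(d(p_0,\sigma_v(t)) - t) = 0$. Consequently, the general homeomorphism $\varphi_{p_0}^{p_0}: S_{p_0}X \to \partial_B^{p_0} X$ of Proposition \ref{prop:busboundprops}(2), which sends $v \mapsto b_v - b_v(p_0)$, specializes in our situation to the map $v \mapsto b_v$. The hypotheses of that proposition are satisfied, since a noncompact harmonic manifold has continuous asymptote and therefore the map $b: SX \to C(X)$ is continuous, as noted in the remark following Proposition \ref{prop:busprops}. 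Thus $v \mapsto b_v$ is a homeomorphism from $S_{p_0}X$ onto $\partial_B^{p_0}X$.

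Second, because $(X,g)$ has purely exponential volume growth, the cone-topology identification \eqref{eq:idbdhom} gives that $\Phi_{p_0}: S_{p_0}X \to X(\infty)$, $v \mapsto [\sigma_v]$, is likewise a homeomorphism. The map in the statement sends $b_v \mapsto [\sigma_v]$; by the previous paragraph each $\xi \in \partial_B^{p_0}X$ equals $b_v$ for a unique $v \in S_{p_0}X$, so this assignment is well defined and coincides with $\Phi_{p_0} \circ (\varphi_{p_0}^{p_0})^{-1}$. Being a composition of two homeomorphisms, it is itself a homeomorphism, which is exactly the claim.

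There is no genuine obstacle here: the whole content of the theorem is packaged into the two prerequisite facts, namely Proposition \ref{prop:busboundprops}(2) (identifying $\partial_B^{p_0}X$ with $S_{p_0}X$) and the identification \eqref{eq:idbdhom} (identifying $X(\infty)$ with $S_{p_0}X$). The only point requiring a moment's care is the well-definedness of $b_v \mapsto [\sigma_v]$, which follows from the injectivity of $\varphi_{p_0}^{p_0}$; once both factorizations through the compact sphere $S_{p_0}X$ are in place, the assertion follows formally.
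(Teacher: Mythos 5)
Your proof is correct and follows essentially the same route as the paper: the paper also obtains the homeomorphism as the composition of $\varphi_{p_0}$ from Proposition \ref{prop:busboundprops}(2) (specialized to $p=p_0$, where $b_v(p_0)=0$) with the cone-topology identification $\Phi_{p_0}$ of \eqref{eq:idbdhom}. Your extra remarks on the continuity hypothesis and well-definedness are sound but add nothing beyond what the paper's proof implicitly uses.
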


\begin{proof}
  We recall from Proposition \ref{prop:busboundprops}(2) that the map
  $\varphi_{p_0}: S_{p_0}X \to \partial_B^{p_0}X$, defined by
  $\varphi_{p_0}(v) = b_v$ is a homeomorphism. We saw at the end of
  Chapter \ref{sect:geombound} that the map $\Phi_{p_0}: S_{p_0}X \to X(\infty)$,
  $\Phi_p(v) = [\sigma_v]$ is a homeomorphism (see \eqref{eq:idbdhom}). The
  canonical homeomorphism introduced in the theorem is the composition
  of these two homeomorphisms.
\end{proof}

Let us, finally, return to the visibility measures $\mu_p \in
{\mathcal M}_1(\partial_B^{p_0})$, introduced in Chapter
\ref{sect:visib}. Using the canonical homeomorphism $\partial_B^{p_0}X
\to X(\infty)$ in Theorem \ref{thm:busegeombd}, we can view these as
probability measures on the geometric boundary $X(\infty)$. Then we
have $d\mu_{p_0}([\sigma_v]) = \frac{1}{\omega_n} d\theta_{p_0}(v)$
for all $v \in S_{p_0}X$ and, because of the identity
$$ d\mu_p(\xi) = e^{-h b_{p_0,\xi}(p)}\; d\mu_{p_0}(\xi) $$
for all $\xi \in \partial_B^{p_0}$, we have the identity
$$ d\mu_p([\sigma_v]) = e^{-h b_v(p)}\; d\mu_{p_0}([\sigma_v]) $$
for all $v \in S_{p_0}X$ representing $[\sigma_v] \in X(\infty)$. This
will become important in Chapter \ref{chp:dirichprob} below on the
solution of the Dirichlet problem at infinity.

\section{Solution of the Dirichlet problem at infinity}
\label{chp:dirichprob}

Let $(X,g)$ be a harmonic manifold with purely exponential volume
growth. Recall that the Busemann function associated to $v \in S_pX$
is defined as
$$ b_v(q) : = \lim\limits_{t \to \infty} d(c_v(t),q)-t. $$
For $v_0 \in S_pX$ and $\delta > 0$, we introduce the cone
$$
C(v_0, \delta) = \{c_v(t) \mid\; t \geq 0, \angle(v_0,v) \leq \delta\}.
$$
We already mentioned at the end of Chapter \ref{chp:repbdharmfunc} the
crucial condition $\lim_{x \to \xi} \mu_x = \delta_\xi$ to solve the
Dirichlet problem at infinity. This abstract condition can be deduced
from the geometric fact that any horoball $\mathcal H$, centered at
$\xi = c_{v_0}(\infty) \in X(\infty)$, ends up inside any given cone
$C(v_0,\delta)$, when being translated to the horoball $\widetilde
{\mathcal H}$ along the stable direction (see the illustration in
Figure \ref{fig:horoincone}). This is essentially the content of the
following proposition. (Note that the horoballs centered at $\xi$ can
be described by $\{ q \in X \mid b_{v_0}(q) \le -C \}$, and that these
horoballs become smaller and shrink towards the limit point $\xi$, as
$C \in \R$ increases to infinity.)

  \begin{figure}[h]
  \psfrag{H}{$\mathcal H$} 
  \psfrag{Ht}{$\widetilde{\mathcal H}$}
  \psfrag{v0}{$v_0$}
  \psfrag{Cv0d}{$C(v_0,\delta)$}
  \psfrag{X(inf)}{$X(\infty)$}
  \psfrag{xi}{$\xi$}
  \begin{center}
         \includegraphics[width=8cm]{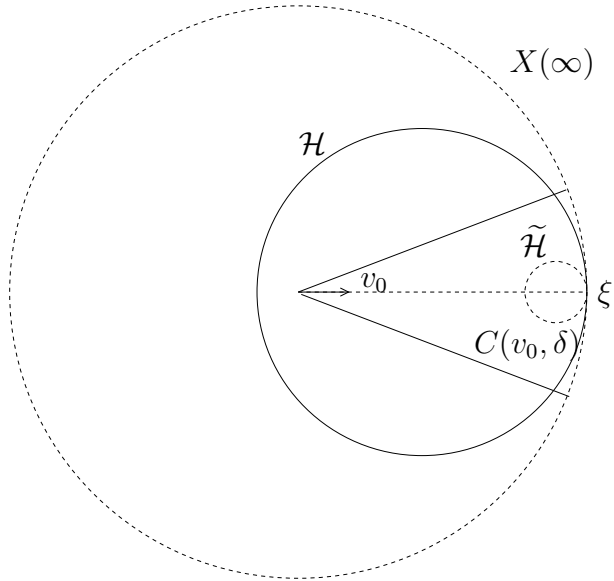}
  \end{center}
  \caption{Geometric property to guarantee the solution of the Dirichlet
    problem at infinity}
  \label{fig:horoincone}
  \end{figure}

\begin{prop} \label{prop:horoincone}
  Let $(X,g)$ be a harmonic space with purely exponential volume growth.
  Let $v_0 \in S_pX$ and $\delta > 0$. Then there exists a constant $C_1
  > 0$, depending only on $v_0$ and $\delta$, such that
  $$
  b_{v_0}(q) \geq d(p,q)-C_1 \quad \text{for all $q \in X \backslash 
  C(v_0, \delta)$.} 
  $$
\end{prop}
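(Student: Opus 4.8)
The plan is to reduce the statement to a uniform upper bound on a Gromov product, and then to obtain that bound by combining the Gromov hyperbolicity of $X$ (established in \cite{Kn3}) with the uniform divergence of geodesics from Corollary \ref{cor:uniformdiv}. The key observation is an identity relating the Busemann function to the Gromov product with the boundary point $\xi = [\sigma_{v_0}]$. Writing $(\cdot,\cdot)_p$ for the Gromov product based at $p$ and using $d(c_{v_0}(t),p)=t$, one has $(c_{v_0}(t),q)_p = \frac{1}{2}\bigl(t + d(p,q) - d(c_{v_0}(t),q)\bigr)$; letting $t\to\infty$, so that $d(c_{v_0}(t),q)-t\to b_{v_0}(q)$, gives $(q,\xi)_p := \lim_{t\to\infty}(c_{v_0}(t),q)_p = \frac{1}{2}\bigl(d(p,q)-b_{v_0}(q)\bigr)$. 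Hence $b_{v_0}(q) = d(p,q) - 2(q,\xi)_p$, and the desired inequality $b_{v_0}(q)\ge d(p,q)-C_1$ is exactly equivalent to the bound $(q,\xi)_p \le C_1/2$. So it suffices to bound $(q,\xi)_p$ by a constant depending only on $\delta$.

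Next I would bring in the cone condition. Since $\exp_p$ is a diffeomorphism, every $q\in X\setminus C(v_0,\delta)$ has the form $q=c_w(R)$ with $R=d(p,q)$ and initial direction $w\in S_pX$ satisfying $\angle(v_0,w) > \delta$. The geodesics from $p$ to $c_{v_0}(t)$ and from $p$ to $q$ are precisely the restrictions of $c_{v_0}$ and $c_w$. By the standard fellow-traveling property of $\delta_0$-hyperbolic spaces (a direct consequence of thin triangles; see \cite{CDP}), there is a universal constant $\kappa$ such that for every $s\le (c_{v_0}(t),q)_p$ one has $d(c_{v_0}(s),c_w(s))\le \kappa\,\delta_0$; recall that the Gromov product satisfies $(c_{v_0}(t),q)_p\le\min(t,R)$, so such $s$ lie in the domains of both geodesics. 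On the other hand, Corollary \ref{cor:uniformdiv} gives $d(c_{v_0}(s),c_w(s))\ge a(s)\,\angle(v_0,w) \ge a(s)\,\delta$ with $a(s)\to\infty$ and $a$ independent of the basepoint.

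I would then choose $s_\delta$ with $a(s_\delta)\,\delta > \kappa\,\delta_0$ and argue by contradiction: if $(c_{v_0}(t),q)_p\ge s_\delta$, then $s_\delta\le R$ and fellow-traveling would force $d(c_{v_0}(s_\delta),c_w(s_\delta))\le \kappa\delta_0$, contradicting the uniform divergence estimate at $s_\delta$. Hence $(c_{v_0}(t),q)_p < s_\delta$ for all sufficiently large $t$. Passing to the limit $t\to\infty$ yields $(q,\xi)_p\le s_\delta$, and setting $C_1 = 2 s_\delta$ gives $b_{v_0}(q) = d(p,q)-2(q,\xi)_p \ge d(p,q) - C_1$, as required; note $C_1$ depends only on $\delta$ (through $a$ and $\delta_0$), hence only on $v_0$ and $\delta$.

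The hardest part will be the fellow-traveling step: one must ensure that $s_\delta$ is chosen uniformly, which rests on $a(s)\to\infty$ (so that $\{s: a(s)\delta\le\kappa\delta_0\}$ is bounded) together with the basepoint-independence of the divergence estimate in Corollary \ref{cor:uniformdiv}. A secondary technical point is the interchange of the limit $t\to\infty$ with the Gromov-product bound and the degenerate regime of small $d(p,q)$; both are handled automatically, since the contradiction argument already forces $(c_{v_0}(t),q)_p < s_\delta$ regardless of the size of $R$.
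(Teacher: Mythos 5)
Your proof is correct, and it shares the paper's skeleton: both arguments reduce the inequality $b_{v_0}(q)\ge d(p,q)-C_1$ to a uniform upper bound on the Gromov products $(c_{v_0}(t),q)_p$ for $q \in X \backslash C(v_0,\delta)$, and then let $t\to\infty$ using $d(c_{v_0}(t),q)-t \to b_{v_0}(q)$. Where you differ is the engine producing that bound. The paper argues softly, by contradiction: if no uniform bound existed, one could pick sequences $t_n\ge 0$ and $q_n\in X\backslash C(v_0,\delta)$ with $(c_{v_0}(t_n),q_n)_p\to\infty$, and Theorem \ref{thm:gromconv} (large Gromov products force small angles at $p$) would give $\angle_p(v_0,q_n)\to 0$, contradicting $q_n\notin C(v_0,\delta)$; this yields existence of $C_1$ with no control on its size. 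You instead prove the bound directly and quantitatively, playing the standard fellow-traveling lemma for geodesics issuing from a common point in a $\delta_0$-hyperbolic space (a consequence of the four-point inequality, compare Proposition \ref{prop:gromprod1}) against the uniform divergence of Corollary \ref{cor:uniformdiv}, and choosing $s_\delta$ with $a(s_\delta)\,\delta>\kappa\delta_0$; this gives $(c_{v_0}(t),q)_p<s_\delta$ outright and hence the explicit constant $C_1=2s_\delta$. Since the proof of the implication (b) $\Rightarrow$ (a) in Theorem \ref{thm:gromconv} itself rests on thin triangles plus Corollary \ref{cor:uniformdiv}, you have in effect inlined and quantified exactly the part of that theorem the paper invokes. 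What your route buys: a constant depending only on $\delta$ and on $X$ (so uniform in $p$ and $v_0$, slightly stronger than the statement), and no appeal to the sequential compactness argument; what the paper's route buys is brevity, given that Theorem \ref{thm:gromconv} is already proved. In a final write-up you should make explicit that the geodesic segments from $p$ to $c_{v_0}(t)$ and from $p$ to $q$ are the restrictions of $c_{v_0}$ and $c_w$ (uniqueness of geodesics via Cartan--Hadamard), which is what lets you apply fellow-traveling to these two rays, and that the inequality $(c_{v_0}(t),q)_p\le\min\{t,d(p,q)\}$ keeps $s_\delta$ inside both parameter domains, as you noted.
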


\begin{remark}
  Proposition \ref{prop:horoincone} does not hold if $(X,g)$ is the
  Euclidean space. In this case, every horoball is a halfspace, which
  lies never inside a given cone.
\end{remark}

\begin{proof}
  There exists a constant $C_1 > 0$ such that
  \begin{equation} \label{eq:gromprodest} 0 \leq 2(c_{v_0}(t),q)_p
    \leq C_1 \quad \forall\; t \geq 0 \quad \forall\; q \in X \backslash
    C(v_0, \delta).
  \end{equation}
  If this were false, then we could find sequences $t_n \geq 0$, $q_n
  \in X \backslash C(v_0,\delta)$, such that
  $$
  (c_{v_0}(t_n),q_n)_p \to \infty.
  $$
  This would mean, by Theorem \ref{thm:gromconv}, that $d(p,q_n) \to
  \infty$ and $\angle_p(v_0,q_n) \to 0$, which is a contradiction to
  $q_n \in X \backslash C(v_0,\delta)$.

  \eqref{eq:gromprodest} means that
  $$
  d(p,q) - (d(c_{v_0}(t),q)-t) \leq C_1 \quad \forall\; t \ge 0.
  $$
  Taking the limit $t \to \infty$, we obtain
  $$
  d(p,q) - b_{v_0}(q) = d(p,q) - \lim\limits_{t \to \infty}
  (d(c_{v_0}(t),q)-t) \leq C_1,
  $$
  finishing the proof.
\end{proof}

In fact, we need the following {\em uniform} modification of Proposition
\ref{prop:horoincone}. Note that in Proposition \ref{prop:busdest}
below, $v \in S_pX$ plays the role of $v_0$ in Proposition
\ref{prop:horoincone}, and every $x \in C(v_0,\frac{\delta}{2})$
satisfies $x \in X \backslash C(v,\frac{\delta}{2})$, because of
$\angle_p(v,v_0) \ge \delta$.

\begin{prop} \label{prop:busdest} Let $(X,g)$ be a harmonic space with
  purely exponential volume growth.  Let $v_0 \in S_pX$ and $\delta >
  0$. Then exists a $C_2>0$, depending only on $v_0$ and $\delta$,
  such that
  $$
  b_v(x) \geq d(p,x)-C_2,
  $$
  for all $x \in C(v_0,\frac{\delta}{2})$ and all $v \in S_pX$ with
  $\angle_p(v,v_0) \geq \delta$.
\end{prop}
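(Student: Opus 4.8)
The plan is to argue by contradiction and to reduce everything to the uniform divergence of geodesics (Corollary \ref{cor:uniformdiv}) together with the Gromov hyperbolicity of $X$. The naive strategy, suggested by the remark preceding the proposition, is to apply Proposition \ref{prop:horoincone} with $v$ in the role of $v_0$ and $\delta/2$ in the role of $\delta$: since $x \in C(v_0,\delta/2)$ and $\angle_p(v,v_0) \ge \delta$ force $x \in X \setminus C(v,\delta/2)$, this yields $b_v(x) \ge d(p,x) - C_1(v)$. The difficulty — and the whole point of the proposition — is that the constant produced by the compactness argument in Proposition \ref{prop:horoincone} a priori depends on $v$, whereas here a single constant must serve all admissible $v$ at once. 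Thus the \emph{main obstacle is uniformity in $v$}, and the observation that resolves it is that the angular separation between the $v$-direction and the direction of any $x \in C(v_0,\delta/2)$ is bounded below by $\delta/2$ \emph{independently of $v$}.

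Concretely, suppose no such $C_2$ exists. Then there are $v_n \in S_pX$ with $\angle_p(v_n,v_0) \ge \delta$ and points $x_n \in C(v_0,\delta/2)$, say $x_n = c_{w_n}(s_n)$ with $\angle_p(v_0,w_n) \le \delta/2$ and $s_n = d(p,x_n)$, such that $b_{v_n}(x_n) < d(p,x_n) - n$. Using $b_{v_n}(x_n) = \lim_{t\to\infty}(d(c_{v_n}(t),x_n)-t)$ together with the identity $d(c_{v_n}(t),x_n) - t = d(p,x_n) - 2(c_{v_n}(t),x_n)_p$, I would pick $t_n$ large and set $y_n = c_{v_n}(t_n)$, so that the Gromov product satisfies $(y_n,x_n)_p > (n-1)/2$. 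Since $(y_n,x_n)_p \le \min\{t_n,s_n\}$, this also forces $t_n,s_n \to \infty$. By Lemma \ref{lem:gromprod}, the connecting geodesic $\sigma_{y_nx_n}$ then stays at distance $\ge (y_n,x_n)_p \to \infty$ from $p$; note that all geodesics are unique and minimizing here, by Cartan--Hadamard, so the triangle with vertices $p,y_n,x_n$ has sides that are the initial arcs of $c_{v_n}$ and $c_{w_n}$ and the segment $\sigma_{y_nx_n}$.

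The next step exploits Gromov hyperbolicity: $X$ is $\delta_0$-hyperbolic for some $\delta_0 \ge 0$ by \cite{Kn3}. Fix a number $L > 0$. For $n$ large enough that $t_n > L$ and $d(p,\sigma_{y_nx_n}) > L + \delta_0$, the point $c_{v_n}(L)$ on the side $\sigma_{py_n}$ cannot lie in the $\delta_0$-neighbourhood of the far side $\sigma_{y_nx_n}$ (its distance to $p$ is only $L$, while every point of $\sigma_{y_nx_n}$ is at distance $> L+\delta_0$ from $p$). Hence $\delta_0$-thinness places $c_{v_n}(L)$ within $\delta_0$ of $\sigma_{px_n}$, giving a parameter $\ell$ with $c_{w_n}(\ell) \in \sigma_{px_n}$ and $d(c_{v_n}(L),c_{w_n}(\ell)) \le \delta_0$; comparing distances to $p$ yields $|\ell - L| \le \delta_0$, and therefore $d(c_{v_n}(L),c_{w_n}(L)) \le 2\delta_0$.

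Finally, I would play this against uniform divergence. The angle triangle inequality on $S_pX$ gives $\angle_p(v_n,w_n) \ge \angle_p(v_n,v_0) - \angle_p(v_0,w_n) \ge \delta - \delta/2 = \delta/2$ for \emph{every} $n$, so Corollary \ref{cor:uniformdiv} yields $d(c_{v_n}(L),c_{w_n}(L)) \ge a(L)\,\angle_p(v_n,w_n) \ge a(L)\,\delta/2$. Choosing $L$ once and for all so large that $a(L)\,\delta/2 > 2\delta_0$ (possible since $a(L) \to \infty$) contradicts the bound $d(c_{v_n}(L),c_{w_n}(L)) \le 2\delta_0$ from the previous step. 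This contradiction establishes the desired uniform constant $C_2 = C_2(v_0,\delta)$. The crucial feature overcoming the obstacle is that both the angular lower bound $\delta/2$ and the resulting threshold $L$ depend only on $v_0$ and $\delta$, and not on the individual directions $v_n$.
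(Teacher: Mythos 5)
Your proof is correct and follows essentially the same route as the paper: both argue by contradiction, translate the failure of the estimate into unboundedness of the Gromov products $(c_{v_n}(t_n),x_n)_p$, and derive a contradiction with the uniform angular separation $\angle_p(v_n,w_n)\ge \delta/2$ via Gromov hyperbolicity and uniform divergence of geodesics (Corollary \ref{cor:uniformdiv}). The only difference is that the paper obtains the angle contradiction by citing Theorem \ref{thm:gromconv}, whose proof of (b)$\Rightarrow$(a) is exactly the thin-triangle-plus-uniform-divergence argument you spell out, whereas you re-derive that implication by hand -- slightly longer, but self-contained, and it sidesteps the fact that Theorem \ref{thm:gromconv} is stated for a single sequence rather than for pairs of sequences.
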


\begin{proof}
  There exists a constant $C_2>0$ such that
  \begin{multline*}
  0 \le 2(c_v(t),x)_p \leq C_2 \quad \forall\; t \geq 0,\quad 
  \forall\; v \in S_pX\, \text{with}\, \angle_p(v,v_0) \geq \delta,\\
   \forall\; x \in C(v_0,\frac{\delta}{2}),
  \end{multline*}
  for otherwise, we could fine sequences $t_n \geq 0$, $v_n \in S_pX$
  with $\angle_p(v_n,v_0) \geq \delta$, and $x_n \in
  C(v_0,\frac{\delta}{2})$ satisfying
  $$
  (c_{v_n}(t_n),x_n)_p \to \infty.
  $$
  Using Theorem \ref{thm:gromconv}, this would imply $d(p,x_n) \to \infty$
  and $\angle_p(v_n,x_n) \to 0$. But $\angle_p(v_n,x_n) \to 0$ contradicts
  to $\angle_p(x_n,v_0) \le \frac{\delta}{2}$ and $\angle_p(v_n,v_0) \ge \delta$.

  Therefore, we have
  $$
  d(x,p) - (d(c_v(t),x)-t) \leq C_2 \quad \forall\; t \ge 0,
  $$
  which implies, taking $t \to \infty$, that
  $$
  d(x,p) - b_v(x) \leq C_2 \quad \forall\; v \in S_pX\, \text{with}\,
  \angle_p(v,v_0) \geq \delta \ \text{and}\, \forall\; x \in
  C(v_0,\frac{\delta}{2}),
  $$
  finishing the proof.
\end{proof}

Now we state our main result of this chapter, namely, the solution of
the Dirichlet problem at infinity in case of purely exponential volume
growth.

\begin{theorem} \label{thm:dirichprob} Let $(X,g)$ be a harmonic space
  with purely exponential volume growth. Let $\varphi: X(\infty) \to
  {\R}$ be a continuous function. Then there exists a unique harmonic
  function $H_\varphi: X \to {\R}$ such that
  \begin{equation} \label{eq:convdirich}
  \lim\limits_{X \to \xi} H_\varphi(x) = \varphi(\xi).
  \end{equation}
  Moreover, $H_\varphi$ has the following integral presentation:
  $$
  H_\varphi(x) = \int\limits_{X(\infty)} \varphi(\xi) d \mu_x(\xi),
  $$
  where $\{\mu_x\}_{x \in X} \subset {\mathcal M}_1(X(\infty))$ are
  the {\em visibility probability measures} (originally introduced in
  Definition \ref{def:vismeas} on $\partial_BX$, and recalled as
  measures on $X(\infty)$ at the end of Chapter \ref{sect:busgeombd}).
\end{theorem}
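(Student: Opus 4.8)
The plan is to take the integral expression in the statement as the \emph{definition} of $H_\varphi$ and to verify in turn that it is harmonic, bounded, attains the prescribed boundary values, and is unique. Recall from the end of Chapter \ref{sect:busgeombd} that the visibility measures satisfy $d\mu_x([\sigma_v]) = e^{-h b_v(x)}\, d\mu_{p_0}([\sigma_v])$, so that
$$ H_\varphi(x) = \int_{X(\infty)} \varphi(\xi)\, e^{-h b_{p_0,\xi}(x)}\, d\mu_{p_0}(\xi), $$
where $b_{p_0,\xi}$ is the Busemann function normalised by $b_{p_0,\xi}(p_0)=0$. Each kernel $K_\xi(x) = e^{-h b_{p_0,\xi}(x)}$ is harmonic: since $\Delta b_{p_0,\xi} = h$ and $\Vert \grad b_{p_0,\xi}\Vert = 1$, one computes $\Delta K_\xi = K_\xi(-h\Delta b_{p_0,\xi} + h^2\Vert\grad b_{p_0,\xi}\Vert^2) = 0$ (these are exactly the functions $K_\sigma = e^{-h\xi}$ already used in Chapter \ref{chp:repbdharmfunc}). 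I would then conclude harmonicity of $H_\varphi$ either by differentiating under the integral sign, legitimate because $X(\infty)$ is compact and $(x,\xi)\mapsto K_\xi(x)$ together with its $x$-derivatives is continuous and locally bounded uniformly in $\xi$, or, more robustly, by interchanging the spherical mean with the boundary integral via Fubini to see that $H_\varphi$ inherits the mean value property from the $K_\xi$. Boundedness is immediate from $|H_\varphi(x)| \le \Vert\varphi\Vert_\infty\, \mu_x(X(\infty)) = \Vert\varphi\Vert_\infty$. For uniqueness, if $H_1,H_2$ both solve the problem then $H_1-H_2$ is a bounded harmonic function extending continuously by $0$ to the compact space $\overline{X} = X\cup X(\infty)$, and the maximum principle forces $H_1-H_2\equiv 0$.

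The real content is the boundary behaviour \eqref{eq:convdirich}, which I would reduce to the weak-$\ast$ convergence $\mu_x \to \delta_\xi$ as $x\to\xi$. Fix $\xi_0 = [\sigma_{v_0}]$ with $v_0\in S_{p_0}X$ and $\epsilon>0$. Choosing $\delta>0$ small, continuity of $\varphi$ (transported to $S_{p_0}X$ via the homeomorphism \eqref{eq:idbdhom}) makes $|\varphi(\xi)-\varphi(\xi_0)|$ small on the cone neighbourhood $U$ of $\xi_0$ consisting of $[\sigma_v]$ with $\angle_{p_0}(v,v_0)<\delta$, so it suffices to show $\mu_x(X(\infty)\setminus U)\to 0$. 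Here the key input is Proposition \ref{prop:busdest}: for $x$ in the smaller cone $C(v_0,\tfrac{\delta}{2})$ and every $\xi=[\sigma_v]$ with $\angle_{p_0}(v,v_0)\ge\delta$ one has $b_{p_0,\xi}(x)\ge d(p_0,x)-C_2$ with $C_2$ depending only on $v_0,\delta$. Therefore
$$ \mu_x(X(\infty)\setminus U) = \int_{X(\infty)\setminus U} e^{-h b_{p_0,\xi}(x)}\, d\mu_{p_0}(\xi) \le e^{hC_2}\, e^{-h\, d(p_0,x)}. $$
Since $x\to\xi_0$ in the cone topology means $x$ eventually lies in $C(v_0,\tfrac{\delta}{2})$ with $d(p_0,x)\to\infty$, and since $h>0$, the right-hand side tends to $0$; splitting $H_\varphi(x)-\varphi(\xi_0) = \int_U(\varphi-\varphi(\xi_0))\,d\mu_x + \int_{X(\infty)\setminus U}(\varphi-\varphi(\xi_0))\,d\mu_x$ and bounding the second integral by $2\Vert\varphi\Vert_\infty\,\mu_x(X(\infty)\setminus U)$ then yields \eqref{eq:convdirich}.

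I expect the main obstacle to be precisely the uniform estimate packaged in Proposition \ref{prop:busdest}, i.e.\ the geometric fact that horoballs centred at $\xi_0$ sink into arbitrarily thin cones. This is where purely exponential volume growth is indispensable, since it enters through Gromov hyperbolicity and the characterisation of convergence to infinity via the Gromov product in Theorem \ref{thm:gromconv}, and it is exactly the property that fails in Euclidean space, consistent with the single-point Martin boundary of $\R^n$ noted after Proposition \ref{prop:horoincone}. Once this estimate is granted, the positivity $h>0$ of the horospherical mean curvature drives the exponential decay, and the remainder is routine measure theory.
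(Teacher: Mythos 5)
Your proposal is correct and follows essentially the same route as the paper: harmonicity by differentiating the kernel $e^{-h b_{p_0,\xi}(x)}$ under the integral (using $\Delta b = h$, $\Vert\grad b\Vert = 1$), boundary convergence by splitting the integral over a $\delta$-cone about $v_0$ and its complement, with Proposition \ref{prop:busdest} giving the decay $e^{hC_2}e^{-h\,d(p_0,x)}$ on the complement and the total mass $\mu_x(X(\infty))=1$ controlling the cone part, and uniqueness from the maximum principle. The only cosmetic difference is that you phrase the key step as weak-$\ast$ convergence $\mu_x \to \delta_{\xi_0}$, whereas the paper carries out the same estimate directly on $H_\varphi(x)$ after normalising $\varphi(\xi_0)=0$.
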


\begin{remark}
  Note the differences between the earlier Theorem
  \ref{thm:harmoncrep} and Theorem \ref{thm:dirichprob}. The earlier
  theorem states the rather abstract fact that every bounded harmonic
  function $F$ on $X$ can be represented by a certain integral of a
  function $\varphi$ defined on the boundary, using a rather abstract
  {\em harmonic measure}. Theorem \ref{thm:harmoncrep} makes no
  statement about the convergence of $F(x) \to \varphi(\xi)$, as $x
  \in X$ converges to $\xi$. Theorem \ref{thm:dirichprob} is
  formulated in the context of continuity, the involved measures are
  the explicitly given visibility measures $\mu_x$, and it
  additionally states the crucial convergence $F(x) \to \varphi(\xi)$.
\end{remark}

\begin{proof}
\item[(a)] We show first that $\int\limits_{X(\infty)} \varphi(\xi) d
  \mu_x(\xi)$ is a harmonic function. Let $p \in X$. Then
  \begin{multline*}
    \Delta_x \int\limits_{X(\infty)} \varphi(\xi) d \mu_x(\xi) =
    \Delta_x \int\limits_{X(\infty)} \varphi(\xi)
    \frac{d \mu_x}{d \mu_p}(\xi) d \mu_p(\xi) = \\
    \Delta_x \int\limits_{X(\infty)} \varphi(\xi) e^{-h b_{p,\xi}(x)}
    d \mu_p(\xi).
  \end{multline*}
  Let $K \subset X$ be a compact set. Then $x \mapsto \varphi(\xi)
  e^{-h b_{p,\xi}(x)}$ is bounded for all $x \in K$ and all $\xi \in
  X(\infty)$, because of $|b_{p,\xi}(x)| \leq d(p,x)$. Moreover
  $\Delta_x \varphi(\xi) e^{-h b_{p,\xi}(x)} = 0$ and
  $b_{p,\xi}(\cdot)$ is smooth, because of $\Delta_x b_{p,\xi} =
  h$. Therefore,
  $$
  \Delta_x \int\limits_{X(\infty)} \varphi(\xi) d \mu_x(\xi) =
  \int\limits_{X(\infty)} \varphi(\xi) \underbrace{\Delta_x e^{- h
      b_{p,\xi}(x)}}_{= 0} d \mu_p(\xi) = 0.
  $$

  \smallskip

  \item[(b)] Now we prove
  $$
  \lim\limits_{x \to \xi_0} \int\limits_{X(\infty)} \varphi(\xi) d
  \mu_x(\xi) = \varphi(\xi_0).
  $$
  Let $\xi_0 = [c_{v_0}]$ with $v_0 \in S_p X$. Without loss of
  generality, we can assume that $\varphi(\xi_0) = 0$ (by subtracting
  a constant if necessary). Let $\epsilon > 0$ be given. Then there
  exists $\delta > 0$, such that
  $$
  |\; \varphi([c_v])\; | \leq \epsilon \quad \forall\; v \in S_pX \;
  \text{with}\; \angle_p(v_0,v) \leq \delta.
  $$
  We split the integral representing $H_\varphi(x)$ in the following way:
  \begin{multline*}
    \omega_n |H_\varphi(x)| \leq \left| \int_{S_pX\; \backslash\;
        \{v\; \mid\; \angle(v_0,v) \leq \delta\}} \varphi([c_v])\;
      e^{-h b_v(x)}\; d\theta_p(v)\right| + \\
    \left |\int_{\{v\; \mid\; \angle(v_0,v) \leq \delta\}}
      \varphi ([c_v])\; e^{-h b_v(x)}\; d \theta_p(v) \right|.
  \end{multline*}
  Now, using Proposition \ref{prop:busdest}, we obtain
  \begin{multline*}
    \omega_n |H_\varphi(x)| \leq \Vert \varphi \Vert_\infty \int_{S_pX\;
      \backslash\; \{v\; \mid\; \angle(v_0,v) \leq \delta\}}
    e^{-h(d(p,x)-C_2)}\; d \theta_p(v) + \\
    \epsilon \int_{\{v\; \mid\; \angle(v_o,v) \leq \delta\}}
    e^{-h b_v(x)}\; d \theta_p(v) \leq\\
    \Vert \varphi \Vert_\infty\; \omega_n\; e^{hC_2}\; e^{-hd(p,x)} + \epsilon
    \underbrace{\int_{S_pX} e^{-hb_v(x)}d \theta_p(v)}_{=\int_{S_xX}
      d \theta_x(v) = \omega_n} \leq\\
    \omega_n \left(\epsilon + \Vert \varphi \Vert_\infty\; e^{hC_2}\;
      e^{-hd(p,x)}\right).
  \end{multline*}
  Since $\epsilon
  > 0$ was arbitrary and $d(p,x) \to \infty$ for $x \to \xi_0$, we
  conclude that
  $$
  |H_\varphi(x)| \to 0\quad \text{for}\, x \to \xi_0.
  $$

  \smallskip

  \item[(c)] Uniqueness of the solution follows from the maximum
  principle.
\end{proof}

Let us finish this chapter with an application of Theorem
\ref{thm:dirichprob} (see formulas \eqref{eq:appl1} and
\eqref{eq:appl2} below).

\begin{remark}
  Obviously, the harmonic function $h_v : X \to {\R}$, introduced in
  Theorem \ref{thm:hvharm}, has a continuous extension to the
  compactification $\overline{X} = X \cup X(\infty)$ via
  $$
  h_v(q) =
  \begin{cases}\mu(r) \langle v,w \rangle & \text{if}\; q = \exp_p(rw)
    \in X
  \; \text{with}\; w \in S_pX,\\
  \frac{1}{h} \langle v,w \rangle & \text{if}\; q  = [c_w] \in X(\infty)\;
  \text{with}\; w \in S_pX.
  \end{cases}
  $$
  This implies that the harmonic map $F_{E_p} : X \to
  B_{\frac{1}{h}}(0)$, introduced in Chapter \ref{sec:harmdiffeo}, has
  a extension as a homeomorphism $\overline{F}_{E_p} : \overline{X}
  \to \overline{B_{\frac{1}{h}}(0)}$ with $\overline{F}_{E_p}(p) = 0$.

  Since $h_v : \overline{X} \to {\R}$ and its restriction
  on $X$ is harmonic, we know from Theorem \ref{thm:dirichprob} that
  $$
  h_v(x) = \frac{1}{h} \frac{1}{\omega_n} \int\limits_{S_pX} e^{-h
    b_w(x)} \langle v,w \rangle d \theta_p(w).
  $$
  On the other hand, we have
  $$
  h_v(x) = \mu(d_p(x)) \cdot \langle v,w_p(x) \rangle,
  $$
  which implies that
  \begin{equation} \label{eq:appl1}  
  \frac{1}{\omega_n} \int\limits_{S_pX} e^{-h b_w(x)}
  \langle v,w \rangle d \theta_p(w) = h \cdot \mu(d_p(x)) \cdot \langle
  v,w_p(x) \rangle,
  \end{equation}
  or
  \begin{equation} \label{eq:appl2}
  \frac{1}{\omega_n} \int\limits_{S_pX}
  \underbrace{e^{-h b_w(x)} w}_{\in T_pX} d \theta_p(w) = h \cdot
  \mu(d_p(x)) \cdot w_p(x).
  \end{equation}
\end{remark}

\section{Horospheres of harmonic spaces with 
purely exponential volume growth have 
polynomial volume growth}
\label{chp:horopoly}

The Anosov property implies for all $v \in SX$ the existence of a
splitting
$$
T_vSX = E^s(v) \oplus E^u (v) \oplus E^c(v)
$$
and constants $a \ge 1$ and $b >0$ such that for all $\xi \in E^s(v)$
\begin{equation} \label{eq:anos}
\|D \phi^t(v) \xi\| \leq a \|\xi\| e^{-bt} \quad \forall\; t \ge 0.
\end{equation}
Let $W_v^s \subset SX$ be the corresponding strong stable manifold,
i.e., the integral manifold associated to the distribution $E^s$
through $v \in SX$. Its projection ${\mathcal H}_v = \pi W_v^s \subset
X$ is a horosphere orthogonal to $v$. Let $p = \pi(v)$. 
Consider a curve
$$
\xi : [0,1] \to W_v^s \quad \xi(0) = v
$$
in the strong stable manifold $W_v^s$ such that
$$
r \geq {\rm length}(\pi \circ \xi) = \int\limits_0^1 \|(\pi \circ
\xi)'(s)\| ds.
$$ 
Note that the geodesic flow $\phi^t: SX \to SX$ induces a bijection
$\phi^t: W_v^S \to W_{\phi^t v}^s$. We apply this to the curve $\xi$
(see Figure \ref{fig_horoshrink}). Then
$$
\pi \circ \phi^t \xi : [0,1] \to \mathcal{H}_{\phi^t_{v}},
$$
and, since $(\phi^t \xi)'(s)=D \phi^t(\xi(s))(\xi'(s)) \in E^s(\xi(s))$,
\begin{eqnarray*}
  {\rm length}(\pi \circ \phi^t \xi) & = & 
  \int\limits_0^1 \|(\pi \circ \phi^t \xi)'(s)\| ds\\
  & \leq & \int\limits_0^1 \| (\phi^t \xi)'(s) \| ds 
  \stackrel{\eqref{eq:anos}}{\leq} ae^{-bt} \int\limits_0^1 \| \xi'(s) \| ds.
\end{eqnarray*}

\begin{figure}[h]
  \begin{center}
    \psfrag{p=pi(v)}{$p=\pi(v)$} 
    \psfrag{v}{$v$}
    \psfrag{Hv=piWsv}{${\mathcal H}_v=\pi W_v^s$}
    \psfrag{Hptv=piWsptv}{${\mathcal H}_{\phi^t(v)}=\pi W_{\pi^t(v)}^s$}
    \psfrag{phit(v)}{$\phi^t(v)$}
    \includegraphics[width=8cm]{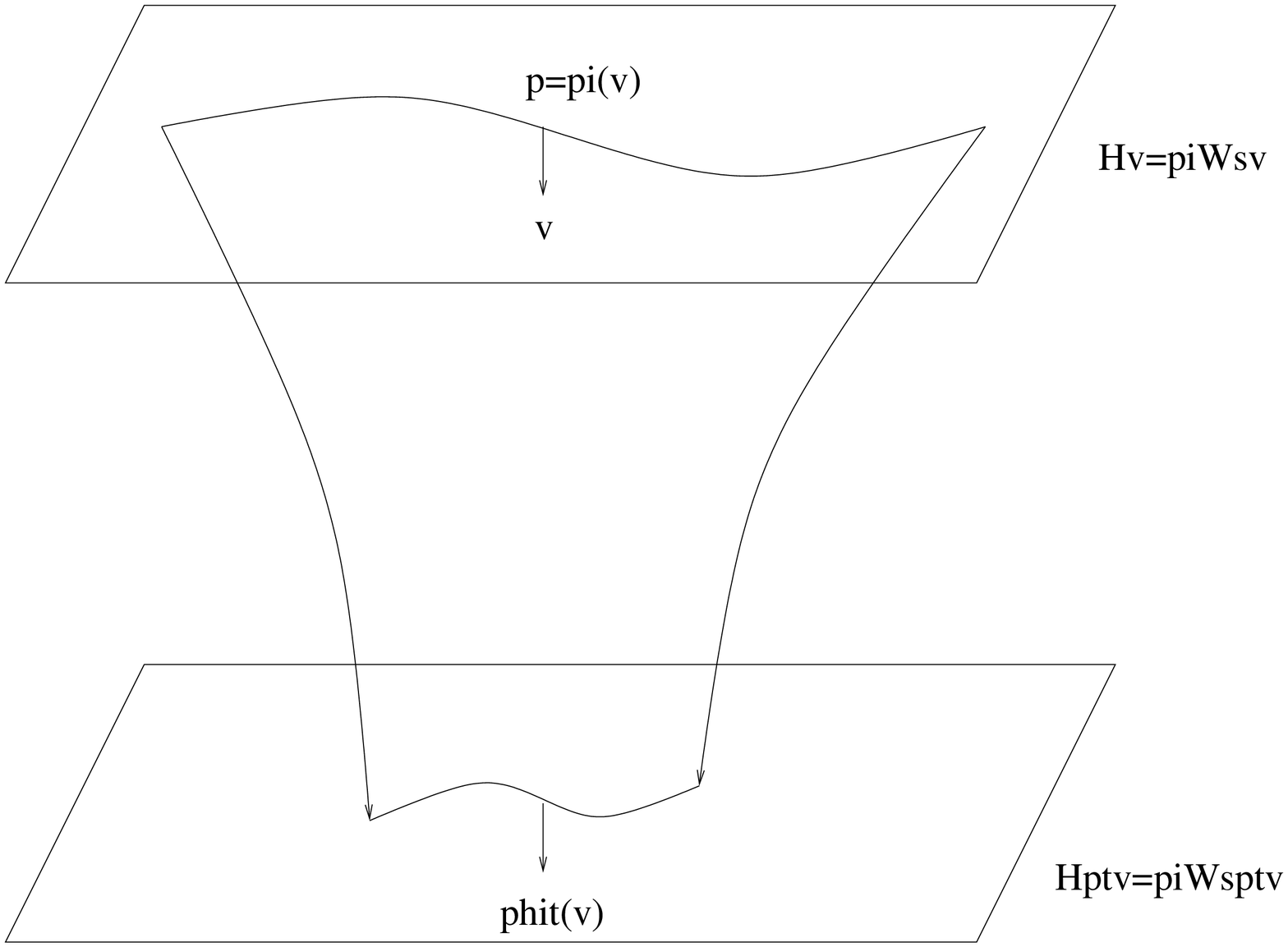}
  \end{center}
  \caption{Contraction of the geodesic flow on stable horospheres}
  \label{fig_horoshrink}
\end{figure}

Since the sectional curvature of a harmonic space is bounded, we conclude
that the second fundamental form of horospheres is bounded as well (see the
proof of part (A) of Proposition \ref{prop:curvsphhoro}). Therefore, there
exists $C >0$ such that
$$
\|\nabla_{(\pi \circ \xi)'(s)} \xi(s)\| \leq C \| \pi \circ \xi)'(s) \|.
$$
This implies
\begin{eqnarray*}
\| \xi'(s) \|^2 &= & \|(\pi \circ \xi)'(s)\|^2 + \| \frac{D}{ds} \xi(s)\|^2 = \|(\pi \circ \xi)'(s)\|^2 + \|\nabla_{(\pi \circ \xi)'(s)} \xi(s)\|^2\\
&\leq& (1+C^2) \|(\pi \circ \xi)'(s)\|^2
\end{eqnarray*}
and
$$
{\rm length}(\pi \circ \phi^t \xi) \leq ae^{-bt} \sqrt{1+C^2}\; {\rm
  length} (\pi \circ \xi) \leq ae^{-bt} \sqrt{1+C^2}\; r.
$$
Hence ${\rm length}(\pi \circ \phi^t \xi) \leq 1$  if
$$
ae^{-bt} \sqrt{1+C^2}\; r \leq 1,
$$
or equivalently
$$
e^{-bt} \leq \frac{1}{a\sqrt{1+C^2}\; r},
$$
which means
$$t \geq \frac{\log(a \sqrt{1+C^2}\; r)}{b} = : t_0.
$$
Let $B_{\mathcal{H}}v = \nabla_v \xi$, where $\xi$ is the inward unit
normal vector field of the horosphere $\mathcal H$. We know from above
that $\|B_{\mathcal{H}}\| \leq C$.

Recall the Gauss equation
$$
\langle R(X,Y)Y,X \rangle = \langle R^{\mathcal{H}} (X,Y)Y,X \rangle +
\langle X,B_{\mathcal{H}} Y \rangle^2 - \langle B_{\mathcal{H}} X,X
\rangle \langle B_{\mathcal{H}} Y,Y \rangle
$$
with $X,Y \in T_q \mathcal{H}$. If $X,Y$ are orthonormal, we have
$$
|K_{\mathcal{H}} (\Span \{X,Y\})| \leq |K(\Span \{X,Y\})| + 2
\|B_{\mathcal{H}}\|^2 \leq \tilde{C},
$$ 
for some positive constant $\tilde{C}>0$, since harmonic manifolds
have bounded sectional curvature. Therefore the Volume Comparison
Theorem yields that any ball of radius $1$ in any horosphere has an
intrinsic volume bounded by some constant $A>0$:
$$
\vol_{\mathcal{H}} (B_1(q)) \leq A \quad \forall\; \mathcal{H}\;
\text{horospheres}\; \forall\; q \in \mathcal{H}.
$$
This implies that
\begin{eqnarray*}
\vol_{\mathcal{H}_{v}} (B_r(p)) & \leq & 
\vol_{\mathcal{H}_{v}} (\phi^{-t_0} (B_1 (\pi \circ \phi^{t_0}(v))))\\
& \leq & e^{ht_0} \vol_{\mathcal{H}_{\phi^{t_0}(v)}} (B_1 (\pi \circ \phi_{t_0}(v))) 
\leq A e^{ht_0}\\
& = & A e^{ \frac{h}{b} \log (a\sqrt{1+C^2}\; r) }= A(a\sqrt{1+C^2}\; r)^{h/b}.
\end{eqnarray*}
This proves the statement in the title of this chapter.$\qquad \qquad \qquad
\qquad \square$

\section{Mean value property of harmonic functions at infinity}
\label{chp:meanvalueinf}

In this chapter, we modify the arguments given in \cite{CaSam} for
asymptotic harmonic manifolds of negative curvature. The flow of
arguments follows also the arguments given in \cite{KP}.

\begin{theorem}
  Let $(X,g)$ be a noncompact harmonic manifold of dimension $n \in \N$ with
  purely exponental volume growth. Let $\varphi : \overline{X} = X \cup
  X(\infty) \to {\R}$ be continuous, and its restriction $\varphi: X
  \to {\R}$ be harmonic. Let $\xi \in X(\infty)$ and $p_0 \in X$. Let
  $\mathcal{H} \subset X$ be the horosphere, centered at $\xi$,
  containing the point $p_0$. Let $K_j \subset \mathcal{H}$ be an
  exhaustion of $\mathcal{H}$, such that $\partial K_j$ is smooth and
  satisfying
  $$
  \frac{\vol_{n-2}(\partial K_j)}{\vol_{n-1}(K_j)} \to 0 \quad
  \text{as}\; j \to \infty.
  $$
  Then we have the following ''mean value property at infinity'':
  $$
  \lim\limits_{j \to \infty} \frac{\int_{K_j} \varphi(x)
    dx}{\vol_{n-1}(K_j)} = \varphi(\xi).
  $$
\end{theorem}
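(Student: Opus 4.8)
The plan is to reduce the statement to two geometric facts about horospheres in a purely exponentially growing harmonic space, after which harmonicity is not even needed and the conclusion follows from continuity of $\varphi$ on the compact space $\overline X$ alone. Write $\mathcal H=\{b_{v_0}=0\}$, where $v_0\in S_{p_0}X$ is the unit vector with $[\sigma_{v_0}]=\xi$, so that $b_{v_0}(p_0)=0$ and $p_0\in\mathcal H$. The first fact I would establish is that the closure of $\mathcal H$ in $\overline X$ is $\mathcal H\cup\{\xi\}$, and more precisely that $x\to\xi$ \emph{uniformly} as $x$ leaves every bounded set of $\mathcal H$: for each neighbourhood $U(v_0,R',\delta)$ of $\xi$ (as in \eqref{eq:Uv0Rd}) there is an $R>0$ with
$$\mathcal H\setminus B_R(p_0)\subset U(v_0,R',\delta).$$
The key computation is that, for $x\in\mathcal H$, the identity $b_{v_0}(x)=\lim_{t\to\infty}\big(d(x,c_{v_0}(t))-t\big)=0$ yields $(x,\xi)_{p_0}=\lim_{t\to\infty}(x,c_{v_0}(t))_{p_0}=\tfrac12 d(p_0,x)$ for the Gromov product; since $X$ is Gromov hyperbolic by \cite{Kn3}, Theorem \ref{thm:gromconv} and the identification of the Busemann with the geometric boundary (Theorem \ref{thm:busegeombd}) translate a large Gromov product into membership in a cone neighbourhood of $\xi$ that shrinks as $d(p_0,x)\to\infty$, the angular part being controlled by the uniform divergence of geodesics (Corollary \ref{cor:uniformdiv}). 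I expect this to be the main obstacle, since it is exactly here that hyperbolicity (hence purely exponential growth) is indispensable: for flat $X$ the statement is false, as a horosphere is a hyperplane whose ends do \emph{not} converge to a single boundary point.

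The second fact is that $\mathcal H$ has infinite $(n-1)$-volume, so that $V_j:=\vol_{n-1}(K_j)\to\infty$ along any exhaustion. This I would obtain from a coarea comparison: if $\vol_{n-1}(\mathcal H)$ were finite, then $\vol_{n-1}(\mathcal H_t)=e^{ht}\vol_{n-1}(\mathcal H)$ for the level sets $\mathcal H_t=\{b_{v_0}=t\}$ would give, by the coarea formula applied to $b_{v_0}$ on $B_r(p_0)\cap\{b_{v_0}\le 0\}$, the bound $\vol\big(B_r(p_0)\cap\{b_{v_0}\le 0\}\big)\le\int_{-r}^0 e^{ht}\vol_{n-1}(\mathcal H)\,dt\le \vol_{n-1}(\mathcal H)/h$; but the left-hand side contains the ball $B_{r/2}(c_{v_0}(r/2))$ (whose points have $b_{v_0}\le 0$ by $1$-Lipschitz continuity of $b_{v_0}$), whence it grows like $e^{hr/2}\to\infty$ by purely exponential growth, a contradiction.

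Granting these two facts the proof concludes in one line. Given $\epsilon>0$, choose $R$ as above so that $|\varphi(x)-\varphi(\xi)|<\epsilon$ for all $x\in\mathcal H\setminus B_R(p_0)$, using continuity of $\varphi$ at $\xi\in X(\infty)$; set $K:=\mathcal H\cap\overline{B_R(p_0)}$, a compact set of finite volume, and note $K\subset K_j$ for large $j$. Then
$$\Big|\frac{1}{V_j}\int_{K_j}\varphi\,dx-\varphi(\xi)\Big|\le \frac{2\|\varphi\|_\infty\,\vol_{n-1}(K)}{V_j}+\frac{1}{V_j}\int_{K_j\setminus K}|\varphi-\varphi(\xi)|\,dx\le \frac{2\|\varphi\|_\infty\,\vol_{n-1}(K)}{V_j}+\epsilon,$$
and letting first $j\to\infty$ (so $V_j\to\infty$) and then $\epsilon\to0$ gives the assertion. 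I note that this route uses neither harmonicity nor the isoperimetric condition $\vol_{n-2}(\partial K_j)/\vol_{n-1}(K_j)\to0$; that hypothesis is what one needs in the alternative derivation following \cite{CaSam}, where one does not presuppose continuity up to $X(\infty)$ but instead integrates the foliated Laplace equation $\varphi_{tt}+h\varphi_t+\Delta_{\mathcal H_t}\varphi=0$ over the flowed regions $K_j^t$ and controls the resulting lateral boundary terms $\int_{\partial K_j^t}\partial_\nu\varphi$ by a gradient bound for $\varphi$ together with the isoperimetric ratio.
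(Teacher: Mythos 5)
Your proposal is correct, and it takes a genuinely different route from the paper's. The paper follows the scheme of \cite{CaSam}: it flows $K_j$ to $K_j(t)=\phi_t(K_j)\subset\mathcal{H}_t$, forms the horospherical averages $g_j(t)=\vol_{n-1}(K_j(t))^{-1}\int_{K_j(t)}\varphi$, and uses harmonicity of $\varphi$, the gradient bound coming from Theorem \ref{thm:intform}, and the isoperimetric hypothesis on $\partial K_j$ to show that a subsequential pointwise limit $g$ solves $g''+hg'=0$ distributionally; boundedness then forces $g$ to be constant, and the constant is identified as $\varphi(\xi)$ via Proposition \ref{prop:horoincone} together with continuity of $\varphi$ at $\xi$. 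You instead isolate two purely geometric facts -- that the far part of $\mathcal{H}$ lies in any cone neighbourhood of $\xi$, and that $\vol_{n-1}(\mathcal{H})=\infty$ -- after which the claim is elementary averaging of a bounded function continuous at $\xi$. Both facts hold, and your sketches of them are sound; in particular your coarea/flow-Jacobian argument for infinite horosphere volume is fine (it needs $h>0$, which purely exponential growth guarantees here, since $h=0$ would force $X$ flat with $f(t)=t^{n-1}$, incompatible with $f\asymp e^{ht}$ for $n\ge 2$). The net effect is a real strengthening: under the stated hypotheses neither harmonicity of $\varphi$ nor the isoperimetric condition on $K_j$ is needed, whereas the paper's machinery is what one falls back on when continuity up to the boundary is \emph{not} assumed (note, though, that the paper's own proof does use continuity on $\overline{X}$, exactly when identifying the constant, so under the theorem's hypotheses your argument strictly subsumes what is used there). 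Two refinements. First, your fact (1) follows at once from the paper's own Proposition \ref{prop:horoincone}: for $q\in\mathcal{H}\setminus C(v_0,\delta)$ one has $0=b_{v_0}(q)\ge d(p_0,q)-C_1$, hence $\mathcal{H}\setminus B_{C_1}(p_0)\subset C(v_0,\delta)$, and intersecting with $\{d(p_0,\cdot)>R\}$ places $\mathcal{H}\setminus B_{\max(R,C_1)}(p_0)$ inside $U(v_0,R,\delta)$; your Gromov-product computation essentially reproves that proposition. Second, if you do keep the Gromov-product route, note that Theorem \ref{thm:gromconv} controls \emph{mutual} products $(x_n,x_m)_{p}$ of a single sequence, so to see that the limit point is $\xi$ itself (and not some other boundary point) you should interleave $(x_n)$ with $(c_{v_0}(n))$ and apply the $\delta$-hyperbolicity inequality of Proposition \ref{prop:gromprod1}, letting the auxiliary parameter tend to infinity to convert $b_{v_0}(x_n)=0$ into lower bounds for the mixed products; as stated, the passage from $(x,\xi)_{p_0}=\tfrac12 d(p_0,x)$ to membership in a shrinking cone neighbourhood is a small but genuine step in the paper's framework.
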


\begin{remark} Since horospheres have polynomial volume growth, the
  intrinsic balls of suitably chosen increasing radii satisfy
  $$
  \frac{\vol_{n-2}(\partial
    B_{\mathcal{H}}(r_j))}{\vol_{n-1}(B_{\mathcal{H}}(r_j))} \to 0.
  $$
  A suitable choice of sets $K_j$ are regularized spheres, as
  explained in \cite[p. 665]{KP}. But there might be many more increasing sets
  satisfying this asymptotic isoperimetric property.
\end{remark}

\begin{proof}
  Let $\xi = [c_v]$ with $v \in S_{p_0}X$ and $\mathcal{H} =
  b_v^{-1}(0)$. Let $\mathcal{H}_t = b_v^{-1}(t)$. Let $\phi_t : X \to
  X$ be the flow assiciated to $\grad b_v = \grad b_{p_0,\xi}$. Then
  $\phi_t : \mathcal{H}_0 \to \mathcal{H}_t$. Let $K_j(t) = \phi_t
  (K_j) \subset \mathcal{H}_t$. Then
  $$
  \vol_{n-1} (K_j(t)) = e^{ht} \vol_{n-1} (K_j).
  $$
  Since $X$ has a lower sectional curvature bound, there exists $C >
  0$ such that
  $$
  \vol_{n-2}(\partial K_j(t)) \leq e^{C|t|} \vol_{n-2}(\partial K_j).
  $$
  This implies that, on every compact set $I \subset [0, \infty)$, we
  have
  $$
  || \frac{\vol_{n-2}(\partial
    K_j(\cdot))}{\vol_{n-1}(K_j(\cdot))}||_{\infty,I} \to 0, \quad
  \text{as}\: j \to \infty.
  $$
  Define
  $$
  g_j(t) = \frac{\int_{K_j(t)} \varphi(x)dx}{\vol_{n-1}(K_j(t))} \quad
  \forall\; t \in {\R}.
  $$
  Since $||g_j||_\infty \leq ||\varphi||_\infty$, using diagonal
  arguments, we find a subsequence $g_{j_k}$ such that $g_{j_k}(t) \to
  g(t)$, for all rational $t$. Since $\varphi$ is uniformly
  continuous, we have $g_{j_k} \to g$ pointwise to a continuous limit.\\

  Next we show that $g$ satisfies
  \begin{equation} \label{eq:diffeq}
    g'' + hg' = 0, 
  \end{equation}
  in the distributional sense. Let $\psi \in C_0^\infty ({\R})$ be a
  test function. Then we have
  $$ \int\limits_{- \infty}^\infty g_j(t) (\psi''(t) - h
  \psi'(t))dt = \int\limits_{- \infty}^\infty \frac{\int_{K_j(t)}
    \varphi(x)dx}{\vol_{n-1}(K_j(t))} (\psi''(t) - h
  \psi'(t))dt.
  $$
  Let $\tilde{\varphi} : \mathcal{H} \times (- \infty,\infty) \to
  {\R}$ be defined as $\tilde{\varphi}(x,t) : =
  \varphi(\phi_t(x))$. The tranformation formula yields:
  \begin{multline*}
    \int\limits_{K_j(t)} \varphi(x)dx = \int\limits_{\phi_t(K_j)}
    \varphi(x)dx = \int\limits_{K_j} \varphi \circ \phi_t(x)
    \overbrace{{\rm Jac}\; \phi_t(x)}^{e^{ht}}dx = \\
    = e^{ht} \int\limits_{K_j} \tilde{\varphi} (x,t)dx.
  \end{multline*}
  Therefore, we have
  $$
  g_j(t) = \frac{1}{\vol(K_j)} \int\limits_{K_j} \varphi(\phi_t x)dx,
  $$
  and
  \begin{eqnarray*}
    g_j''(t) + h g_j'(t) & = & \frac{1}{\vol(K_j)} 
    \int\limits_{K_j} \frac{d^2}{dt^2} \varphi(\phi_t x) + h \frac{d}{dt} 
    \varphi(\phi_t x)dx\\
    & = & \frac{1}{\vol(K_j)} \int\limits_{K_j} \Delta_x \varphi(\phi_t x) - 
    \Delta_{\mathcal{H}_t} \varphi(\phi_t x)dx\\
    & = & \frac{1}{\vol(K_j(t))} \int\limits_{K_j(t)} 
    \underbrace{\Delta_x \varphi(x)}_{= 0} - \Delta_{\mathcal{H}_t} \varphi(x)dx\\
    & = & - \frac{1}{\vol(K_j(t))} \int\limits_{K_j(t)} 
    \Delta_{\mathcal{H}_t} \varphi(x)dx\\
    & = & \frac{1}{\vol(K_j(t))} \int\limits_{\partial K_j(t)} 
    \langle \grad_{\mathcal{H}_t} \varphi(x), \nu_x \rangle dx,
  \end{eqnarray*}
  where $\nu_x$ denotes the outward unit vector of $\partial K_j(t)
  \subset {\mathcal{H}_t}$. Since ${\rm supp} \psi \subset \R$ is compact,
  we have  
  \begin{multline*}
    \int\limits_{- \infty}^\infty g_j(t)(\psi''(t) - h \psi'(t))dt
    = \int\limits_{- \infty}^\infty (g_j''(t) + h g_j'(t)) \psi(t)dt \\
    = \int\limits_{- \infty}^\infty \frac{1}{\vol_{n-1}(K_j(t))}
    \int\limits_{\partial K_j(t)} \langle \grad_{\mathcal{H}(t)}
    \varphi(x), v_x \rangle dx\, \psi(t)dt.
  \end{multline*}
  Taking absolute value, we conclude:
  \begin{multline*}
    \left| \int\limits_{- \infty}^\infty g_j(t)(\varphi''(t) - h
      \varphi'(t))dt \right| \\
    \leq \int\limits_{\supp \psi} \frac{\vol_{n-2} (\partial
      K_j(t))}{\vol_{n-1}(K_j(t))} \; ||\grad_X \varphi||_\infty\;
    ||\psi||_\infty\; dt \to 0,
 \end{multline*}
  as $j \to \infty$, since, by Theorem \ref{thm:intform}:
  $$
  \langle \grad_X \varphi(p),v \rangle = \frac{1}{\vol(B_1(p))}
  \int\limits_{S_1(p)} \varphi(q) \varphi_v(q) d \mu_1(q)\; \forall\;
  v \in S_p X,
  $$
  which implies
  $$
  ||\grad_X \varphi(p)|| \leq \frac{1}{\vol_n(B_1(p))} \vol_{n-1}
  (S_1(p)) ||\varphi||_\infty,
  $$
  i.e.,
  $$
  ||\grad_X \varphi||_\infty \leq \frac{\vol_{n-1}(S_1(p))}{\vol_n(B_1(p))}
  ||\varphi||_\infty.
  $$
  By Lebesgue's dominated convergence, and since $||g||_\infty,
  ||g_j||_\infty \leq ||\varphi||_\infty$, we conclude that
  $$
  \int\limits_{- \infty}^\infty g(t)(\varphi''(t) - h \varphi'(t))dt = 0,
  $$
  i.e., the continuous function $g$ satisfies \eqref{eq:diffeq} in the
  distributional sense. Therefore, $g$ is smooth and satisfies
  \eqref{eq:diffeq} in the classical sense, which implies
  $$
  g' + hg = c,
  $$
  for some suitably chosen constant $c \in \R$. The general solution
  of $g' + hg = c$ is $g(t) = c' e^{-ht} + \frac{c}{h}$ with an
  arbitrary constant $c' \in {\R}$. Since $g$ ist bounded, we
  have $g(t) = \frac{c}{h}$.\\

  Let $p_0 \in X$, $v \in S_{p_0}X$ and $\xi = [c_v] \in
  X(\infty)$. Let $\epsilon > 0$ and $R > 0$. Recall from Proposition
  \ref{prop:horoincone}, that we find $t \ge R$ such that
  \begin{equation} \label{eq:bU}
  b_{\xi,p_0}((- \infty, -t]) \subset U(v,R,\epsilon).
  \end{equation}
  
  Continuity of $\varphi : \overline{X} \to {\R}$ implies, for every
  $\epsilon > 0$, that there exists an open neighborhood $U$ of $\xi
  \in X(\infty)$, such that
  $$
  |\varphi(x) - \varphi(\xi)| < \epsilon \quad \forall\; x \in U.
  $$
  Choose $t < 0$ negative enough, such that $\mathcal{H}_t =
  b_{p_0,\xi}^{-1}(t) \subset U$. This is possible because of
  \eqref{eq:bU}. This implies
  $$
  |g_j(t) - \varphi(\xi)| = \frac{\int_{K_j(t)} |\varphi(x) -
    \varphi(\xi)|\; dx}{\vol_{n-1}(K_j(t))} \leq \epsilon.
  $$
  Therefore, we have $|g(t) - \varphi(\xi)| \leq \epsilon$ for $t < 0$
  negative enough. Since $g(t) = \frac{c}{h}$, i.e., $g$ is constant,
  we must have $g \equiv \varphi(\xi)$, since $\epsilon > 0$ was arbitrary.\\

  For $t = 0$, we conclude
  $$
  g_j(0) = \frac{\int_{K_j} \varphi(x)dx}{\vol(K_j)} \to g(0) = \varphi(\xi),
  $$
  as $j \to \infty$. This finishes the proof. 
\end{proof}

%%%%%%%%%%%%%%%%%%%%%%%%%%%%%%%%%%%%%%%%%%%%%%%%%%%%%%%%%%%%%%%%%%%%%%

\end{document}